\newcommand{\ie}{{\sfcode`\.1000 i.e.}}
\numberwithin{equation}{section}
\theoremstyle{plain}
\newtheorem*{theorem*}{Theorem}
\newtheorem{theorem}[equation]{Theorem}
\newtheorem{proposition}[equation]{Proposition}
\newtheorem{lemma}[equation]{Lemma}
\newtheorem{corollary}[equation]{Corollary}
\theoremstyle{definition}
\newtheorem{definition}[equation]{Definition}
\theoremstyle{remark}
\newtheorem{remark}[equation]{Remark}
\let\scr=\mathcal
\let\bb=\mathbb
\def\N{\bb N}
\def\Z{\bb Z}
\def\A{\bb A}
\def\1{\mathbf{1}}
\let\del=\partial
\let\into=\hookrightarrow
\let\tens=\otimes
\def\ph{\mathord-}
\def\id{\mathrm{id}}
\DeclareMathOperator{\Hom}{Hom}
\DeclareMathOperator{\Fun}{Fun}
\DeclareMathOperator{\End}{End}
\DeclareMathOperator{\Aut}{Aut}
\DeclareMathOperator{\Map}{Map{}}
\DeclareMathOperator{\Spec}{Spec}
\def\et{\mathrm{\acute et}}
\DeclareMathOperator{\Tr}{Tr}
\def\ch{\mathrm{ch}}
\def\op{\mathrm{op}}
\def\Cat{\mathrm{Cat}{}}
\def\CAT{\mathbf{Cat}{}}
\def\ev{\mathrm{ev}}
\def\coev{\mathrm{coev}}
\def\HH{\mathrm{HH}}
\def\CAlg{\mathrm{CAlg}{}}
\def\CMon{\mathrm{CMon}{}}
\def\Mod{\mathrm{Mod}{}}
\let\lim=\relax
\DeclareMathOperator*{\lim}{lim}
\DeclareMathOperator*{\colim}{colim}
\def\h{\mathrm h}
\def\St{\mathrm{St}}
\def\Pr{\scr P\mathrm r}
\def\PR{\mathbf{Pr}{}}
\def\L{\mathrm{L}}
\def\R{\mathrm{R}}
\def\ex{\mathrm{ex}}
\def\perf{\mathrm{perf}}
\def\Ind{\mathrm{Ind}}
\def\rig{\mathrm{rig}}
\def\lax{\mathrm{lax}}
\def\oplax{\mathrm{oplax}}
\DeclareMathOperator{\maps}{maps}
\def\Fr{\mathrm{Fr}}
\def\Fin{\mathrm{Fin}}
\def\Sp{\mathrm{Sp}{}}
\def\pre{\mathrm{pre}}
\def\QCoh{\mathrm{QCoh}}
\def\Perf{\mathrm{Perf}}
\def\Mot{\mathrm{Mot}}
\def\MOT{\mathbb M\mathrm{ot}}
\def\K{\mathbb{K}}
\def\HH{\mathrm{HH}}
\def\Aff{\mathrm{Aff}{}}
\def\PrSt{\mathrm{PrStk}{}}
\def\Mor{\mathrm{Mor}}
\def\sat{\mathrm{sat}}
\def\Tan{\mathrm{Tan}}
\def\loc{\mathrm{loc}}
\def\add{\mathrm{add}}
\def\Ar{\mathrm{Ar}}
\newcommand{\ccat}[2]{\mathop{\mbox{$ \mathrm{Cat^{#1}(#2)}$}}}
\newcommand{\cA}{\mathcal{A}}
\newcommand{\cB}{\mathcal{B}}
\newcommand{\cD}{\mathcal{D}}
\newcommand{\cE}{\mathcal{E}}
\newcommand{\cL}{\mathcal{L}}
\newcommand{\cO}{\mathcal{O}}
\newcommand{\cP}{\mathcal{P}}
\newcommand{\cC}{\mathcal{C}}
\newcommand{\cS}{\mathcal{S}}
\newcommand{\cU}{\mathcal{U}}
\newcommand{\cI}{\mathcal{I}}
\newcommand{\Pre}{\mathcal{P}}
\DeclareMathOperator{\Lax}{Lax}
\begin{document}
\title{Higher traces, noncommutative motives, and the categorified Chern character}

\begin{abstract}
We propose a categorification of the Chern character 
that refines earlier work of To\"en and Vezzosi and of Ganter and Kapranov.  If $X$ is an algebraic stack, 
our categorified  Chern character is a symmetric monoidal functor from a category of mixed noncommutative motives over $X$, which we introduce,   
to $S^1$-equivariant perfect complexes on the derived free loop stack $\cL X$.  As an application of the theory, we
 show that To\"en and Vezzosi's secondary Chern character factors through secondary $K$-theory. Our techniques depend on a careful investigation of the functoriality of traces in symmetric monoidal $(\infty,n)$-categories, which is of independent interest.
\end{abstract}

\author{Marc Hoyois}
\address{Marc Hoyois, Massachusetts Institute of Technology, Cambridge, MA, USA}
\email{hoyois@mit.edu}

\author{Sarah Scherotzke}
\address{Sarah Scherotzke,
Mathematical Institute of the University of Bonn,
Endenicher Allee 60,
53115 Bonn,
Germany}
\email{sarah@math.uni-bonn.de}

\author[Sibilla]{Nicol\`o Sibilla}
\address{School of Mathematics, Statistics and Actuarial Sciences\\ 
Cornwallis Building\\ 
University of Kent\\ 
Canterbury, Kent CT2 7NF\\
UK}
\email{\href{N.Sibilla@kent.ac.uk}{N.Sibilla@kent.ac.uk}}
\maketitle

{\small \tableofcontents}

\section{Introduction}

In this paper,
we develop a formalism of higher categorical traces and use it
to refine in various ways the categorified character theory developed by Toën and Vezzosi in \cite{TV1,TV2}.
Along the way we introduce a theory of relative noncommutative motives and generalize work of 
 Cisinski and Tabuada \cite{CT}, and Blumberg, Gepner, and Tabuada \cite{BGT}. 
 We start by placing our results in the context of To\"en and Vezzosi's work on secondary $K$-theory, 
and of the categorified homological algebra that emerges from the work of Ben-Zvi, Francis, and Nadler \cite{BFN}.

\subsection{Secondary $K$-theory and the secondary Chern character}
\label{sub:K2intro}
The algebraic $K$-theory of a scheme or stack $X$ measures 
the geometry of algebraic vector bundles on $X$. 
The analogy with topological $K$-theory suggests that algebraic $K$-theory probes 
only the first chromatic layer of the geometry of $X$. In the stable homotopy category, homotopy theorists have developed a rich picture of 
the chromatic hierarchy of homology theories. 
In particular, homology theories of chromatic level two have been 
the focus of intense investigation ever since work of Landweber, Stong, and Ravenel on elliptic cohomology in the late 80's.  An important insight emerging from topology is that climbing up the chromatic ladder 
is related to studying invariants of spaces that are higher-categorical in nature. A vast literature is  devoted to investigating cohomology theories of chromatic level two that are related to elliptic cohomology and measure the geometry of higher-categorical analogues of vector bundles, see for instance \cite{BDR}.

Motivated by these ideas from homotopy theory, To\"en and Vezzosi introduced in \cite{TV1} the notion of secondary $K$-theory of schemes and stacks.
Their work hinges on a categorification of coherent sheaf theory where the role of the structure sheaf is taken up by the sheaf of symmetric monoidal  
$\infty$-categories $\Perf(-)$ on $X$, which maps affine open subschemes to their category of perfect complexes.  Categorical sheaves are sheaves of small stable $\infty$-categories on $X$ that are tensored over $\Perf(-)$. By results of Gaitsgory \cite{Ga1}, they can often be described in a global way as \emph{$\Perf(X)$-linear} $\infty$-categories: that is, as $\infty$-categories tensored over 
$\Perf(X)$  (this holds for instance when $X$ is a quasi-compact quasi-separated scheme, or a semi-separated Artin stack of finite type over a field of characteristic zero). The definition of the secondary $K$-theory spectrum of 
$X$, denoted by 
$K^{(2)}(X)$, is closely patterned after classical  algebraic $K$-theory. In particular, its group of connected components  
$K^{(2)}_0(X) = \pi_0 K^{(2)}(X)$  
is the free abelian group on the set of equivalence classes of dualizable categorical sheaves on $X$, modulo the relations 
\[
[\cB] = [\cA] + [\cC], 
\]
where $\cA \to \cB \to \cC$ is a Verdier localization. 

Secondary $K$-theory is an intricate invariant of $X$ and is related to several other fundamental invariants.
Derived Azumaya algebras over $X$ are examples of dualizable categorical sheaves, and this gives rise to a multiplicative map from the algebraic Brauer space $\mathrm{Br}_\mathrm{alg}(X)$ of Antieau--Gepner \cite{AG} to $K^{(2)}(X)$, analogous to the multiplicative map from the Picard space of line bundles to ordinary $K$-theory. When $X$ is a scheme, this induces a map from the cohomological Brauer group $H^2_\et(X,\mathbb G_m)$ to $K^{(2)}_0(X)$. Tabuada has shown that this map is nontrivial in many cases, and that it is even injective when $X=\Spec k$ for $k$ a field of characteristic zero \cite{TabuadaK2}.
In that case, Bondal, Larsen, and Lunts \cite{BLL} have constructed a motivic measure on $k$-varieties with value in the ring $K^{(2)}_0(k)$ (under a different name, see \cite{TabuadaK2}): they showed that the assignment $X\mapsto \Perf(X)$, for $X$ a smooth projective variety, induces a ring homomorphism $K_0(\mathrm{Var}_k)/(\mathbb L-1)\to K_0^{(2)}(k)$, which is again nontrivial (although not injective). Finally, secondary $K$-theory is closely related to \emph{iterated} $K$-theory (see Remark~\ref{rmk:KK} for a precise statement).
From this perspective, the analogy with elliptic cohomology is made precise by the work of Ausoni and Rognes \cite{AR,Ausoni}, who showed that $K(K(\mathbb C))$ is a spectrum of telescopic complexity $2$ at primes $\geq 5$. 

One of the main results of \cite{TV2} is the construction of a secondary Chern character for dualizable categorical sheaves on a derived $k$-stack $X$. 
To\"en and Vezzosi achieve this by developing a general formalism of Chern 
characters using $S^1$-invariant traces that applies also to the classical Chern character in ordinary $K$-theory. Let 
\[
\cL X = \mathrm{\mathbb{R} Hom } (S^1,X) \quad\text{and}\quad \cL^{2}X = \mathrm{\mathbb{R} Hom } (S^1 \times S^1,X)
\] 
be the free loop stack and the double free loop stack of $X$. 
When $k$ is a field of characteristic zero and $X$ is a smooth $k$-scheme, the $E_\infty$-ring spectrum $\cO(\cL X)^{hS^1}$ of $S^1$-invariant functions on $\cL X$ is closely related to the de Rham cohomology of $X$: there is an equivalence
\[
\cO(\cL X)^{hS^1}[u^{-1}] \simeq H^\mathrm{per}_\mathrm{dR}(X),
\]
where $u$ is a generator of the cohomology of $BS^1$ (in degree $-2$) and $H_\mathrm{dR}^\mathrm{per}(X)$ is the $2$-periodization of the de Rham complex of $X$ over $k$ (see \cite[\S3]{TV}).
From this perspective, the classical Chern character 
\[
K (X) \to H^\mathrm{per}_\mathrm{dR}(X) \simeq \cO(\cL X)^{hS^1}[u^{-1}]
\]
sends a vector bundle over $X$ to the trace of the canonical monodromy operator on its pullback to $\cL X$ (see \cite[Appendix B]{TV2}). The fact that this trace is a homotopy $S^1$-fixed point follows from a general $S^1$-invariance property of the trace, which is a consequence of the $1$-dimensional cobordism hypothesis \cite{Lurie}.

The construction of the secondary Chern character in  \cite{TV2} is in keeping with this picture of Chern characters as traces of monodromy operators. For $X$ a derived $k$-stack, To\"en and Vezzosi's secondary Chern character is a map
\begin{equation}\label{eqn:ch2intro}
\iota_0\Cat^\sat(X) \rightarrow \Omega^\infty\cO(\cL^{2} X)^{h(S^1 \times S^1)},
\end{equation}
where $\iota_0\Cat^\sat(X)$ is the $\infty$-groupoid of dualizable categorical sheaves on $X$ (called sheaves of saturated dg-categories in \cite{TV2}). This map sends a dualizable categorical sheaf on $X$ to the 2-fold trace of the pair of commuting monodromy operators on its pullback to $\cL^2 X$.

One of our main applications is  
that~\eqref{eqn:ch2intro} factors canonically through the secondary $K$-theory of $X$, and even through a nonconnective version of it, denoted by $\K^{(2)}(X)$:

\begin{theorem}[see Theorem~\ref{thrm:TV2}]
\label{thrm:mainsecond}
The Toën–Vezzosi secondary Chern character is refined by a morphism of $E_\infty$-ring spectra
\[
\mathrm{\mathbb{K}}^{(2)}(X) \to \cO(\cL^{2} X)^{h(S^1 \times S^1)}.
\] 
\end{theorem}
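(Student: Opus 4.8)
The plan is to express the Toën–Vezzosi secondary Chern character as a composite of three maps, each of which is manifestly a morphism of $E_\infty$-ring spectra, and then to match the composite with~\eqref{eqn:ch2intro} on objects. Recall that $\K^{(2)}(X)$ is the nonconnective $K$-theory of (the compact part of) the category $\NMot(X)$ of mixed noncommutative motives over $X$ --- so that on $\pi_0$ it recovers the Grothendieck group of dualizable categorical sheaves modulo Verdier localizations --- and that the categorified Chern character is a symmetric monoidal exact functor $\mathrm{ch}\colon\NMot(X)\to\Perf^{S^1}(\cL X)$.

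First, nonconnective $K$-theory is lax symmetric monoidal on stable $\infty$-categories \cite{BGT}, so applying it to $\mathrm{ch}$ produces a morphism of $E_\infty$-rings $\K^{(2)}(X)\to\K(\Perf^{S^1}(\cL X))$. Second, compose with the Chern character to negative cyclic homology $\K(\Perf^{S^1}(\cL X))\to\HN(\Perf^{S^1}(\cL X))$ --- the cyclotomic trace followed by $k$-linearization --- which is again a morphism of $E_\infty$-rings, since $\K$ and $\HN$ are lax symmetric monoidal and the Dennis trace is a lax symmetric monoidal transformation. Third, invoke the identification $\HN(\Perf^{S^1}(\cL X))\simeq\cO(\cL^2X)^{h(S^1\times S^1)}$ of $E_\infty$-rings established above --- a Hochschild homology computation extending \cite{BFN}, in which one circle is the loop rotation of $\cL X$ transported through Hochschild homology and the other is the equivariance circle already present in $\Perf^{S^1}(\cL X)$. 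The composite of the three is the desired morphism $\K^{(2)}(X)\to\cO(\cL^2X)^{h(S^1\times S^1)}$.

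It remains to verify that the underlying map of spaces, precomposed with the canonical map $\iota_0\Cat^\sat(X)\to\Omega^\infty\K^{(2)}(X)$ sending a dualizable categorical sheaf $\cE$ to its class, agrees with~\eqref{eqn:ch2intro}. This I would split into two statements. (i) For any symmetric monoidal stable $\infty$-category $\cA$, the Chern character $\iota_0\cA^{\mathrm{dual}}\to\Omega^\infty\HN(\cA)$ carrying a dualizable object to the homotopy $S^1$-fixed trace of its identity factors through $\Omega^\infty\K(\cA)$ via the Dennis trace; this is a formal property of the construction of the Dennis trace, and the uncategorified case of the comparison at hand. (ii) Toën and Vezzosi's $2$-fold trace of the two commuting monodromies on the pullback of $\cE$ to $\cL^2X$ equals the Chern character of $\Perf^{S^1}(\cL X)$ evaluated on $\mathrm{ch}(\cE)$: tracing out the first monodromy is exactly the operation producing $\mathrm{ch}(\cE)\in\Perf^{S^1}(\cL X)$, while the second monodromy together with its $S^1$-invariance is the canonical monodromy carried by $\mathrm{ch}(\cE)$ along $\cL^2X=\cL(\cL X)\to\cL X$, whose trace is the Chern character of $\mathrm{ch}(\cE)$ in $\HN(\Perf^{S^1}(\cL X))$ --- the two $S^1$-invariances being the two instances of the $S^1$-equivariance of traces, each a consequence of the $1$-dimensional cobordism hypothesis \cite{Lurie}. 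Combining (i) for $\cA=\Perf^{S^1}(\cL X)$ with (ii) and the naturality of the Dennis trace along $\K(\mathrm{ch})$ yields the theorem. The substance is in (ii): reconciling Toën and Vezzosi's iterated trace with the systematic one and tracking the two circle actions through horizontal composition of traces is precisely what the careful analysis of the functoriality of traces in symmetric monoidal $(\infty,2)$-categories is for, and this --- rather than the formal steps above, or the Hochschild computation, which only extends \cite{BFN} --- is where I expect the real work to be.
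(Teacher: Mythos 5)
Your first leg coincides with the paper's: there, $\ch^{(2)}$ is defined as the composite $\K^{(2)}(X)=\K(\Mot^\sat(X))\xrightarrow{\K(\ch)}\K(\Perf^{S^1}(\scr LX))\to\scr O(\scr L^2X)^{h(S^1\times S^1)}$, and the compatibility with the Toën–Vezzosi pre-character requires no separate check because the categorified and primary Chern characters are themselves built by rerunning Toën–Vezzosi's iterated-trace construction through the $S^1$-invariant trace functor, so the top row of the comparison diagram~\eqref{eqn:ch2} \emph{is} their pre-character by construction. The genuine gap is in your second and third steps. The asserted identification $\HN(\Perf^{S^1}(\scr LX))\simeq\scr O(\scr L^2X)^{h(S^1\times S^1)}$ is not established anywhere in the paper, and it should not be expected to hold: even over an affine, where $\scr O(\scr L^2X)\simeq\HH(\Perf(\scr LX)/\scr E)$, the right-hand side is $\HN(\Perf(\scr LX)/\scr E)^{hS^1}$ with the outer fixed points taken for the action induced by loop rotation on $\scr LX$, whereas your left-hand side is $\HN$ of the fixed-point category $\Perf(\scr LX)^{hS^1}$. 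Hochschild and negative cyclic homology do not commute with homotopy $S^1$-fixed points (equivalently, the loop space of a quotient by $S^1=B\Z$ is not the fixed points of the loop space), so there is only a canonical comparison map $\HN(\Perf^{S^1}(\scr LX))\to\HN(\Perf(\scr LX))^{hS^1}$, not an equivalence. A map in that direction would still let you write down \emph{some} morphism $\K^{(2)}(X)\to\scr O(\scr L^2X)^{h(S^1\times S^1)}$, but your argument as written leans on an equivalence attributed to a computation that does not exist.

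Beyond that, your step (i) is not ``a formal property of the construction of the Dennis trace'': the statement that the $S^1$-invariant trace of the identity of a dualizable object (whose invariance comes from the $1$-dimensional cobordism hypothesis) descends through $K$-theory is exactly Theorem~\ref{thm:additivity}, which the paper explicitly warns is not a formal consequence of the universal property of $K$-theory, and its identification with the Dennis/cyclotomic trace factorization is obtained only \emph{a posteriori} in Remark~\ref{rmk:Dennistrace}, using \cite[Example 4.2.2]{Lurie} to match the two circle actions on $\HH$. The paper sidesteps both of your problematic steps by taking as second leg the $S^1$-homotopy-fixed refinement of its own primary Chern character $\ch\colon\K(\scr LX)\to\scr O(\scr L^2X)^{hS^1}$ (Theorem~\ref{thm:additivity} applied, naturally and equivariantly, to the prestack $\scr LX$), which lands directly in $\scr O(\scr L^2X)^{h(S^1\times S^1)}$ and makes the comparison with~\eqref{eqn:ch2intro} automatic. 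So the content you defer to (i) and (ii) is where the proof actually lives, and step 3 as stated would fail.
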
  

This theorem is a consequence of a more fundamental result that we discuss in the next subsection.
When $X=BG$, $\cO(\cL^{2} X)$ is the set of conjugation-invariant functions on commuting pairs of elements of $G$. In that case, the secondary Chern character of Theorem~\ref{thrm:mainsecond} is reminiscent of the character constructed by Hopkins, Kuhn, and Ravenel \cite{hopkins1992morava} for Morava $E$-theory at height $2$, and it suggests that secondary $K$-theory can be viewed as an algebraic (and integral) analogue of the latter.

\subsection{Categorified Hochschild homology and the categorified Chern character}
\label{sub:chc}
In \cite{BFN}, Ben-Zvi, Francis and Nadler investigate categorified instances of Hochschild homology and Hochschild cohomology of  
commutative algebra objects in symmetric monoidal 
$\infty$-categories. 
Their work suggests a categorification of the Dennis trace map in which the role of topological Hochschild homology is played by the $\infty$-category of quasi-coherent sheaves on the free loop space.

Let $\cC$ be a symmetric monoidal 
$\infty$-category and let $\mathrm{Alg}(\cC)$ be the $\infty$-category of 
associative algebra objects in $\cC$.  Every  associative algebra $A \in \mathrm{Alg}(\cC)$ has a canonical structure of $A \otimes A^\op$-module.  
Following \cite{BFN}, we define 
the \emph{Hochschild homology} of $A$ by the formula
\[
\mathrm{HH}(A) = A \otimes_{A \otimes A^{\op}} A \in \cC.   
\]
If $\cC = \mathrm{\cP r^L}$ 
is the $\infty$-category of presentable $\infty$-categories and $A= \mathrm{QCoh}(X)$ is the $\infty$-category of quasi-coherent sheaves on a scheme $X$ (or more generally on a \emph{perfect stack}),  
their theory categorifies the picture of Hochschild (co)homology of Calabi--Yau categories emerging from two-dimensional TQFT. In particular, a key insight of \cite{BFN} is that there is an equivalence of presentable stable $\infty$-categories  
\[
\mathrm{HH}( \mathrm{QCoh}( X)) 
\simeq \mathrm{QCoh}(\cL X)
\] 
between the Hochschild homology of 
$\mathrm{QCoh}(X)$ and quasi-coherent sheaves on the free loop stack $\cL X$. 

In ordinary algebra, one of the salient properties of Hochschild homology is that it is the recipient of a 
trace map that takes perfect modules to Hochschild classes.  Namely, let $\Perf(A)$ be the $\infty$-category of perfect modules over an associative algebra $A$ (in spectra, say) and denote by $\iota_0 \Perf(A)$ its underlying space of objects. Then the Hochschild homology of $A$ is isomorphic to the Hochschild homology of $\Perf(A)$, and this  gives rise to a trace map 
\begin{equation}
\label{eqn:tr}
\iota_0\Perf(A) \to \mathrm{HH}(\Perf(A)) \simeq \mathrm{HH}(A).
\end{equation}
Note that in \eqref{eqn:tr}, 
contrary to the previous paragraph, the notation 
$\mathrm{HH}(\Perf(A))$ stands for the  ordinary Hochschild homology of small 
stable $\infty$-categories, which takes values in spectra.  
This trace map factors through the nonconnective $K$-theory of $A$ and lands in the homotopy fixed points of the $S^1$-action on $\mathrm{HH}(A)$, giving rise to the classical Chern character with value in the negative cyclic homology of $A$ (see for example \cite{Keller}):
\begin{tikzmath}
	\diagram{
	\iota_0\Perf(A) & \HH(A) \\
	\K(A) & \HH(A)^{hS^1}\rlap. \\
	};
	\arrows (11-) edge node[above]{\eqref{eqn:tr}} (-12) (11) edge (21) (21-) edge node[above]{$\ch$} (-22) (22) edge (12);
\end{tikzmath}

If $\mathrm{Cat}(\Perf(X))$ denotes  
the $(\infty,2)$-category of small $\Perf(X)$-linear stable $\infty$-categories,  
a categorified analogue of \eqref{eqn:tr}
would be an $(\infty,1)$-functor 
\begin{equation}
\label{eqn:cattr}
\iota_1\mathrm{Cat}(\Perf(X)) \to \mathrm{HH}(\mathrm{QCoh}(X)) \simeq \mathrm{QCoh}(\cL X),
\end{equation} 
where $\iota_1\mathrm{Cat}(\Perf(X))$ denotes the maximal sub-$(\infty,1)$-category of $\mathrm{Cat}(\Perf(X))$.
The formalism of Chern characters developed by Toën and Vezzosi in \cite{TV2} gives a partial construction of such a functor. Namely, they construct a morphism of $\infty$-groupoids $\iota_0\mathrm{Cat}(\Perf(X)) \to \iota_0\QCoh(\cL X)$, and they show that it factors through the homotopy $S^1$-fixed points $\iota_0\QCoh^{S^1}(\cL X)$. When $X$ is the classifying stack of an algebraic group, their construction is an enhancement of Ganter and Kapranov's 2-character theory \cite{Ganter20082268}.

One of our main results is that Toën and Vezzosi's categorified Chern character is the shadow of a much richer categorified character theory which is captured by an exact symmetric monoidal $(\infty,1)$-functor
\[
\ch\colon \MOT(\Perf(X)) \rightarrow \mathrm{QCoh}^{S^1}(\cL X)
\] 
between the stable $\infty$-category of \emph{localizing $\Perf(X)$-motives}  (see \S\ref{intro:nc} below), and  
$S^1$-equivariant  
quasi-coherent sheaves on $\cL X$. More precisely:

\begin{theorem}[see Corollary~\ref{cor:affine}]
 The Toën--Vezzosi categorified Chern character can be promoted to a symmetric monoidal $(\infty,1)$-functor \eqref{eqn:cattr}, which moreover fits in a commutative square
\begin{tikzmath}
	\diagram{
	\iota_1\Cat(\Perf(X)) & \QCoh(\cL X) \\
	\MOT(\Perf(X)) & \QCoh^{S^1}(\cL X)\rlap. \\
	};
	\arrows (11-) edge node[above]{\eqref{eqn:cattr}} (-12) (11) edge (21) (21-) edge node[above]{$\ch$} (-22) (22) edge (12);
\end{tikzmath}	
\end{theorem}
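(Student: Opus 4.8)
The plan is to deduce the statement from the paper's general construction of categorified Chern characters for symmetric monoidal $(\infty,n)$-categories, specialized to $\Cat(\Perf(X))$, and then to match the output against the Toën--Vezzosi construction. When one specializes to the symmetric monoidal $(\infty,2)$-category $\Cat(\Perf(X))$ of small $\Perf(X)$-linear stable $\infty$-categories, whose monoidal unit is $\Perf(X)$, the dualizable objects become (after Ind-completion, inside $\mathrm{Mod}_{\QCoh(X)}(\Pr^\L)$) exactly the dualizable categorical sheaves on $X$ of \cite{TV2}; this uses Gaitsgory's comparison, recalled above, valid when $X$ is a quasi-compact quasi-separated scheme or a semi-separated Artin stack of finite type over a field of characteristic zero. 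The categorified Hochschild homology of the monoidal unit is $\HH(\QCoh(X)) \simeq \QCoh(\cL X)$ by the Ben-Zvi--Francis--Nadler equivalence recalled above. Hence the general categorified-trace formalism specializes to a symmetric monoidal functor $\iota_1\Cat(\Perf(X)) \to \HH(\QCoh(X)) \simeq \QCoh(\cL X)$ --- this is \eqref{eqn:cattr} --- and, since the categorified trace of the identity is canonically $S^1$-equivariant by the $1$-dimensional cobordism hypothesis \cite{Lurie} (in the functorial form that is the paper's main technical input), it lifts to a symmetric monoidal functor $\iota_1\Cat(\Perf(X)) \to \QCoh^{S^1}(\cL X)$ with values in homotopy $S^1$-fixed points.

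To factor this through motives, use that $\MOT(\Perf(X))$ carries a universal symmetric monoidal localizing invariant $\cU\colon \iota_1\Cat(\Perf(X)) \to \MOT(\Perf(X))$, the $\Perf(X)$-linear analogue of \cite{CT,BGT}; it then suffices to verify that the $S^1$-equivariant categorified trace of the previous paragraph is a finitary localizing invariant --- that is, carries a Verdier localization $\cA \to \cB \to \cC$ of dualizable $\Perf(X)$-linear categories to a cofiber sequence in $\QCoh^{S^1}(\cL X)$ and preserves filtered colimits. The universal property then produces the exact symmetric monoidal functor $\ch\colon \MOT(\Perf(X)) \to \QCoh^{S^1}(\cL X)$ with $\ch\circ\cU$ equivalent to the $S^1$-equivariant trace. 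I expect this localization step to be the main obstacle. Its non-equivariant shadow --- that $\iota_1\Cat(\Perf(X)) \to \QCoh(\cL X)$ is localizing --- is comparatively formal, following from the localizing property of Hochschild homology of small stable categories together with the Ben-Zvi--Francis--Nadler identification; but propagating it through the $S^1$-structure is not, because that structure is produced by the cobordism hypothesis and is a priori natural only with respect to equivalences of categorical sheaves. Showing that it is compatible with the cofiber sequences coming from Verdier localizations --- so that the factorization through $\MOT(\Perf(X))$ can itself be made $S^1$-equivariant --- is precisely where the careful functoriality of traces in symmetric monoidal $(\infty,n)$-categories is needed, and is the technical heart of the statement.

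Granting this, the commutative square is essentially formal. Take the top arrow \eqref{eqn:cattr} to be the composite
\[
\iota_1\Cat(\Perf(X)) \xrightarrow{\ \cU\ } \MOT(\Perf(X)) \xrightarrow{\ \ch\ } \QCoh^{S^1}(\cL X) \longrightarrow \QCoh(\cL X),
\]
where the last map is the forgetful functor; it is symmetric monoidal as a composite of symmetric monoidal functors, and the square commutes by construction, since its bottom-then-right composite is $\ch\circ\cU$ followed by the forgetful functor, which is the $S^1$-equivariant trace of the first paragraph followed by forgetting the $S^1$-structure --- that is, \eqref{eqn:cattr}. Finally, one identifies, after restriction to underlying $\infty$-groupoids, the resulting map $\iota_0\Cat(\Perf(X)) \to \iota_0\QCoh(\cL X)$ with the Toën--Vezzosi categorified Chern character of \cite{TV2}: since the latter is also constructed as the $S^1$-invariant trace of the canonical monodromy on the pullback of a dualizable categorical sheaf to $\cL X$, this reduces to matching two incarnations of one and the same trace --- each being the value on the circle of the $1$-dimensional oriented TQFT classified by the dualizable object via the cobordism hypothesis --- which the paper's analysis of trace functoriality is set up to supply.
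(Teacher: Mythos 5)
Your architecture is the paper's own: build the $S^1$-equivariant symmetric monoidal functor $\ch^\pre\colon \Cat^\perf(X)\to\QCoh^{S^1}(\scr LX)$ from the functorial $S^1$-invariant trace, check that it is a localizing invariant, factor it through $\MOT(\Perf(X))$ by the universal property of localizing motives, and identify the result on objects with the Toën--Vezzosi construction (this is Theorem~\ref{thm:main} plus Corollary~\ref{cor:affine}). However, you explicitly grant the decisive step, and you misdiagnose where its difficulty lies. You suggest the obstacle is ``propagating'' the localizing property through the $S^1$-structure, on the grounds that the latter is a priori natural only for equivalences of categorical sheaves. In the paper's formalism that issue does not arise: Theorem~\ref{thm:S1trace} produces the $S^1$-invariant refinement of $\Tr_\otimes$ as a natural transformation $\Aut\to\Omega$, natural with respect to \emph{all} morphisms of $\Aut(\CAT^\Mor(\scr LX))$ (these include non-invertible, right dualizable $1$-morphisms equipped with laxly commuting squares), so the functor $\ch^\pre$ lands in $\QCoh^{S^1}(\scr LX)$ from the outset; and since the forgetful functor $\QCoh^{S^1}(\scr LX)\to\QCoh(\scr LX)$ reflects colimits, the equivariant localization property reduces formally to the non-equivariant one. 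Conversely, the non-equivariant statement is \emph{not} as formal as you claim: it is not just ``the localizing property of Hochschild homology plus the Ben-Zvi--Francis--Nadler identification'', because the traces in play are Hochschild homologies with coefficients in the monodromy bimodules, and one needs the cofiber sequence compatibly with that coefficient data under the right-adjointability hypotheses (automatic for morphisms in $\Aut$). That is exactly Theorem~\ref{thm:localization}, the categorified additivity/localization theorem proved in \S\ref{sec:localization}, which your proposal neither proves nor correctly isolates.

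A second, smaller gap: to invoke the universal property of $\MOT(\Perf(X))$ (Theorem~\ref{thm:monoidalloc}) you must also verify that $\ch^\pre$ preserves filtered colimits; you list this requirement but give no argument. The paper establishes it via the explicit simplicial bar formula for the trace (Proposition~\ref{prop:hochschild} and Corollary~\ref{cor:filteredcolimits}). With those two inputs supplied, the rest of your outline (the reduction to the affine case $\Spec\Perf(X)$, the factorization $\ch\circ\cU_\loc\simeq\ch^\pre$, the commutativity of the square with the forgetful functor, and the identification with the Toën--Vezzosi character on underlying $\infty$-groupoids) matches the paper's proof and is indeed formal.
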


This construction categorifies three key features of the ordinary trace map \eqref{eqn:tr}, namely its multiplicativity, its $S^1$-invariance, and the fact that it splits exact sequences.
In particular, this clarifies that the stable $(\infty, 1)$-category of noncommutative motives $\MOT(\Perf(X))$ can be viewed as 
the ``nonconnective $K$-theory'' of the $(\infty, 2)$-category 
$\Cat(\Perf(X))$. 

\begin{remark}
	If $k$ is a field of characteristic zero and $X$ is a smooth $k$-scheme, the categorified Chern character $\ch\colon\MOT(\Perf(X))\to\QCoh^{S^1}(\cL X)$ is related to the Hodge realization of ordinary motives, see \cite[\S4.3]{TV2}. More precisely, there is a commutative diagram of stable presentable $\infty$-categories
	\begin{tikzmath}
		\diagram{
		& \MOT(\Perf(X)) & \QCoh^{S^1}(\cL X) \\
		\mathrm{SH}(X)^\vee & \MOT_{\mathbb A^1}(\Perf(X)) & \Mod_{\Z/2}(\scr D_X)\rlap, \\
		};
		\arrows (12-) edge node[above]{$\ch$} (-13) (12) edge (22) (13) edge (23) (21-) edge (-22) (22-) edge[dashed] (-23);
	\end{tikzmath}
	where $\mathrm{SH}(X)^\vee$ is the dual of Voevodsky's stable $\mathbb A^1$-homotopy category over $X$ \cite{VoevodskyA1}, $\MOT_{\mathbb A^1}(\Perf(X))\subset \MOT(\Perf(X))$ is the reflective subcategory of $\A^1$-local motives \cite{TabuadaA1}, and $\Mod_{\Z/2}(\scr D_X)$ is the $\infty$-category of $\mathbb Z/2$-graded $\scr D$-modules over $X$. The lower composition sends a smooth $X$-scheme to the corresponding variation of Hodge structure over $X$. 
	See also work of Robalo \cite{Robalo} for a different but closely related formalism of noncommutative motives, and a comparison with Voevodsky's stable $\mathbb A^1$-homotopy category.
\end{remark}

\subsection{The structure of the paper}
We explain next our main results and the structure of the paper.

\subsubsection{Higher traces}
It is well known that the topological Hochschild homology $\HH(\scr A,\scr M)$ of a small stable $\infty$-category $\scr A$ with coefficients in an $\scr A$-bimodule $\scr M$ can be identified with the \emph{trace} of $\scr M$ in a certain symmetric monoidal $(\infty,2)$-category, whose objects are stable $\infty$-categories and whose morphisms are bimodules (we will recall this identification in \S\ref{sub:hochschild}).

Consider the following features of topological Hochschild homology:
\begin{enumerate}
	\item It is functorial in the pair $(\scr A,\scr M)$ as follows: given an exact functor $\scr A\to \scr B$ and a morphism of $\scr A$-bimodules $\scr M\to\scr N$, where $\scr N$ is an $\scr B$-bimodule, there is an induced morphism $\HH(\scr A,\scr M)\to \HH(\scr B,\scr N)$.
	\item When $\scr M=\scr A$, $\HH(\scr A)=\HH(\scr A,\scr A)$ has a canonical action of the circle group $S^1$, which is moreover natural in $\scr A$.
	\item It is a \emph{localizing invariant} of stable categories: given a fully faithful inclusion $\scr A\into\scr B$ with Verdier quotient $\scr B/\scr A$, there is a cofiber sequence of spectra
	\[
	\HH(\scr A) \to \HH(\scr B) \to \HH(\scr B/\scr A).
	\]
\end{enumerate}
Our goal in Sections \ref{sec:traces} and \ref{sec:localization} is to show that (1)--(3) are general features of traces in symmetric monoidal $(\infty,n)$-categories. Given a symmetric monoidal $(\infty,n)$-category $\scr C$, endomorphisms of the unit object form a symmetric monoidal $(\infty,n-1)$-category $\Omega\scr C$. We define in \S\ref{sub:trace} a symmetric monoidal $(\infty,n-1)$-category $\End(\scr C)$, whose objects are pairs $(X,f)$ where $X\in\scr C$ is dualizable and $f$ is an endomorphism of $X$. A $1$-morphism $(X,f)\to (Y,g)$ in $\End(\scr C)$ is a pair $(\phi,\alpha)$ where $\phi\colon X\to Y$ is a right dualizable $1$-morphism and $\alpha\colon \phi f\to g\phi$ is a $2$-morphism.
The main construction of \S\ref{sub:trace} can be summarized as follows:

\begin{theorem}\label{thm:intro-trace}
	Let $\scr C$ be a symmetric monoidal $(\infty,n)$-category. Then the assignment $(X,f)\mapsto \Tr(f)$ can be promoted to a symmetric monoidal $(\infty,n-1)$-functor
	\[
	\Tr\colon \End(\scr C) \to \Omega \scr C,
	\]
	natural in $\scr C$.
\end{theorem}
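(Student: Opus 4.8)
The plan is to exploit the cobordism hypothesis for framed $(\infty,n)$-categories in the guise of a universal property for a suitable "category of endomorphisms" built from the free symmetric monoidal $(\infty,n)$-category on a single point. Concretely, let $\mathrm{Fr}^{\ge\mathrm{d}}_n$ denote the free symmetric monoidal $(\infty,n)$-category generated by one dualizable object; by the cobordism hypothesis this is the $(\infty,n)$-category of framed $0$-dimensional bordisms with all cells below top dimension, and a symmetric monoidal functor $\mathrm{Fr}^{\ge\mathrm{d}}_n\to\scr C$ is the same datum as a dualizable object of $\scr C$. First I would enlarge this: let $E_n$ be the free symmetric monoidal $(\infty,n)$-category on an object $X$ equipped with an endomorphism $f\colon X\to X$, subject to $X$ being dualizable. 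I claim $E_n$ corepresents the functor $\scr C\mapsto \iota_{\le n-1}\End(\scr C)$ in an appropriate sense, so that $\End(\scr C)\simeq\Fun^{\otimes}(E_n,\scr C)$ after passing to the relevant mapping $(\infty,n-1)$-categories; and $\Omega\scr C\simeq\Fun^{\otimes}(\mathbf{1}_n,\scr C)$ where $\mathbf 1_n$ is the free symmetric monoidal $(\infty,n)$-category on no generators, i.e.\ the unit. Under these identifications, a natural symmetric monoidal functor $\Tr\colon\End(\scr C)\to\Omega\scr C$ is, by Yoneda, the same as a symmetric monoidal functor $\mathbf 1_n\to E_n$ in the opposite direction — equivalently, a distinguished point of $\Omega E_n$, i.e.\ an endomorphism of the unit of $E_n$. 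That distinguished point is the \emph{abstract trace} of $(X,f)$: form the composite $\mathbf 1\xrightarrow{\coev}X\otimes X^\vee\xrightarrow{f\otimes\id}X\otimes X^\vee\simeq X^\vee\otimes X\xrightarrow{\ev}\mathbf 1$, which lives in $\Omega E_n$ by construction since $X$ is dualizable in $E_n$.

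The key steps, in order, are: (i) construct $E_n$ and establish its corepresentability, so that evaluation at the universal pair $(X_{\mathrm{univ}},f_{\mathrm{univ}})$ gives a natural equivalence $\Fun^\otimes(E_n,\scr C)\xrightarrow{\sim}\End(\scr C)$ of symmetric monoidal $(\infty,n-1)$-categories (here one must be careful that the $1$-morphisms of $\End(\scr C)$ — pairs $(\phi,\alpha)$ with $\phi$ right dualizable — match the $1$-morphisms of $\Fun^\otimes(E_n,\scr C)$, which forces a corresponding adjointability constraint on the generating data of $E_n$; this is the analogue of the "all lower cells are dualizable" clause in the cobordism hypothesis); (ii) identify $\Omega\scr C$ with $\Fun^\otimes(\mathbf 1_n,\scr C)$ by unwinding the definition of endomorphisms of the unit; (iii) define the morphism $\tau\colon\mathbf 1_n\to E_n$ classifying the abstract trace element of $\Omega E_n$ described above; (iv) set $\Tr := \tau^*\colon\Fun^\otimes(E_n,\scr C)\to\Fun^\otimes(\mathbf 1_n,\scr C)$ and transport along the equivalences of (i)–(ii); naturality in $\scr C$ is automatic since everything is precomposition with a fixed map; (v) check that on objects this recovers the usual formula for $\Tr(f)$, which is immediate because $\tau$ was defined by that very formula in the universal case and symmetric monoidal functors preserve $\coev$, $\ev$, and the symmetry.

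The main obstacle is step (i): making precise, and proving, the corepresentability of $E_n$ as a symmetric monoidal $(\infty,n)$-category, including the subtle handedness conditions. The issue is that "the free symmetric monoidal $(\infty,n)$-category on a dualizable object with an endomorphism" is not literally covered by the cobordism hypothesis as usually stated (which concerns \emph{fully} dualizable objects, and produces $\mathrm{Bord}_n^{\mathrm{fr}}$), so one needs either a variant of the cobordism hypothesis with tangential structure and truncated dualizability, or a direct construction via generators and relations in the model of symmetric monoidal $(\infty,n)$-categories being used, followed by a verification of the mapping-space formula. I would build $E_n$ by an explicit pushout/localization: start from the free symmetric monoidal $(\infty,n)$-category on one $0$-cell $X$ and one $1$-cell $f\colon X\to X$, then force $X$ to be dualizable and $f$ right-adjointable by the standard "adding adjoints" localizations (as in the construction of the symmetric monoidal envelope of a bordism category), and prove the resulting object has the correct universal property by checking it against the defining data of $\End(-)$ cell by cell. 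Once $E_n$ is in hand with its universal property, the rest of the argument is formal, and the naturality claim in the theorem is nothing more than functoriality of $\tau^*$.
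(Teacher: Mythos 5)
Your proposal founders at step (i), and the failure is not a technical subtlety about how to present $E_n$ by generators and relations: the corepresentability you need is false for $n\geq 2$. Whatever generating data you choose, $E_n$ is rigid---all of its objects are tensor products of the dualizable generator and its dual---and a standard argument shows that a symmetric monoidal (strong) natural transformation between symmetric monoidal functors with rigid domain is automatically invertible: at a dualizable object, the mate of the component at the dual provides an inverse, using compatibility with $\ev$ and $\coev$. Consequently any corepresentable functor on rigid symmetric monoidal $(\infty,n)$-categories is groupoid-valued, so $\Fun^\tens(E_n,\scr C)$, taken with its mapping-object structure in symmetric monoidal $(\infty,n)$-categories, has only invertible $1$-morphisms. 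But $\End(\scr C)$ is genuinely an $(\infty,n-1)$-category: its $1$-morphisms are pairs $(\phi,\alpha)$ where $\phi$ is merely right dualizable and $\alpha\colon \phi f\to g\phi$ is an arbitrary, typically noninvertible, $2$-morphism. No adjointability constraint imposed on the generators of $E_n$ can change what the $1$-morphisms of $\Fun^\tens(E_n,\scr C)$ are, so the posited equivalence $\End(\scr C)\simeq \Fun^\tens(E_n,\scr C)$ cannot hold, and the Yoneda argument of steps (iii)--(iv) collapses with it. A symptom of the same problem already appears in step (ii): symmetric monoidal functors out of the unit form a contractible category, so $\Fun^\tens(\mathbf 1,\scr C)$ is not $\Omega\scr C$; endomorphisms of the unit are intrinsically (op)lax data.

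The repair is to define $\End(\scr C)$ as the $(\infty,n-1)$-category of symmetric monoidal \emph{oplax transfors} $\Fr^\rig(B\N)\to\scr C$ (note that the corepresenting object is the same one as for $n=1$; no new $E_n$ is needed), which does have the expected $1$-morphisms by an analysis of dualizable objects in $\Lax_{(1)}(\scr C)$. The price is that oplax transfor categories are not the mapping objects of any higher category, so there is no corepresentability and no Yoneda lemma available; the functor $\Tr$ must be constructed by hand, by producing a looping map $\iota_{n-1}\Fun^\oplax_\tens(\scr B,\scr C)\to \Fun^\oplax_\tens(\Omega\scr B,\Omega\scr C)$ (which rests on a combinatorial identification of suspended computads yielding $\Omega\Lax_{\vec k,0}(\scr C)\simeq\Lax_{\vec k}(\Omega\scr C)$) and then evaluating at the trace of the walking endomorphism in $\Omega\Fr^\rig(B\N)$; the symmetric monoidal structure is added afterwards via the functor $[m]\mapsto\tilde{\scr C}[m]$. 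Your guiding intuition---reduce everything to a universal endomorphism and a distinguished point of $\Omega$ of the universal object---is exactly what works for $n=1$, but for $n\geq 2$ the corepresentability step has to be abandoned rather than refined.
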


This theorem generalizes the functoriality of Hochschild homology described in (1).
In \S\ref{sub:S1trace}, we consider a subcategory $\Aut(\scr C)\subset \End(\scr C)$, whose objects are pairs $(X,f)$ where $X$ is a dualizable object and $f$ is an \emph{automorphism} of $X$. There is a ``tautological'' action of the circle group $S^1$ on $\Aut(\scr C)$, and we show that the trace functor $\Tr$ is $S^1$-invariant:

\begin{theorem}[see Theorem \ref{thm:S1trace}]
	\label{thm:intro-S1trace}
	Let $\scr C$ be a symmetric monoidal $(\infty,n)$-category. Then the symmetric monoidal $(\infty,n-1)$-functor
	\[
	\Tr\colon \Aut(\scr C) \to \Omega\scr C
	\]
	admits a canonical $S^1$-invariant refinement which is natural in $\scr C$.
\end{theorem}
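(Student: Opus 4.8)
The plan is to deduce the $S^1$-invariant refinement from the $1$-dimensional cobordism hypothesis, exactly as in the heuristic discussion of the classical Chern character in the introduction, but carried out at the level of the universal symmetric monoidal $(\infty,n)$-category with duals. First I would recall that for a dualizable object $X$ in a symmetric monoidal $(\infty,n)$-category $\scr C$, the data of an endomorphism $f\colon X\to X$ together with its trace $\Tr(f)\in\Omega\scr C$ is governed by maps out of a ``walking dualizable object with endomorphism''; restricting to automorphisms, $\Aut(\scr C)$ should be identified with symmetric monoidal functors out of (or a natural object built from) the free symmetric monoidal $(\infty,n)$-category on a dualizable object $X$ equipped with an $\Aut$-datum. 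The key point is that the free symmetric monoidal $(\infty,1)$-category (with duals) on one object is, by the cobordism hypothesis in dimension one, the $(\infty,1)$-category $\mathrm{Bord}_1^{fr}$ of framed $1$-bordisms, and $BS^1 = B\Aut(\mathrm{pt})$ acts on the relevant moduli of objects because a framed point has automorphism group $S^1$ once one remembers the circle of framings — equivalently, the trace $\Tr(\id_X)$ is the value of $\mathrm{Bord}_1^{fr}$ on the circle, and the circle carries its rotation $S^1$-action.

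Concretely, the steps I would carry out are: (i) use Theorem~\ref{thm:intro-trace} to get the symmetric monoidal functor $\Tr\colon\End(\scr C)\to\Omega\scr C$, natural in $\scr C$, and restrict it along $\Aut(\scr C)\into\End(\scr C)$; (ii) exhibit the ``tautological'' $S^1$-action on $\Aut(\scr C)$ — on objects it sends $(X,f)$ to $(X,f)$ with the loop in the automorphism space of $(X,f)$ given by conjugating by $f$ (the mapping-torus picture), so that $S^1\to\Aut_{\Aut(\scr C)}(X,f)$ is adjoint to $f$ itself; (iii) reduce the $S^1$-invariance of $\Tr$ to a universal statement by naturality in $\scr C$: take $\scr C$ to be the free symmetric monoidal $(\infty,n)$-category on one dualizable object (so that $\Aut(\scr C)$ receives a universal object), whence it suffices to produce the $S^1$-invariant refinement there; (iv) identify the relevant piece of that free category, after applying $\Omega^{n-1}$ to land in an $(\infty,1)$-category, with (a retract of) $\mathrm{Bord}_1^{fr}$, and observe that $\Tr$ of the universal automorphism is the evaluation on the bordism $S^1\hookrightarrow\mathrm{Bord}_1^{fr}$; (v) invoke the known $S^1$-equivariance of this evaluation — the fact that the circle in $\mathrm{Bord}_1^{fr}$ is $S^1$-fixed, which is precisely Lurie's computation that the framed bordism category is $O(1)$-equivariant and that $\mathrm{ev}_{S^1}$ is an $S^1$-fixed point \cite{Lurie} — and transport it back along the universal map to get the refinement for all $\scr C$, automatically natural because everything was built by naturality from the universal case.

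The main obstacle I expect is step (iv)–(v): making precise, at the level of symmetric monoidal $(\infty,n)$-categories rather than $(\infty,1)$-categories, the comparison between the abstract trace construction $\End/\Aut$ of \S\ref{sub:trace} and the bordism-theoretic picture, and extracting from the cobordism hypothesis exactly the $S^1$-action one wants (as opposed to merely an $O(1)$- or a discrete action). One delicate subtlety is that $\Aut(\scr C)$ as defined carries \emph{two} potential circle actions — the ``tautological'' one coming from automorphisms of the object, and the ``bordism'' rotation on the circle — and one must check these agree (or that the construction is equivariant for the diagonal), which is where the mapping-torus identification $\Tr(f)\simeq$ ``$X$ twisted by $f$ along $S^1$'' does the real work. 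A cleaner route, which I would pursue in parallel, avoids bordism categories entirely: present the $S^1$-action as the action of the group-like $E_\infty$-monoid $B\Z$ generated by the single automorphism, observe that $\Tr$ of an automorphism $f$ is canonically the colimit (coequalizer) computing $X\otimes_{X\otimes X^\vee}X$ twisted by $f$, and note that such a twisted colimit over $BS^1$ depends only on the $S^1$-orbit data — so that $\Tr$ factors through the homotopy orbits $\Aut(\scr C)_{hS^1}\to\Aut(\scr C)/S^1$, giving the invariant refinement directly; naturality in $\scr C$ and symmetric monoidality are then inherited from Theorem~\ref{thm:intro-trace} since all the extra structure is built functorially from $f$. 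Either way, the content is entirely formal once the $S^1$-action on $\Aut(\scr C)$ is pinned down, and I would state the theorem's proof as: restrict Theorem~\ref{thm:intro-trace}, identify the $S^1$-action with conjugation-by-$f$, and apply the cobordism-hypothesis computation of $\mathrm{ev}_{S^1}$ (or the twisted-colimit argument) to get $S^1$-invariance, naturality being automatic.
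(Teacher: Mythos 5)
Your overall strategy --- the tautological $S^1$-action induced by $S^1$ acting on itself, reduction to the walking automorphism, and the $1$-dimensional cobordism hypothesis as the ultimate input --- is indeed the paper's. But the step you treat as a formality, namely (iii) ``take $\scr C$ universal and transport the refinement back by naturality,'' is exactly where the paper locates the real difficulty, and your justification of it does not work as stated. For $n\geq 2$ the functor $\scr C\mapsto\Aut(\scr C)=\Fun^\oplax_\tens(\Fr^\rig(S^1),\scr C)$ is \emph{not} corepresentable: oplax transfor categories are not the mapping objects of any higher category, and any genuinely corepresentable functor on rigid symmetric monoidal $(\infty,n)$-categories would take values in $\infty$-groupoids, since symmetric monoidal natural transformations out of a rigid domain are invertible (this is spelled out in \S\ref{sub:tracesIntro}, and it is why the generalization to $n\geq 2$ ``is not a mere formality''). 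So there is no Yoneda argument that lets you verify $S^1$-invariance on a universal object and conclude for all $\scr C$. What the paper does instead is build, by hand, an $S^1$-equivariant morphism of $\infty$-groupoids $\Omega\Fr^\rig(S^1)\to\Map_\otimes(\Aut,\Omega)$ from the naturality in the source variable of the looping map~\eqref{eqn:Omega} and the evaluation~\eqref{eqn:evaluation} (the same machinery underlying Theorem~\ref{thm:intro-trace}), and then takes homotopy $S^1$-fixed points; the cobordism-hypothesis input, via \cite[Th\'eor\`eme 2.18]{TV2}, is that the trace of the walking automorphism lies in a \emph{contractible} component of $\Omega\Fr^\rig(S^1)$, so it admits a unique $S^1$-invariant refinement, whose image under the equivariant map is the desired refinement of $\Tr_\otimes$. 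Note also that the correct universal gadget is $\Fr^\rig(S^1)$, the free rigid symmetric monoidal category on $B\Z$ (already an $(\infty,1)$-category), not the free symmetric monoidal $(\infty,n)$-category on one dualizable object, and no identification with a framed bordism category is needed beyond the quoted contractibility statement; your worry about reconciling ``two'' circle actions also evaporates in this formulation, because the action on $\Aut$ is by definition induced from the $S^1$-action on $\Fr^\rig(S^1)$ and the universal map is equivariant by construction.

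Your proposed ``cleaner route'' is circular: the claim that the twisted colimit ``depends only on the $S^1$-orbit data,'' i.e.\ that $\Tr$ factors through the homotopy orbits of the $S^1$-action on $\Aut(\scr C)$, \emph{is} the statement to be proved, not an observation one can appeal to. Moreover the formula $X\otimes_{X\otimes X^\vee}X$ only makes sense in Morita-type settings (it is the Hochschild formula for algebra objects, as in \S\ref{sub:hochschild}); for a general dualizable object of a general symmetric monoidal $(\infty,n)$-category the trace is defined via coevaluation and evaluation, and no such relative tensor product is available.
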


In particular, since $(X,\id_X)$ is an $S^1$-fixed point in $\Aut(\scr C)$, $\Tr(\id_X)$ has a canonical action of $S^1$ which is natural in $(X,\id_X)$. Specializing to the $(\infty,2)$-category of stable $\infty$-categories and bimodules, this theorem recovers property (2) above.
If $n=1$, then $\Aut(\scr C)$ and $\Omega\scr C$ are $\infty$-groupoids, and Theorem \ref{thm:intro-S1trace} was proved by Toën and Vezzosi in \cite[\S2.3]{TV2}. The generalization to $n\geq 2$ is not a mere formality, however, as we will explain in \S\ref{sub:tracesIntro}. Our proof crucially relies on the formalism of higher lax transfors developed by Johnson-Freyd and Scheimbauer in \cite{JFS}.

A symmetric monoidal $(\infty,2)$-category is called \emph{linearly symmetric monoidal} if its mapping $\infty$-categories are stable and if composition and tensor products of $1$-morphisms are exact in each variable.
The notion of Verdier localization sequence makes sense in any such $(\infty,2)$-category, see Definition~\ref{dfn:exact}.
We then have the following generalization of (3):

\begin{theorem}[see Theorem \ref{thm:localization}]
	\label{thm:intro-localization}
	Let $\scr C$ be a linearly symmetric monoidal $(\infty,2)$-category.
	Let $X\to Y \to Z$ be a localization sequence of dualizable objects in $\scr C$, and let
	\begin{tikzmath}
		\diagram{X & Y & Z \\ X & Y & Z \\};
		\arrows (11-) edge (-12) (12-) edge (-13)
		(21-) edge (-22) (22-) edge (-23)
		(11) edge[font=\normalsize,draw=none] node[rotate=45]{$\Longrightarrow$} (22)
		(11) edge[draw=none] node[pos=.3]{$\alpha$} (22)
		(12) edge[font=\normalsize,draw=none] node[rotate=45]{$\Longrightarrow$} (23)
		(12) edge[draw=none] node[pos=.3]{$\beta$} (23)
		(11) edge node[left]{$f$} (21) (12) edge node[left]{$g$} (22) (13) edge node[right]{$h$} (23);
	\end{tikzmath}
	be a commutative diagram where $\alpha$ and $\beta$ are right adjointable. Then $\Tr(f)\to \Tr(g)\to\Tr(h)$ is a cofiber sequence in $\Omega\scr C$.
\end{theorem}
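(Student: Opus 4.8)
The idea is to exhibit $\Tr(f)\to\Tr(g)\to\Tr(h)$ as the image, under an exact functor, of the ``recollement triangle'' of projectors carried by the localization sequence. By the definition of a localization sequence (Definition~\ref{dfn:exact}), together with the right adjointability hypotheses, we dispose of a right adjoint $p$ of $i\colon X\to Y$ and a right adjoint $j$ of $q\colon Y\to Z$, with $pi\simeq\id_X$ and $qj\simeq\id_Z$, as well as a cofiber sequence
\[
ip\xrightarrow{\ \epsilon\ }\id_Y\xrightarrow{\ \eta\ }jq
\]
in the stable monoidal $\infty$-category $\End_{\scr C}(Y)$, where $\epsilon$ is the counit of $i\dashv p$ and $\eta$ the unit of $q\dashv j$. (If this cofiber sequence is not literally part of Definition~\ref{dfn:exact}, it follows from it exactly as in the $1$-categorical case: the cofiber of $\epsilon$ lies in the essential image of $j$ and is carried by $q$ to an equivalence.) Since composition of $1$-morphisms in $\scr C$ is exact in each variable, composing this sequence with $g$ produces a cofiber sequence $gip\to g\to gjq$ in $\End_{\scr C}(Y)$.

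Next I would use that $\scr C$ is linearly symmetric monoidal to see that $\sigma\mapsto\Tr(\sigma)$, viewed as a functor from $1$-endomorphisms of the fixed dualizable object $Y$ to $\Omega\scr C$, is an exact functor $\Tr_Y\colon\End_{\scr C}(Y)\to\Omega\scr C$: by construction $\Tr_Y(\sigma)$ is obtained from $\sigma$ by tensoring with $\id_{Y^\vee}$ and then pre- and post-composing with the fixed coevaluation, symmetry, and evaluation $1$-morphisms, and both tensoring with a fixed $1$-morphism and composing with a fixed $1$-morphism are exact. Applying $\Tr_Y$ to the cofiber sequence just obtained yields a cofiber sequence
\[
\Tr(gip)\to\Tr(g)\to\Tr(gjq)
\]
in $\Omega\scr C$.

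It then remains to identify the outer terms and to match the maps with the canonical ones. For the terms I would invoke the cyclic invariance of the trace from Section~\ref{sec:traces}: for dualizable $1$-morphisms $\phi\colon X\to Y$ and $\psi\colon Y\to X$ there is a natural equivalence $\Tr_Y(\phi\psi)\simeq\Tr_X(\psi\phi)$. Taking $\phi=gi$ and $\psi=p$ gives $\Tr(gip)\simeq\Tr_X(pgi)$; right adjointability of $\alpha$ provides an equivalence $pg\simeq fp$, and together with $pi\simeq\id_X$ this identifies $pgi\simeq f$, so $\Tr(gip)\simeq\Tr(f)$. Symmetrically, with $\phi=gj$ and $\psi=q$ one gets $\Tr(gjq)\simeq\Tr_Z(qgj)$, and right adjointability of $\beta$ provides an equivalence $gj\simeq jh$ which together with $qj\simeq\id_Z$ gives $qgj\simeq h$, so $\Tr(gjq)\simeq\Tr(h)$. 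Finally, unwinding the construction of $\Tr$ on $1$-morphisms — which is assembled precisely from the units and counits above and the cyclicity equivalences — shows that under these identifications the two maps in the cofiber sequence are $\Tr(i,\alpha)$ and $\Tr(q,\beta)$, which finishes the proof.

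The step I expect to be the main obstacle is the last comparison: checking that the maps produced by the projector argument agree with the canonical maps $\Tr(i,\alpha)$, $\Tr(q,\beta)$ coming from the functoriality of $\Tr$ on $\End(\scr C)$. This amounts to comparing two a priori different presentations of the same $1$-morphisms of the $(\infty,2)$-category $\scr C$, and it requires the cyclic invariance of the trace with enough naturality to be compatible with those presentations; establishing that naturality at the $(\infty,2)$-categorical level (rather than on homotopy categories) is the real work. By contrast, the coherence issues after the first step are mild: because $\Tr$ is, by Theorem~\ref{thm:intro-trace}, an honest functor out of $\End(\scr C)$, and because $\End_{\scr C}(Y)$ and $\Omega\scr C$ are stable $\infty$-categories, the remaining manipulations all take place inside ordinary stable $\infty$-categories, where cofiber sequences behave as usual.
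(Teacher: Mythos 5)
Your first half coincides with the paper's proof of Theorem~\ref{thm:localization}: both arguments start from the cofiber sequence of projectors $\iota\iota^r\to\id_Y\to\pi^r\pi$ (which, as you suspected, is literally part of Definition~\ref{dfn:exact}), compose it with $g$ (you compose on one side, the paper on the other, writing $f'=\iota\iota^r g$, $h'=\pi^r\pi g$; this is immaterial), and use linearity to see that the restricted trace $\scr C(Y,Y)\to\Omega\scr C$ carries it to a cofiber sequence in $\Omega\scr C$.

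The genuine gap is exactly the step you flag at the end: identifying the two maps of that cofiber sequence with $\Tr(\iota,\alpha)$ and $\Tr(\pi,\beta)$. Your route requires a cyclicity equivalence $\Tr(\phi\psi)\simeq\Tr(\psi\phi)$ that is not merely pointwise but natural and compatible with the functorial trace on $\End(\scr C)$, and no such coherent statement is established in the paper (nor is it needed for its argument); as stated, your proof only identifies the outer \emph{objects}, not the maps, so the conclusion does not follow. The paper avoids this by never leaving $\End(\scr C)$: it constructs explicit $1$-morphisms $(\iota,\bar\alpha)\colon(X,f)\to(Y,f')$ and $(\pi,\bar\beta)\colon(Y,h')\to(Z,h)$, with $\bar\alpha$, $\bar\beta$ assembled from the units, counits, and the push-pull transformations, fitting into the commutative diagram~\eqref{eqn:localization} whose middle column is the identity on $(Y,g)$. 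Functoriality of $\Tr$ (your Theorem~\ref{thm:intro-trace}) then matches the maps automatically, and the whole comparison reduces to checking that $\Tr(\iota,\bar\alpha)$ and $\Tr(\pi,\bar\beta)$ are \emph{equivalences}; this is done by inspecting the $2$-morphisms in the trace diagram~\eqref{eqn:2trace}, using the right adjointability of $\alpha$ and $\beta$ and the invertibility of $\eta$ and $\epsilon$, with cyclicity invoked only once and only pointwise (to see that the single $2$-morphism $\Tr(h')\to\Tr(h'\pi^r\pi)$ induced by $\eta$ is invertible). Finally, the compatibility of nullhomotopies — which your sketch does not address but which is needed to conclude that $\Tr(f)\to\Tr(g)\to\Tr(h)$ with its \emph{canonical} nullhomotopy is a cofiber sequence — is obtained for free from the uniqueness of the square~\eqref{eqn:nullsquare}. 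If you want to salvage your version, the cleanest fix is to repackage your identifications $pgi\simeq f$ and $qgj\simeq h$ as morphisms in $\End(\scr C)$ over $\iota$ and $\pi$, which is precisely what the paper's $(\iota,\bar\alpha)$ and $(\pi,\bar\beta)$ do.
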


We can regard Theorem~\ref{thm:intro-localization} as a categorification of May's theorem on the additivity of traces in symmetric monoidal stable $\infty$-categories \cite{May}. An interesting question is what form this additivity theorem should take for traces in symmetric monoidal $(\infty,n)$-categories, for $n\geq 3$.

\subsubsection{Noncommutative 
$\cE$-motives}
\label{intro:nc} 
A theory of noncommutative 
motives was first sketched by Kontsevich in the mid 2000's \cite{Kontsevich1}, in analogy with the 
theory of pure Chow motives of algebraic varieties.   
The objects of Kontsevich's category of noncommutative motives are smooth and proper
 triangulated dg-categories, and the mapping spaces are given by the $K$-theory of bimodules.  
 Tabuada's work \cite{tabuada2008} 
 shifts the perspective by foregrounding   
 a universal property of noncommutative motives that is reminiscent of the universality of ordinary motives within Weil cohomology theories. Roughly speaking, Tabuada defines the category of noncommutative motives as the recipient of the universal invariant of dg-categories that satisfies Waldhausen additivity. In Tabuada's approach, the fact that the mapping spaces are given by bivariant
 $K$-theory is a theorem,  rather than being part of the definition. 
As a corollary, Kontsevich's noncommutative motives sit inside Tabuada's as a full subcategory.

We will rely on the theory of noncommutative motives of stable $\infty$-categories that was  developed in  \cite{BGT}. After giving a short  recapitulation of the theory of tensored 
$\infty$-categories in section \ref{prel}, we devote section 
\ref{E-motives} to extend to the enriched setting the results of \cite{BGT}.  Let $\cE$ be a small symmetric monoidal $\infty$-category that is rigid and stable, and let  
$\ccat{perf}{\cE}$ be the $\infty$-category of small, stable, and idempotent complete $\infty$-categories that are tensored over $\cE$. As evinced in \cite{CT} and \cite{BGT}, there are two meaningful classes of invariants of stable $\cE$-linear $\infty$-categories (called \emph{additive} and \emph{localizing}, see Definition \ref{def:add} and \ref{def:loc}), and they pick out 
two different notions of 
noncommutative motives. The universal additive and localizing invariants of 
$\cE$-linear categories are canonical functors 
\[
\cU_{\add}\colon \ccat{perf}{\cE} \to \mathrm{Mot(\cE)}, 
\quad \cU_{\loc}\colon \ccat{perf}{\cE} \to 
 \mathrm{\mathbb{M}ot(\cE)}, 
\]
that map respectively to the category of \emph{additive}  and of \emph{localizing} $\cE$-motives. 
In section \ref{sub:add} and \ref{sub:loc} 
we construct the categories 
of additive and localizing $\cE$-motives 
$\mathrm{Mot(\cE)}$ and 
$\mathrm{\mathbb{M}ot(\cE)}$. 
We show that the $\infty$-categories 
$\mathrm{Mot(\cE)}$ and 
$\mathrm{\mathbb{M}ot(\cE)}$
 are stable and presentable, 
and $\cU_{\add}$ and 
$\cU_{\loc}$ are symmetric monoidal functors. 
In its main lines our treatment follows 
\cite{CT} and \cite{BGT}, and encompasses the theory of \cite{BGT} as the special case when  
$\cE$ is the $\infty$-category of finite spectra.  

In Section \ref{sec:corepresentability}, we prove that the $K$-theory of 
$\cE$-linear $\infty$-categories is corepresentable in noncommutative $\cE$-motives:

\begin{theorem}[see Theorems \ref{thm:representability} and \ref{thm:nc-representability}]
\label{thrm:corep}
Let $\cA$ be an $\infty$-category in $\mathrm{Cat^{perf}(\cE)}$. Then there are natural equivalences
\[
\mathrm{Mot(\cE)}(\cU_{\add}(\cE), \cU_{\add}(\cA)) \simeq K(\cA)
\quad\text{and}\quad \mathrm{\mathrm{\mathbb{M}}ot(\cE)}(\cU_{\loc}(\cE), \cU_{\loc}(\cA)) \simeq \mathrm{\mathbb{K}}(\cA),
\]
where $K(\cA)$ and $\mathrm{\mathbb{K}}(\cA)$ are respectively the connective and 
nonconnective $K$-theory spectra of $\cA$.  
\end{theorem}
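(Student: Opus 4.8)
The plan is to reduce the statement to the nonequivariant corepresentability results of Blumberg–Gepner–Tabuada \cite{BGT}, via the enriched analogues established in Sections \ref{prel}--\ref{sub:loc}. Recall that by construction $\cU_{\add}$ (resp.\ $\cU_{\loc}$) is the universal additive (resp.\ localizing) invariant of $\cE$-linear $\infty$-categories, so $\Mot(\cE)$ is obtained from $\ccat{perf}{\cE}$ by a localization that imposes exactly the split-exactness (resp.\ localization) relations, together with a stabilization and idempotent completion. I would first recall that connective $K$-theory $K\colon\ccat{perf}{\cE}\to\Sp$ is itself an additive invariant, and nonconnective $K$-theory $\K$ is a localizing invariant; these are enriched versions of the corresponding statements in \cite{BGT} and follow once one checks that the relevant constructions (e.g.\ the $S_\bullet$-construction, the Bass delooping) are compatible with the $\cE$-linear structure, which I would have arranged in Section \ref{E-motives}.

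The heart of the argument is then the following chain of natural equivalences, carried out first for the additive case. The key input is that the unit $\cU_{\add}(\cE)$ corepresents an additive invariant, namely $\cA\mapsto \Mot(\cE)(\cU_{\add}(\cE),\cU_{\add}(\cA))$: this is automatic because $\Mot(\cE)$ is stable and presentable and $\cU_{\add}$ sends split-exact sequences to cofiber sequences, so the mapping-spectrum-out-of-the-unit functor is exact and filtered-colimit-preserving, hence additive. By the universal property of $\cU_{\add}$, to identify this corepresented invariant with $K$ it suffices to produce a natural transformation $K\Rightarrow \Mot(\cE)(\cU_{\add}(\cE),\cU_{\add}(-))$ and check it is an equivalence on a generating object. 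Over the point $\cE$ itself one computes $\Mot(\cE)(\cU_{\add}(\cE),\cU_{\add}(\cE))\simeq K(\cE)$ by a unit/endomorphism argument: the endomorphism spectrum of the monoidal unit of $\Mot(\cE)$ is, by the construction of the symmetric monoidal structure on $\Mot(\cE)$ as a localization of presheaves on $\ccat{perf}{\cE}$, computed as the $K$-theory of the endomorphisms of $\cE$ in $\ccat{perf}{\cE}$, i.e.\ $K$ of perfect $\cE$-modules, which is $K(\cE)$. Bootstrapping from the unit to an arbitrary $\cA$ uses that both sides, as functors of $\cA$, are additive invariants agreeing on $\cE$ and that $\ccat{perf}{\cE}$ is generated under the relevant operations by $\cE$-modules; alternatively, one invokes the enriched Yoneda/density argument exactly as in \cite[\S\S7--8]{BGT}. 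The nonconnective statement is formally identical with "additive" replaced by "localizing" and $K$ by $\K$, using that $\cU_{\loc}$ inverts localization sequences and that $\K$ is the universal such deloopings of $K$.

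The main obstacle I anticipate is not the formal skeleton above but verifying that the symmetric monoidal structures on $\Mot(\cE)$ and $\MOT(\cE)$ interact correctly with the $\cE$-enrichment, so that the endomorphism spectrum of the unit really is $K(\cE)$ rather than, say, $K$ of finite spectra; this is precisely the point where the enriched setting departs from \cite{BGT} and where one must use rigidity and stability of $\cE$ to ensure that $\Perf(\cE)$ (perfect $\cE$-modules in $\cE$-linear categories) has the expected $K$-theory and that tensoring with $\cE$ is the identity on $\ccat{perf}{\cE}$. Once that compatibility is in place, the rest is a routine transcription of the Blumberg–Gepner–Tabuada corepresentability proof into the enriched language, with the universal properties of $\cU_{\add}$ and $\cU_{\loc}$ doing the bookkeeping. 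I would therefore present the proof by first isolating the lemma that $\mathrm{map}_{\Mot(\cE)}(\1,\1)\simeq K(\cE)$ (and its nonconnective analogue), and then deducing the general case by the additive/localizing universal property.
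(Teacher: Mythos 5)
Your reduction from the unit to an arbitrary $\cA$ contains a genuine gap. You argue that both $K(-)$ and $\Mot(\cE)(\cU_{\add}(\cE),\cU_{\add}(-))$ are additive invariants, that they agree on $\cE$, and that this suffices because $\Cat^\perf(\cE)$ is ``generated under the relevant operations by $\cE$-modules.'' That last claim is false: the unit does not generate. The compact generators of $\Mot(\cE)$ are the objects $\cU_{\add}(\cB)$ for \emph{all} compact $\cB\in\Cat^\perf(\cE)^\omega$, and $\Cat^\perf(\cE)$ is not built from $\cE$ by filtered colimits and split exact sequences. Already for $\cE=\Sp^\omega$, connective and nonconnective $K$-theory are two additive invariants that agree on the unit (since $\K(\bb S)$ is connective) but differ on general objects, so agreement on the unit cannot determine an additive invariant. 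Relatedly, your computation of $\Mot(\cE)(\1,\1)\simeq K(\cE)$ is asserted to follow ``by the construction of the symmetric monoidal structure,'' but this identification is precisely the nontrivial content: before localization the Yoneda lemma only gives $\Sigma^\infty_+$ of a mapping space, and identifying the \emph{localized} presheaf $\cU_{\add}(\cA)$ with the bivariant presheaf $\cB\mapsto K(\Fun_\cE^\ex(\cB,\cA))$ requires the Waldhausen-type argument — in the paper this is Lemmas~\ref{lem:S_1}, \ref{lem:KAlocal} and \ref{lem:S_2}, using the $\cE$-linear $S_\bullet$-construction, the splitting of $K$-theory on split exact sequences, and the path-object sequence $\cA_\bullet\to PS_\bullet\cA\to S_\bullet\cA$ — after which corepresentability follows by the spectral Yoneda lemma for every compact $\cB$ (Theorem~\ref{thm:representability}), not just for $\cB=\cE$. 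The introduction of the paper even flags that the connective statement cannot be obtained by formal/functoriality arguments and requires redoing the Tabuada--Blumberg--Gepner--Tabuada argument in the enriched setting; your ``alternatively, invoke the Yoneda/density argument as in BGT'' is in fact the whole proof, not an alternative. A smaller point: for the corepresented functor to preserve filtered colimits you need $\cU_{\add}(\cE)$ to be compact, which rests on compactness of the unit $\cE$ in $\Cat^\perf(\cE)$ (Proposition~\ref{prop:sat=>compact}).

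For the nonconnective half, the paper's route is different from your ``formally identical'' sketch: rather than redoing a localizing analogue of the connective argument, it uses the base-change adjunction $f^*\dashv f_*$ along the unique symmetric monoidal functor $f\colon\Sp^\omega\to\cE$, together with dualizability of the saturated corepresenting object, to reduce $\MOT(\cE)(\cU_{\loc}(\cB),\cU_{\loc}(\cA))$ to the absolute case $\MOT(\Sp^\omega)(\cU_{\loc}(\Sp^\omega),\cU_{\loc}(\cB^\op\tens_\cE\cA))$ and then quotes \cite[Theorem 9.8]{BGT} (Theorem~\ref{thm:nc-representability}). Your appeal to ``$\K$ is the universal such delooping of $K$'' would have to be substantiated by an enriched version of the Bass-delooping analysis of \cite[\S9]{BGT}, which you do not supply; the functoriality-in-$\cE$ reduction avoids this entirely.
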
 

By the functoriality properties of 
$\mathrm{Mot(\cE)}$ and 
$\mathrm{\mathrm{\mathbb{M}}ot(\cE)}$,  
the proof of Theorem \ref{thrm:corep} reduces to the absolute case which was studied in \cite{BGT}. 
In the case of connective $K$-theory 
a substantially stronger corepresentability 
result is available, see Theorem \ref{thm:representability}.  
Functoriality arguments however are insufficient to establish Theorem \ref{thm:representability}, 
and the proof consists instead in an adaptation of 
the arguments of \cite{tabuada2008} and 
\cite{BGT} to the enriched setting.

\subsubsection{The categorified Chern character}

Let $X$ be a derived stack over a base commutative ring $k$. In \cite{TV1,TV2}, Toën and Vezzosi consider a generalization of the classical Chern character, which assigns to every perfect complex on $X$ a rotation-invariant function on the free loop stack $\scr LX$. More precisely, they construct an additive and multiplicative map
\begin{equation}\label{eqn:TVchern1}
\ch^\pre\colon \iota_0\Perf(X) \to \scr O(\scr LX)^{hS^1},
\end{equation}
where $\iota_0\Perf(X)$ is the maximal sub-$\infty$-groupoid of the $(\infty,1)$-category $\Perf(X)$.
They also introduce a categorified version of this construction, which is an additive and multiplicative map
\begin{equation}\label{eqn:TVchern2}
\ch^\pre\colon \iota_0\Cat^\perf(X) \to \iota_0\QCoh^{S^1}(\scr LX),
\end{equation}
where $\Cat^\perf(X)$ is the $(\infty,2)$-category of sheaves of dg-categories on $X$. Finally, they combine \eqref{eqn:TVchern1} and \eqref{eqn:TVchern2} to obtain the secondary Chern pre-character
\begin{equation}\label{eqn:TVchern3}
\ch^{\pre,(2)}\colon \iota_0\Cat^\sat(X) \to \scr O(\scr L^2X)^{h(S^1\times S^1)},
\end{equation}
where $\Cat^\sat(X)$ is the $(\infty,2)$-category of sheaves of saturated dg-categories on $X$.

In Section \ref{sec:chern}, we use the main results of the previous sections to refine each of these constructions as follows. First of all, we allow $k$ to be a (potentially nonconnective) $E_\infty$-ring spectrum and $X$ to be a spectral prestack over $k$.
\begin{enumerate}
	\item In Theorem~\ref{thm:additivity}, we prove that~\eqref{eqn:TVchern1} factors through the nonconnective deloopings of $K$-theory and induces a morphism of $E_\infty$-ring spectra
	\[
	\ch\colon \K(X) \to \scr O(\scr LX)^{hS^1}.
	\]
	\item In Theorem~\ref{thm:main}, we prove that~\eqref{eqn:TVchern2} lifts to a symmetric monoidal $(\infty,1)$-functor
	\[
	\ch^\pre\colon \iota_1\Cat^\perf(X) \to \QCoh^{S^1}(\scr LX),
	\]
	which sends localization sequences to cofiber sequences. As a consequence, we obtain the morphism of $E_\infty$-ring spectra $\K^{(2)}(X)\to \K^{S^1}(\scr LX)$ envisioned in \cite[Section 4]{TV1}.
	\item Finally, in \S\ref{sub:K2},
	we combine these results and deduce that~\eqref{eqn:TVchern3} descends to a morphism of $E_\infty$-ring spectra
	\[
	\ch^{(2)}\colon \K^{(2)}(X) \to \scr O(\scr L^2X)^{h(S^1\times S^1)}.
	\]
\end{enumerate}

\subsection{Acknowledgments} 
We are grateful to Clark Barwick, Kai Behrend, David Ben-Zvi, Andrew Blumberg, David Carchedi, Rune Haugseng, Claudia Scheimbauer, Chris Schommer-Pries, Jay Shah,
Gon\c{c}alo Tabuada, Bertrand To\"en, and Jesse Wolfson for useful conversations and for their interest in this project. We thank the referee for very helpful comments that have greatly improved the readability of the paper.  
 
\subsection{Terminology and notation}
We use the terms $\infty$-category and $(\infty,1)$-category interchangeably.
We denote by $\cS$ and $\Sp$ the $\infty$-categories of spaces (i.e., $\infty$-groupoids) and of spectra, respectively.
If $\scr A$ is an $\infty$-category and $a,b\in\scr A$, we write $\scr A(a,b)\in\cS$ for the space of maps from $a$ to $b$ in $\scr A$.
If $\scr A$ is stable, we will also write $\scr A(a,b)$ for the spectrum which is the canonical infinite delooping of that space.
If $\cA$ admits filtered colimits, recall that an object $a\in\cA$ is \emph{compact} if $\cA(a,\ph)\colon\cA^\op\to\cS$ preserves filtered colimits; we denote by $\scr A^\omega\subset\scr A$ the full subcategory of compact objects.

\section{Traces in symmetric monoidal \texorpdfstring{$(\infty,n)$}{(∞,n)}-categories}
\label{sec:traces}

\subsection{Introduction}
\label{sub:tracesIntro}

Let $(\scr C,\tens,\mathbf{1})$ be a symmetric monoidal category. Recall that an object $X\in\scr C$ is \emph{dualizable} if there exists an object $X^\vee$ and morphisms $\coev_X\colon \mathbf{1}\to X\tens X^\vee$ and $\ev_X\colon X^\vee\tens X\to \mathbf{1}$ satisfying the \emph{triangle identities}: the composites
\begin{gather*}
	X  \xrightarrow{\coev_X \tens \id}  X \tens X^\vee \tens X \xrightarrow{ \id \tens \ev_X}  X, \\
	X^\vee  \xrightarrow{  \id \tens \coev_X}   X^\vee \tens X  \tens X^\vee \xrightarrow{  \ev_X \tens \id}  X^\vee
\end{gather*}
are identity morphisms.

If $X$ is dualizable and $f\colon X\to X$ is any endomorphism, the \emph{trace} $\Tr(f)$ of $f$ is the endomorphism of $\mathbf{1}$ given by the composition
\[
\mathbf{1} \xrightarrow{\coev_X} X\tens X^\vee \xrightarrow{f\tens\id} X\tens X^\vee \simeq X^\vee\tens X \xrightarrow{\ev_X} \mathbf{1}.
\]
If $\phi\colon X\to Y$ is an isomorphism in $\scr C$, then $\Tr(f)=\Tr(\phi\circ f\circ \phi^{-1})$. If we write $\End({\scr C})$ for the groupoid of endomorphisms of dualizable objects in $\scr C$ and $\Omega\scr C$ for the set of endomorphisms of $\mathbf{1}$, the trace is thus a functor
\[
\Tr\colon \End({\scr C}) \to \Omega\scr C.
\]

Things become more interesting if $\scr C$ is a symmetric monoidal $2$-category.\footnote{In this paper, we use the term ``$2$-category'' for what is often called a weak $2$-category or a bicategory: the composition of $1$-morphisms is only required to be associative up to a (specified) $2$-isomorphism.} Then $\Omega\scr C$ is a category, and one can ask to what extent the trace can be upgraded to a functor with values in $\Omega\scr C$. Recall that a $1$-morphism $\phi\colon X\to Y$ in a $2$-category $\scr C$ is \emph{right dualizable} if there exists a $1$-morphism $\phi^r\colon Y\to X$ and $2$-morphisms $\eta\colon \id_X\to \phi^r\circ\phi$ and $\epsilon\colon \phi\circ \phi^r\to \id_Y$ satisfying the following triangle identities: the composites
\begin{gather*}
	\phi  \xrightarrow{\id\circ\eta}  \phi\circ\phi^r\circ\phi \xrightarrow{ \epsilon\circ\id} \phi, \\
	\phi^r \xrightarrow{\eta\circ \id}   \phi^r\circ\phi\circ\phi^r \xrightarrow{ \id\circ\epsilon}  \phi^r
\end{gather*}
are identity $2$-morphisms.
We then say that $\phi^r$ is \emph{right adjoint} to $\phi$. Note that this agrees with the usual notion of adjunction when $\scr C$ is the $2$-category of categories.

Define a $(2,1)$-category $\End(\scr C)$ as follows:
\begin{itemize}
	\item An object of $\End(\scr C)$ is a pair $(X,f)$ where $X$ is a dualizable object in $\scr C$ and $f$ is an endomorphism of $X$.
	\item A $1$-morphism $(X,f)\to(Y,g)$ in $\End(\scr C)$ is a pair $(\phi,\alpha)$ where $\phi\colon X\to Y$ is a \emph{right dualizable} $1$-morphism and $\alpha\colon \phi f\to g\phi$ is a $2$-morphism:
	\begin{tikzmath}
		\diagram{X & Y \\ X & Y\rlap. \\   };
		\arrows (11-) edge node[above]{$\phi$} (-12) (11) edge node[left]{$f$} (21) (12) edge node[right]{$g$} (22) (21-) edge node[below]{$\phi$} (-22)
		(11) edge[font=\normalsize,draw=none] node[rotate=45]{$\Longrightarrow$} (22)
		(11) edge[draw=none] node[pos=.3]{$\alpha$} (22);
	\end{tikzmath}
	\item A $2$-morphism $(\phi,\alpha)\to (\psi,\beta)$ in $\End(\scr C)$ is a $2$-isomorphism $\xi\colon \phi\stackrel\sim\to\psi$ such that $(g\xi)\alpha= \beta(\xi f)$.
\end{itemize}
It is easy to show that the trace can be upgraded to a functor $\Tr\colon \End(\scr C)\to\Omega\scr C$: the image of a $1$-morphism $(\phi,\alpha)\colon (X,f)\to (Y,g)$ is depicted by the diagram
\begin{tikzequation}\label{eqn:2trace}
	\def\rowsep{1em}
	\diagram{
	& X\tens X^\vee & & & X\tens X^\vee & \\
	\mathbf{1} & & & & & \mathbf{1} \rlap, \\
	& Y\tens Y^\vee & & & Y\tens Y^\vee & \\
	};
	\arrows (21) edge (12) edge (32) (12-) edge node[above]{$f\tens \id$} (-15) (32-) edge node[below]{$g\tens \id$} (-35) (15) edge (26) (35) edge (26) (12) edge node[right]{$\phi\tens\phi^{r\vee}$} (32) (15) edge node[left]{$\phi\tens\phi^{r\vee}$} (35)
	(21) edge[font=\normalsize,draw=none] node[rotate=45]{$\Longleftarrow$} node[pos=.13,rotate=45]{$\Longleftarrow$} node[pos=.87,rotate=45]{$\Longleftarrow$} (26)
	(21) edge[draw=none] node[above left]{$\alpha\tens \id$} (26)
	;
\end{tikzequation}
where the unlabeled $2$-morphisms are
\begin{gather*}
	(\phi\tens\phi^{r\vee})\coev_X = (\phi\phi^r\tens\id)\coev_Y \stackrel\epsilon\to \coev_Y,\\
	\ev_X \stackrel\eta\to \ev_X(\phi^r\phi\tens\id) = \ev_Y(\phi\tens\phi^{r\vee}).
\end{gather*}
Dually, the trace also has a functoriality with respect to \emph{left dualizable} $1$-morphisms. However, this is a special case of the above functoriality, applied to the $2$-category obtained from $\scr C$ by reversing the direction of the $2$-morphisms. As the symmetric monoidal $2$-categories that occur in practice seem to favor right dualizability, we only treat the right dualizable case explicitly.

Our goal in the section is to generalize this functoriality of the trace to the case where $\scr C$ is a symmetric monoidal $(\infty,n)$-category. In that case, $\Omega\scr C$ is an $(\infty,n-1)$-category, and we will define an $(\infty,n-1)$-category $\End(\scr C)$ and a functor $\Tr\colon \End(\scr C)\to\Omega\scr C$ with the expected values on objects and $1$-morphisms. As an example, we will see in \S\ref{sub:hochschild} that, if $\scr C$ is the symmetric monoidal $(\infty,2)$-category of compactly generated stable $\infty$-categories, then $\End(\scr C)$ is the $(\infty,1)$-category of pairs $(\scr A,\scr M)$ where $\scr A$ is a small idempotent complete stable $\infty$-category and $\scr M$ is an $\scr A$-bimodule, and $\Tr\colon\End(\scr C)\to\Sp$ sends a bimodule to its topological Hochschild homology. 

Going back to the $1$-categorical situation, one may observe that the functor $\scr C\mapsto \End({\scr C})$ from symmetric monoidal categories to groupoids is \emph{corepresentable}. That is, there exists a symmetric monoidal category $\scr E$ and an equivalence of categories
\[
\Fun^\tens(\scr E,\scr C)\simeq \End({\scr C}),
\]
natural in $\scr C$. The category $\scr E$ is the free rigid symmetric monoidal category on $B\bb N$, the category with one object and morphism set the monoid $\bb N$. The above equivalence sends a symmetric monoidal functor $f\colon \scr E\to\scr C$ to the image by $f$ of the ``walking endomorphism'' $1\in \N$.

By the Yoneda lemma, the trace functor $\Tr\colon \End({\scr C})\to \Omega\scr C$, being natural in the symmetric monoidal category $\scr C$, is completely determined by the trace of the walking endomorphism, which is an element of the set $\Omega\scr E$. In \cite{TV2}, Toën and Vezzosi used this observation to define a functorial enhancement of the trace on a symmetric monoidal $(\infty,1)$-category $\scr C$.
If $\Cat_{(\infty,1)}^\tens$ denotes the $(\infty,1)$-category of symmetric monoidal $(\infty,1)$-categories, there is an adjunction
\[
\Fr^\rig: \Cat_{(\infty,1)} \rightleftarrows \Cat_{(\infty,1)}^\tens:(\ph)^\rig,
\]
where $\scr C^\rig\subset\scr C$ is the full subcategory of dualizable objects. The existence of the left adjoint $\Fr^\rig$ follows from the adjoint functor theorem (a more explicit description of $\Fr^\rig$ is given by the $1$-dimensional cobordism hypothesis with singularities \cite[\S4.3]{Lurie}, but we do not need this description for the time being).
If $\End({\scr C})$ is the $\infty$-groupoid of endomorphisms of dualizable objects, it is then clear that the functor $\scr C\mapsto\End({\scr C})$ is corepresented by $\Fr^\rig (B\N)$. 
By the Yoneda lemma, the trace of the walking endomorphism in $\Omega \Fr^\rig(B\N)$ specifies a morphism of $\infty$-groupoids $\Tr\colon \End({\scr C})\to \Omega\scr C$, natural in $\scr C$.
Because symmetric monoidal functors commute with traces, this morphism sends an object $(X,f)\in\End(\scr C)$ to the trace of $f$, so it is indeed a functorial enhancement of the trace. 

One may hope to use a similar corepresentability trick to define the trace on a symmetric monoidal $(\infty,2)$-category. This hope is quickly squashed by the observation that \emph{any} corepresentable functor on the $(\infty,2)$-category of rigid symmetric monoidal $(\infty,2)$-categories takes values in $\infty$-groupoids: a standard argument shows that symmetric monoidal natural transformations between symmetric monoidal functors with rigid domain are always invertible. 
To work around this problem, one is therefore led to consider \emph{lax} natural transformations. The formalism of lax natural transformations was developed by Johnson-Freyd and Scheimbauer in \cite{JFS}. We will see that a natural definition of $\End(\scr C)$, for $\scr C$ a symmetric monoidal $(\infty,n)$-category, is the $(\infty,n-1)$-category of symmetric monoidal \emph{oplax transfors} $\Fr^\rig(B\N)\to\scr C$, where $\Fr^\rig (B\N)$ is the \emph{same} category that was used in the case $n=1$. One must be careful that categories of oplax transfors are not the mapping objects of any higher category, and so there is no sense in which the functor $\scr C\mapsto \End(\scr C)$ is corepresentable. Instead of invoking the Yoneda lemma, we will therefore argue ``by hand'' that an element in $\Omega \Fr^\rig(B\N)$ gives rise to a natural morphism of $(\infty,n-1)$-categories $\End(\scr C)\to\Omega\scr C$.

\subsection{The trace as a symmetric monoidal \texorpdfstring{$(\infty,n-1)$}{(∞,n-1)}-functor}
\label{sub:trace}

We borrow terminology and notation from \cite{JFS}. In particular, $(\infty,n)$-categories are modeled by Barwick's complete $n$-fold Segal spaces, where in this context ``space'' means Kan complex. A complete $n$-fold Segal space $\scr C$ is in particular a collection of spaces $\scr C_{\vec k}$ indexed by $\vec k\in (\Delta^\op)^n$. For $p\leq n$, we denote by $(p)$ the $n$-tuple $(1,\dotsc,1,0,\dotsc,0)$, with $p$ copies of $1$. Then $\scr C_{(p)}$ is the \emph{space of $p$-morphisms} in $\scr C$. We denote by $\maps^h$ and $\times^h$ strictly functorial models for derived mapping spaces and homotopy fiber products of presheaves of spaces (i.e., computed with respect to objectwise weak equivalences). If $\scr C$ is a complete $n$-fold Segal space and $m\leq n$, we denote by $\iota_m\scr C$ the complete $m$-fold Segal space defined by $(\iota_m\scr C)_{\vec k}=\scr C_{\vec k,\vec 0}$. Thus, $\iota_m\scr C$ models the maximal sub-$(\infty,m)$-category of $\scr C$. Note that $\iota_0\scr C=\scr C_{(0)}$ is the space of objects of $\scr C$. We will also use the computads $\Theta^{\vec k}$ and $\Theta^{\vec k;\vec l}$, for tuples of natural numbers $\vec k$ and $\vec l$, as defined in \cite{JFS}. The former are designed so that $\scr C_{\vec k}\simeq\maps^h(\Theta^{\vec k},\scr C)$. Symmetric monoidal $(\infty,n)$-categories are modeled by strict functors from the category of pointed finite sets to the category of complete $n$-fold Segal spaces satisfying the usual Segal condition; we commit the usual sacrilege of identifying such a functor with its value on $[1]$ (the set $\{0,1\}$ pointed at $0$).

Let $\scr C$ be an $(\infty,n)$-category. For $\vec k$ a $n$-tuple of natural numbers, let $\Lax_{\vec k}(\scr C)$ be the $n$-fold simplicial space defined by
\[
\Lax_{\vec k}(\scr C)_{\vec\bullet} = \maps^h(\Theta^{\vec k;\vec\bullet},\scr C).
\]
By \cite[Theorem 5.11]{JFS}, $\Lax_{\vec k}(\scr C)$ is a complete $n$-fold Segal space, and moreover $\vec k\mapsto\Lax_{\vec k}(\scr C)$ is a complete $n$-fold Segal object internal to complete $n$-fold Segal spaces. 
We have $\Lax_{(0)}(\scr C)\simeq \scr C$ and, in the notation of \cite{JFS}, $\Lax_{(1)}(\scr C)\simeq\scr C^\to$. We also have $\Lax_{\vec k}(\scr C)_{(0)}\simeq \scr C_{\vec k}$, since $\Theta^{\vec k;(0)}=\Theta^{\vec k}$. In particular, the space of objects of $\Lax_{(p)}(\scr C)$ is the space of $p$-morphisms in $\scr C$.
If $\scr C$ is symmetric monoidal, so is $\Lax_{\vec k}(\scr C)$ with $\Lax_{\vec k}(\scr C)[m]=\Lax_{\vec k}(\scr C[m])$.

If $\scr B$ and $\scr C$ are $(\infty,n)$-categories, the complete $n$-fold Segal space $\Fun^\oplax(\scr B,\scr C)$ of \emph{oplax transfors} is defined by:\footnote{The fact that these are oplax rather than lax is explained by the asymmetry of the Gray tensor product: if $\scr A$, $\scr B$, and $\scr C$ are $(\infty,n)$-categories, then $\maps^h(\scr A,\Fun^\oplax(\scr B,\scr C))\simeq \maps^h(\scr B,\Fun^\lax(\scr A,\scr C))$.
The definition of $\Fun^\oplax$ should be understood as the special case of this formula with $\scr A=\Theta^{\vec\bullet}$.
}
\[
\Fun^\oplax(\scr B,\scr C)_{\vec \bullet} = \maps^h(\scr B,\Lax_{\vec \bullet}(\scr C)).
\]
If $\scr B$ and $\scr C$ are moreover symmetric monoidal, the complete $n$-fold Segal space $\Fun^\oplax_\tens(\scr B,\scr C)$ of \emph{symmetric monoidal oplax transfors} is similarly defined by:
\[
\Fun^\oplax_\tens(\scr B,\scr C)_{\vec\bullet} = \maps^{h}_\tens(\scr B,\Lax_{\vec\bullet}(\scr C)).
\]

Recall that an object in a symmetric monoidal $(\infty,n)$-category $\scr C$ is called \emph{dualizable} if it is dualizable in the homotopy $1$-category $\h_1\iota_1\scr C$ (in the classical sense recalled in \S\ref{sub:tracesIntro}). Similarly, a $1$-morphism of $\scr C$ is called \emph{right dualizable} if it is so in the homotopy $2$-category $\h_2\iota_2\scr C$. We denote by $\scr C^\rig\subset\scr C$ the full subcategory spanned by the dualizable objects. The functor
\[
(\ph)^\rig\colon\Cat_{(\infty,n)}^\otimes \to \Cat_{(\infty,n)}
\]
preserves filtered colimits as well as limits \cite[Proposition 4.6.1.11]{HA}, and hence it admits a left adjoint
\[
\Fr^\rig\colon \Cat_{(\infty,n)} \to \Cat_{(\infty,n)}^\otimes,
\]
by the adjoint functor theorem \cite[Corollary 5.5.2.9]{HTT}. For $1\leq m\leq n$, $\Fr^\rig$ sends $(\infty,m)$-categories to $(\infty,m)$-categories, since $(\iota_m\scr C)^\rig=\iota_m(\scr C^\rig)$.

\begin{definition}
	If $\scr C$ is a symmetric monoidal $(\infty,n)$-category, the complete $(n-1)$-fold Segal space $\End(\scr C)$ is defined by:
	\[
	\End(\scr C)=\iota_{n-1}\Fun^\oplax_\tens(\Fr^\rig(B\N),\scr C).
	\]
\end{definition}

\begin{remark}
	One can show that the $(\infty,n)$-category $\Fun^\oplax_\tens(\Fr^\rig(B\N),\scr C)$ is in fact already an $(\infty,n-1)$-category: the rigidity of $\Fr^\rig(B\N)$ implies that the components of a symmetric monoidal oplax $k$-transfor $\Fr^\rig(B\N)\to\scr C$ satisfy a one-sided dualizability condition, which amounts to invertibility for $k=n$.
\end{remark}

If $p\leq n-1$, the space of $p$-morphisms in $\End(\scr C)$ is thus the space of symmetric monoidal functors $\Fr^\rig(B\N)\to\Lax_{(p)}(\scr C)$, i.e., the space of endomorphisms of dualizable objects in $\Lax_{(p)}(\scr C)$. Let us make this more explicit for $p=1$.
An object in $\Lax_{(1)}(\scr C)$ is a morphism $\phi\colon X\to Y$ in $\scr C$, and an endomorphism of such is a square
\begin{tikzmath}
	\diagram{X & Y \\ X & Y\rlap. \\ };
	\arrows (11-) edge node[above]{$\phi$} (-12) (11) edge node[left]{$f$} (21) (12) edge node[right]{$g$} (22) (21-) edge node[below]{$\phi$} (-22)
	(11) edge[font=\normalsize,draw=none] node[rotate=45]{$\Longrightarrow$} (22)
	(11) edge[draw=none] node[pos=.3]{$\alpha$} (22);
\end{tikzmath}
The following lemma shows that such a square is a $1$-morphism in $\End(\scr C)$, with source $(X,f)$ and target $(Y,g)$, if and only if $X$ and $Y$ are dualizable and $\phi$ is right dualizable:

\begin{lemma}\label{lem:dualizability}
	Let $\scr C$ be a symmetric monoidal $(\infty,n)$-category. An object $\phi\colon X\to Y$ in $\Lax_{(1)}(\scr C)$ is dualizable if and only if $X$ and $Y$ are dualizable in $\scr C$ and $\phi$ is a right dualizable $1$-morphism in $\scr C$. In that case, the trace of an endomorphism $(f,g,\alpha)$ of $\phi$ is the endomorphism of $\id_\mathbf{1}$ given by the diagram~\eqref{eqn:2trace}.
\end{lemma}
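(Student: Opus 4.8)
The plan is to prove Lemma~\ref{lem:dualizability} by a direct analysis in the homotopy $2$-category $\h_2\iota_2\Lax_{(1)}(\scr C)$, reducing the statement to the classical characterization of dualizable objects and adjoint $1$-morphisms. First I would recall that, by \cite[Theorem 5.11]{JFS}, $\Lax_{(1)}(\scr C)\simeq\scr C^\to$ is again a symmetric monoidal $(\infty,n)$-category, with tensor product computed ``pointwise'' on source and target of a morphism, and unit object $\id_\mathbf{1}\colon\mathbf{1}\to\mathbf{1}$; in particular $\Omega\Lax_{(1)}(\scr C)=\Lax_{(1)}(\Omega\scr C)$. By definition, dualizability of an object in a symmetric monoidal $(\infty,n)$-category only refers to $\h_1\iota_1$, and a $1$-morphism's right dualizability only to $\h_2\iota_2$, so it suffices to work with the symmetric monoidal (weak) $2$-category $\scr D=\h_2\iota_2\Lax_{(1)}(\scr C)$, whose objects are morphisms of $\scr C$, whose $1$-morphisms are (homotopy classes of) squares, and whose $2$-morphisms are as in the definition of $\Lax_{(1)}$.

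The key step is the following explicit description of a dual. Suppose $\phi\colon X\to Y$ is an object of $\scr D$ with $X,Y$ dualizable in $\scr C$ and $\phi$ right dualizable with right adjoint $\phi^r\colon Y\to X$, unit $\eta$ and counit $\epsilon$. I claim the object $\phi^{r\vee}\colon Y^\vee\to X^\vee$ — the right-mate/dual of $\phi^r$ obtained by conjugating with the evaluation and coevaluation of $X$ and $Y$ — together with the coevaluation $1$-morphism $(\coev_X,\coev_Y,\gamma)\colon \id_\mathbf{1}\to (\phi\tens\phi^{r\vee})$ and evaluation $1$-morphism $(\ev_X,\ev_Y,\delta)\colon (\phi^{r\vee}\tens\phi)\to\id_\mathbf{1}$, exhibits $\phi$ as dualizable in $\scr D$. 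Here $\gamma$ is the $2$-morphism $(\phi\tens\phi^{r\vee})\circ\coev_X\simeq(\phi\phi^r\tens\id)\circ\coev_Y\xrightarrow{\epsilon\tens\id}\coev_Y$ (using the standard identity $(\id\tens\phi^{r\vee})\coev_Y=(\phi^r\tens\id)\coev_Y$ coming from the definition of the mate), and $\delta$ is built dually from $\eta$. The triangle identities for $(\coev,\ev)$ in $\scr D$ then unwind into: on the source legs, the triangle identities for $X$ (resp.\ $Y$) as a dualizable object of $\scr C$; and on the $2$-morphism component, a diagram chase combining the triangle identities for $X$ and $Y$ with the triangle identities for the adjunction $\phi\dashv\phi^r$ — this is exactly the kind of ``syllepsis/zig-zag'' computation that is routine once drawn but tedious in prose, so I would present it as a short string diagram rather than grinding through it. Conversely, if $\phi$ is dualizable in $\scr D$ with dual $\psi\colon Y'\to X'$, projecting to the two ``endpoint'' symmetric monoidal $2$-categories (source and target, which are both $\h_2\iota_2\scr C$ via the symmetric monoidal functors $\Lax_{(1)}(\scr C)\to\scr C$ evaluating at $0$ and $1$) shows $X,Y$ are dualizable in $\scr C$ with $X'=X^\vee$, $Y'=Y^\vee$; and then the coevaluation square $\id_\mathbf{1}\to\phi\tens\psi$ provides a $1$-morphism whose target component, combined with $\ev$ and $\coev$ of $X$ and $Y$, produces a candidate right adjoint to $\phi$ with unit and counit extracted from the two triangle-identity $2$-morphisms, whence $\phi$ is right dualizable in $\scr C$.

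Finally, for the computation of the trace: once $\phi$ is known to be dualizable in $\Lax_{(1)}(\scr C)$ with the dual described above, $\Tr(f,g,\alpha)$ is by definition the endomorphism of the unit $\id_\mathbf{1}$ of $\Lax_{(1)}(\scr C)$ given by the composite $\id_\mathbf{1}\xrightarrow{\coev}\phi\tens\phi^{r\vee}\xrightarrow{(f,g,\alpha)\tens\id}\phi\tens\phi^{r\vee}\simeq\phi^{r\vee}\tens\phi\xrightarrow{\ev}\id_\mathbf{1}$, and this is an object of $\Omega\Lax_{(1)}(\scr C)=\Lax_{(1)}(\Omega\scr C)$, i.e.\ a $1$-morphism of $\Omega\scr C$. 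Tracing through the pointwise formula for $\tens$, $\coev$, and $\ev$ in $\Lax_{(1)}$, the source component of this composite is $\Tr(f)$ and the target component is $\Tr(g)$, while the connecting $2$-morphism is precisely obtained by pasting $\alpha\tens\id$ between the two unlabeled $2$-morphisms $\gamma$ and $\delta$ described above — which is exactly the pasting diagram~\eqref{eqn:2trace}.

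The main obstacle I anticipate is the ``diagram chase'' establishing the triangle identities for the dual of $\phi$ in $\scr D$ on the $2$-morphism component: this requires combining four separate triangle identities (for $X$, for $Y$, and the two for $\phi\dashv\phi^r$) with the naturality of the braiding and the coherence isomorphisms of the weak $2$-category, and making sure all the mates are taken consistently; while conceptually forced, it is the one place where the argument is not a formal consequence of the $1$-categorical statement in \S\ref{sub:tracesIntro} and genuinely uses the interplay between duality data for objects and adjunction data for morphisms. I would handle it by fixing once and for all the convention that $\phi^{r\vee}$ is the right mate of $\phi^r$ and then invoking the standard fact (e.g.\ from the calculus of mates) that mates of adjoints are adjoints, so that the needed identity reduces to a single already-known zig-zag.
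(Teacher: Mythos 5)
Your proposal is correct and follows essentially the same route as the paper's proof: reduce to the homotopy $2$-category, exhibit $(\phi^r)^\vee$ as an explicit dual of $\phi$ in $\Lax_{(1)}(\scr C)$ via (co)evaluation squares built from $\coev_X,\coev_Y,\epsilon$ and $\ev_X,\ev_Y,\eta$, argue the converse by extracting the dualities of $X,Y$ and the adjunction data for $\phi$ from a duality datum in $\Lax_{(1)}(\scr C)$, and then read off the trace of $(f,g,\alpha)$ as the pasting~\eqref{eqn:2trace} from this explicit dual. The only blemishes are notational slips (the dual $(\phi^r)^\vee$ goes $X^\vee\to Y^\vee$, as your displayed formulas in fact use, and the standard mate identity should read $(\id\tens\phi^{r\vee})\coev_X\simeq(\phi^r\tens\id)\coev_Y$), which do not affect the argument.
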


\begin{proof}
	Note that the assertion holds for $\scr C$ if and only if it holds for the homotopy $2$-category $\h_2\iota_2\scr C$. We may therefore assume that $\scr C$ is a $2$-category.
	Suppose that $X$ and $Y$ are dualizable and that $\phi\colon X\to Y$ is right dualizable, with right adjoint $\psi$ and unit and counit $\eta$ and $\epsilon$. We claim that $\psi^\vee\colon X^\vee\to Y^\vee$ is dual to $\phi$ in $\Lax_{(1)}(\scr C)$.
	Define $\coev_\phi\colon \id_\mathbf{1}\to \phi\tens\psi^\vee$ to be the following morphism in $\Lax_{(1)}(\scr C)$:
	\begin{tikzmath}
		\diagram{\mathbf{1} & \mathbf{1} \\ X\tens X^\vee & Y\tens Y^\vee\rlap, \\ };
		\arrows (11-) edge node[above]{$\id$} (-12) (11) edge node[left]{$\coev_X$} (21) (12) edge node[right]{$\coev_Y$} (22) (21-) edge node[below]{$\phi\tens\psi^\vee$} (-22)
		(11) edge[font=\normalsize,draw=none] node[rotate=45,pos=.6]{$\Longrightarrow$} (22);
	\end{tikzmath}
	where the $2$-morphism is
	\[
	(\phi\tens\psi^\vee)\coev_X = (\phi\psi\tens\id)\coev_Y\stackrel\epsilon\to\coev_Y.
	\]
	Similarly, let $\ev_\phi\colon \psi^\vee \tens \phi \to \id_\mathbf{1}$ be the morphism given by $\ev_X$, $\ev_Y$, and the $2$-morphism
	\[
	\ev_X \stackrel\eta\to \ev_X(\id\tens \psi\phi)=\ev_Y(\psi^\vee\tens \phi).
	\]
	It is easy to check that $\ev_\phi$ and $\coev_\phi$ determine a duality between $\phi$ and $\psi^\vee$ in $\Lax_{(1)}(\scr C)$.
	
	Conversely, suppose that $\phi\colon X\to Y$ admits a dual $\phi'\colon X'\to Y'$ in $\Lax_{(1)}(\scr C)$. The triangle identities imply at once that $X$ and $Y$ are dualizable in $\scr C$ with duals $X'$ and $Y'$. The evaluation $\ev_\phi$ thus consists of the $1$-morphisms $\ev_X$ and $\ev_Y$ and a $2$-morphism $\alpha\colon \ev_X\to \ev_Y(\phi'\tens\phi)$. Let $\eta\colon \id_X \to \phi^{\prime\vee}\phi$ be the $2$-morphism
	\[
	\id_X = (\id_X\tens\ev_X)(\coev_X\tens\id_X) \stackrel\alpha\to (\id_X \tens \ev_Y(\phi'\tens\phi))(\coev_X\tens\id_X) = \phi^{\prime\vee}\phi,
	\]
	and let $\epsilon\colon \phi\phi^{\prime\vee}\to \id_Y$ be defined in a dual manner. It is easy to check that $\eta$ and $\epsilon$ determine an adjunction between $\phi$ and $\phi^{\prime\vee}$ in $\scr C$.
\end{proof}

Let $\scr C$ be an $(\infty,n)$-category. Given $X,Y\in\scr C$, we have an $(\infty,n-1)$-category $\scr C(X,Y)$ defined by
\[
\scr C(X,Y)_{\vec\bullet} = \{X\}\times^h_{\scr C_{0,\vec\bullet}} \scr C_{1,\vec\bullet} \times^h_{\scr C_{0,\vec\bullet}} \{Y\}.
\]
If $\scr C$ has a symmetric monoidal structure, we define
\[\Omega\scr C=\scr C(\mathbf{1},\mathbf{1}).\]
Note that $\Omega\scr C$ is a symmetric monoidal $(\infty,n-1)$-category, with $(\Omega\scr C)[m]=\scr C[m](\mathbf{1},\mathbf{1})$.

If $\Theta$ is an $(n-1)$-computad, there is an ``unreduced suspension'' $n$-computad $\Sigma\Theta$ with the property that, for every $(\infty,n)$-category $\scr C$,
\[
\maps^h(\Sigma\Theta,\scr C)\simeq \maps^h(\Theta,\scr C_{1,\vec\bullet}).
\]
Explicitly, $\Sigma\Theta$ has two vertices $s$ and $t$, which are the source and target of every generating $1$-morphism, and for every $k\geq 0$ there is a bijection $\sigma$ between the generating $k$-morphisms of $\Theta$ and the generating $(k+1)$-morphisms of $\Sigma\Theta$, compatible with sources and targets.
 Note that the underlying CW complex of $\Sigma\Theta$ is the unreduced suspension of that of $\Theta$.

For the proof of the following lemma, the reader will need to have the inductive definition of the $\Theta^{\vec k;\vec l}$'s from \cite[Definition 5.7]{JFS} at hand.

\begin{lemma}\label{lem:Thetas}
	There is a unique family of isomorphisms of computads
	\[
	\Sigma\Theta^{\vec k;\vec l} \simeq *\cup_{\Theta^{\vec k}} \Theta^{\vec k;1,\vec l} \cup_{\Theta^{\vec k}} *,
	\]
	defined for all tuples of natural numbers $\vec k$ and $\vec l$, such that:
	\begin{itemize}
		\item the vertices $s$ and $t$ are sent to the collapsed source and target $\Theta^{\vec k}$'s, respectively;
		\item a generating morphism of type $\sigma(\theta_{i,j})$ is sent to a generating morphism of type $\theta_{i,j+1}$;
		\item these isomorphisms are natural with respect to the structural inclusions $\Theta^{\vec m;\vec n}\into \Theta^{\vec k;\vec l}$.
	\end{itemize}
\end{lemma}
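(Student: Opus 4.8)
The plan is to establish both existence and uniqueness by induction on the total number of levels $n=\abs{\vec k}+\abs{\vec l}$, working directly from the inductive construction of the computads $\Theta^{\vec k;\vec l}$ in \cite[Definition 5.7]{JFS}. Uniqueness is the formal part and I would dispatch it first: an isomorphism of computads amounts to a family of bijections between generating cells in each dimension compatible with the source and target maps, and the three displayed conditions pin such a family down completely. Indeed, by the defining property of the unreduced suspension, the only generating $0$-cells of $\Sigma\Theta^{\vec k;\vec l}$ are $s$ and $t$, while every generating $m$-cell with $m\geq 1$ is $\sigma(\theta)$ for a \emph{unique} generating $(m-1)$-cell $\theta$ of $\Theta^{\vec k;\vec l}$; the first two bullets then force $s,t$ onto the collapsed source and target copies of $\Theta^{\vec k}$, and the second forces $\sigma(\theta_{i,j})\mapsto\theta_{i,j+1}$. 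So the content is the \emph{existence} assertion --- that this prescription is well defined, is an isomorphism, and is natural in the structural inclusions --- and that is what the induction must supply.

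For the base case $\vec k=\vec l=()$ one has $\Theta^{\vec k}=*$ and $\Theta^{\vec k;\vec l}=*$, so $\Sigma\Theta^{\vec k;\vec l}$ is the walking $1$-morphism, the computad $\Theta^{\vec k;1,\vec l}$ is the walking $1$-morphism as well, and collapsing the point-sized source and target copies of $\Theta^{\vec k}=*$ inside it changes nothing; all three conditions hold by inspection. For the inductive step I would split according to whether it is $\vec k$ or $\vec l$ whose leading entry the recursion of \cite[Definition 5.7]{JFS} strips off. If $\vec k=(k_1,\vec k')$ is nonempty, that definition presents $\Theta^{\vec k}$, $\Theta^{\vec k;\vec l}$ and $\Theta^{\vec k;1,\vec l}$ by the \emph{same} iterated pushout --- a chain of length $k_1$ --- of $k_1$ copies of $\Sigma\Theta^{\vec k'}$, $\Sigma\Theta^{\vec k';\vec l}$ and $\Sigma\Theta^{\vec k';1,\vec l}$ respectively, glued along the evident lower-dimensional source/target subcomputads. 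The unreduced suspension commutes with such pushouts --- immediate from its cellwise description, since a pushout of computads along a subcomputad inclusion is computed dimension by dimension and the two copies of $\{s,t\}$ get identified --- so $\Sigma\Theta^{\vec k;\vec l}$ is the chain of length $k_1$ of the $\Sigma\bigl(\Sigma\Theta^{\vec k';\vec l}\bigr)$. Applying the inductive hypothesis to $\Theta^{\vec k';\vec l}$ rewrites each link of the chain in terms of $\Theta^{\vec k';1,\vec l}$; the links glue consistently precisely by the naturality clause of the hypothesis for the inclusions $\Theta^{\vec m';\vec n'}\into\Theta^{\vec k';\vec l}$, and assembling them produces an isomorphism onto the chain presenting $\Theta^{\vec k;1,\vec l}$, with the end vertices in the roles of $s$ and $t$. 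The remaining case $\vec k=()$, $\vec l=(l_1,\vec l')$ is handled identically, using instead the lax-$l_1$ pasting pattern by which \cite[Definition 5.7]{JFS} builds $\Theta^{;\vec l}$ and $\Theta^{;1,\vec l}$ out of $\Theta^{;\vec l'}$ and $\Theta^{;1,\vec l'}$ and applying the hypothesis to $\Theta^{;\vec l'}$. In both cases the generating-cell bookkeeping $\sigma(\theta_{i,j})\leftrightarrow\theta_{i,j+1}$ and the placement of $s,t$ propagate from the innermost stage, and naturality in the inclusions $\Theta^{\vec m;\vec n}\into\Theta^{\vec k;\vec l}$ is inherited because every such inclusion is compatible with the chosen pushout presentations.

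The step I expect to be the main obstacle is the compatibility check inside the induction: one must verify that the recursion of \cite[Definition 5.7]{JFS} removes a level at exactly the ``end'' at which the unreduced suspension introduces its new bottom dimension and at which the symbol $1$ is prepended to the lax tuple of $\Theta^{\vec k;1,\vec l}$, so that the two chain (\resp lax-pasting) decompositions match up generating cell by generating cell; and, relatedly, that after collapsing the source and target copies of $\Theta^{\vec k}$ the generating cells of $\Theta^{\vec k;1,\vec l}$ that survive are precisely the $\sigma$-images of those of $\Theta^{\vec k;\vec l}$, with the incidence relations matching. Given the explicit combinatorics of \cite{JFS} this is routine but bookkeeping-heavy; the uniqueness statement and the naturality clause are what allow the locally defined isomorphisms to be glued without ambiguity.
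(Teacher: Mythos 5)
There is a genuine gap in the existence half of your argument: the structural claim your induction rests on --- that \cite[Definition 5.7]{JFS} presents $\Theta^{\vec k;\vec l}$ and $\Theta^{\vec k;1,\vec l}$, for $\vec k=(k_1,\vec k')$, as the \emph{same} chain of $k_1$ copies of $\Sigma\Theta^{\vec k';\vec l}$ and $\Sigma\Theta^{\vec k';1,\vec l}$ --- is false. A suspension has exactly two vertices, so a chain of $k_1$ suspensions glued end to end has $k_1+1$ generating $0$-cells; but already $\Theta^{(1);(1)}$, the walking lax square, has four generating $0$-cells (your chain would be a single copy of $\Sigma\Theta^{();(1)}$, with two). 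The lax direction $\vec l$ contributes its own vertices, which no amount of suspending in the $\vec k$-direction can produce; the only suspension-type relation these computads satisfy is precisely the one the lemma asserts, namely that $\Sigma\Theta^{\vec k;\vec l}$ is the quotient of $\Theta^{\vec k;1,\vec l}$ collapsing the two end copies of $\Theta^{\vec k}$, and that cannot be fed back into the induction. (What is true, and is reflected in the structural inclusions $\Theta^{(i),\vec k;\vec l}\into\Theta^{(i-1),n,\vec k;\vec l}$, is a Segal-type decomposition of $\Theta^{(k_1,\vec k');\vec l}$ into $k_1$ copies of $\Theta^{(1,\vec k');\vec l}$ glued along copies of $\Theta^{(0,\vec k');\vec l}$; but the link $\Theta^{(1,\vec k');\vec l}$ is not $\Sigma\Theta^{\vec k';\vec l}$, so this does not lower the level count.) The upshot is that your induction never actually confronts the mixed case $\vec k=(p)$, $\vec l=(q)$ with $p,q\geq 1$, which is the heart of the lemma and is not ``routine bookkeeping''.

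For comparison, the paper's proof attacks exactly that case by induction on $p+q$: the skeleta $\partial\Theta^{(p);(q)}$ and $\partial\Theta^{(p);(q+1)}$ are glued from lower-dimensional $\Theta$'s in the same pattern (\cite[Remark 3.6]{JFS}), so the inductive hypothesis, the naturality clause, and the fact that $\Sigma$ preserves pushouts of computads (your correct observation, but applied to the skeletal decomposition rather than to a nonexistent chain-of-suspensions presentation) give the isomorphism on skeleta; each side is then obtained by attaching a \emph{single} top generating $(p+q+1)$-morphism, which is uniquely determined by the prescribed cells in its source and target, so the isomorphism extends uniquely; general $\vec k,\vec l$ then follow by a further induction using the third condition. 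Relatedly, your uniqueness paragraph overstates what the bullets give: in general there are several generating morphisms of a given type $\theta_{i,j+1}$ (e.g.\ the two vertical edges of the lax square), so the second bullet alone does not determine the image of $\sigma(\theta_{i,j})$; what pins it down is compatibility with sources and targets together with the naturality clause, i.e.\ essentially the same induction that establishes existence.
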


In the last condition, the ``structural inclusions'' are the horizontal source and target inclusions 
\[s_h,t_h\colon \Theta^{(p-1);(q)}\into \Theta^{(p);(q)},\]
 the $n$ horizontal inclusions $\Theta^{(i),\vec k;\vec l}\into \Theta^{(i-1),n,\vec k;\vec l}$, and their vertical analogues. Both assignments $\Theta^{\vec k;\vec l}\mapsto \Sigma\Theta^{\vec k;\vec l}$ and $\Theta^{\vec k;\vec l}\mapsto \Theta^{\vec k;1,\vec l}$ are functorial with respect to these inclusions in an obvious way.

\begin{proof}
	As a reality check, note that the underlying CW complexes are homeomorphic, as an instance of the fact that the unreduced suspension $\Sigma(A\times B)$ is obtained from $A\times\Sigma B$ by collapsing $A\times 0$ and $A\times 1$. We must however verify that this homeomorphism preserves the directionality of each cell.
	
	The three conditions determine the obvious isomorphisms $\Sigma\Theta^{(p)}\simeq \Theta^{(p+1)}$.
	Suppose that $\vec k = (p)$ and $\vec l=(q)$ for some $p,q\geq 1$. We extend the previous isomorphisms to this case by induction on $p+q$.
	The skeletons $\del\Theta^{(p);(q)}$ and $\del\Theta^{(p);(q+1)}$ are obtained by gluing lower-dimensional $\Theta$'s in the exact same way (see \cite[Remark 3.6]{JFS}). Using the induction hypothesis and the third condition, and noting that $\Sigma$ preserves pushouts of computads, we obtain the isomorphism
	\[
	\del\Sigma\Theta^{(p);(q)}=\Sigma\del\Theta^{(p);(q)} \simeq *\cup_{\Theta^{(p)}} \del\Theta^{(p);(q+1)} \cup_{\Theta^{(p)}} * = \del(*\cup_{\Theta^{(p)}} \Theta^{(p);(q+1)} \cup_{\Theta^{(p)}} *).
	\]
	Both $\Sigma\Theta^{(p);(q)}$ and $*\cup_{\Theta^{(p)}} \Theta^{(p);(q+1)} \cup_{\Theta^{(p)}} *$ are obtained from their skeletons by adjoining a single generating $(p+q+1)$-morphism: for $\Sigma\Theta^{(p);(q)}$, one adds the morphism $\sigma(\theta_{p,q})$, which is uniquely determined by having $\sigma(s_h\theta_{p-1,q})$ in its source and $\sigma(t_h\theta_{p-1,q})$ in its target; for $*\cup_{\Theta^{(p)}} \Theta^{(p);(q+1)} \cup_{\Theta^{(p)}} *$, one adds the morphism $\theta_{p,q+1}$, which is uniquely determined by having $s_h\theta_{p-1,q+1}$ in its source and $t_h\theta_{p-1,q+1}$ in its target. By the second and third conditions, the isomorphism between the skeletons extends uniquely to an isomorphism $\Sigma\Theta^{(p);(q)}\simeq *\cup_{\Theta^{(p)}} \Theta^{(p);(q+1)} \cup_{\Theta^{(p)}} *$. Finally, these isomorphisms extend uniquely to the general case by induction, using the third condition.
\end{proof}

\begin{proposition}\label{prop:laxloops}
	Let $\scr C$ be an $(\infty,n)$-category, $\vec k\in(\Delta^\op)^{n-1}$, and $X,Y\in\scr C$. Then
	\[
	\Lax_{\vec k,0}(\scr C)(X,Y) \simeq \Lax_{\vec k}(\scr C(X,Y)),
	\]
	naturally in $\vec k$.
	In particular, if $\scr C$ is a symmetric monoidal $(\infty,n)$-category, then
	\[
	\Omega\Lax_{\vec k,0}(\scr C) \simeq \Lax_{\vec k}(\Omega\scr C).
	\]
\end{proposition}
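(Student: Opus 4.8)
The plan is to prove the identity $\Lax_{\vec k,0}(\scr C)(X,Y) \simeq \Lax_{\vec k}(\scr C(X,Y))$ levelwise in a simplicial direction $\vec m$, and then deduce the symmetric monoidal consequence by applying it with $X = Y = \mathbf 1$ and invoking the componentwise structure of $\Omega$. By definition, $\Lax_{\vec k,0}(\scr C)_{\vec m} = \maps^h(\Theta^{\vec k,0;\vec m},\scr C)$, and the mapping object $\scr D(X,Y)$ of any $(\infty,n)$-category $\scr D$ is built by the fiber-product formula $\scr D(X,Y)_{\vec\bullet} = \{X\}\times^h_{\scr D_{0,\vec\bullet}}\scr D_{1,\vec\bullet}\times^h_{\scr D_{0,\vec\bullet}}\{Y\}$. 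So the left-hand side at level $\vec m$ is the space of maps $\Theta^{\vec k,0;\vec m}\to\scr C$ that, in the outermost simplicial direction, restrict on the two endpoints to the objects $X$ and $Y$ respectively. The key combinatorial input is the suspension identity of Lemma~\ref{lem:Thetas}: $\Sigma\Theta^{\vec k;\vec l}\simeq *\cup_{\Theta^{\vec k}}\Theta^{\vec k;1,\vec l}\cup_{\Theta^{\vec k}} *$, together with the fact that $\maps^h(\Sigma\Theta,\scr C)\simeq\maps^h(\Theta,\scr C_{1,\vec\bullet})$.

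The heart of the argument is a bookkeeping translation between two parametrizations of the same computad. First I would observe that a $k$-morphism in the outermost direction of $\Lax_{\vec k,0}$ corresponds, via the inductive structure of the $\Theta^{\vec k;\vec l}$'s in \cite[Definition 5.7]{JFS}, to the suspension of a lower computad; concretely, $\Theta^{(p),\vec k;\vec l}$ (where we single out the last/outermost coordinate $0$ of the first tuple, so $p=0$) should be expressible, after collapsing its source and target copies, as $\Sigma$ applied to a computad of the form $\Theta^{\vec k;\vec l}$ built out of the mapping category's data. Then $\maps^h(\Theta^{\vec k,0;\vec m},\scr C)$ with fixed endpoints $X,Y$ becomes, by the adjunction $\maps^h(\Sigma\Theta,\scr C)\simeq\maps^h(\Theta,\scr C_{1,\vec\bullet})$ restricted to the fiber over $(X,Y)\in\scr C_{0,\vec\bullet}\times\scr C_{0,\vec\bullet}$, precisely $\maps^h(\Theta^{\vec k;\vec m},\scr C(X,Y))$, which is $\Lax_{\vec k}(\scr C(X,Y))_{\vec m}$ by definition. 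Naturality in $\vec k$ is exactly the naturality statement built into Lemma~\ref{lem:Thetas} with respect to the structural inclusions, so one gets the equivalence of $(n-1)$-fold simplicial spaces, hence of complete $(n-1)$-fold Segal spaces, for free once the levelwise statement is in place.

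For the symmetric monoidal addendum, I would take $X = Y = \mathbf 1$ in the first equivalence, giving $\Omega\Lax_{\vec k,0}(\scr C) = \Lax_{\vec k,0}(\scr C)(\mathbf 1,\mathbf 1)\simeq \Lax_{\vec k}(\scr C(\mathbf 1,\mathbf 1)) = \Lax_{\vec k}(\Omega\scr C)$, and then check this respects the symmetric monoidal structures. Since $\Lax_{\vec k}$ is defined componentwise ($\Lax_{\vec k}(\scr C)[m]=\Lax_{\vec k}(\scr C[m])$), since $\Omega$ is defined componentwise ($(\Omega\scr C)[m]=\scr C[m](\mathbf 1,\mathbf 1)$), and since the equivalence above is natural in $\scr C$, one can apply it to $\scr C[m]$ for each pointed finite set $[m]$ and obtain the compatibility with the Segal diagrams; this is routine given the naturality already established.

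The main obstacle I expect is the first combinatorial identification: verifying that the outermost-direction slice of $\Theta^{\vec k,0;\vec m}$ is genuinely a suspension of the computad $\Theta^{\vec k;\vec m}$ appearing in the definition of $\Lax_{\vec k}(\scr C(X,Y))$, with all directionalities of cells matching. This is the point where one must carefully unwind the inductive definition from \cite{JFS} and match it against the suspension $\Sigma$; Lemma~\ref{lem:Thetas} is tailored to supply exactly this, but pinning down that the index $0$ in the first tuple is what triggers the suspension (as opposed to, say, a $1$) and that the pushout along $\Theta^{\vec k}$ records precisely the ``fix the endpoints'' condition is the delicate part. Once that dictionary is fixed, everything else is formal manipulation of $\maps^h$ and homotopy fiber products.
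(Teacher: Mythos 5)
Your proposal is correct and is essentially the paper's own argument: one computes levelwise via the fiber-product formula for mapping objects, observes that collapsing the two boundary copies of $\Theta^{\vec k}$ in $\Theta^{\vec k;1,\vec l}$ simultaneously encodes the fixed-endpoint condition and, by Lemma~\ref{lem:Thetas}, yields $\Sigma\Theta^{\vec k;\vec l}$, then applies $\maps^h(\Sigma\Theta,\scr C)\simeq\maps^h(\Theta,\scr C_{1,\vec\bullet})$ and commutes $\maps^h$ past the homotopy fibers over $X$ and $Y$, with the statement about $\Omega$ following by taking $X=Y=\mathbf{1}$ and using naturality together with the componentwise definitions of $\Omega$ and $\Lax_{\vec k}$, just as you say. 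The only slip is in your index bookkeeping: the suspension is not ``triggered'' by the trailing $0$ appended to $\vec k$, but by the extra $1$ in the lax direction --- the relevant space is $\Lax_{\vec k,0}(\scr C)_{1,\vec l}$, i.e.\ maps out of $\Theta^{\vec k;1,\vec l}$, whose endpoint-collapse $*\cup_{\Theta^{\vec k}}\Theta^{\vec k;1,\vec l}\cup_{\Theta^{\vec k}}*$ is what Lemma~\ref{lem:Thetas} identifies with $\Sigma\Theta^{\vec k;\vec l}$, exactly as you anticipated.
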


\begin{proof}
	We have natural equivalences
	\begin{align*}
	\Lax_{\vec k,0}(\scr C)(X,Y)_{\vec l} &= \{X\}\times^h_{\scr C_{\vec k,0}} \Lax_{\vec k,0}(\scr C)_{1,\vec l} \times^h_{\scr C_{\vec k,0}} \{Y\} \\
	&\simeq \{X\}\times^h_{\scr C_{(0)}} (\scr C_{(0)}\times^h_{\scr C_{\vec k,0}} \Lax_{\vec k,0}(\scr C)_{1,\vec l} \times^h_{\scr C_{\vec k,0}}\scr C_{(0)}) \times^h_{\scr C_{(0)}} \{Y\} \\
	&\simeq \{X\} \times^h_{\scr C_{(0)}} \maps^h(*\cup_{\Theta^{\vec k}} \Theta^{\vec k;1,\vec l} \cup_{\Theta^{\vec k}} *,\scr C) \times^h_{\scr C_{(0)}} \{Y\}\\
	&\stackrel{(*)}\simeq \{X\} \times^h_{\scr C_{(0)}} \maps^h(\Sigma\Theta^{\vec k;\vec l},\scr C) \times^h_{\scr C_{(0)}} \{Y\}\\
	&\simeq \{X\} \times^h_{\scr C_{(0)}} \maps^h(\Theta^{\vec k;\vec l},\scr C_{1,\vec\bullet}) \times^h_{\scr C_{(0)}} \{Y\}\\
	&\simeq\maps^h(\Theta^{\vec k;\vec l},\{X\}\times^h_{\scr C_{(0)}}\scr C_{1,\vec\bullet}\times^h_{\scr C_{(0)}}\{Y\}) = \Lax_{\vec k}(\scr C(X,Y))_{\vec l},
	\end{align*}
	where $(*)$ is the isomorphism from Lemma~\ref{lem:Thetas}.
\end{proof}

Let $\scr B$ and $\scr C$ be symmetric monoidal $(\infty,n)$-categories. Then there is a morphism of complete $(n-1)$-fold Segal spaces
\begin{equation}\label{eqn:Omega}
\Omega\colon \iota_{n-1}\Fun^\oplax_\tens(\scr B,\scr C) \to \Fun^\oplax_\tens(\Omega\scr B,\Omega\scr C)
\end{equation}
defined levelwise as follows. Its component at $\vec k\in(\Delta^\op)^{n-1}$ is the composition
\[
 \maps^h_\tens(\scr B,\Lax_{\vec k,0}(\scr C)) \xrightarrow \Omega \maps^h_{\tens}(\Omega\scr B,\Omega\Lax_{\vec k,0}(\scr C)) \simeq \maps^h_\tens(\Omega\scr B,\Lax_{\vec k}(\Omega\scr C)),
\]
where the equivalence is Proposition~\ref{prop:laxloops}.

If $\scr B$ and $\scr C$ are (symmetric monoidal) $(\infty,n)$-categories, there is an evaluation morphism of complete $n$-fold Segal spaces
\begin{equation}\label{eqn:evaluation}
\iota_0\scr B \times \Fun^\oplax_{(\tens)}(\scr B,\scr C) \to \scr C
\end{equation}
defined levelwise as follows. Its component at $\vec k\in(\Delta^\op)^n$ is the evaluation map
\[
\scr B_{(0)} \times \maps^h_{(\tens)}(\scr B,\Lax_{\vec k}(\scr C)) \to \Lax_{\vec k}(\scr C)_{(0)} = \scr C_{\vec k}.
\]

\begin{definition}\label{def:trace}
	Let $\scr C$ be a symmetric monoidal $(\infty,n)$-category. Then the trace functor
	\[\Tr\colon \End(\scr C)\to\Omega\scr C\]
	is the composition
	\[
	\End(\scr C)=\iota_{n-1}\Fun^\oplax_\tens(\Fr^\rig(B\N),\scr C) \xrightarrow\Omega \Fun^\oplax_\tens(\Omega \Fr^\rig(B\N),\Omega\scr C) \to \Omega\scr C,
	\]
	where the last map is evaluation at the trace of the walking endomorphism.
\end{definition}

We can describe this functor more explicitly as follows.
If $p\leq n-1$, recall that a $p$-morphism in $\End(\scr C)$ is a dualizable object of $\Lax_{(p)}(\scr C)$ with an endomorphism. By definition, the functor $\Tr$ sends such a $p$-morphism to the trace of the given endomorphism, which is an element of $(\Omega\Lax_{(p)}(\scr C))_{(0)}\simeq (\Omega\scr C)_{(p)}$. The case $p=1$ is made explicit by Lemma~\ref{lem:dualizability}: a dualizable object of $\Lax_{(1)}(\scr C)$ is a right dualizable morphism $\phi\colon X\to Y$ in $\scr C$, and an endomorphism of such is a triple $(f,g,\alpha\colon \phi f\to g\phi)$. The trace of this endomorphism in $\Lax_{(1)}(\scr C)$ is exactly the endomorphism of $\id_\mathbf{1}$ in $\Lax_{(1)}(\scr C)$ depicted in~\eqref{eqn:2trace}.

Note that the trace functor of Definition~\ref{def:trace} is natural in $\scr C$. It can be upgraded to a symmetric monoidal functor using a standard trick: any symmetric monoidal $(\infty,n)$-category $\scr C$ is canonically a symmetric monoidal object \emph{in} symmetric monoidal $(\infty,n)$-categories.
	More precisely, there is a diagram
	\begin{tikzequation}
		\label{eqn:tildeC}
		\diagram{ & \Cat^\tens_{(\infty,n)} \\ \Fin_* & \Cat_{(\infty,n)} \\ };
		\arrows (12) edge node[right]{$\ev_{[1]}$} (22) (21-) edge node[below]{$\scr C$} (-22) (21) edge[dashed] node[above left]{$\tilde{\scr C}$} (12);
	\end{tikzequation}
	where $\tilde{\scr C}[n][m]=\scr C[nm]$ (see \cite[\S2.5]{TV2}).
If $\scr A$ is an $\infty$-category with finite products and $F\colon \Cat_{(\infty,n)}^\otimes\to \scr A$ is a functor that preserves finite products, then
\[
\Cat_{(\infty,n)}^\otimes \to \CMon(\scr A), \quad \scr C\mapsto F\circ \tilde{\scr C},
\]
is a lift of $F$ to the $\infty$-category of commutative monoids in $\scr A$.
Applying this to the functor
\[
\End\colon \Cat_{(\infty,n)}^\otimes \to\Cat_{(\infty,n-1)},
\]
we obtain a canonical symmetric monoidal structure on $\End(\scr C)$, namely $[n]\mapsto\End(\tilde{\scr C}[n])$.
At the level of objects, this symmetric monoidal product is given by $(X,f)\tens (Y,g)=(X\tens Y,f\tens g)$.

\begin{definition}\label{def:monoidaltrace}
	Let $\scr C$ be a symmetric monoidal $(\infty,n)$-category.
	The symmetric monoidal trace functor
	\[\Tr_\otimes\colon \End(\scr C)\to \Omega\scr C\]
	is the symmetric monoidal functor whose $[n]$th component is \[\Tr\colon \End(\tilde{\scr C}[n]) \to \Omega(\tilde {\scr C}[n])=\scr C[n](\mathbf{1},\mathbf{1}).\]
\end{definition}

\begin{remark}\label{rmk:kfoldTrace}
The trace functor of Definition~\ref{def:monoidaltrace} may be iterated, yielding for any $k\leq n$ a symmetric monoidal $(\infty,n-k)$-functor
\[
\Tr_\otimes^{(k)}\colon \End^k(\scr C) \to \Omega^k\scr C
\]
defined inductively as the composite
\[
\End(\End^{k-1}(\scr C)) \xrightarrow{\End(\Tr_\otimes^{(k-1)})} \End(\Omega^{k-1}\scr C) \xrightarrow{\Tr_\otimes} \Omega\Omega^{k-1}\scr C.
\]
Roughly speaking, an object in $\End^k(\scr C)$ is an object of $\scr C$ equipped with $k$ laxly commuting endomorphisms, with the minimal dualizability conditions that make it possible to take their traces successively. For example, if $n=k=2$, $\End^2(\scr C)$ is the $\infty$-groupoid of lax squares in the subcategory $\scr C^\mathrm{fd}\subset\scr C$ of fully dualizable objects, and $\Tr_\otimes^{(2)}\colon \End^2(\scr C)\to\Omega^2\scr C$ is a homotopy coherent and multiplicative enhancement of the secondary trace considered in \cite{BN2}.
\end{remark}

\subsection{The circle-invariant trace}
\label{sub:S1trace}

We conclude this section with a generalization of the $S^1$-invariant trace from \cite{TV2}. First we introduce the subcategory $\Aut(\scr C)\subset\End(\scr C)$ such that $\Aut(\scr C)_{\vec k}$ is the sub-space of $\End(\scr C)_{\vec k}$ consisting of those pairs $(X\in\Lax_{\vec k}(\scr C),f\colon X\to X)$ where $f$ is an equivalence. Equivalently, if we denote by $S^1=B\Z$ the groupoid completion of $B\N$, then:
\[
\Aut(\scr C) = \Fun^\oplax_\tens(\Fr^\rig(S^1),\scr C).
\]
Using~\eqref{eqn:tildeC}, the assignment $[n]\mapsto \Aut(\tilde{\scr C}[n])$ defines a symmetric monoidal structure on $\Aut(\scr C)$ such that the inclusion $\Aut(\scr C)\subset\End(\scr C)$ is symmetric monoidal.

The groupoid $S^1$ has the structure of an $\infty$-group, \ie, a group object in the $\infty$-category $\scr S$. This group structure can be specified in many equivalent ways: it is induced by complex multiplication on the unit circle in $\bb C$, by the group structure on the simplicial bar construction on $\Z$, or by the homotopy equivalence $S^1\simeq\Omega(\bb C\bb P^\infty)$. 

\begin{definition}
If $X$ is an object in an $\infty$-category $\scr C$ and $G$ is an $\infty$-group, an action of $G$ on $X$ is a functor $BG\to \scr C$ sending the base point to $X$.
\end{definition}

An action of $S^1$ on $X\in\scr C$ is thus a morphism of $E_2$-spaces $\Z\to\Omega^2_X\iota_0\scr C$. Such an action determines a self-homotopy of $\id_X$ (the image of $1\in\Z$), but also some additional data, since $\Z$ is not freely generated by $1$ as an $E_2$-space.

Being an $\infty$-group, $S^1$ acts on itself and hence it acts on $\Aut(\scr C)$. The self-equivalence of the identity functor on $\Aut(\scr C)$ induced by this action is given by $(f,\id_{f^2})\colon (X,f)\to (X,f)$.

\begin{theorem}\label{thm:S1trace}
	The symmetric monoidal trace functor
\[
\Tr_\otimes\colon \Aut(\scr C) \to\Omega\scr C
\]
(Definition~\ref{def:monoidaltrace})
admits a canonical $S^1$-invariant refinement which is natural in the symmetric monoidal $(\infty,n)$-category $\scr C$.
\end{theorem}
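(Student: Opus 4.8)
The plan is to leverage the corepresentability philosophy underlying Definition~\ref{def:trace}, replacing the walking endomorphism $B\N$ by the walking automorphism $S^1 = B\Z$, and tracking the residual $S^1$-action throughout. Recall that $\Aut(\scr C) = \Fun^\oplax_\tens(\Fr^\rig(S^1),\scr C)$ and that the trace functor factors through the $\Omega$-morphism~\eqref{eqn:Omega} followed by evaluation~\eqref{eqn:evaluation} at a chosen element of $\Omega\Fr^\rig(S^1)$ (the trace of the walking automorphism). The point is that $S^1$, being an $\infty$-group, acts on $\Fr^\rig(S^1)$ via the conjugation/translation action of $S^1$ on itself; since $\Fr^\rig$ is a functor $\Cat_{(\infty,n)}\to\Cat_{(\infty,n)}^\tens$, this produces an $S^1$-action on $\Fr^\rig(S^1)$ in symmetric monoidal $(\infty,n)$-categories. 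Transporting this along $\Fun^\oplax_\tens(-,\scr C)$ gives precisely the tautological $S^1$-action on $\Aut(\scr C)$ described just before the theorem statement. The whole of the trace construction is then manifestly equivariant \emph{except} for the final evaluation step: so the crux is to promote the trace of the walking automorphism to an $S^1$-\emph{fixed point} of $\Omega\Fr^\rig(S^1)$, equivariantly and naturally.

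Thus I would proceed as follows. First, make the $S^1$-action on $\Aut(\scr C)$ precise by exhibiting the functor $BS^1 \to \Cat_{(\infty,n-1)}$, $\ast\mapsto\Aut(\scr C)$, as $\Fun^\oplax_\tens(\Fr^\rig(-),\scr C)$ applied to the action $BS^1\to\Cat_{(\infty,n)}$, $\ast\mapsto S^1$ (which is just $B$ applied to the group object $S^1$, whose underlying object is $S^1$ itself). Since $\Fun^\oplax_\tens(-,\scr C)$ is contravariant but $S^1$ is a group, left and right actions agree and we obtain a genuine $S^1$-action; one checks the induced self-equivalence of $\id_{\Aut(\scr C)}$ is $(f,\id_{f^2})$ as claimed, using that the $E_2$-structure on $\Z\simeq\Omega^2\bb C\bb P^\infty$ records exactly this datum. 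Second, observe that both of the intermediate maps $\Omega$ of~\eqref{eqn:Omega} and the partial evaluation $\iota_0\scr B\times\Fun^\oplax_\tens(\scr B,\scr C)\to\scr C$ of~\eqref{eqn:evaluation} are natural in the source $\scr B$, hence $S^1$-equivariant when we let $S^1$ act on $\scr B = \Fr^\rig(S^1)$; here $S^1$ acts trivially on $\Omega\scr C$ and on $\scr C$. Therefore the composite defining $\Tr_\otimes$ descends to an $S^1$-equivariant map as soon as the specific element $\mathrm{tr}(\mathrm{id}_{\text{univ}})\in\Omega\Fr^\rig(S^1)$ at which we evaluate is chosen $S^1$-equivariantly — i.e. as a map of $S^1$-spaces $\ast\to\iota_0\Omega\Fr^\rig(S^1)$ with $\ast$ carrying the trivial action, which is the same as a point of the homotopy fixed points $\bigl(\iota_0\Omega\Fr^\rig(S^1)\bigr)^{hS^1}$.

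Third, and this is the heart of the matter, construct that fixed point. The clean way is the $n=1$ case, which is exactly what To\"en--Vezzosi prove in \cite[\S2.3]{TV2}: for $n=1$, $\Fr^\rig(S^1)$ is an honest rigid symmetric monoidal $(\infty,1)$-category, $\iota_0\Omega\Fr^\rig(S^1)$ is the relevant endomorphism space, and the trace of the universal automorphism is $S^1$-invariant because the universal automorphism itself is the "rotation" $S^1$-fixed point of $\Aut$ evaluated at the universal object, together with the general $S^1$-invariance of traces coming from the $1$-dimensional cobordism hypothesis. Concretely: $\Aut(\Fr^\rig(S^1))$ has a tautological object — the image of $1\in\N$ under $\mathrm{id}\colon\Fr^\rig(S^1)\to\Fr^\rig(S^1)$, equipped with its universal automorphism — and this object is manifestly an $S^1$-fixed point (it corresponds to the identity symmetric monoidal functor, which the $S^1$-action fixes because $S^1$ acts by precomposition with automorphisms of $\Fr^\rig(S^1)$ that become inner, hence trivial on $\iota_0$). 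Applying the $n=1$ trace $\Tr_\otimes\colon\Aut(\Fr^\rig(S^1))\to\Omega\Fr^\rig(S^1)$, which we already know to be $S^1$-invariant by loc.\ cit.\ (or, for a self-contained argument, by applying the theorem being proved, in the base case, to $\scr C=\Fr^\rig(S^1)$ — but that would be circular, so I would instead cite \cite[\S2.3]{TV2} directly for $n=1$), yields the desired point of $\bigl(\iota_0\Omega\Fr^\rig(S^1)\bigr)^{hS^1}$. Feeding this fixed point into the equivariant composite of Step~2 gives the $S^1$-invariant refinement of $\Tr_\otimes\colon\Aut(\scr C)\to\Omega\scr C$, and naturality in $\scr C$ is inherited from the naturality of each constituent map. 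The main obstacle I anticipate is purely bookkeeping: making the $S^1$-equivariance of~\eqref{eqn:Omega} and~\eqref{eqn:evaluation} genuinely homotopy-coherent rather than merely pointwise, which requires working with the strict functorial models $\maps^h$, $\times^h$ and the computads $\Theta^{\vec k;\vec l}$ of \cite{JFS} so that all the Segal spaces in sight are strictly functorial in $\scr B$; once that is set up, the action transports formally and the reduction to $n=1$ carries the geometric content.
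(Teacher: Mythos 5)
Your reduction is the right one, and it is the same as the paper's: by naturality of \eqref{eqn:Omega} and \eqref{eqn:evaluation} in the source variable, everything comes down to refining the trace of the walking automorphism to a point of $\bigl(\Omega\Fr^\rig(S^1)\bigr)^{hS^1}$ for the rotation action. The gap is in your third step, where you claim the walking automorphism --- the object of $\Aut(\Fr^\rig(S^1))$ corresponding to the identity functor --- is ``manifestly'' an $S^1$-fixed point of the tautological (precomposition) action. It is not, and in fact it admits no homotopy fixed-point structure at all. The monodromy of the tautological action is the self-equivalence of $\id_{\Aut(\scr C)}$ with component $(f,\id_{f^2})$ at $(X,f)$; a fixed-point structure on a point $x$ requires, as its first obstruction, that the resulting loop at $x$ be nullhomotopic. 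At the walking automorphism $(U,u)$ this loop is the automorphism $(u,\id)$, and trivializing it would force $u\simeq \id_U$ in $\Fr^\rig(S^1)$, which is false: symmetric monoidal functors out of $\Fr^\rig(S^1)$ can send $u$ to a nontrivial automorphism (e.g.\ multiplication by a nontrivial unit on the tensor unit of $\Perf(k)$). Your justification --- that each rotation $g^*$ is ``inner, hence trivial on $\iota_0$'' --- only says the action is trivial on path components; a fixed-point structure needs a coherent trivialization of the whole loop $g\mapsto g^*$, whose first obstruction is exactly the nontrivial class above. (This nontriviality is precisely what gives the theorem its content, cf.\ Remark~\ref{rmk:S1action}.) There is also an action mismatch: the $n=1$ statement of \cite[\S2.3]{TV2} is invariance with the \emph{trivial} action on $\Omega\scr D$, so even with a fixed input it would produce a fixed point for the trivial action, not for the rotation action needed at your evaluation step.

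What the paper uses instead, and what your argument is missing, is the cobordism-hypothesis input \cite[Th\'eor\`eme 2.18]{TV2}: the trace of the walking automorphism lies in a contractible, $S^1$-invariant component of the $\infty$-groupoid $\Omega\Fr^\rig(S^1)$, so $\bigl(\Omega\Fr^\rig(S^1)\bigr)^{hS^1}\to\Omega\Fr^\rig(S^1)$ is an equivalence over that component, and the desired refinement exists and is unique (this is also where canonicity comes from, which your sketch does not address). One could try to salvage a ``reduce to $n=1$'' argument by letting $S^1$ act on $\Fun^\tens(\Fr^\rig(S^1),\Fr^\rig(S^1))$ by \emph{conjugation} (precomposition combined with the functorial postcomposition action), for which $\id$ is a canonical fixed point, and then combining the tautological invariance of the $n=1$ trace with its naturality in $\scr D$; but this requires nontrivial coherence bookkeeping you have not supplied, and it still rests on the To\"en--Vezzosi theorem whose proof is the contractibility statement, so it repackages rather than replaces that input.
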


\begin{proof}
The morphisms~\eqref{eqn:Omega} and~\eqref{eqn:evaluation} are natural in both $\scr B$ and $\scr C$. 
In particular, they give rise to a morphism of $\infty$-groupoids
\[
\iota_0\Omega\scr B \to \Map(\iota_{n-1}\Fun^\oplax_\tens(\scr B,\ph), \Omega)
\]
natural in $\scr B$,
where the mapping space on the right-hand side is taken in the $(\infty,1)$-category of functors $\Cat_{(\infty,n)}^\tens \to \Cat_{(\infty,n-1)}$. Moreover, the construction~\eqref{eqn:tildeC} gives a natural map
\[
\Map(\iota_{n-1}\Fun^\oplax_\tens(\scr B,\ph), \Omega) \to \Map_{\otimes}(\iota_{n-1}\Fun^\oplax_\tens(\scr B,\ph), \Omega),
\]
where $\Map_{\otimes}$ denotes a mapping space in the $(\infty,1)$-category of functors $\Cat_{(\infty,n)}^\tens \to \Cat_{(\infty,n-1)}^\otimes$.
Taking $\scr B=\Fr^\rig(S^1)$ with its action of $S^1$, we obtain an $S^1$-equivariant morphism of $\infty$-groupoids
\[
\Omega\Fr^\rig(S^1) \to \Map_{\otimes}(\Aut, \Omega),
\]
which sends the trace of the walking automorphism to $\Tr_{\otimes}$.
Taking homotopy $S^1$-fixed points, we obtain a commutative square
\begin{tikzequation}\label{eqn:hS1}
	\diagram{
	(\Omega\Fr^\rig(S^1))^{hS^1} & \Map_{\otimes}(\Aut, \Omega)^{hS^1} \\
	\Omega\Fr^\rig(S^1) & \Map_{\otimes}(\Aut, \Omega)\rlap.\\
	};
	\arrows (11-) edge (-12) (11) edge (21) (21-) edge (-22) (12) edge (22);
\end{tikzequation}
At this point we invoke the $1$-dimensional cobordism hypothesis via \cite[Théorème 2.18]{TV2}: the trace of the walking automorphism lives in a contractible component of the $\infty$-groupoid $\Omega\Fr^\rig(S^1)$, and hence the left vertical map in~\eqref{eqn:hS1} is an equivalence over that component. Thus, the trace of the walking automorphism has a unique $S^1$-invariant refinement, whose image by the top horizontal map of~\eqref{eqn:hS1} is an element of $\Map_{\otimes}(\Aut, \Omega)^{hS^1}$ refining $\Tr_{\otimes}$, as desired.
\end{proof}

\begin{remark}
	By iterating the symmetric monoidal $S^1$-invariant trace, as in Remark~\ref{rmk:kfoldTrace}, we deduce that the $k$-fold trace $\Tr^{(k)}_{\otimes}\colon \Aut^k(\scr C)\to\Omega^k\scr C$ is invariant for the action of the $k$-dimensional torus $(S^1)^k$ on $\Aut^k(\scr C)$.
\end{remark}

\begin{remark}\label{rmk:S1action}
	Concretely, the $S^1$-invariant refinement of the trace provides, for every $(A,f)\in\Aut(\scr C)$, a homotopy between the morphism $\Tr(f)\to \Tr(f)$ induced by $f$ and the identity. When $f=\id_A$, it recovers the action of $S^1$ on the Euler characteristic $\chi(A)$ of $A$ (see \cite[Proposition 4.2.1]{Lurie}). The functoriality of the $S^1$-invariant trace encodes in particular the $S^1$-equivariance of the map $\chi(A)\to \chi(B)$ induced by a right dualizable morphism $A\to B$.
\end{remark}

\section{A localization theorem for traces}
\label{sec:localization}

The goal of this section is to show that, if $\scr C$ is a symmetric monoidal $(\infty,2)$-category $\scr C$ with stable mapping $(\infty,1)$-categories, the trace sends special sequences in $\End(\scr C)$, called \emph{localization sequences}, to cofiber sequences in $\Omega\scr C$. 
As we will see in \S\ref{sub:hochschild}, this result generalizes the localization theorem for topological Hochschild homology from \cite[\S7]{BM}. In fact, the proof is based on the same key idea.

\begin{definition}\label{dfn:linear}
	An $(\infty,2)$-category $\scr C$ is called \emph{linear} if the following conditions hold:
	\begin{itemize}
		\item For every $X,Y\in\scr C$, the $(\infty,1)$-category $\scr C(X,Y)$ is stable.
		\item For every $X,Y,Z\in\scr C$, the composition functor $\scr C(X,Y)\times\scr C(Y,Z)\to \scr C(X,Z)$ is exact in each variable.
	\end{itemize}
	If $\scr C$ is moreover symmetric monoidal, we say that it is \emph{linearly symmetric monoidal} if it is linear and if:
	\begin{itemize}
		\item For every $X,Y,Z\in\scr C$, the functor $(\ph)\tens \id_Z\colon \scr C(X,Y)\to \scr C(X\tens Z,Y\tens Z)$ is exact.
	\end{itemize}
\end{definition}

If $\scr C$ is a linearly symmetric monoidal $(\infty,2)$-category, then the restricted trace functor $\Tr\colon\scr C(X,X) \to \Omega\scr C$ is exact for every dualizable object $X\in\scr C$. Indeed, it can be written as the composite
\[
\scr C(X,X) \xrightarrow{(\ph)\tens\id_{X^\vee}} \scr C(X\tens X^\vee, X\tens X^\vee) \xrightarrow{(\ph)\circ\coev} \scr C(\mathbf{1}, X\tens X^\vee) \xrightarrow{\ev\circ(\ph)} \scr C(\mathbf{1},\mathbf{1})=\Omega\scr C.
\]

Recall that a $1$-morphism $(\phi,\alpha)\colon (X,f)\to (Y,g)$ in $\End(\scr C)$ is a square
\begin{tikzmath}
	\diagram{X & Y \\ X & Y \\ };
	\arrows (11-) edge node[above]{$\phi$} (-12) (11) edge node[left]{$f$} (21) (12) edge node[right]{$g$} (22) (21-) edge node[below]{$\phi$} (-22)
	(11) edge[font=\normalsize,draw=none] node[rotate=45]{$\Longrightarrow$} (22)
	(11) edge[draw=none] node[pos=.3]{$\alpha$} (22);
\end{tikzmath}
where $\phi$ has a right adjoint $\phi^r$.
We say that the morphism $(\phi,\alpha)$ is \emph{right adjointable} if the associated push-pull transformation $\alpha^\flat\colon f\phi^r\to \phi^r g$ is an equivalence. Note that morphisms in $\Aut(\scr C)$ are always right adjointable.

\begin{definition}\label{dfn:exact}
	Let $\scr C$ be a linear $(\infty,2)$-category. A sequence
	\[
	X\xrightarrow\iota Y\xrightarrow\pi Z
	\]
	in $\scr C$ is called a \emph{localization sequence} if the following conditions hold:
	\begin{itemize}
		\item $\iota$ and $\pi$ have right adjoints $\iota^r$ and $\pi^r$;
		\item the composite $\pi\iota$ is a zero object in $\scr C(X,Z)$;
		\item the unit $\eta\colon\id_X\to \iota^r \iota$ and the counit $\epsilon\colon \pi\pi^r\to\id_Z$ are equivalences;
		\item the sequence $\iota\iota^r \to \id_Y \to \pi^r\pi$, with its unique nullhomotopy, is a cofiber sequence in $\scr C(Y,Y)$.
	\end{itemize}
	If $\scr C$ is a linearly symmetric monoidal $(\infty,2)$-category, a sequence
	\[
	(X,f) \xrightarrow{(\iota,\alpha)} (Y,g) \xrightarrow{(\pi,\beta)} (Z,h)
	\]
	in $\End(\scr C)$ is called a \emph{localization sequence}
	if $X\stackrel\iota\to Y\stackrel\pi\to Z$ is a localization sequence and moreover the morphisms $(\iota,\alpha)$ and $(\pi,\beta)$ are right adjointable.
\end{definition}

Given a localization sequence
	\[
	(X,f) \xrightarrow{(\iota,\alpha)} (Y,g) \xrightarrow{(\pi,\beta)} (Z,h)
	\]
in $\End(\scr C)$ and a zero object $0\in \scr C(Y,Y)$, there is a \emph{unique} commutative square
\begin{tikzequation}\label{eqn:nullsquare}
	\diagram{(X,f) & (Y,g) \\ (Y,0) & (Z,h) \\ };
	\arrows (11-) edge node[above]{$(\iota,\alpha)$} (-12) (12) edge node[right]{$(\pi,\beta)$} (22)
	(11) edge node[left]{$(\iota,!)$} (21) (21-) edge node[below]{$(\pi,!)$} (-22);
\end{tikzequation}
in $\End(\scr C)$, since the $\infty$-groupoid of zero objects in $\scr C(X,Z)$ is contractible.
In particular, since $\Tr(0)$ is a zero object in $\Omega\scr C$, the sequence
\begin{equation*}\label{eqn:TrSeq}
	\Tr(f)\to\Tr(g)\to \Tr(h)
\end{equation*}
is equipped with a canonical nullhomotopy.

\begin{theorem}
	\label{thm:localization}
	Let $\scr C$ be a linearly symmetric monoidal $(\infty,2)$-category, and let
	\[
	(X,f) \xrightarrow{(\iota,\alpha)} (Y,g) \xrightarrow{(\pi,\beta)} (Z,h)
	\]
	be a localization sequence in $\End(\scr C)$. Then
	\[
	\Tr(f) \to \Tr(g) \to \Tr(h)
	\]
	is a cofiber sequence in $\Omega\scr C$.
\end{theorem}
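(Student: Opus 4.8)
The plan is to reduce the statement to May's additivity theorem for traces in a symmetric monoidal \emph{stable} $\infty$-category, applied to the stable mapping category $\scr C(\mathbf 1,\mathbf 1)=\Omega\scr C$, following the strategy of Blumberg--Mandell. First I would record the explicit formula for the restricted trace functor observed just before the statement: for a dualizable object $W$, $\Tr\colon\scr C(W,W)\to\Omega\scr C$ is the composite of tensoring with $\id_{W^\vee}$, post-composition with $\coev_W$, and pre-composition with $\ev_W$, hence is exact. The key point is then to choose a single dualizable object $W$ through which all three traces $\Tr(f),\Tr(g),\Tr(h)$ can be computed coherently. Following Blumberg--Mandell's idea, $W=Y$ is the right choice: using the equivalences $\id_X\simeq\iota^r\iota$ and $\pi\pi^r\simeq\id_Z$ from the definition of localization sequence together with right adjointability of $(\iota,\alpha)$ and $(\pi,\beta)$, I would identify $\Tr(f)$ with the trace of the endomorphism $\iota f\iota^r\simeq \iota\iota^r g\colon Y\to Y$ (supported on the summand $\iota\iota^r\to\id_Y$) and $\Tr(h)$ with the trace of $\pi^r h\pi\simeq g\pi^r\pi\colon Y\to Y$ (supported on $\id_Y\to\pi^r\pi$); meanwhile $\Tr(g)$ is literally the trace of $g$.

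Concretely, I would make this precise by producing a commutative diagram in $\End(\scr C)$ relating $(X,f)$, $(Y,g)$, $(Z,h)$ to objects of the form $(Y,-)$. The morphism $(\iota,\alpha)\colon(X,f)\to(Y,g)$ has right adjoint in $\End(\scr C)$ given by $(\iota^r,\alpha^\flat)$, where $\alpha^\flat\colon f\iota^r\to\iota^r g$ is the push-pull transformation, an equivalence by right adjointability; since $\eta\colon\id_X\to\iota^r\iota$ is an equivalence, $(\iota,\alpha)$ is in fact a split inclusion with retraction $(\iota^r,\alpha^\flat)$, so applying $\Tr$ (which by the cyclic invariance built into Theorem~\ref{thm:intro-trace}/Definition~\ref{def:trace} is functorial for right adjointable squares) identifies $\Tr(f)$ with the trace of the idempotent-corner endomorphism $\iota\iota^r g$ of $Y$, i.e. with $\Tr_Y\colon\scr C(Y,Y)\to\Omega\scr C$ applied to the map $\iota\iota^r\xrightarrow{}\id_Y\xrightarrow{g}\id_Y$ — here I use that $\alpha$ expresses compatibility of $f$ with $g$ through $\iota$ so that $\iota f\iota^r\simeq \iota\iota^r g$. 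Dually, using $\epsilon\colon\pi\pi^r\to\id_Z$ an equivalence and right adjointability of $(\pi,\beta)$, the morphism $(\pi,\beta)$ is a split projection with section $(\pi^r,\beta^\flat)$, and $\Tr(h)\simeq\Tr_Y(g\circ(\id_Y\to\pi^r\pi))$. The zero-object square~\eqref{eqn:nullsquare} supplies the canonical nullhomotopies making $\Tr(f)\to\Tr(g)\to\Tr(h)$ a (null) sequence compatible with the above identifications.

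With all three terms expressed as $\Tr_Y$ applied to endomorphisms of $Y$, and with $\Tr_Y\colon\scr C(Y,Y)\to\Omega\scr C$ exact, it suffices to show that in $\scr C(Y,Y)$ the sequence of endomorphisms $\bigl(\iota\iota^r g\bigr)\to g\to\bigl(g\,\pi^r\pi\bigr)$ — more precisely $g$ composed with the cofiber sequence $\iota\iota^r\to\id_Y\to\pi^r\pi$ of the last axiom of Definition~\ref{dfn:exact}, using that composition with $g$ is exact by linearity — is a cofiber sequence, and then apply the exact functor $\Tr_Y$. This reduces everything to the defining cofiber sequence $\iota\iota^r\to\id_Y\to\pi^r\pi$ together with exactness of composition and of $\Tr_Y$; the compatibility $2$-morphisms $\alpha,\beta$ enter only to guarantee that the maps of endomorphisms I write down agree with those coming from $\Tr(\iota,\alpha)$ and $\Tr(\pi,\beta)$, which is where I expect the main technical obstacle to lie: carefully matching the $2$-cell data in diagram~\eqref{eqn:2trace} for $(\iota,\alpha)$ and $(\pi,\beta)$ with the multiplication-by-$g$ structure on the Blumberg--Mandell cofiber sequence, i.e. checking that the various unit/counit $2$-morphisms assemble into a genuine map of cofiber sequences in $\scr C(Y,Y)$ rather than merely an objectwise identification. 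Once that bookkeeping is done, exactness of $\Tr_Y$ finishes the proof.
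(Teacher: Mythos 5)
Your proposal follows essentially the same route as the paper (both crediting Blumberg--Mandell): reduce to exactness of the restricted trace $\scr C(Y,Y)\to\Omega\scr C$, replace $f$ and $h$ by the endomorphisms obtained from the cofiber sequence $\iota\iota^r\to\id_Y\to\pi^r\pi$ composed with $g$, and use the uniqueness of the square through the zero object to match nullhomotopies. The paper carries out exactly the ``bookkeeping'' you defer, via an explicit comparison diagram in $\End(\scr C)$ whose vertical maps are checked to be trace equivalences; the only corrections to your sketch are that $g$ must be composed on a fixed side (the paper takes $f'=\iota\iota^r g$, $h'=\pi^r\pi g$, not $g\pi^r\pi$), and that one should not invoke $(\iota^r,\alpha^\flat)$ as a morphism of $\End(\scr C)$, since $\iota^r$ is not assumed right dualizable.
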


\begin{proof}
	Since $\scr C$ is linear, the trace functor $\Tr\colon \scr C(Y,Y)\to \Omega\scr C$ preserves cofiber sequences.
	We will define a cofiber sequence $f'\to g\to h'$ in $\scr C(Y,Y)$ and a diagram
	 \begin{tikzequation}\label{eqn:localization}
	 	\diagram{(X,f) & (Y,g) & (Z,h) \\
		(Y,f') & (Y,g) & (Y,h') \\ };
		\arrows (11-) edge node[above]{$(\iota,\alpha)$} (-12) (12-) edge node[above]{$(\pi,\beta)$} (-13)
		(21-) edge node[below]{$(\id,\alpha')$} (-22) (22-) edge node[below]{$(\id,\beta')$} (-23)
		(11) edge node[left]{$(\iota,\bar\alpha)$} (21) (12) edge[-,vshift=1pt] (22) edge[-,vshift=-1pt] (22) (13) edge[<-] node[right]{$(\pi,\bar\beta)$} (23);
		\end{tikzequation}
		in $\End(\scr C)$, commuting up to homotopy, such that the vertical maps induce equivalences on traces.
		The uniqueness of the square~\eqref{eqn:nullsquare} will then imply that the cofiber sequence $\Tr(f')\to \Tr(g)\to \Tr(h')$ is equivalent to the sequence $\Tr(f)\to\Tr(g)\to\Tr(h)$ with its nullhomotopy, and hence that the latter is a cofiber sequence.
		
		 Let $f'=\iota\iota^r g$, $h'=\pi^r\pi g$, and let
		\begin{gather*}
		\alpha'\colon f'=\iota\iota^r g \stackrel\epsilon\to g,\\
		\beta'\colon g\stackrel\eta\to \pi^r\pi g=h',\\
		\bar\alpha\colon \iota f \stackrel\eta\to \iota f\iota^r\iota \stackrel{\alpha^\flat}\to \iota\iota^r g\iota=f'\iota,\\
		\bar\beta\colon \pi h'=\pi \pi^r\pi g\stackrel\epsilon \to \pi g\stackrel{\beta}\to h\pi.
		\end{gather*}
		The commutativity of the diagram~\eqref{eqn:localization} is then clear. The cofiber sequence $\iota\iota^r\to\id_X\to\pi^r\pi$ shows that $f'\to g\to h'$ is a cofiber sequence in $\scr C(Y,Y)$. It remains to show that $(\iota,\bar\alpha)$ and $(\pi,\bar\beta)$ induce equivalences on traces.

		The morphism induced by $(\iota,\bar\alpha)$ on traces looks as follows:
		\begin{tikzmath}
			\def\rowsep{1em}
			\diagram{
			& X\tens X^\vee & & & X\tens X^\vee & \\
			\mathbf{1} & & & & & \mathbf{1}\rlap. \\
			& Y\tens Y^\vee & & & Y\tens Y^\vee & \\
			};
			\arrows (21) edge (12) edge (32) (12-) edge node[above]{$f\tens \id$} (-15) (32-) edge node[below]{$f'\tens \id$} (-35) (15) edge (26) (35) edge (26) (12) edge node[right]{$\iota\tens\iota^{r\vee}$} (32) (15) edge node[left]{$\iota\tens\iota^{r\vee}$} (35)
			(21) edge[font=\normalsize,draw=none] node[rotate=45]{$\Longleftarrow$} node[pos=.13,rotate=45]{$\Longleftarrow$} node[pos=.87,rotate=45]{$\Longleftarrow$} (26)
			(21) edge[draw=none] node[above left]{$\bar\alpha\tens\id$} (26)
			;
		\end{tikzmath}
		The third $2$-morphism is an equivalence because $\eta\colon \id_X\to \iota^r\iota$ is an equivalence.
		Since moreover $(\iota,\alpha)$ is right adjointable, $\bar\alpha$ is also an equivalence.
		The first $2$-morphism becomes an equivalence when post-composed with $\iota^r\tens \id$ and \textit{a fortiori} when post-composed with $f'\tens\id$, since $f'\simeq \iota f\iota^r$. This shows that $\Tr(\iota,\bar\alpha)$ is an equivalence.
		
		The morphism induced by $(\pi,\bar\beta)$ on traces looks as follows:
		\begin{tikzmath}
			\def\rowsep{1em}
			\diagram{
			& Y\tens Y^\vee & & & Y\tens Y^\vee & \\
			\mathbf{1} & & & & & \mathbf{1}\rlap. \\
			& Z\tens Z^\vee & & & Z\tens Z^\vee & \\
			};
			\arrows (21) edge (12) edge (32) (12-) edge node[above]{$h'\tens \id$} (-15) (32-) edge node[below]{$h\tens \id$} (-35) (15) edge (26) (35) edge (26) (12) edge node[right]{$\pi\tens\pi^{r\vee}$} (32) (15) edge node[left]{$\pi\tens\pi^{r\vee}$} (35)
			(21) edge[font=\normalsize,draw=none] node[rotate=45]{$\Longleftarrow$} node[pos=.13,rotate=45]{$\Longleftarrow$} node[pos=.87,rotate=45]{$\Longleftarrow$} (26)
			(21) edge[draw=none] node[above left]{$\bar\beta\tens\id$} (26)
			;
		\end{tikzmath}
		The first $2$-morphism is an equivalence because $\epsilon\colon \pi\pi^r\to\id_Z$ is an equivalence. The last $2$-morphism is an equivalence when it is pre-composed with $\pi^r\tens\id$, which happens if we decompose $\bar\beta$ as
		\[
		\pi h'\stackrel\eta\to \pi h'\pi^r\pi \stackrel{\bar\beta^\flat}\to \pi\pi^r h\pi \stackrel\epsilon \to h\pi.
		\]
		By assumption, $\epsilon$ is an equivalence. Since moreover $(\pi,\beta)$ is right adjointable, $\bar\beta^\flat$ is also an equivalence.
		It will therefore suffice to show that the $2$-morphism
		\[
		\Tr(h')=\ev_Y (h'\tens \id)\coev_Y \stackrel\eta\to \ev_Y(h'\tens \id) (\pi^r\pi\tens\id)\coev_Y = \Tr(h'\pi^r\pi)
		\]
		is an equivalence. This follows from the fact that $\eta h'\colon h'\to \pi^r\pi h'$ is an equivalence and the cyclicity of the trace.
\end{proof}

\begin{remark}
	It is worth noting that the proof of Theorem~\ref{thm:localization} did not make full use of the linearity of $\scr C$. The same result holds if we only assume that each $\scr C(X,Y)$ is a pointed $\infty$-category admitting cofibers, and that composition and tensoring preserve zero objects and cofiber sequences.
\end{remark}

\section{Preliminaries on $\cE$-linear categories}
\label{prel}
In this section, we study the linearly symmetric monoidal $(\infty, 2)$-category of $\cE$-linear categories, where $\cE$ is a symmetric monoidal $\infty$-category which is small, stable, idempotent complete, and rigid.

\subsection{Categories tensored over $\cE$}
\label{sec:enrich}

\begin{definition}
We denote by
\begin{itemize}
\item $\Cat^\perf$, the $\infty$-category of small, stable, and idempotent complete $\infty$-categories and exact functors. 
\item $\Pr^{\L}_\St$, the $\infty$-category of stable presentable $\infty$-categories and left adjoint functors.  
\end{itemize}
\end{definition}
By \cite[Proposition 5.5.7.10]{HTT}, the ind-completion functor $\Ind\colon \Cat^\perf \to \Pr^\L_\St$ induces an equivalence between $\Cat^\perf$ and the subcategory of $\Pr^\L_\St$ whose objects are the compactly generated stable $\infty$-categories and whose morphisms are the left adjoint functors preserving compact objects.

Given $\infty$-categories (resp.\ stable $\infty$-categories) $\scr A$ and $\scr B$, we denote by $\Fun^\L(\scr A,\scr B)$ (resp.\ by $\Fun^\ex(\scr A,\scr B)$) the full subcategory of $\Fun(\scr A,\scr B)$ spanned by left adjoint functors (resp.\ by exact functors). If $\scr A$ is presentable, a functor $\scr A\to\scr B$ is left adjoint if and only if it preserves small colimits \cite[Corollary 5.5.2.9]{HTT}. If $\scr A$ and $\scr B$ are stable, a functor $\scr A\to\scr B$ is exact if and only if it preserves finite colimits or finite limits \cite[Proposition 1.1.4.1]{HA}.
As a special case of \cite[Proposition 5.3.6.2]{HTT}, the Yoneda embedding $j\colon\scr A\into\Ind(\scr A)$ induces an equivalence of $\infty$-categories
\begin{equation}\label{eqn:PKR}
\Fun^\mathrm{ex}(\scr A,\scr B) \simeq \Fun^\L(\Ind(\scr A),\scr B),
\end{equation}
for any $\scr A\in\Cat^\perf$ and any cocomplete stable $\infty$-category $\scr B$.

Recall from \cite[Section 3.1]{BGT} that $\Cat^\perf$, and $\Pr^\L_\St$ admit symmetric monoidal structures such that
$\Ind\colon \Cat^\perf\to\Pr^\L_\St$ is a symmetric monoidal functor. Their tensor products will be denoted by $\tens$ and $\tens^\L$, respectively. Given $\scr A,\scr B\in\Cat^\perf$, the stable $\infty$-category $\scr A\tens\scr B$ is the recipient of the universal functor $\scr A\times\scr B\to\scr C$ which is exact in each variable. In other words, we have an equivalence of $\infty$-categories
\[
\Fun^\ex(\scr A\tens\scr B,\scr C)\simeq \Fun^\ex(\scr A,\Fun^\ex(\scr B,\scr C))
\]
for all $\scr A,\scr B,\scr C\in\Cat^\perf$. Similarly, given $\scr A,\scr B,\scr C\in\Pr^\L_\St$, the tensor product $\tens^\L$ is characterized by
\[
\Fun^\L(\scr A\tens^\L\scr B,\scr C) \simeq \Fun^\L(\scr A,\Fun^\L(\scr B,\scr C)).
\]

Let $\cC$ be a complete and cocomplete symmetric monoidal $\infty$-category whose tensor product preserves geometric realizations, 
and let $A\in\mathrm{CAlg(\mathrm{\cC})}$. Denote by $\Mod_A(\cC)$ the $\infty$-category of $A$-modules in $\cC$ (see \cite[Section 4.5]{HA}).
By \cite[Theorem 4.5.2.1]{HA}, $\Mod_A(\cC)$ inherits a symmetric monoidal structure whose tensor product we will denote by $\tens_{A}$. Given two $A$-modules $M$ and $N$, $M\tens_AN$ is the colimit of the usual simplicial diagram
\begin{tikzmath}
	\def\colsep{.85em}
	\diagram{
	\dotsb & M\tens A\tens N & M\tens N\rlap. \\
	};
	\arrows
	(11-) edge[-top,vshift=2*\dbl] (-12) edge[-mid] (-12) edge[-bot,vshift=-2*\dbl] (-12)
	(12-) edge[-top,vshift=\dbl] (-13) edge[-bot,vshift=-\dbl] (-13);
\end{tikzmath}
In other words, $M\tens_AN$ is the recipient of the universal $A$-bilinear map $M\tens N\to P$.
If $\cC$ has an internal Hom object $\Hom(\ph,\ph)$, then $\Mod_A(\scr C)$ also has an internal Hom object $\Hom_A(\ph,\ph)$: given $A$-modules $M$ and $N$, $\Hom_A(M,N)$ is the limit of the cosimplicial diagram
\begin{tikzmath}
	\def\colsep{.85em}
	\diagram{
	\Hom(M,N) & \Hom( A,\Hom(M,N)) & \dotsb\rlap. \\
	};
	\arrows
	(12-) edge[-top,vshift=2*\dbl] (-13) edge[-mid] (-13) edge[-bot,vshift=-2*\dbl] (-13)
	(11-) edge[-top,vshift=\dbl] (-12) edge[-bot,vshift=-\dbl] (-12);
\end{tikzmath}

If $\widehat{\Cat}$ denotes the $\infty$-category of (possibly large) $\infty$-categories, a commutative algebra $\scr E$ in $\widehat{\Cat}$ (with the cartesian symmetric monoidal structure) is exactly a symmetric monoidal $\infty$-category; we will refer to objects and morphisms of $\Mod_{\scr E}(\widehat{\Cat})$ as \emph{$\scr E$-module $\infty$-categories} and \emph{$\scr E$-module functors}, respectively.
Given $\scr E$-module $\infty$-categories $\scr A$ and $\scr B$, we denote by $\Fun_{\scr E}(\scr A,\scr B)$ the $\infty$-category of $\scr E$-module functors from $\scr A$ to $\scr B$ \cite[Definition 4.6.2.7]{HA}, which can be described as the limit of the cosimplicial diagram
\begin{tikzmath}
	\def\colsep{.85em}
	\diagram{
	\Fun(\scr A,\scr B) & \Fun(\scr E,\Fun(\scr A,\scr B)) & \dotsb \\
	};
	\arrows
	(12-) edge[-top,vshift=2*\dbl] (-13) edge[-mid] (-13) edge[-bot,vshift=-2*\dbl] (-13)
	(11-) edge[-top,vshift=\dbl] (-12) edge[-bot,vshift=-\dbl] (-12);
\end{tikzmath}
(see \cite[Lemma 4.8.4.12]{HA}). In other words, $\Fun_{\scr E}$ is the internal Hom object in $\Mod_{\cE}(\widehat{\Cat})$.
We denote by $\Fun^\L_{\scr E}(\scr A,\scr B)$ the full subcategory of $\Fun_{\scr E}(\scr A,\scr B)$ defined by the cartesian square
\begin{tikzmath}
	\diagram{
	\Fun_{\scr E}^\L(\scr A,\scr B) & \Fun_{\scr E}(\scr A,\scr B) \\
	\Fun^\L(\scr A,\scr B) & \Fun(\scr A,\scr B)\rlap. \\
	};
	\arrows (11-) edge[c->] (-12) (21-) edge[c->] (-22) (11) edge (21) (12) edge (22);
\end{tikzmath}
If $\scr A$ and $\scr B$ are stable, 
 we define $\Fun^\ex_{\scr E}(\scr A,\scr B)\subset\Fun_{\scr E}(\scr A,\scr B)$ similarly.
 The objects of $\Fun^\ex_{\scr E}(\scr A,\scr B)$ will also be called \emph{$\scr E$-linear functors}.

\begin{definition}
\label{def:Emod}
Let $\cE$ be a commutative algebra in $\Cat^\perf$, \ie, a small, stable, and idempotent complete symmetric monoidal $\infty$-category whose tensor product $\scr E\times\scr E\to\scr E$ is exact in each variable.
We set 
\begin{itemize}
\item $\Cat^{\perf}(\cE):= \Mod_{\cE}(\Cat^{\perf})$.  
\item $\Pr^\L(\cE) :=  \Mod_{\Ind(\cE)}(\Pr^\L_\St)$.
\end{itemize}
\end{definition}

\begin{remark}
	The $\infty$-category $\Sp^\omega$ of finite spectra is the unit for the symmetric monoidal structure on $\Cat^\perf$. Hence, there are identifications $ \Cat^\perf(\Sp^\omega)=\Cat^\perf$ and $\Pr^\L(\Sp^\omega)=  \Pr^\L_\St$. 
\end{remark}

We denote the tensor products in $\ccat{perf}{\cE}$ and 
$\Pr^\L(\cE)$ by $\tens_{\scr E}$ and $\tens^\L_{\scr E}$, respectively.
Thus, $\Fun^\ex_\cE$ is an internal Hom object in $\Cat^\perf(\cE)$, and $\Fun^\L_\cE$ is an internal Hom object in $\Pr^\L(\cE)$.
Since
$\Ind\colon \Cat^\perf\to \Pr^\L_\St$ is a symmetric monoidal functor, it lifts to a symmetric monoidal functor between $\infty$-categories of modules:
\[
	\Ind\colon \Cat^\perf(\cE) \to \Pr^\L(\cE).
\]

\begin{proposition}\label{prop:IdemInd}
	Let $\scr E\in\CAlg(\Cat^\perf)$ and $\scr A\in\Cat^\perf(\scr E)$.
	For any cocomplete stable $\infty$-category $\scr B$ with a colimit-preserving action of $\scr E$, the Yoneda embedding $\scr A\subset\Ind(\scr A)$ induces an equivalence of $\scr E$-modules
		\[
		\Fun^\ex_{\scr E}(\scr A,\scr B)\simeq \Fun^\L_{\scr E}(\Ind(\scr A),\scr B).
		\]
\end{proposition}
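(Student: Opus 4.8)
The plan is to reduce the statement to the non-equivariant equivalence~\eqref{eqn:PKR} by resolving $\scr A$ through free $\scr E$-modules. (The colimit-preserving $\scr E$-action on $\scr B$ extends uniquely to an $\Ind\scr E$-action, so $\scr B$ is naturally an object of $\Pr^\L(\scr E)$ and $\Fun^\L_\scr E(\Ind\scr A,\scr B)$ makes sense.) As an $\scr E$-module in $\Cat^\perf$, $\scr A$ is the geometric realization of its bar resolution, the simplicial object $[n]\mapsto\scr E^{\tens(n+1)}\tens\scr A$ whose terms are free $\scr E$-modules (the leftmost tensor factor supplying the freeness) and whose faces and degeneracies are built from the multiplication $\scr E\tens\scr E\to\scr E$, the action $\scr E\tens\scr A\to\scr A$, and the unit $\Sp^\omega\to\scr E$; here $\tens$ denotes the tensor product of $\Cat^\perf$. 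Since $\Ind\colon\Cat^\perf(\scr E)\to\Pr^\L(\scr E)$ is symmetric monoidal and preserves colimits, applying it termwise and using $\Ind(\scr C\tens\scr D)\simeq\Ind\scr C\tens^\L\Ind\scr D$ exhibits $\Ind\scr A$ as the geometric realization in $\Pr^\L(\scr E)$ of the corresponding bar resolution $[n]\mapsto\Ind(\scr E^{\tens(n+1)}\tens\scr A)$ of $\Ind\scr A$ by free $\Ind\scr E$-modules.

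Next I would use that $\Fun^\ex_\scr E(\ph,\scr B)$ and $\Fun^\L_\scr E(\ph,\scr B)$, being the internal mapping objects of the closed symmetric monoidal $\infty$-categories $\Cat^\perf(\scr E)$ and $\Pr^\L(\scr E)$, carry colimits in the first variable to limits. Thus each side of the asserted equivalence is the totalization of a cosimplicial $\scr E$-module, obtained by applying the relevant internal mapping functor to the two bar resolutions above. On free modules these totalizations unwind termwise by the free/forgetful adjunction: for $\scr C\in\Cat^\perf$ one has $\Fun^\ex_\scr E(\scr E\tens\scr C,\scr B)\simeq\Fun^\ex(\scr C,\scr B)$ and $\Fun^\L_\scr E(\Ind\scr E\tens^\L\Ind\scr C,\scr B)\simeq\Fun^\L(\Ind\scr C,\scr B)$, in each case as an $\scr E$-module via the action on $\scr B$. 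Hence the two cosimplicial objects have $n$-th terms $\Fun^\ex(\scr E^{\tens n}\tens\scr A,\scr B)$ and $\Fun^\L(\Ind(\scr E^{\tens n}\tens\scr A),\scr B)$ respectively, and~\eqref{eqn:PKR} --- applied with $\scr E^{\tens n}\tens\scr A\in\Cat^\perf$ in the role of its ``$\scr A$'' --- identifies these via restriction along the Yoneda embedding. Passing to totalizations then gives $\Fun^\ex_\scr E(\scr A,\scr B)\simeq\Fun^\L_\scr E(\Ind\scr A,\scr B)$.

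The step requiring the most care --- and the main obstacle --- is checking that these levelwise identifications assemble into an equivalence of cosimplicial objects \emph{internal to $\scr E$-modules}: compatibly with all cofaces and codegeneracies and with the ambient $\scr E$-actions. For this I would exploit that~\eqref{eqn:PKR} is implemented by restriction along the Yoneda map $j\colon\scr A\into\Ind\scr A$, that $j$ is natural in $\scr A$ with respect to exact functors, and that every structure map of the bar resolution is the image under $\Ind$ of such an exact functor; the $\scr E$-module structures, induced throughout from the action on $\scr B$, are preserved because restriction along $j$ commutes with postcomposition by that action. An equivalent way to package the argument is to observe that the two functors $\Cat^\perf(\scr E)^\op\to\Mod_\scr E(\widehat{\Cat})$ given by $\scr A\mapsto\Fun^\ex_\scr E(\scr A,\scr B)$ and $\scr A\mapsto\Fun^\L_\scr E(\Ind\scr A,\scr B)$ both send colimits to limits and agree, naturally, on free $\scr E$-modules; since the bar resolution exhibits every object of $\Cat^\perf(\scr E)$ as a colimit of free ones, the two functors agree everywhere.
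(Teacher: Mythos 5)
Your overall strategy---reduce to \eqref{eqn:PKR} by resolving $\scr A$ by free $\scr E$-modules---is sound, and it is essentially a dual packaging of the paper's short argument: the paper invokes the description of $\Fun_{\scr E}(\scr A,\scr B)$ as the limit of the cosimplicial diagram with terms $\Fun(\scr E^{\times n},\Fun(\scr A,\scr B))$ (recalled in \S\ref{sec:enrich} after \cite[Lemma 4.8.4.12]{HA}) together with the definitions of $\Fun^\ex_{\scr E}$ and $\Fun^\L_{\scr E}$ as full subcategories, and applies \eqref{eqn:PKR} levelwise; you instead resolve the source by the bar construction and map out. Once your ``colimits to limits'' step is made precise it reproduces exactly that cosimplicial description, so the two routes are the same computation read in opposite directions, with yours carrying some extra bookkeeping (identifying the bar realization, then the levelwise free-module identifications).

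The genuine weak point is the justification of that key step. You argue that $\Fun^\ex_{\scr E}(\ph,\scr B)$ and $\Fun^\L_{\scr E}(\ph,\scr B)$ carry colimits to limits because they are the internal mapping objects of $\Cat^\perf(\scr E)$ and $\Pr^\L(\scr E)$; but $\scr B$ is not an object of either category: it is cocomplete, hence large, so it never lies in $\Cat^\perf(\scr E)$, and it is not assumed presentable, so your parenthetical claim that $\scr B\in\Pr^\L(\scr E)$ is unwarranted --- and this extra generality is genuinely used later, e.g.\ the proposition is applied with target $\scr B^\op$ in the proof of Proposition~\ref{prop:dual}. So, as written, the internal-hom argument does not apply to the functors you need it for. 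The claim is nevertheless true and repairable: either work with $\Fun_{\scr E}$ as the internal Hom of $\Mod_{\scr E}(\widehat{\Cat})$ (as in \S\ref{sec:enrich}), noting that the augmented bar construction is split after forgetting the $\scr E$-action and that colimits of $\scr E$-module $\infty$-categories in $\widehat{\Cat}$ are computed on underlying $\infty$-categories, so the bar realization is still a colimit there and is converted to a totalization; or, more directly, invoke the cosimplicial limit description of $\Fun_{\scr E}$, which is the paper's proof. In either case one must still check, as your last paragraph gestures at, that the exactness/left-adjointness conditions cut out full subcategories compatible with all the structure maps --- this works because the $\scr E$-action on $\scr A$ is exact in each variable and the action on $\scr B$ preserves colimits --- and that the free-module identifications such as $\Fun^\ex_{\scr E}(\scr E\tens\scr C,\scr B)\simeq\Fun^\ex(\scr C,\scr B)$ are stated in a form valid for large $\scr B$.
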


\begin{proof}
	This is a straightforward consequence of~\eqref{eqn:PKR}, using the definitions of $\Fun^\ex_{\scr E}$ and $\Fun^\L_{\scr E}$.
\end{proof}

Recall that an $\infty$-category is \emph{compactly generated} if it has the form $\Ind(\scr C)$, where $\scr C$ is small and has finite colimits.

\begin{lemma}\label{lem:coco}
	Let $\scr C$ be a symmetric monoidal $\infty$-category and $\scr M$ a $\scr C$-module. Suppose that $\scr M$ is compactly generated and that, for each $c\in\scr C$, the action $c\tens\ph\colon\scr M\to\scr M$ preserves colimits. Then, for every $A\in\CAlg(\scr C)$, the $\infty$-category $\Mod_A(\scr M)$ is compactly generated.
\end{lemma}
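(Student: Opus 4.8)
The plan is to reduce the statement to the well-understood case $\scr C = \Sp$ (or, if one prefers, to abstract nonsense about modules in presentable categories) by exhibiting explicit compact generators of $\Mod_A(\scr M)$. First I would fix the setup: since $\scr M$ is compactly generated, write $\scr M = \Ind(\scr M^\omega)$ with $\scr M^\omega$ the (essentially small) full subcategory of compact objects, closed under finite colimits. The hypothesis that each $c\tens\ph$ preserves colimits guarantees that $\scr M$ is a module over $\scr C$ in the symmetric monoidal $\infty$-category $\Pr^\L$; in particular the free–forgetful adjunction $F\colon \scr M \rightleftarrows \Mod_A(\scr M) : U$ exists, with $F(m) = A\tens m$, and $U$ preserves all colimits (it is computed on underlying objects via a limit diagram of modules, but the relevant sifted colimits are created by $U$ by \cite[Corollary 4.2.3.5]{HA}). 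The forgetful functor $U$ also preserves filtered colimits, hence so does $\Mod_A(\scr M)$ as a category, and $F$ sends compact objects to compact objects (being left adjoint to a colimit-preserving functor).

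Next I would identify a candidate set of compact generators: the objects $F(m) = A\tens m$ for $m$ ranging over (a set of representatives of) $\scr M^\omega$. Each such $A\tens m$ is compact in $\Mod_A(\scr M)$ by the previous paragraph. To see they generate, let $N\in\Mod_A(\scr M)$ be such that $\Map_{\Mod_A(\scr M)}(A\tens m, N)\simeq \ast$ for all $m\in\scr M^\omega$. By adjunction this says $\Map_{\scr M}(m, U(N))\simeq\ast$ for all compact $m$, and since $\scr M$ is compactly generated this forces $U(N)\simeq 0$, whence $N\simeq 0$ because $U$ is conservative. Finally, to conclude that $\Mod_A(\scr M)$ is compactly generated I would invoke the characterization: a presentable stable (or merely pointed, with the obvious modification) $\infty$-category with a small set of compact generators is compactly generated, i.e.\ equivalent to $\Ind$ of its full subcategory of compact objects — this is \cite[Proposition 5.3.5.11]{HTT} together with the fact that $\Mod_A(\scr M)$ is presentable (it is an accessible localization-free construction: $\Mod_A(\scr M) = \Mod_A(\scr C)\tens_{\scr C}\scr M$ in $\Pr^\L$, or directly presentable by \cite[Corollary 4.2.3.7]{HA}). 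Alternatively, once one has a set $S$ of compact objects that jointly detect equivalences, the smallest stable subcategory closed under retracts and containing $S$ consists of compact objects, and $\Mod_A(\scr M)$ is generated under colimits by $S$, so $\Mod_A(\scr M)\simeq\Ind(\langle S\rangle^{\mathrm{idem}})$.

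The main obstacle is bookkeeping rather than any deep input: one must be careful that the hypotheses actually place $\scr M$ in the framework where the Lurie machinery applies — concretely, that $\scr M$, as a $\scr C$-module, is a module object in $\Pr^\L$ so that $\Mod_A(\scr M)$ is again presentable and the free functor $F$ exists with $F$ compact-object-preserving. This is exactly where the two standing assumptions ("$\scr M$ compactly generated" and "$c\tens\ph$ preserves colimits for all $c$") are used, and the only real content is checking that $U\colon\Mod_A(\scr M)\to\scr M$ is conservative and preserves filtered colimits, which follows from \cite[Corollary 4.2.3.2, Corollary 4.2.3.5]{HA} since the monad $A\tens\ph$ on $\scr M$ preserves filtered colimits (as $A\tens\ph$ does, the action being colimit-preserving in the module variable). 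Everything else is a routine application of the Yoneda/adjunction argument together with \cite[Proposition 5.3.5.11]{HTT}.
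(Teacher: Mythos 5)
Your setup (presentability of $\Mod_A(\scr M)$, the free--forgetful adjunction, conservativity of $U$, and compactness of the free modules $A\tens m$ for $m\in\scr M^\omega$ because $U$ preserves filtered colimits) is correct and coincides with the first half of the paper's argument. The gap is in the generation step. You conclude compact generation from the fact that the objects $A\tens m$ jointly \emph{detect} zero objects: if $\Map(A\tens m,N)\simeq *$ for all compact $m$ then $N\simeq 0$. That implication from ``a set of compact objects detects zero'' to ``compactly generated'' is a theorem about \emph{stable} presentable $\infty$-categories; it is not available here. The lemma is stated for an arbitrary compactly generated $\infty$-category $\scr M$ with a colimit-preserving $\scr C$-action, and in the paper it is applied with $\scr C=\scr M=\Cat^\perf$, which is pointed but not stable. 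Your parenthetical ``(or merely pointed, with the obvious modification)'' is where the argument breaks: for pointed (non-stable) presentable $\infty$-categories, a set of compact objects whose mapping spaces detect terminal objects need not generate the category under colimits, and \cite[Proposition 5.3.5.11]{HTT} only gives full faithfulness of $\Ind(\Mod_A(\scr M)^\omega)\to\Mod_A(\scr M)$; essential surjectivity is exactly the statement that compacts generate under (filtered) colimits, which is what still has to be proved. Your ``alternative'' closing argument has the same defect, and in addition speaks of the smallest \emph{stable} subcategory containing the generators, which again presupposes stability.

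The missing ingredient is an actual generation argument, and this is what the paper supplies: every $A$-module $N$ is the colimit of the split simplicial bar resolution whose terms are free modules $F(m')=A\tens m'$ with $m'\in\scr M$, and each such free module is in turn a filtered colimit of free modules $A\tens m$ with $m\in\scr M^\omega$, since $m'$ is a filtered colimit of compacts and $F$ preserves colimits. Hence $\Mod_A(\scr M)$ is generated under colimits by the compact objects $A\tens m$, and combining this with full faithfulness and colimit-preservation of $\Ind(\Mod_A(\scr M)^\omega)\to\Mod_A(\scr M)$ finishes the proof. If you insert this bar-resolution step in place of the detection argument, your proof becomes essentially the paper's.
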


\begin{proof}
	By \cite[Corollary 4.2.3.7]{HA}, $\Mod_A(\scr M)$ is presentable. Let $F\colon \Ind(\Mod_A(\scr M)^\omega)\to\Mod_A(\scr M)$ be the functor induced by the inclusion $\Mod_A(\scr M)^\omega\into \Mod_A(\scr M)$. Since compact objects are stable under finite colimits, it remains to show that $F$ is an equivalence.
By \cite[Proposition 5.3.5.11 (1)]{HTT}, $F$ is fully faithful, and by \cite[Proposition 5.5.1.9]{HTT}, $F$ preserves colimits.
Hence, to conclude the proof, it suffices to show that $\Mod_A(\scr M)$ is generated under colimits by its compact objects.
Any $A$-module in $\scr M$ is canonically the colimit of a (split) simplicial diagram whose terms are free $A$-modules. Free $A$-modules are in turn filtered colimits of free $A$-modules of the form $A\tens X$ with $X\in\scr M^\omega$. Since the forgetful functor $\Mod_A(\scr M)\to\scr M$ preserves filtered colimits, such $A$-modules are compact in $\Mod_A(\scr M)$.
\end{proof}

\begin{proposition}\label{prop:coco}
	For every $\cE\in\CAlg(\Cat^\perf)$, the $\infty$-category $\ccat{perf}{\cE}$ is compactly generated.
\end{proposition}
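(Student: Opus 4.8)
The plan is to deduce this immediately from Lemma~\ref{lem:coco}. I would apply that lemma with $\scr C=\scr M=\Cat^\perf$, where $\Cat^\perf$ carries its usual symmetric monoidal structure and is regarded as a module over itself, and with $A=\cE\in\CAlg(\Cat^\perf)$. Since $\Mod_\cE(\Cat^\perf)=\Cat^\perf(\cE)$ by Definition~\ref{def:Emod}, it then only remains to verify the two hypotheses of Lemma~\ref{lem:coco}.

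First, one must know that $\Cat^\perf$ is itself compactly generated. This is the one non-formal ingredient, and I would simply import it from the literature (see \cite{BGT}): $\Cat^\perf$ is presentable and compactly generated. Note that this input is genuinely unavoidable, since the unit of $\Cat^\perf$ is $\Sp^\omega$ and $\Cat^\perf(\Sp^\omega)=\Cat^\perf$, so the case $\cE=\Sp^\omega$ of the proposition is precisely this fact. Second, one must check that for each $c\in\Cat^\perf$ the functor $c\tens\ph\colon\Cat^\perf\to\Cat^\perf$ preserves small colimits. This is purely formal: $\Cat^\perf$ is closed symmetric monoidal with internal Hom $\Fun^\ex$ (recall $\Fun^\ex(\scr A\tens\scr B,\scr C)\simeq\Fun^\ex(\scr A,\Fun^\ex(\scr B,\scr C))$), so by symmetry of the tensor product $c\tens\ph$ is left adjoint to $\Fun^\ex(c,\ph)$ and hence preserves all small colimits.

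With these two points established, Lemma~\ref{lem:coco} yields that $\Mod_\cE(\Cat^\perf)=\Cat^\perf(\cE)$ is compactly generated. The main obstacle is thus not in this proposition at all but in the cited compact generation of $\Cat^\perf$; granting that (and granting Lemma~\ref{lem:coco}, which does the real work of transporting compact generation to categories of modules), the argument is a one-line application.
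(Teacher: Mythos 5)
Your proposal is correct and matches the paper's proof: the paper likewise cites the compact generation of $\Cat^\perf$ from \cite{BGT} and applies Lemma~\ref{lem:coco} with $\scr C=\scr M=\Cat^\perf$. Your explicit check that $c\tens\ph$ preserves colimits (via the closed structure with $\Fun^\ex$) is the same implicit hypothesis the paper relies on, just spelled out.
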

\begin{proof}
By \cite[Corollary 4.25]{BGT}, the $\infty$-category $ \Cat^{\perf}$ is compactly generated.
The result now follows from Lemma~\ref{lem:coco} applied to $\scr C=\scr M=\Cat^\perf$.
\end{proof}

\subsection{The enriched Yoneda embedding} 
\label{sub:mapping}
If $\scr E$ is a commutative algebra in $\Cat^\perf$ and $\scr A\in\Cat^\perf(\scr E)$, then $\scr A$ is naturally enriched over $\Ind(\scr E)$.
Indeed, given $a\in\scr A$, the functor $\scr E\to\scr A$ sending $e$ to $e\tens a$ preserves finite colimits and hence admits an ind-right adjoint $\scr A^{\scr E}(a,\ph)\colon \scr A\to\Ind(\scr E)$. One can hence define a functor
\begin{equation}\label{eqn:yoneda}
\scr A \to \Fun^\ex(\scr A^\op,\Ind(\scr E)),
\end{equation}
given informally by $a\mapsto \scr A^{\scr E}(\ph,a)$. When $\scr E=\Sp^\omega$, the functor \eqref{eqn:yoneda} is fully faithful, but this is not true for more general $\scr E$. Indeed, given $a\in\scr A$, the functor $\scr A^{\scr E}(\ph,a)$ comes with extra structure, namely that of an \emph{$\Ind(\scr E)$-enriched} functor.
Below we will consider an assumption on $\scr E$ that greatly simplifies the situation: we will assume that $\scr E$ is \emph{rigid}, i.e., that every object of $\scr E$ is dualizable. This will suffice for our later applications. In that case, we will show that \eqref{eqn:yoneda} factors through a fully faithful $\scr E$-linear embedding $\scr A\into \Fun_{\scr E}^\ex(\scr A^\op,\Ind(\scr E))$. 

We denote by $\CAlg^\rig(\Cat^\perf)\subset \CAlg(\Cat^\perf)$ the full subcategory spanned by the rigid symmetric monoidal $\infty$-categories.\footnote{One of the reasons rigidity plays a special role is that objects of $\CAlg^\rig(\Cat^\perf)$ are \emph{smooth Frobenius algebras} in $\Cat^\perf$, see \cite[\S4.6.5]{HA}.} We gather some immediate consequences of rigidity in the next proposition:

\begin{proposition}\label{prop:rigid}
Let $\scr E\in\CAlg^\rig(\Cat^\perf)$.
Then:
\begin{enumerate}
	\item There is a canonical equivalence of symmetric monoidal $\infty$-categories
	\[
	\scr E\simeq \scr E^\op,\quad e\mapsto e^\vee = \Hom(e,\mathbf{1}).
	\]
	\item For any $\scr A\in\Pr^\L(\cE)$, the action of $\scr E$ on $\scr A$ restricts to the full subcategory $\scr A^\omega\subset\scr A$ of compact objects. In particular, if $\scr A$ is compactly generated, then it belongs to the essential image of the functor $\Ind\colon \Cat^\perf(\scr E)\to\Pr^\L(\scr E)$.
\item For any $\scr E$-module $\infty$-categories $\scr A$ and $\scr B$ and any $F\in\Fun_{\scr E}^\L(\scr A,\scr B)$, the right adjoint $G\colon \scr B\to\scr A$ of $F$ has a canonical structure of $\scr E$-module functor, and the unit $\id_{\scr A}\to G\circ F$ and counit $F\circ G\to\id_{\scr B}$ have canonical structures of $\scr E$-module natural transformations.
	\item Let $\scr A$ and $\scr B$ be arbitrary $\scr E$-module $\infty$-categories. Then there is a canonical equivalence of $\scr E$-module $\infty$-categories
	\[
	\Fun_{\cE}^\L(\scr A,\scr B)\simeq \Fun_{\cE}^\R(\scr B,\scr A)^\op,
	\]
	sending a left adjoint $\scr E$-module functor to its right adjoint.
\end{enumerate}
\end{proposition}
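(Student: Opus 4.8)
The plan is to treat the four items essentially independently, each by reducing to a known piece of the Lurie toolbox together with the defining feature of rigidity (every object dualizable).

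For (1), I would observe that in a symmetric monoidal $\infty$-category, passing to duals $e\mapsto e^\vee$ is always a contravariant pseudofunctor on the full subcategory of dualizable objects, and sends $e\tens e'$ to $e^\vee\tens e'^\vee$, $\mathbf 1$ to $\mathbf 1$, so it is lax (in fact strong) symmetric monoidal. When $\scr E$ is rigid this subcategory is all of $\scr E$, and the double-dual natural transformation $e\to e^{\vee\vee}$ is an equivalence (again because everything is dualizable), so the functor is an equivalence $\scr E\simeq\scr E^\op$ of symmetric monoidal $\infty$-categories. A clean way to package this is to invoke the $(\infty,1)$-categorical duality functor on the symmetric monoidal category of dualizable objects, e.g.\ as in \cite[\S4.6]{HA}; the identification $e^\vee\simeq\Hom(e,\mathbf 1)$ holds because the duality data exhibit $e^\vee$ as the internal mapping object out of $e$.

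For (2), the key point is that the action $e\tens\ph\colon\scr A\to\scr A$ of a dualizable object $e\in\scr E$ has a two-sided adjoint, namely $e^\vee\tens\ph$ (using the coevaluation/evaluation of $e$ to build unit and counit). In particular $e\tens\ph$ is colimit-preserving \emph{and} limit-preserving, hence its left adjoint $e^\vee\tens\ph$ preserves compact objects \cite[Proposition 5.5.7.2]{HTT}, i.e.\ $e^\vee\tens\ph$ sends $\scr A^\omega$ to $\scr A^\omega$; since every object of $\scr E$ is of the form $e^\vee$, the action restricts to $\scr A^\omega$. That $\scr A^\omega\in\Cat^\perf(\scr E)$ and $\Ind(\scr A^\omega)\simeq\scr A$ when $\scr A$ is compactly generated then follows from the symmetric monoidal compatibility of $\Ind$ established in \S\ref{sec:enrich} together with \cite[Proposition 5.5.7.10]{HTT}. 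For (3), one wants to lift a right adjoint to the $\scr E$-linear world: $F\in\Fun^\L_\cE(\scr A,\scr B)$ is in particular a lax (hence, by rigidity, strong — cf.\ the argument in \S\ref{sub:tracesIntro}) $\scr E$-module map, and the Beck–Chevalley/mate construction produces on $G$ a canonical oplax $\scr E$-module structure given on $e\in\scr E$ by the composite $e\tens G(\ph)\to GF(e\tens G(\ph))\simeq G(e\tens FG(\ph))\to G(e\tens\ph)$; one checks this 2-morphism is an equivalence because $e\tens\ph$ on both $\scr A$ and $\scr B$ is itself right adjointable (its adjoint being $e^\vee\tens\ph$), so the mate is invertible. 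Equivalently, and perhaps more efficiently, I would invoke \cite[Corollary 7.3.2.7]{HA} on relative adjunctions, applied to the families $\Mod_\bullet$ over $\scr E$, to get the $\scr E$-module structure on $G$ and the module structure on unit and counit for free. For (4), assemble (3) into a functor: sending $F\mapsto G=F^r$ defines a functor $\Fun^\L_\cE(\scr A,\scr B)\to\Fun^\R_\cE(\scr B,\scr A)^\op$; it is an equivalence because passing to adjoints is an equivalence $\Fun^\L(\scr A,\scr B)\simeq\Fun^\R(\scr B,\scr A)^\op$ of underlying $\infty$-categories \cite[Corollary 5.2.6.5]{HTT} and, by the cartesian square defining $\Fun^\L_\cE$ (and its $\Fun^\R_\cE$ analogue), the $\scr E$-enrichment is determined by the underlying functor plus the canonical module structure from (3), which matches on both sides.

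The main obstacle, I expect, is (3): making the mate/Beck–Chevalley construction genuinely homotopy-coherent — producing the $\scr E$-module structure on $G$ as a map in $\Mod_\cE(\widehat{\Cat})$ together with the module structures on the unit and counit 2-cells, rather than just levelwise — requires either a careful cosimplicial argument through the bar-resolution description of $\Fun_\cE$ recalled in \S\ref{sec:enrich}, or a black-box appeal to the relative adjoint functor theorem of \cite[\S7.3.2]{HA}. Items (1), (2), (4) are then comparatively routine, with (4) a formal consequence of (3). Throughout, rigidity is used in exactly one way — every object is dualizable, hence every action functor $e\tens\ph$ is two-sided adjointable — and I would make that the organizing principle of the write-up.
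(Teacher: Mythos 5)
Items (1)--(3) of your proposal essentially coincide with the paper's argument: (1) is the self-duality $e\mapsto e^\vee$ with the double-dual map invertible, (2) is the observation that $e\tens\ph$ and $e^\vee\tens\ph$ are adjoint so the action preserves compact objects, and for (3) the paper does exactly what you call the ``more efficient'' route, namely \cite[Corollary 7.3.2.7]{HA} applied to the $\scr L\scr M^\otimes$-fibrations to get a right-lax $\scr E$-module structure on $G$ together with module structures on unit and counit, followed by a mapping-space computation (using $e\tens\ph\dashv e^\vee\tens\ph$) to show the lax structure maps $e\tens G(b)\to G(e\tens b)$ are equivalences. Your mate-calculus sketch is a terser version of that same strictness check.

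The genuine gap is in (4). You treat it as a formal consequence of (3): ``sending $F\mapsto F^r$ defines a functor'' and ``the $\scr E$-enrichment is determined by the underlying functor plus the canonical module structure from (3), which matches on both sides.'' Neither claim can be taken for granted. An object of $\Fun^\L_\cE(\scr A,\scr B)$ is a point of the limit of the cosimplicial diagram computing $\Fun_\cE$, i.e.\ a homotopy-coherent tower of compatibilities, not an underlying functor plus a single structure map that can be transferred objectwise via (3); and what the statement requires is an equivalence of the whole functor categories, as $\scr E$-module $\infty$-categories, which cannot be produced by specifying values on objects. The paper's proof is genuinely different at this point: since $\scr E$ is rigid it acts by left adjoints, so $\Fun^\L_\cE(\scr A,\scr B)$ is the limit of the cosimplicial diagram with levels $\Fun(\scr E^{\times\bullet},\Fun^\L(\scr A,\scr B))$, and dually $\Fun^\R_\cE(\scr B,\scr A)^\op$ is the limit of the diagram with levels $\Fun(\scr E^{\op\times\bullet},\Fun^\R(\scr B,\scr A)^\op)$; one then identifies the two cosimplicial diagrams levelwise using the equivalence $\scr E\simeq\scr E^\op$ from item (1) together with $\Fun^\L(\scr A,\scr B)\simeq\Fun^\R(\scr B,\scr A)^\op$ \cite[Proposition 5.2.6.2]{HTT}, and passes to limits. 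Note in particular the role of item (1), which your argument for (4) never uses: it is exactly what matches the $\scr E$-action on one side with the $\scr E^\op$-action appearing on the other. To repair your write-up, replace the objectwise assembly in (4) by this diagram-level identification (or some equally coherent construction); item (3) alone does not yield it.
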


\begin{proof}
	(1) The functor $e\mapsto e^\vee$ is adjoint to itself, and the unit of the adjunction $e\to (e^\vee)^\vee$ is an equivalence since $e$ is dualizable.
	
	(2) Since $\scr E$ is rigid, $e\otimes\ph\colon\scr A\to\scr A$ is left adjoint to $e^\vee\otimes\ph$, for every $e\in\scr E$. In particular, $e^\vee \otimes\ph$ commutes with filtered colimits. Hence, $e\otimes\ph$ preserves compact objects.

	(3) 
	In the language of \cite[\S7.3.2]{HA}, the $\scr E$-modules $\scr A$ and $\scr B$ are encoded by cocartesian fibrations over the $\infty$-operad $\scr L\scr M^\tens$, and $F$ is an $\scr L\scr M^\otimes$-monoidal functor between them. By \cite[Corollary 7.3.2.7]{HA},\footnote{This corollary is missing the assumption that $F$ should preserve cocartesian edges.} $F$ admits a right adjoint relative to $\scr L\scr M^\otimes$, which is moreover a morphism of $\infty$-operads.
	In other words, $G$ has a structure of right-lax $\scr E$-module functor and the unit and counit of the adjunction are $\scr E$-module transformations. To prove our assertion, it remains to show that, when $\scr E$ is rigid, the right-lax $\scr E$-module structure on $G$ is strict, \ie, for every $e\in\scr E$ and $b\in\scr B$, the map $e\tens G(b)\to G(e\tens b)$ is an equivalence. 
	For any $a\in\scr A$, the induced map $\scr A(a,e\tens G(b))\to\scr A(a,G(e\tens b))$ is the composition
\begin{align*}
 \scr A (a,e\otimes  G(b)) &\simeq \scr A ( e^\vee\otimes a, G(b)) \\& \simeq  \scr B ( F(e^\vee\otimes a), b) \\ &\simeq \scr B ( e^\vee\otimes F(a), b) \\
 &\simeq \scr B ( F(a), e \otimes b) \\ &\simeq \scr A( a, G(e \otimes b)).\end{align*}

(4) As $\scr E$ is rigid, it acts on $\scr A$ and $\scr B$ via left adjoint functors. Hence, $\Fun_{\scr E}^\L(\scr A,\scr B)$ is the limit of the cosimplicial diagram
	\begin{tikzmath}
		\def\colsep{.85em}
		\diagram{
		\Fun^\L(\scr A,\scr B) & \Fun(\scr E,\Fun^\L(\scr A,\scr B)) & \dotsb\rlap. \\
		};
		\arrows
		(12-) edge[-top,vshift=2*\dbl] (-13) edge[-mid] (-13) edge[-bot,vshift=-2*\dbl] (-13)
		(11-) edge[-top,vshift=\dbl] (-12) edge[-bot,vshift=-\dbl] (-12);
	\end{tikzmath}
	Dually, $\Fun_{\scr E}^\R(\scr B,\scr A)^\op$ is the limit of the cosimplicial diagram
	\begin{tikzmath}
		\def\colsep{.85em}
		\diagram{
		\Fun^\R(\scr A,\scr B)^\op & \Fun(\scr E^\op,\Fun^\R(\scr A,\scr B)^\op) & \dotsb\rlap. \\
		};
		\arrows
		(12-) edge[-top,vshift=2*\dbl] (-13) edge[-mid] (-13) edge[-bot,vshift=-2*\dbl] (-13)
		(11-) edge[-top,vshift=\dbl] (-12) edge[-bot,vshift=-\dbl] (-12);
	\end{tikzmath}
	Using the equivalence $\scr E\simeq\scr E^\op$ from (1) and the equivalence $\Fun^\L(\scr A,\scr B)\simeq \Fun^\R(\scr B,\scr A)^\op$ from \cite[Proposition 5.2.6.2]{HTT}, we can identify these two cosimplicial diagrams, and the result follows.
\end{proof}

\begin{proposition}\label{prop:dual}
	Let $\scr E\in\CAlg^\rig(\Cat^\perf)$.
	If $\scr A \in\Pr^\L(\scr E)$ is compactly generated, then it is dualizable with dual $\scr A^\vee=\Ind(\scr A^{\omega,\op})$.
\end{proposition}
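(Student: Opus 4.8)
The plan is to reduce to the construction of explicit duality data and then verify the triangle identities via the enriched Yoneda lemma, following the argument of the absolute case in \cite{BGT}.

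First the reductions. Since $\scr A$ is compactly generated, Proposition~\ref{prop:rigid}(2) lets us write $\scr A\simeq\Ind(\scr A^\omega)$ with $\scr A^\omega\in\Cat^\perf(\scr E)$, and Proposition~\ref{prop:rigid}(1), through the equivalence $\scr E\simeq\scr E^\op$, equips $\scr A^{\omega,\op}$ with an $\scr E$-module structure, so that $\scr A^\vee:=\Ind(\scr A^{\omega,\op})$ is indeed an object of $\Pr^\L(\scr E)$. It then suffices to construct morphisms
\[
\ev\colon\scr A^\vee\otimes^\L_{\scr E}\scr A\longrightarrow\Ind(\scr E),\qquad
\coev\colon\Ind(\scr E)\longrightarrow\scr A\otimes^\L_{\scr E}\scr A^\vee
\]
in $\Pr^\L(\scr E)$ satisfying the two triangle identities; dualizability of $\scr A$ with dual $\scr A^\vee$ then follows, and with it the identification $\scr A^\vee\simeq\Fun^\L_{\scr E}(\scr A,\Ind(\scr E))$ with the internal Hom.

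Since $\Ind\colon\Cat^\perf(\scr E)\to\Pr^\L(\scr E)$ is symmetric monoidal, $\scr A^\vee\otimes^\L_{\scr E}\scr A\simeq\Ind(\scr A^{\omega,\op}\otimes_{\scr E}\scr A^\omega)$, so by Proposition~\ref{prop:IdemInd} the datum of $\ev$ amounts to an $\scr E$-linear exact functor $\scr A^{\omega,\op}\otimes_{\scr E}\scr A^\omega\to\Ind(\scr E)$, equivalently to a functor $\scr A^{\omega,\op}\times\scr A^\omega\to\Ind(\scr E)$ that is exact in each variable and $\scr E$-bilinear. For this I take the enriched mapping-object functor $(a,b)\mapsto\scr A^{\scr E}(a,b)$ of \S\ref{sub:mapping}: exactness in each variable is automatic, because an $\Ind(\scr E)$-valued mapping functor on a stable $\infty$-category preserves finite limits and hence, by stability, finite colimits; and the $\scr E$-bilinearity, \ie\ the natural equivalences $\scr A^{\scr E}(e\otimes a,b)\simeq e^\vee\otimes\scr A^{\scr E}(a,b)\simeq\scr A^{\scr E}(a,e^\vee\otimes b)$, is exactly the rigidity input recorded in Proposition~\ref{prop:rigid} (the right adjoint of the $\scr E$-linear functor $e'\mapsto e'\otimes a$ is strictly $\scr E$-linear). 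Dually, $\scr A\otimes^\L_{\scr E}\scr A^\vee\simeq\Ind(\scr A^\omega\otimes_{\scr E}\scr A^{\omega,\op})$, and a colimit-preserving $\scr E$-linear functor out of the unit $\Ind(\scr E)$ is the same as an object of its target; I define $\coev$ to be the one sending $\mathbf{1}$ to the object of $\Ind(\scr A^\omega\otimes_{\scr E}\scr A^{\omega,\op})$ that corresponds, under the enriched Yoneda embedding of \S\ref{sub:mapping}, to the identity endofunctor of $\scr A^\omega$ — the ``$\scr E$-linear diagonal''.

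It remains to verify the two triangle identities for $(\ev,\coev)$. Both unwind to the assertion that, for every compact $a\in\scr A^\omega$, the composite induced on $a$ by $\coev$ and then $\ev$ is homotopic to $\id_a$ (and symmetrically for $\scr A^\vee$), and this is precisely the content of the enriched Yoneda lemma of \S\ref{sub:mapping}, used exactly as in the proof that compactly generated stable $\infty$-categories are dualizable in $\Pr^\L_\St$ \cite{BGT}. I expect this step — carrying out the enriched Yoneda computation coherently in the $\infty$-categorical setting, while keeping track of the $\scr E$-bilinearity of $\scr A^{\scr E}(\ph,\ph)$ and its compatibility with $\coev$ — to be the main obstacle. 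The rigidity of $\scr E$ (and the attendant fact, noted after Proposition~\ref{prop:rigid}, that $\scr E$ is a smooth Frobenius algebra in $\Cat^\perf$) is what makes the $\scr E$-linear bookkeeping reduce to the absolute case rather than introducing new difficulties.
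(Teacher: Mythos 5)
Your strategy—build explicit evaluation/coevaluation data and check the triangle identities—is genuinely different from the paper's, but as written it has a gap that is essentially a circularity. Your coevaluation is defined as ``the object of $\Ind(\scr A^\omega\tens_{\scr E}\scr A^{\omega,\op})$ corresponding, under the enriched Yoneda embedding, to the identity endofunctor.'' But the identification of $\Ind(\scr A^\omega\tens_{\scr E}\scr A^{\omega,\op})$ with a category of $\scr E$-linear functors/bimodules, and the fully faithful $\scr E$-linear Yoneda embedding itself, are exactly Corollary~\ref{cor:bimod}, which the paper deduces \emph{from} Proposition~\ref{prop:dual}; before that point, \S\ref{sub:mapping} only provides the objectwise mapping objects $\scr A^{\scr E}(a,b)$ and the functor~\eqref{eqn:yoneda}, which for general rigid $\scr E$ is not even known to be fully faithful or $\scr E$-linear. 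The same problem affects your evaluation map: to obtain a functor out of the relative tensor product $\scr A^{\omega,\op}\tens_{\scr E}\scr A^\omega$ you need a \emph{coherent} $\scr E$-balanced structure on $(a,b)\mapsto\scr A^{\scr E}(a,b)$, not just the objectwise equivalences $\scr A^{\scr E}(e\tens a,b)\simeq e^\vee\tens\scr A^{\scr E}(a,b)$ supplied by rigidity; you correctly flag this as the main obstacle, but it is left unresolved, and the ``enriched Yoneda lemma'' you invoke for the triangle identities is not established in the paper (nor does checking the composites objectwise on compact objects suffice: the triangle identities are equivalences of functors/natural transformations, so you need naturality in hand, compare the extra-degeneracy argument in Proposition~\ref{prop:hochschild}).

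Note how the paper sidesteps all of this: no duality data are constructed at all. Using Proposition~\ref{prop:rigid}(4) (passage to right adjoints gives $\Fun^\L_{\scr E}(\scr C,\scr D)\simeq\Fun^\L_{\scr E}(\scr D^\op,\scr C^\op)$) and Proposition~\ref{prop:IdemInd}, one runs the adjunction-juggling of \cite[Proposition 4.8.1.16]{HA} to produce a natural equivalence $\scr A^\vee\tens^\L_{\scr E}\scr B\simeq\Fun^\L_{\scr E}(\scr A,\scr B)$ for all $\scr B\in\Pr^\L(\scr E)$, which yields dualizability abstractly; the explicit evaluation, coevaluation, and the $\scr E$-linear Yoneda embedding are then read off afterwards (Lemma~\ref{lem:corest}, Corollary~\ref{cor:bimod}). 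If you want to keep your explicit route, you would need an independent development of $\Ind(\scr E)$-enriched category theory (coherent enriched homs, enriched Yoneda) or a presentation of $\scr A$ via endomorphism algebras of compact generators—substantial input beyond what the paper provides.
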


\begin{proof}
	Note that $\scr A^\vee$ is a meaningful object of $\Pr^\L(\scr E)$, by Proposition~\ref{prop:rigid} (1,2).
	We must construct an equivalence $\scr A^\vee\tens_{\scr E}^\L \scr B\simeq \Fun^\L_{\scr E}(\scr A,\scr B)$, natural in $\scr B$.
	By Proposition~\ref{prop:rigid} (4), for any $\scr E$-module $\infty$-categories $\scr C$ and $\scr D$, we have
	\[
	\Fun^\L_{\scr E}(\scr C,\scr D)
	\simeq \Fun^\L_{\scr E}(\scr D^\op,\scr C^\op).
	\]
	The rest of the argument is identical to \cite[Proposition 4.8.1.16]{HA}.
	If $\scr B,\scr C\in\Pr^\L(\scr E)$, we have natural equivalences
	\begin{align*}
		\Fun^\L_{\scr E}(\scr A^\vee\tens_{\scr E}^\L\scr B,\scr C) &\simeq \Fun^\L_{\scr E}(\scr A^\vee,\Fun^\L_{\scr E}(\scr B,\scr C)) \\
		& \simeq \Fun^\L_{\scr E}(\scr A^\vee,\Fun^\L_{\scr E}(\scr C^\op,\scr B^\op)) \\
		& \simeq \Fun^\L_{\scr E}(\scr C^\op,\Fun^\L_{\scr E}(\scr A^\vee,\scr B^\op)) \\
		& \simeq \Fun^\L_{\scr E}(\Fun^\L_{\scr E}(\scr A^\vee,\scr B^\op)^\op,\scr C), \\
	\end{align*}
	so that $\scr A^\vee\tens_{\scr E}^\L\scr B\simeq \Fun^\L_{\scr E}(\scr A^\vee,\scr B^\op)^\op$.
	Using Proposition~\ref{prop:IdemInd} twice, we get 
	\[
	\Fun^\L_{\scr E}(\scr A^\vee,\scr B^\op)^\op \simeq \Fun^{\ex}_{\scr E}(\scr A^{\omega,\op},\scr B^\op)^\op
	\simeq \Fun^{\ex}_{\scr E}(\scr A^{\omega},\scr B) \simeq \Fun^\L_{\scr E}(\scr A,\scr B),
	\]
	as desired.
\end{proof}

\begin{corollary}\label{cor:bimod}
	Let $\scr E\in\CAlg^\rig(\Cat^\perf)$. For any $\scr A,\scr B\in\Cat^\perf(\scr E)$, there is a canonical equivalence
	\[
	\Ind(\scr A^\op\tens_{\scr E}\scr B)\simeq \Fun^\ex_{\scr E}(\scr A,\Ind(\scr B))
	\]
	in $\Pr^\L(\scr E)$. In particular, $\Ind(\scr A)\simeq \Fun^\ex_{\scr E}(\scr A^\op,\Ind(\scr E))$.
\end{corollary}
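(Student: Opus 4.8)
The plan is to deduce the corollary by combining three facts already in place: that $\Ind\colon\Cat^\perf(\scr E)\to\Pr^\L(\scr E)$ is symmetric monoidal, the dualizability statement of Proposition~\ref{prop:dual}, and the enriched ind-adjunction comparison of Proposition~\ref{prop:IdemInd}. Note first that $\scr A^\op$ is again an object of $\Cat^\perf(\scr E)$ via the self-duality $\scr E\simeq\scr E^\op$ of Proposition~\ref{prop:rigid}(1), so the left-hand side makes sense; applying the symmetric monoidal functor $\Ind$ then yields a natural equivalence
\[
\Ind(\scr A^\op\tens_{\scr E}\scr B)\simeq \Ind(\scr A^\op)\tens^\L_{\scr E}\Ind(\scr B)
\]
in $\Pr^\L(\scr E)$.

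Next I would identify the right-hand side with a mapping $\infty$-category. Since $\scr A$ is idempotent complete, $\Ind(\scr A)$ is compactly generated with $\Ind(\scr A)^\omega\simeq\scr A$ by \cite[Proposition~5.5.7.10]{HTT}, so Proposition~\ref{prop:dual} applies: $\Ind(\scr A)$ is dualizable with dual $\Ind(\scr A^{\omega,\op})\simeq\Ind(\scr A^\op)$, and, reading off the proof of that proposition, for every $\scr B'\in\Pr^\L(\scr E)$ there is a natural equivalence $\Ind(\scr A^\op)\tens^\L_{\scr E}\scr B'\simeq\Fun^\L_{\scr E}(\Ind(\scr A),\scr B')$. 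Taking $\scr B'=\Ind(\scr B)$ and then invoking Proposition~\ref{prop:IdemInd} with the cocomplete $\scr E$-module $\Ind(\scr B)$ to rewrite $\Fun^\L_{\scr E}(\Ind(\scr A),\Ind(\scr B))\simeq\Fun^\ex_{\scr E}(\scr A,\Ind(\scr B))$, I obtain the desired equivalence; its naturality and $\scr E$-linearity are inherited from those of the three ingredients. The ``in particular'' assertion is then the special case obtained by replacing $\scr A$ with $\scr A^\op$ and $\scr B$ with $\scr E$, using that $\scr E$ is the unit of $\Cat^\perf(\scr E)$, so that $\Ind\bigl((\scr A^\op)^\op\tens_{\scr E}\scr E\bigr)\simeq\Ind(\scr A)$.

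Since every ingredient is already available, there is no genuine obstacle here; the only points demanding a little care are keeping track of the $\scr E$-module structures on the various functor categories and duals (all provided by Propositions~\ref{prop:rigid} and~\ref{prop:dual}) and the identification $\Ind(\scr A)^\omega\simeq\scr A$, which is exactly where idempotent completeness of $\scr A$ is used.
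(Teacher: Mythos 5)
Your argument is correct and is essentially the paper's own proof: apply the symmetric monoidal functor $\Ind$, use Proposition~\ref{prop:dual} to identify $\Ind(\scr A^\op)\tens^\L_{\scr E}\Ind(\scr B)$ with $\Fun^\L_{\scr E}(\Ind(\scr A),\Ind(\scr B))$, and conclude with Proposition~\ref{prop:IdemInd}. The only cosmetic difference is that you appeal to the proof of Proposition~\ref{prop:dual} for the equivalence $\scr A^\vee\tens^\L_{\scr E}\scr B'\simeq\Fun^\L_{\scr E}(\scr A,\scr B')$, whereas this already follows formally from the stated dualizability together with the fact that $\Fun^\L_{\scr E}$ is the internal Hom of $\Pr^\L(\scr E)$.
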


\begin{proof}
	By Proposition~\ref{prop:dual}, we have 
	\[
	\Ind(\scr A^\op)\tens_{\scr E}^\L\Ind(\scr B)\simeq \Fun_{\scr E}^\L(\Ind(\scr A),\Ind(\scr B)).
	\]
	We conclude using Proposition~\ref{prop:IdemInd}.
\end{proof}

By Corollary~\ref{cor:bimod}, if $\scr E$ is rigid and $\scr A\in\Cat^\perf(\scr E)$, we have a fully faithful $\scr E$-linear functor
\[
j_{\scr E}\colon\scr A \into \Fun^\ex_{\scr E}(\scr A^\op,\Ind(\scr E)),
\]
called the \emph{$\scr E$-linear Yoneda embedding}, exhibiting the right-hand side as the ind-completion of $\scr A$. 
An $\scr E$-linear functor $\scr A^{\op}\to\Ind(\scr E)$ (resp.\ $\scr A\to\Ind(\scr E)$) is also called a \emph{right $\scr A$-module} (resp.\ a \emph{left $\scr A$-module}). Thus, $\Ind(\scr A)$ and $\Ind(\scr A)^\vee$ are canonically identified with the $\infty$-categories of right and left $\scr A$-modules, respectively.
Similarly, for $\scr A,\scr B\in\Cat^\perf(\cE)$, $\Ind(\scr A)^\vee\tens_\cE^\L\Ind(\scr B)$ is identified with the $\infty$-category of $\scr E$-bilinear functors $\scr B^\op\times\scr A\to\Ind(\scr E)$, called \emph{$\scr A$-$\scr B$-bimodules}.

\begin{definition}\label{def:CatMor}
	Let $\scr E\in\CAlg^\rig(\Cat^\perf)$. We denote by
	\[
	 \Cat^\Mor(\scr E)\subset \Pr^\L(\scr E)
	\]
	the full subcategory spanned by the compactly generated $\infty$-categories.
\end{definition}

 By Proposition~\ref{prop:rigid} (2), $\Cat^\Mor(\scr E)$ is exactly the essential image of the functor $\Ind\colon\Cat^\perf(\scr E)\to \Pr^\L(\scr E)$. We may therefore think of $\Cat^\Mor(\scr E)$ as having the same objects as $\Cat^\perf(\scr E)$, but the morphisms from $\scr A$ to $\scr B$ are now $\scr A$-$\scr B$-bimodules. Since $\Ind$ is symmetric monoidal, we also deduce that $\Cat^\Mor(\scr E)$ is stable under the tensor product $\tens^\L_\cE$.
Moreover, by Proposition~\ref{prop:IdemInd}, ind-completion induces a fully faithful functor $\Fun_{\scr E}^\ex(\scr A,\scr B)\into \Fun_\cE^\L(\Ind(\cA),\Ind(\cB))$ for every $\scr A,\scr B\in\Cat^\perf(\scr E)$. In particular, we can identify $\Cat^\perf(\scr E)$ with a \emph{wide} subcategory of $\Cat^\Mor(\scr E)$, \ie, a subcategory obtained by discarding some noninvertible morphisms.

\subsection{Dualizable $\cE$-categories}
\label{app:dualizable}
In this subsection, we give a characterization of the dualizable objects in $\Cat^{\perf}(\cE)$, where $\cE\in\CAlg^\rig(\Cat^\perf)$. Our results are straightforward generalizations of facts that are well known in the case when $\cE$ is the $\infty$-category of perfect modules over a commutative ring. The theory in that case is due to To\"en;  
a convenient reference is \cite[Section 3]{BGT}.

\begin{definition}
Let $\scr E\in\CAlg^\rig(\Cat^\perf)$ and let $\cA\in\Cat^{\perf}(\cE)$. We say that $\cA$ is:
\begin{itemize}
\item \emph{smooth} if the object $\Delta\in\mathrm{Ind}(\cA^\op \otimes_\cE  \cA)$ corresponding to the Yoneda embedding $\cA \into\Ind(\cA)$ 
 is compact (see Corollary \ref{cor:bimod});
\item \emph{proper} if, for all $a, a' \in \cA$, the mapping object $\cA^\cE(a, a') \in \mathrm{Ind} (\cE)$ is compact;
 \item \emph{saturated} if it is both smooth and proper.
\end{itemize}
We denote by $\Cat^\sat(\scr E)\subset\Cat^\perf(\scr E)$ the full subcategory of saturated $\scr E$-module $\infty$-categories.
\end{definition}

\begin{lemma}
\label{lem:corest}
Let $\scr E\in\CAlg^\rig(\Cat^\perf)$ and let $\cA\in\Cat^{\perf}(\cE)$. Then $\cA$ is dualizable if and only if the two maps
\begin{gather*}
	\ev_{\Ind(\cA)}\colon \Ind(\cA^\op \otimes_\cE \cA)\simeq \mathrm{Ind}(\cA^\op) \otimes_\cE^\L \mathrm{Ind}(\cA) \rightarrow \mathrm{Ind}(\cE), \\
	\coev_{\Ind(\cA)}\colon \mathrm{Ind}(\cE) \rightarrow \mathrm{Ind}(\cA) \otimes_\cE^\L \mathrm{Ind}(\cA^{\op})\simeq\Ind(\scr A\tens_{\scr E}\scr A^\op)
\end{gather*}
preserve compact objects. In that case, their restrictions to compact objects are the evaluation and coevaluation of a duality between $\cA$ and $\cA^\op$.
\end{lemma}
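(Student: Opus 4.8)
The plan is to use the general principle that in a symmetric monoidal $\infty$-category, an object is dualizable with a prescribed dual, evaluation, and coevaluation precisely when the triangle identities hold, and to transport such data back and forth between $\Cat^\perf(\scr E)$ and $\Pr^\L(\scr E)$ along the symmetric monoidal functor $\Ind$. First I would observe that $\Ind\colon\Cat^\perf(\scr E)\to\Pr^\L(\scr E)$ is symmetric monoidal and fully faithful onto $\Cat^\Mor(\scr E)$ (the compactly generated objects), by Proposition~\ref{prop:IdemInd} and Definition~\ref{def:CatMor}. By Proposition~\ref{prop:dual}, $\Ind(\scr A)$ is always dualizable in $\Pr^\L(\scr E)$ with dual $\Ind(\scr A^{\omega,\op})=\Ind(\scr A^\op)$; unwinding the proof of that proposition (which follows \cite[Proposition 4.8.1.16]{HA}), the evaluation and coevaluation maps are exactly the maps $\ev_{\Ind(\scr A)}$ and $\coev_{\Ind(\scr A)}$ named in the statement, after the identifications $\Ind(\scr A^\op\tens_\scr E\scr A)\simeq\Ind(\scr A^\op)\tens^\L_\scr E\Ind(\scr A)$ coming from symmetric monoidality of $\Ind$ (Corollary~\ref{cor:bimod}).

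Now suppose $\ev_{\Ind(\scr A)}$ and $\coev_{\Ind(\scr A)}$ preserve compact objects. Since a colimit-preserving functor between compactly generated categories preserves compact objects if and only if it is in the image of $\Ind$ applied to an exact functor between the subcategories of compact objects (this is the equivalence of \cite[Proposition 5.5.7.10]{HTT} recalled at the start of \S\ref{sec:enrich}, in its $\scr E$-linear form via Proposition~\ref{prop:IdemInd}), the maps $\ev_{\Ind(\scr A)}$ and $\coev_{\Ind(\scr A)}$ are of the form $\Ind(e)$ and $\Ind(c)$ for unique $\scr E$-linear functors
\[
e\colon \scr A^\op\tens_\scr E\scr A\to\scr E,\qquad c\colon \scr E\to\scr A\tens_\scr E\scr A^\op.
\]
I would then argue that $e$ and $c$ exhibit $\scr A^\op$ as a dual of $\scr A$ in $\Cat^\perf(\scr E)$. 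The triangle identities for $(e,c)$ are equalities of $\scr E$-linear functors between objects of $\Cat^\perf(\scr E)$; applying the fully faithful functor $\Ind$, they become the triangle identities for $(\ev_{\Ind(\scr A)},\coev_{\Ind(\scr A)})$ in $\Pr^\L(\scr E)$, which hold by Proposition~\ref{prop:dual}. Since $\Ind$ is fully faithful, the triangle identities descend, so $\scr A$ is dualizable. Conversely, if $\scr A$ is dualizable in $\Cat^\perf(\scr E)$ with dual $\scr A'$, evaluation $e$, and coevaluation $c$, then $\Ind$ sends this duality datum to a duality datum for $\Ind(\scr A)$ in $\Pr^\L(\scr E)$; by uniqueness of duals (the dual, together with its structure maps, is determined up to a contractible choice) and Proposition~\ref{prop:dual}, we may identify $\Ind(\scr A')\simeq\Ind(\scr A^\op)$ compatibly with the structure maps, so $\Ind(e)$ and $\Ind(c)$ are (equivalent to) $\ev_{\Ind(\scr A)}$ and $\coev_{\Ind(\scr A)}$. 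In particular the latter are in the image of $\Ind$ on exact functors, hence preserve compact objects, and their restrictions to compact objects recover $(e,c)$.

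The step I expect to require the most care is the identification, in the proof of Proposition~\ref{prop:dual}, of the abstractly produced evaluation and coevaluation of $\Ind(\scr A)$ with the concrete maps $\ev_{\Ind(\scr A)}$ and $\coev_{\Ind(\scr A)}$ written in the statement — i.e., checking that the chain of adjunction equivalences used there is compatible with the symmetric monoidal structure maps of $\Ind$ and with the canonical identifications $\Ind(\scr A)^\vee\simeq\Ind(\scr A^{\omega,\op})$, $\Ind(\scr A^\op\tens_\scr E\scr A)\simeq\Ind(\scr A^\op)\tens^\L_\scr E\Ind(\scr A)$ of Corollary~\ref{cor:bimod}. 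This is bookkeeping rather than a genuine difficulty, and it is precisely the kind of compatibility that is routine in the absolute case treated in \cite[Section 3]{BGT}; the only new input is the $\scr E$-linear refinement, which is already packaged in Propositions~\ref{prop:rigid}, \ref{prop:IdemInd}, \ref{prop:dual} and Corollary~\ref{cor:bimod}.
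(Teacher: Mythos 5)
Your proposal is correct and follows essentially the same route as the paper: the forward direction via symmetric monoidality of $\Ind$ and uniqueness of duals to identify $\Ind$ of the duality data with $\ev_{\Ind(\cA)}$ and $\coev_{\Ind(\cA)}$, and the converse by restricting to compact objects and checking that the triangle identities descend (your explicit appeal to full faithfulness of $\Ind$ is just a spelled-out version of the paper's one-line argument).
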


\begin{proof}
	 Assume that $\cA$ is dualizable and let $\ev_\cA\colon \cA^\vee\tens_\cE\cA\to\cE$ exhibit $\scr A^\vee$ as dual to $\scr A$. As $\Ind$ is symmetric monoidal, $\Ind(\ev_{\cA})$ exhibits $\Ind(\cA^\vee)$ as dual to $\Ind(\cA)$. By uniqueness of duals \cite[Lemma 4.6.1.10]{HA}, $\Ind(\ev_\cA)$ can be identified with $\ev_{\Ind(\cA)}$, which therefore preserves compact objects. A dual argument shows that $\coev_{\Ind(\cA)}$ preserves compact objects.
Conversely, if $\ev_{\Ind(\cA)}$ and $\coev_{\Ind(\cA)}$ preserve compact objects, then their restrictions to compact objects satisfy the triangle identities and hence exhibit $\cA^\op$ as a dual of $\cA$.
\end{proof}

\begin{proposition}
\label{prop:sat}
Let $\scr E\in\CAlg^\rig(\Cat^\perf)$ and let $\cA\in\Cat^{\perf}(\cE)$. Then $\cA$ is dualizable if and only if it is saturated. In that case, the dual of $\scr A$ is $\scr A^\op$.
\end{proposition}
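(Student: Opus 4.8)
The plan is to deduce the proposition from Lemma~\ref{lem:corest} by matching its two conditions with smoothness and properness separately. By Proposition~\ref{prop:dual}, $\Ind\cA$ is dualizable in $\Pr^\L(\cE)$ with dual $\Ind(\cA^\op)$, so the maps $\ev_{\Ind\cA}$ and $\coev_{\Ind\cA}$ of Lemma~\ref{lem:corest} are defined, and by that lemma $\cA$ is dualizable --- with dual $\cA^\op$ --- if and only if both of them preserve compact objects. It therefore suffices to prove: (i)~$\coev_{\Ind\cA}$ preserves compact objects if and only if $\cA$ is smooth; and (ii)~$\ev_{\Ind\cA}$ preserves compact objects if and only if $\cA$ is proper. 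Throughout one uses that $\ev_{\Ind\cA}$ and $\coev_{\Ind\cA}$ are morphisms in $\Pr^\L(\cE)$, hence colimit-preserving and $\Ind(\cE)$-linear, and that $\Ind\cE$ is compactly generated by $\cE$ with $\Ind(\cE)^\omega=\cE$.

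For (i), since $\Ind\cE$ is compactly generated by $\cE$, a colimit-preserving functor out of it preserves compact objects precisely when it carries each object of $\cE$ to a compact object. For $e\in\cE$ one has $\coev_{\Ind\cA}(e)\simeq e\otimes\coev_{\Ind\cA}(\mathbf 1)$ by $\Ind(\cE)$-linearity, and $e\otimes\ph$ preserves compact objects by Proposition~\ref{prop:rigid}(2) --- this is where rigidity enters --- so $\coev_{\Ind\cA}$ preserves compact objects if and only if $\coev_{\Ind\cA}(\mathbf 1)$ is compact. The remaining task is to identify $\coev_{\Ind\cA}(\mathbf 1)$ with the diagonal bimodule $\Delta$: unwinding the construction of the duality in Proposition~\ref{prop:dual}, the coevaluation $\mathbf 1\to\Ind\cA\otimes^\L_\cE\Ind(\cA^\op)\simeq\Fun^\L_\cE(\Ind\cA,\Ind\cA)$ is the name of $\id_{\Ind\cA}$, and under the equivalences $\Fun^\L_\cE(\Ind\cA,\Ind\cA)\simeq\Fun^\ex_\cE(\cA,\Ind\cA)\simeq\Ind(\cA^\op\otimes_\cE\cA)$ of Proposition~\ref{prop:IdemInd} and Corollary~\ref{cor:bimod}, $\id_{\Ind\cA}$ restricts to the Yoneda embedding, which is $\Delta$ by definition. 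Thus $\coev_{\Ind\cA}$ preserves compact objects iff $\Delta$ is compact iff $\cA$ is smooth.

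For (ii), the compact objects of $\Ind(\cA^\op\otimes_\cE\cA)$ are exactly $\cA^\op\otimes_\cE\cA$ (which is idempotent complete), and this category is generated under finite colimits and retracts by the external products $a^\op\boxtimes a'$, $a,a'\in\cA$ --- a standard property of the tensor product of small stable idempotent-complete $\infty$-categories, which passes to the $\cE$-relative version since the latter is a quotient. As $\ev_{\Ind\cA}$ is exact and $\cE\subset\Ind\cE$ is closed under finite colimits and retracts, $\ev_{\Ind\cA}$ preserves compact objects if and only if $\ev_{\Ind\cA}(a^\op\boxtimes a')$ is compact for all $a,a'$. Finally, under the identification of $\Ind(\cA^\op)\otimes^\L_\cE\Ind\cA$ with $\cA$-$\cA$-bimodules and of $\ev_{\Ind\cA}$ with the relative tensor product over $\cA$ --- the $\cE$-linear analogue of the Morita-theoretic description in \cite[Section~3]{BGT} --- the object $a^\op\boxtimes a'$ corresponds to a pair of a corepresentable and a representable module, whose relative tensor product over $\cA$ is $\cA^\cE(a,a')$ by the coYoneda lemma. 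Hence $\ev_{\Ind\cA}$ preserves compact objects iff $\cA^\cE(a,a')$ is compact for all $a,a'$, i.e.\ iff $\cA$ is proper. Combining (i) and (ii) with Lemma~\ref{lem:corest} finishes the proof, the dual of $\cA$ being $\cA^\op$.

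I expect the two identifications in (i) and (ii) --- that $\coev_{\Ind\cA}(\mathbf 1)$ is the diagonal bimodule, and that $\ev_{\Ind\cA}$ computes the enriched mapping objects $\cA^\cE(a,a')$ on (co)representable bimodules --- to be the main obstacle, since they require carefully tracing the $\Pr^\L(\cE)$-duality of Proposition~\ref{prop:dual} through the chain of equivalences in Corollary~\ref{cor:bimod} and Proposition~\ref{prop:IdemInd}. By contrast, the rigidity reduction in (i) and the generation-by-external-products argument in (ii) are routine, and it is reassuring that both identifications are already available (and classical) in the case $\cE=\Sp^\omega$ or $\cE=\Perf(R)$, cf.\ \cite[Section~3]{BGT}.
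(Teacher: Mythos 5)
Your proposal is correct and follows the same route as the paper: the paper's proof simply invokes Lemma~\ref{lem:corest} and asserts that $\ev_{\Ind(\cA)}$ preserves compact objects iff $\cA$ is proper and $\coev_{\Ind(\cA)}$ preserves compact objects iff $\cA$ is smooth, which is exactly your decomposition into (i) and (ii). The extra details you supply (identifying $\coev_{\Ind(\cA)}(\mathbf 1)$ with the diagonal bimodule $\Delta$ and $\ev_{\Ind(\cA)}$ on $a^\op\boxtimes a'$ with $\cA^\cE(a,a')$) are precisely what the paper leaves implicit, and they are consistent with the explicit description of the (co)evaluation used later in Proposition~\ref{prop:hochschild}.
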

\begin{proof}
The proposition is a simple corollary of Lemma~\ref{lem:corest}. Indeed, $\ev_{\Ind(\cA)}$ preserves compact objects if and only if $\cA$ is  proper. Similarly, $\coev_{\Ind(\cA)}$ preserves compact objects if and only if $\cA$ is smooth.
\end{proof}

\begin{corollary}\label{cor:Brown}
	Let $\scr E\in\CAlg^\rig(\Cat^\perf)$ and let $\cA\in\Cat^{\sat}(\cE)$. Then the $\scr E$-linear Yoneda embedding $j_{\scr E}$ induces an equivalence
	\[
	\cA \simeq \Fun_\cE^\ex(\cA^{\op}, \cE).
	\]
\end{corollary}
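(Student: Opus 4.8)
The plan is to obtain the statement as a formal consequence of Proposition~\ref{prop:sat}. Since $\cA$ is saturated, that proposition gives that $\cA$ is dualizable in $\Cat^\perf(\cE)$ with dual $\cA^\op$; as duals are symmetric, $\cA^\op$ is then dualizable with dual $\cA$. Now $\cE$ is the unit object of $\Cat^\perf(\cE)$ and $\Fun^\ex_\cE(\ph,\ph)$ is its internal Hom, so $\Fun^\ex_\cE(\cA^\op,\cE)=\underline{\Hom}(\cA^\op,\mathbf 1)$; for a dualizable object $X$ of a symmetric monoidal $\infty$-category this internal Hom into the unit is canonically the dual $X^\vee$, the identification $X^\vee\to\underline{\Hom}(X,\mathbf 1)$ being adjoint to $\ev_X\colon X^\vee\tens X\to\mathbf 1$. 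With $X=\cA^\op$ this yields a canonical equivalence
\[
\cA\simeq(\cA^\op)^\vee\xrightarrow{\ \sim\ }\Fun^\ex_\cE(\cA^\op,\cE).
\]

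The next step is to check that this equivalence is the one induced by $j_{\scr E}$. Unwinding it, an object $a\in\cA$ is sent to the functor $\cA^\op\to\cE$ classified by the restriction of $\ev_{\cA^\op}\colon\cA\tens_\cE\cA^\op\to\cE$ along $\{a\}\tens_\cE\cA^\op$. By symmetry of duality $\ev_{\cA^\op}$ factors as the braiding followed by $\ev_\cA\colon\cA^\op\tens_\cE\cA\to\cE$, and by Lemma~\ref{lem:corest} we may take $\ev_\cA$ to be the restriction to compact objects of $\ev_{\Ind(\cA)}$. By the construction of the self-duality of $\Ind(\cA)$ in Proposition~\ref{prop:dual} (via Proposition~\ref{prop:IdemInd}), $\ev_{\Ind(\cA)}$ is evaluation of functors, hence is computed on representable modules by the $\Ind(\cE)$-enriched mapping objects of $\cA$. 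Tracing this through, the functor attached to $a$ is $\cA^\cE(\ph,a)=j_{\scr E}(a)$; in particular $j_{\scr E}$ lands in $\Fun^\ex_\cE(\cA^\op,\cE)$ (recovering that $\cA$ is proper) and corestricts to the claimed equivalence.

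The step I expect to be the main obstacle is this last identification: carefully tracing the evaluation morphism of the duality $(\Ind(\cA),\Ind(\cA^\op))$ through the chain of equivalences of Propositions~\ref{prop:dual} and~\ref{prop:IdemInd} and matching it with the $\Ind(\cE)$-enriched mapping objects of $\cA$, with all variances correct. If one prefers to avoid this bookkeeping, there is a more hands-on alternative in the spirit of \cite{BGT}: properness shows that $j_{\scr E}$ lands in the fully faithful subcategory $\Fun^\ex_\cE(\cA^\op,\cE)\subseteq\Fun^\ex_\cE(\cA^\op,\Ind(\cE))\simeq\Ind(\cA)$ (Corollary~\ref{cor:bimod}), whose essential image is contained in the compact objects $\Ind(\cA)^\omega=\cA$; conversely, for an $\cE$-valued $\cE$-linear functor $F\colon\cA^\op\to\cE$ one writes $F\simeq F\tens_\cA\Delta$ (as $\Delta$ is the identity bimodule), and since smoothness makes $\Delta$ a retract of a finite colimit of free bimodules, the enriched co-Yoneda lemma exhibits $F$ as a retract of a finite colimit of modules of the form $e\tens\cA^\cE(\ph,y)$ with $e\in\cE$ and $y\in\cA$ — each compact in $\Ind(\cA)$ because $\cA^\cE(\ph,y)=j_{\scr E}(y)$ is compact and the $\cE$-action preserves compact objects by rigidity (Proposition~\ref{prop:rigid}(2)). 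Thus $F$ is compact and lies in the essential image of $j_{\scr E}$, giving the equivalence. I would present the first route, since the result is advertised as a corollary of Proposition~\ref{prop:sat}.
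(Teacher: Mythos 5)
Your proposal is correct and follows the paper's (implicit) line of argument: the corollary is presented as an immediate consequence of Proposition~\ref{prop:sat} (identifying $\Fun^\ex_\cE(\cA^\op,\cE)$, the internal Hom into the unit, with the dual of $\cA^\op$, which is $\cA$), and the remark right after the corollary records exactly your second route — properness gives $\cA\subset\Fun^\ex_\cE(\cA^\op,\cE)$ and smoothness gives the reverse inclusion inside $\Fun^\ex_\cE(\cA^\op,\Ind(\cE))\simeq\Ind(\cA)$. Both of your routes are sound, including the identification of the abstract duality equivalence with $j_{\scr E}$ via the computation of $\ev_{\Ind(\cA)}$ on representables, which is the same computation the paper uses in Proposition~\ref{prop:hochschild}.
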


\begin{remark}
	In fact, the following finer statements hold. If $\cA\in\Cat^\perf(\cE)$, both $\cA$ and $\Fun_\cE^\ex(\cA^\op,\cE)$ can be identified with full subcategories of $\Fun_{\cE}^\ex(\cA^\op,\Ind(\cE))$. Then:
	\begin{itemize}
		\item If $\cA$ is proper, $\cA\subset \Fun_\cE^\ex(\cA^\op,\cE)$.
		\item If $\cA$ is smooth, $\Fun_\cE^\ex(\cA^\op,\cE)\subset\cA$.
	\end{itemize}
\end{remark}

\begin{corollary}\label{cor:split}
	Let $\scr E\in\CAlg^\rig(\Cat^\perf)$ and let $\cA,\cB\in\Cat^{\perf}(\cE)$.
If $\cA$ is smooth and $\cB$ is proper, then every $\cE$-linear functor $F\colon \cA \rightarrow \cB$ has an $\cE$-linear right adjoint. 
\end{corollary}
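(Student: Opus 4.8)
The plan is to adapt the argument from the absolute case treated in \cite[\S3]{BGT}. Given an $\cE$-linear functor $F\colon \cA\to\cB$, apply the symmetric monoidal functor $\Ind\colon\Cat^\perf(\cE)\to\Pr^\L(\cE)$ to get a colimit-preserving $\Ind(\cE)$-linear functor $\Ind(F)\colon\Ind(\cA)\to\Ind(\cB)$. By the adjoint functor theorem it has a right adjoint $G\colon\Ind(\cB)\to\Ind(\cA)$, and Proposition~\ref{prop:rigid}(3) endows $G$ with a canonical $\Ind(\cE)$-module structure for which the unit and counit of $\Ind(F)\dashv G$ are $\Ind(\cE)$-module transformations. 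It therefore suffices to show that $G$ preserves compact objects: since $\cA$ and $\cB$ are idempotent complete, such a $G$ restricts to a functor $\cB\simeq\Ind(\cB)^\omega\to\Ind(\cA)^\omega\simeq\cA$, which is exact (being the restriction of a limit-preserving functor between stable $\infty$-categories) and $\cE$-linear (the $\Ind(\cE)$-action restricts to the $\cE$-action on compacts by Proposition~\ref{prop:rigid}(2)), and the unit and counit restrict accordingly, exhibiting this functor as a right adjoint of $F$ in $\Cat^\perf(\cE)$.

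To prove that $G$ preserves compact objects, I would compute it on representables. Under the $\cE$-linear Yoneda equivalences $\Ind(\cA)\simeq\Fun^\ex_\cE(\cA^\op,\Ind(\cE))$ and $\Ind(\cB)\simeq\Fun^\ex_\cE(\cB^\op,\Ind(\cE))$ of Corollary~\ref{cor:bimod}, the functor $\Ind(F)$ sends the representable $j_\cE(a)$ to $j_\cE(F(a))$. Since the adjunction $\Ind(F)\dashv G$ is $\Ind(\cE)$-enriched, the enriched Yoneda lemma gives, for $b\in\cB$ and $a\in\cA$, an equivalence $G(j_\cE(b))(a)\simeq\Ind(\cB)^\cE(j_\cE(F(a)),j_\cE(b))\simeq\cB^\cE(F(a),b)$ in $\Ind(\cE)$, where $\cB^\cE(\ph,\ph)$ denotes the $\Ind(\cE)$-valued enriched mapping object of \S\ref{sub:mapping}. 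In other words, $G(j_\cE(b))$ is the right $\cA$-module $a\mapsto\cB^\cE(F(a),b)$, \ie\ the restriction along $F$ of the right $\cB$-module represented by $b$.

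Now the hypotheses enter. As $\cB$ is proper, $\cB^\cE(F(a),b)$ is a compact object of $\Ind(\cE)$, \ie\ lies in $\cE$, for every $a\in\cA$; hence $G(j_\cE(b))$ belongs to the full subcategory $\Fun^\ex_\cE(\cA^\op,\cE)\subset\Fun^\ex_\cE(\cA^\op,\Ind(\cE))=\Ind(\cA)$. As $\cA$ is smooth, the Remark following Corollary~\ref{cor:Brown} gives $\Fun^\ex_\cE(\cA^\op,\cE)\subset\cA$ inside $\Ind(\cA)$, so $G(j_\cE(b))$ is representable, \ie\ compact in $\Ind(\cA)$. Since $\cB$ is idempotent complete, every compact object of $\Ind(\cB)$ is of the form $j_\cE(b)$, so $G$ preserves compact objects, which completes the argument.

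The step I expect to be the main obstacle is the computation of $G$ on representables in the second paragraph: making precise, in the $\cE$-enriched setting, that the right adjoint of $\Ind(F)$ is restriction of modules along $F$, so that $G(j_\cE(b))$ is computed by the enriched mapping object $\cB^\cE(F(\ph),b)$. Granting this, the smoothness and properness of $\cA$ and $\cB$ enter solely through the two inclusions recorded in the Remark after Corollary~\ref{cor:Brown}, and everything else is formal.
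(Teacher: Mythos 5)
Your proof is correct, but it takes a genuinely different route from the paper's. Both arguments make the same initial reduction: pass to $\Ind(F)$, use Proposition~\ref{prop:rigid} to handle the $\cE$-linearity of the right adjoint $G$ and of the (co)unit, and reduce to showing that $G$ preserves compact objects. The paper finishes purely formally: it writes $G$ as the composite $(\id\tens\ev_{\Ind(\cB)})\circ(\id\tens\Ind(F^{\op})\tens\id)\circ(\coev_{\Ind(\cA)}\tens\id)$ and observes that smoothness of $\cA$ and properness of $\cB$ say precisely that $\coev_{\Ind(\cA)}$ and $\ev_{\Ind(\cB)}$ preserve compacts (Lemma~\ref{lem:corest} and the proof of Proposition~\ref{prop:sat}), while $\Ind(F^{\op})$ clearly does. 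You instead compute $G$ on representables, identifying $G(j_{\cE}(b))$ with the restricted module $a\mapsto\cB^{\cE}(F(a),b)$ via the enriched adjunction, and then feed in the hypotheses through the Remark following Corollary~\ref{cor:Brown}: properness puts the values in $\cE$, and smoothness gives $\Fun^{\ex}_{\cE}(\cA^{\op},\cE)\subset\cA$. The enriched-Yoneda step you flag is fine: the $\Ind(\cE)$-module structure on $\Ind(F)$ yields the enriched adjunction formula, and the $\cE$-linear Yoneda embedding of Corollary~\ref{cor:bimod} is fully faithful on enriched mapping objects, so $G$ is indeed restriction of modules along $F$. The one caveat is that in your argument smoothness enters only through that Remark, which the paper asserts without proof; to be self-contained you would have to prove it (compactness of the diagonal bimodule implies every pseudo-perfect right $\cA$-module is perfect), which is essentially the same duality manipulation that the paper's factorization of $G$ packages directly. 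What your route buys in exchange is an explicit description of the right adjoint as restriction along $F$, which the paper's formal proof leaves implicit.
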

\begin{proof}
The functor $\Ind(F)$ has a right adjoint $G\colon \Ind(\cB) \to \Ind(\cA)$ in $\Cat^\Mor(\scr E)$, so it suffices to show that $G$ preserves compact objects. But $G$ can be written as the composite
\[
\Ind(\cB) \xrightarrow{\coev_{\Ind(\cA)}} \Ind(\cA)\tens^\L_\cE\Ind(\cA^\op)\tens^\L_\cE\Ind(\cB) \xrightarrow{G^\vee} 
\Ind(\cA)\tens^\L_\cE\Ind(\cB^\op)\tens^\L_\cE\Ind(\cB) \xrightarrow{\ev_{\Ind(\cB)}} \Ind(\cA).
\]
Since $\cA$ is smooth and $\cB$ is proper, $\coev_{\Ind(\cA)}$ and $\ev_{\Ind(\cB)}$ preserve compact objects. Finally, $G^\vee=\Ind(F^\op)$ also preserves compact objects.
\end{proof}

\begin{proposition}\label{prop:sat=>compact}
	Let $\scr E\in\CAlg^\rig(\Cat^\perf)$.
If $\cA\in\Cat^{\perf}(\cE)$ is saturated, then $\cA$ is compact.
\end{proposition}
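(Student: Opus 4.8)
The plan is to deduce the compactness of $\cA$ from its dualizability, using the standard principle that in a closed symmetric monoidal $\infty$-category whose monoidal unit is compact, every dualizable object is compact. Three facts are in place. First, $\Cat^\perf(\cE)$ is presentable by Proposition~\ref{prop:coco}, so that compactness is a meaningful notion and filtered colimits exist. Second, the tensor product $\tens_\cE$ on $\Cat^\perf(\cE)$ is closed, with internal hom $\Fun^\ex_\cE$; in particular, tensoring with a fixed object is a left adjoint and hence preserves all small colimits. Third, by Proposition~\ref{prop:sat} a saturated $\cA\in\Cat^\perf(\cE)$ is dualizable with dual $\cA^\op$.

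The crux of the argument is to show that the unit $\cE$ is a compact object of $\Cat^\perf(\cE)=\Mod_\cE(\Cat^\perf)$. Here I would use that $\cE$ is the free $\cE$-module on the unit $\Sp^\omega$ of $\Cat^\perf$, so that the free--forgetful adjunction provides a natural equivalence
\[
\Cat^\perf(\cE)(\cE,\cB)\simeq\Cat^\perf(\Sp^\omega,U\cB),
\]
where $U\colon\Cat^\perf(\cE)\to\Cat^\perf$ is the forgetful functor. The functor $U$ preserves filtered colimits: by \cite[Corollary 4.2.3.5]{HA} it preserves every colimit that is preserved in each variable by the tensor product of $\Cat^\perf$, and this tensor product is closed. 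Moreover $\Sp^\omega$ is a compact object of $\Cat^\perf$, since it corepresents $\cB\mapsto\iota_0\cB$ and filtered colimits in $\Cat^\perf$ agree with those in $\Cat_\infty$, where $\iota_0$ preserves filtered colimits. Composing, $\Cat^\perf(\cE)(\cE,\ph)$ preserves filtered colimits.

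With this in hand the rest is formal. For every $\cB\in\Cat^\perf(\cE)$, the duality $\cA^\vee\simeq\cA^\op$ of Proposition~\ref{prop:sat} yields a natural equivalence
\[
\Cat^\perf(\cE)(\cA,\cB)\simeq\Cat^\perf(\cE)(\cE,\cA^\op\tens_\cE\cB),
\]
and since $\cA^\op\tens_\cE(\ph)$ preserves all small colimits while $\Cat^\perf(\cE)(\cE,\ph)$ preserves filtered colimits, the composite $\Cat^\perf(\cE)(\cA,\ph)$ preserves filtered colimits; that is, $\cA$ is compact. I do not expect a genuine obstacle: the only non-formal inputs are the compactness of $\Sp^\omega$ in $\Cat^\perf$ and the computation of filtered colimits in $\Cat^\perf$, both of which belong to the structure theory recalled from \cite{BGT} (\cf the proof of Proposition~\ref{prop:coco}). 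This generalizes the analogous statement for $\cE=\Sp^\omega$.
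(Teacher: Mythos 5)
Your proof is correct and follows essentially the same route as the paper: reduce via the duality $\cA^\vee\simeq\cA^\op$ of Proposition~\ref{prop:sat} to compactness of the unit $\cE$, then identify $\Cat^\perf(\cE)(\cE,\ph)$ with the composite of the colimit-preserving forgetful functor to $\Cat^\perf$ and the filtered-colimit-preserving functor $\iota_0$ (the paper phrases this via the forgetful functors $\Cat^\perf(\cE)\to\Cat^\perf\to\Cat_{(\infty,1)}\xrightarrow{\iota_0}\cS$, which is the same as your free--forgetful adjunction plus compactness of $\Sp^\omega$). The only differences are cosmetic, e.g.\ the exact corollaries of \cite{HA} cited for the forgetful functor and for filtered colimits in $\Cat^\perf$.
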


\begin{proof}
		By Proposition~\ref{prop:sat}, there is an equivalence of mapping 
	$\infty$-groupoids
	\[
	\Cat^{\perf}(\cE)(\cA,\ph) \simeq \Cat^{\perf}(\cE)(\cE, \cA^\op \otimes_{\cE} \ph).
	\] 
	As $\cA^\op \otimes_\cE \ph$ preserves colimits, it suffices to show that the unit $\cE$ is compact. The functor $\Cat^{\perf}(\cE)(\cE,\ph)$ is equivalent to the composition
	\[
	\Cat^\perf(\scr E) \to \Cat^\perf \to \Cat_{(\infty,1)} \xrightarrow{\iota_0} \scr S.
	\]
	The first functor preserves all colimits \cite[Corollary 4.2.3.7]{HA}, and $\iota_0$ clearly preserves filtered colimits.
	The forgetful functor $\Cat^\perf\to\Cat_{(\infty,1)}$ also preserves filtered colimits, by \cite[Proposition 1.1.4.6 and Lemma 7.3.5.10]{HA}.
\end{proof}

\begin{corollary}
\label{cor:small}
Let $\scr E\in\CAlg^\rig(\Cat^\perf)$.
Then the $\infty$-category $ \Cat^\sat(\scr E)$ is small.
\end{corollary}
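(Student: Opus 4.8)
The plan is to deduce this immediately from Proposition~\ref{prop:sat=>compact}, together with the standard fact that the compact objects of a compactly generated $\infty$-category form an essentially small subcategory.

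First I would recall that, by Proposition~\ref{prop:sat=>compact}, every saturated object of $\Cat^\perf(\scr E)$ is a compact object, so that $\Cat^\sat(\scr E)$ is a full subcategory of $\Cat^\perf(\scr E)^\omega$. It therefore suffices to show that $\Cat^\perf(\scr E)^\omega$ is essentially small. By Proposition~\ref{prop:coco}, $\Cat^\perf(\scr E)$ is compactly generated, hence of the form $\Ind(\scr C)$ for a small $\infty$-category $\scr C$ admitting finite colimits. By the basic properties of ind-completions \cite[\S5.3--5.4]{HTT}, the full subcategory $\Ind(\scr C)^\omega$ of compact objects is the idempotent completion of $\scr C$, which is again essentially small. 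Hence $\Cat^\perf(\scr E)^\omega$, and \emph{a fortiori} its full subcategory $\Cat^\sat(\scr E)$, is essentially small; since an essentially small $\infty$-category is in particular locally small, this says precisely that $\Cat^\sat(\scr E)$ is small.

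There is no real obstacle here: the entire content lies in Proposition~\ref{prop:sat=>compact}, and the corollary is a formal consequence of compact generation. (One could alternatively phrase the argument through Proposition~\ref{prop:sat}, noting that $\Cat^\sat(\scr E)$ is the full subcategory of dualizable objects of $\Cat^\perf(\scr E)$, but the route through compactness seems the most direct.)
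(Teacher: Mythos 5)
Your argument is correct and is essentially the paper's own proof: the paper likewise deduces from Proposition~\ref{prop:sat=>compact} that $\Cat^\sat(\scr E)$ sits inside $\Cat^\perf(\scr E)^\omega$, and then invokes Proposition~\ref{prop:coco} (compact generation) to conclude that the latter is small. Your extra sentence identifying $\Cat^\perf(\scr E)^\omega$ with the idempotent completion of a small category just makes explicit what the paper leaves implicit.
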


\begin{proof}
	By Proposition~\ref{prop:sat=>compact}, $\Cat^\sat(\scr E)$ is a subcategory of $\Cat^\perf(\scr E)^\omega$, which is small by Proposition~\ref{prop:coco}.
\end{proof}

\subsection{The \texorpdfstring{$(\infty,2)$}{(∞,2)}-categorical structure}
\label{sub:2cat}

To apply the results of \S\ref{sec:traces}–\ref{sec:localization}, we need to upgrade the symmetric monoidal $\infty$-categories
\[
\Cat^\sat(\scr E)\subset \Cat^\perf(\scr E) \subset \Cat^\Mor(\scr E)\subset \Pr^\L(\scr E)
\]
to symmetric monoidal $(\infty,2)$-categories. 
We refer to \cite[Chapter I.1, \S6.1.8]{GR} for the construction of the symmetric monoidal $(\infty,2)$-category of stable cocomplete $\infty$-categories.\footnote{In \emph{loc. cit.}, the authors use an axiomatic approach to $(\infty,1)$-categories. However, if we choose to use complete Segal spaces as a model for $(\infty,1)$-categories, then their definitions of $(\infty,2)$-categories and symmetric monoidal structures coincide with those used in \cite{JFS} and recalled in \S\ref{sub:trace}.}
We let $\PR^\L_\St$ denote the full subcategory of the latter spanned by the \emph{presentable} stable $\infty$-categories.
For $n\in\Delta$, the $(\infty,1)$-category $(\PR^\L_\St)_{n,\bullet}$ is the subcategory of $\widehat{\Cat}_{/(\Delta^n)^\op}$ whose objects are the presentable fibrations with stable fibers and whose morphisms are the fiberwise equivalences.
By construction, we have $\iota_1\PR^\L_\St\simeq \Pr^\L_\St$ as symmetric monoidal $(\infty,1)$-categories, and the $(\infty,1)$-category of morphisms from $\scr A$ to $\scr B$ in $\PR^\L_\St$ is $\Fun^\L(\scr A,\scr B)$. Moreover, the notion of adjunction internal to the $(\infty,2)$-category $\PR^\L_\St$ matches the usual notion of adjunction between functors \cite[Chapter I.1, Lemma 5.3.2]{GR}.

Let $\scr C\in \Cat_{(\infty,1)}^\otimes$ be a symmetric monoidal $\infty$-category compatible with geometric realizations \cite[Definition 3.1.1.18]{HA}. Then, for every commutative algebra $A\in\CAlg(\scr C)$, the $\infty$-category of $A$-modules $\Mod_A(\scr C)$ has a canonical symmetric monoidal structure. In fact, by \cite[Theorem 4.5.3.1]{HA}, there exists a functor
\begin{equation}\label{eqn:ModA1}
\CAlg(\scr C) \to \Cat_{(\infty,1)}^\otimes, \quad A\mapsto \Mod_A(\scr C).
\end{equation}
We will need a $2$-categorical enhancement of this construction. 

More generally, suppose that $\scr C\in \Cat_{(\infty,n)}^\otimes$ is a symmetric monoidal $(\infty,n)$-category whose underlying symmetric monoidal $(\infty,1)$-category $\iota_1\scr C$ is compatible with geometric realizations.
	We construct a functor
	\begin{equation}\label{eqn:ModA2}
	\CAlg(\iota_1\scr C) \to \Cat_{(\infty,n)}^\otimes,\quad A\mapsto \Mod_A(\scr C),
	\end{equation}
	such that $\iota_1\Mod_A(\scr C)\simeq \Mod_A(\iota_1\scr C)$.
	The functor~\eqref{eqn:ModA1}
	is obtained by straightening an explicit cocartesian fibration $\Mod(\scr C)^\otimes\to \CAlg(\scr C)\times \Fin_*$, which is natural in $\scr C$ at the point-set level. In particular, if we plug in the $n$-fold simplicial symmetric monoidal $(\infty,1)$-category
	\[
	\vec k\mapsto \iota_1\Fun(\Theta^{\vec k},\scr C),
	\]
	 and pull back the resulting cocartesian fibrations to the initial object of $(\Delta^\op)^n$,
	 we obtain an $n$-fold simplicial cocartesian fibration over $\CAlg(\iota_1\scr C)\times \Fin_*$. By straightening, this gives rise to a functor
	\[
	\CAlg(\iota_1\scr C) \times (\Delta^\op)^n \to \Cat_{(\infty,1)}^\otimes,\quad (A,\vec k)\mapsto \Mod_A(\iota_1\Fun(\Theta^{\vec k},\scr C)).
	\]
	For fixed $A\in\CAlg(\iota_1\scr C)$, we claim that this is a complete $n$-fold Segal object in symmetric monoidal $(\infty,1)$-categories.
	Since $\Fun(\Theta^{\vec \bullet},\scr C)$ is a complete $n$-fold Segal object, it suffices to show that $\Mod_A(\ph)$ preserves limits of $\iota_1\scr C$-modules, but this follows directly from the definition \cite[Definition 4.2.1.13]{HA}.
	Applying $\iota_0$, we obtain the functor~\eqref{eqn:ModA2}. The identification $\iota_1\Mod_A(\scr C)\simeq \Mod_A(\iota_1\scr C)$ results from 
	\[
	\Mod_A(\iota_1\Fun(\Delta^n,\scr C))=\Mod_A(\Fun(\Delta^n,\iota_1\scr C))\simeq \Fun(\Delta^n,\Mod_A(\iota_1\scr C)).
	\]

We apply this construction with $\scr C=\PR^\L_\St$: given $\scr E\in\CAlg(\Cat^\perf)$, we denote by $\PR^\L(\scr E)$ the symmetric monoidal $(\infty,2)$-category $\Mod_{\Ind(\scr E)}(\PR^\L_\St)$. Thus, $\iota_1\PR^\L(\scr E)\simeq \Pr^\L(\scr E)$.
An equivalent construction of this symmetric monoidal $(\infty,2)$-category can be found in \cite[Chapter I.1, \S8.3]{GR}.
By unraveling the construction, we see that for $\scr A,\scr B\in\PR^\L(\scr E)$, there is an equivalence of $(\infty,1)$-categories
\[
\PR^\L(\scr E)(\scr A,\scr B) \simeq \Fun_{\scr E}^\L(\scr A,\scr B)
\]
compatible with binary composition and tensor product. In particular, $\PR^\L(\scr E)$ is linearly symmetric monoidal in the sense of Definition~\ref{dfn:linear}.
By~\eqref{eqn:ModA2}, we can moreover regard the assignment $\scr E\mapsto\PR^\L(\scr E)$ as a functor $\CAlg(\Cat^\perf)\to\Cat_{(\infty,2)}^\otimes$.

Assume now that $\scr E$ is rigid. Since $\Cat^\sat(\scr E)$, $\Cat^\perf(\scr E)$, and $\Cat^\Mor(\scr E)$ are (or can be identified with) symmetric monoidal subcategories of $\Pr^\L(\scr E)$, they can be upgraded to symmetric monoidal $(\infty,2)$-categories $\CAT^\sat(\scr E)$, $\CAT^\perf(\scr E)$, and $\CAT^\Mor(\scr E)$, namely the corresponding subcategories of $\PR^\L(\scr E)$. We thus have a sequence of linearly symmetric monoidal $(\infty,2)$-categories
\begin{equation*}\label{eqn:CAT}
\CAT^\sat(\scr E)\subset \CAT^\perf(\scr E) \subset \CAT^\Mor(\scr E)\subset \PR^\L(\scr E).
\end{equation*}
For any morphism $\scr E\to\scr F$ in $\CAlg^\rig(\Cat^\perf)$, these subcategories are preserved by the change of scalars functor $\PR^\L(\scr E)\to\PR^\L(\scr F)$, and hence they vary functorially with $\scr E\in\CAlg^\rig(\Cat^\perf)$.

Recall that any symmetric monoidal $(\infty,n)$-category $\scr C$ has a (not necessarily full) subcategory $\scr C^\mathrm{fd}\subset \scr C$ of \emph{fully dualizable} objects \cite[\S2.3]{Lurie}; this is the largest subcategory in which every object is dualizable and every $p$-morphism, for $0<p<n$, has left and right adjoints. The following proposition is a rephrasing of previous results:

\begin{proposition}\label{prop:2cat}
	Let $\scr E\in\CAlg^\rig(\Cat^\perf)$.
	\begin{enumerate}
		\item Every object in $\CAT^\Mor(\scr E)$ is dualizable.
		\item $\CAT^\perf(\scr E)$ is the wide subcategory of $\CAT^\Mor(\scr E)$ on the right dualizable $1$-morphisms.
		\item $\CAT^\sat(\scr E)$ is the full subcategory of dualizable objects in $\CAT^\perf(\scr E)$.
		\item $\CAT^\sat(\scr E)$ is the subcategory of fully dualizable objects in $\CAT^\Mor(\scr E)$.
	\end{enumerate}
\end{proposition}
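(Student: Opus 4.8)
The plan is to read off all four statements from \S\ref{app:dualizable} together with Propositions~\ref{prop:rigid}, \ref{prop:dual}, and~\ref{prop:sat}, using three facts. First, dualizability of an object of a symmetric monoidal $(\infty,2)$-category depends only on $\h_1\iota_1(\ph)$ and right dualizability of a $1$-morphism only on $\h_2\iota_2(\ph)$, while by construction $\iota_1$ of $\PR^\L(\scr E)$, $\CAT^\Mor(\scr E)$, $\CAT^\perf(\scr E)$, $\CAT^\sat(\scr E)$ recovers $\Pr^\L(\scr E)$, $\Cat^\Mor(\scr E)$, $\Cat^\perf(\scr E)$, $\Cat^\sat(\scr E)$. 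Second, each of these is a full (for $\CAT^\sat$) or wide (for $\CAT^\perf$) symmetric monoidal subcategory of the next, with $\CAT^\Mor(\scr E)$ full in $\PR^\L(\scr E)$. Third, an adjunction internal to $\PR^\L(\scr E)$ is an ordinary adjunction of colimit-preserving $\scr E$-linear functors \cite[Chapter~I.1, Lemma~5.3.2]{GR}. Given this, statement (1) is immediate from Proposition~\ref{prop:dual}: every object of $\CAT^\Mor(\scr E)$ is compactly generated, hence dualizable in $\Pr^\L(\scr E)$ with dual again compactly generated. Statement (3) is equally quick: an object of $\CAT^\perf(\scr E)$ is dualizable iff it is dualizable in $\Cat^\perf(\scr E)$, which by Proposition~\ref{prop:sat} means saturated (and then its dual $\scr A^\op$ again lies in $\Cat^\perf(\scr E)$); the full subcategory on the saturated objects is $\CAT^\sat(\scr E)$ by definition.

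For (2), a $1$-morphism $F\colon\scr A\to\scr B$ of $\CAT^\Mor(\scr E)$ is a colimit-preserving $\scr E$-linear functor between compactly generated $\infty$-categories. As $\CAT^\Mor(\scr E)\subset\PR^\L(\scr E)$ is full, $F$ is right dualizable iff its right adjoint---which exists and is $\scr E$-linear by Proposition~\ref{prop:rigid}(3)---is colimit-preserving; since this right adjoint is exact (a right adjoint between stable $\infty$-categories), colimit-preservation is equivalent to preservation of filtered colimits, hence by \cite[Proposition~5.5.7.2]{HTT} to $F$ preserving compact objects. By Proposition~\ref{prop:rigid}(2) the $\scr E$-action restricts to compact objects, so $F$ preserves compact objects iff $F\simeq\Ind(f)$ for an exact $\scr E$-linear $f\colon\scr A^\omega\to\scr B^\omega$; these are exactly the $1$-morphisms of the wide subcategory $\CAT^\perf(\scr E)\subset\CAT^\Mor(\scr E)$, as recalled after Definition~\ref{def:CatMor}.

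For (4), note that since $n=2$ the $(\infty,2)$-category $\CAT^\Mor(\scr E)^{\mathrm{fd}}$ keeps exactly the objects $X$ that are dualizable with $\ev_X$ admitting both a left and a right adjoint, the $1$-morphisms admitting both adjoints, and all $2$-morphisms \cite[\S2.3]{Lurie}. Dualizing a $1$-morphism interchanges its left and right adjoints, and $\coev_X$ is the dual of $\ev_X$, so the object condition is equivalent to both $\ev_X$ and $\coev_X$ admitting right adjoints. Writing $X=\Ind(\scr A_0)$ with $\scr A_0\in\Cat^\perf(\scr E)$, the maps $\ev_X$ and $\coev_X$ are the $1$-morphisms $\ev_{\Ind(\scr A_0)}$ and $\coev_{\Ind(\scr A_0)}$ of Lemma~\ref{lem:corest}; by (2) they admit right adjoints iff they preserve compact objects, which by Proposition~\ref{prop:sat} happens iff $\scr A_0$ is proper, respectively smooth. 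Hence $X$ is fully dualizable iff $\scr A_0$ is saturated. For the $1$-morphisms: between saturated objects a $1$-morphism of $\CAT^\Mor(\scr E)$ admits a right adjoint iff, by (2), it is $\Ind(f)$ for some exact $\scr E$-linear $f\colon\scr A_0\to\scr B_0$; since $\scr A_0^\op$ and $\scr B_0^\op$ are again saturated by Proposition~\ref{prop:sat}, Corollary~\ref{cor:split} applied to $f^\op$ gives $f$ an exact $\scr E$-linear left adjoint, whose ind-completion is left adjoint to $\Ind(f)$ in $\CAT^\Mor(\scr E)$. So for these $1$-morphisms ``admits both adjoints'' reduces to ``admits a right adjoint'', and $\CAT^\Mor(\scr E)^{\mathrm{fd}}$ has precisely the objects, $1$-morphisms, and $2$-morphisms of $\CAT^\sat(\scr E)$.

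I expect the only real friction to be the handedness bookkeeping in (4): one must pin down that dualization of $1$-morphisms in a symmetric monoidal $(\infty,2)$-category swaps left and right adjoints, identify $\coev_X$ with the dual $1$-morphism of $\ev_X$ (up to the usual symmetry $(X^\vee\tens X)^\vee\simeq X\tens X^\vee$), and verify that $\Ind$ carries an adjunction of exact $\scr E$-linear functors to an adjunction in $\CAT^\Mor(\scr E)$---e.g.\ by checking that the ordinary right adjoint of $\Ind(f)$ preserves compact objects and restricts to the right adjoint of $f$. Each of these is a routine consequence of the results already established, which is why the proposition is genuinely a rephrasing.
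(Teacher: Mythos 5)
Your proposal is correct and follows essentially the same route as the paper: (1) and (3) are read off from Propositions~\ref{prop:dual} and~\ref{prop:sat}, (2) from Proposition~\ref{prop:IdemInd} together with Proposition~\ref{prop:rigid}(3), and (4) from the Lurie-style characterization of fully dualizable objects in a symmetric monoidal $(\infty,2)$-category combined with (1)–(3). The only difference is that you unpack the one-line citation the paper uses for (4) — identifying the adjoints of $\ev$ and $\coev$ with properness and smoothness via Lemma~\ref{lem:corest}, and handling the $1$-morphism level with Corollary~\ref{cor:split} applied to $f^\op$ — which is exactly the intended argument.
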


\begin{proof}
	(1) This is Proposition~\ref{prop:dual}.
	
	(2) We must show that, for every $\scr A,\scr B\in\Cat^\perf(\scr E)$, the functor $\Fun_\cE^\ex(\scr A,\scr B)\to \Fun_\cE^\L(\Ind(\scr A),\Ind(\scr B))$ is fully faithful, and that its image is the subcategory of right dualizable $1$-morphisms.
	By Proposition~\ref{prop:IdemInd}, it is indeed a full embedding whose image consists of those functors that preserve compact objects. The right adjoint of such a functor preserves colimits, and by Proposition~\ref{prop:rigid} (3) it can be promoted to a right adjoint in the $(\infty,2)$-category $\CAT^\Mor(\scr E)$.
	
	(3) This is Proposition~\ref{prop:sat}.
	
	(4) This follows from (1)–(3) and \cite[Proposition 4.2.3]{Lurie}.
\end{proof}

\subsection{Hochschild homology as a trace}
\label{sub:hochschild}

Let $\scr E\in\CAlg^\rig(\Cat^\perf)$.
We recall how the trace
\[
\Tr\colon \End(\CAT^\Mor(\scr E)) \to \Omega\CAT^\Mor(\scr E)\simeq \Ind(\scr E)
\]
can be identified with Hochschild homology (relative to $\Ind(\scr E)$). By Corollary~\ref{cor:bimod}, endomorphisms of $\Ind(\scr A)$ in $\CAT^\Mor(\scr E)$ are $\scr A$-bimodules. This leads to the following informal description of the $(\infty,1)$-category $\End(\CAT^\Mor(\scr E))$:
\begin{itemize}
	\item An object of $\End(\CAT^\Mor(\scr E))$ is a pair $(\scr A,\scr M)$ where $\scr A\in\Cat^\perf(\scr E)$ and $\scr M\colon\scr A^\op\times\scr A\to\Ind(\scr E)$ is an $\scr A$-bimodule.
	\item A morphism $(\scr A,\scr M)\to(\scr B,\scr N)$ in $\End(\CAT^\Mor(\scr E))$ is an $\scr E$-linear functor $\phi\colon\scr A\to\scr B$ together with a morphism of $\scr A$-bimodules $\scr M\to \phi^*(\scr N)$.
\end{itemize}

Let us recall the standard definition of the Hochschild homology of the pair $(\scr A,\scr M)$. To do so we must choose a set $S$ of objects of $\scr A$. Define a simplicial object $C_\bullet(S,\scr M)$ in $\Ind(\scr E)$ by
\[
C_n(S,\scr M) = \bigvee_{a_0,\dots,a_n\in S} \scr A^\cE(a_n,a_{n-1})\tens\dotsb\tens \scr A^\cE(a_1,a_0)\tens \scr M(a_0,a_n) \in\Ind(\scr E).
\]

\begin{proposition}\label{prop:hochschild}
	Let $\scr E\in\CAlg^\rig(\Cat^\perf)$.
	Let $(\scr A,\scr M)\in\End(\CAT^\Mor(\scr E))$ and let $S$ be a set of objects of $\scr A$ meeting all equivalence classes.
	Then there is an equivalence
	\[
	\Tr(\scr A,\scr M) \simeq \colim C_\bullet(S,\scr M)
	\]
	in $\Ind(\scr E)$, natural in $(\scr A, S,\scr M)$.
\end{proposition}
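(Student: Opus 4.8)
The plan is to unwind the abstract trace of \S\ref{sub:trace} and then evaluate it termwise against a bar resolution. Write $X=\Ind(\scr A)$; by Proposition~\ref{prop:dual} this is dualizable in $\CAT^\Mor(\scr E)$ with dual $X^\vee=\Ind(\scr A^\op)$. Using $\Omega\CAT^\Mor(\scr E)\simeq\Ind(\scr E)$ and Corollary~\ref{cor:bimod} to identify both $X\tens^\L_\cE X^\vee$ and $\Fun^\L_\cE(X,X)$ with the $\infty$-category of $\scr A$-bimodules, the object $(\scr A,\scr M)$ of $\End(\CAT^\Mor(\scr E))$ is the pair consisting of $X$ and the endofunctor attached to the bimodule $\scr M$. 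Since $X$ is a $0$-morphism and $\scr M$ a $1$-endomorphism in the $(\infty,2)$-category, the trace of \S\ref{sub:trace} specialises to the ordinary trace
\[
\mathbf 1\xrightarrow{\;\coev_X\;} X\tens^\L_\cE X^\vee \xrightarrow{\;\scr M\tens\id\;} X\tens^\L_\cE X^\vee \xrightarrow{\;\simeq\;} X^\vee\tens^\L_\cE X \xrightarrow{\;\ev_X\;}\mathbf 1
\]
computed in $\iota_1\CAT^\Mor(\scr E)$, and the task is to identify this endomorphism of $\mathbf 1$ --- an object of $\Ind(\scr E)$ --- with $\colim C_\bullet(S,\scr M)$.

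First I would recognise $\coev_X(\mathbf 1)$ as the diagonal bimodule $\underline{\scr A}=\scr A^{\scr E}(\ph,\ph)$, equivalently as the identity endofunctor of $X$; this follows from the triangle identities for $(\ev_X,\coev_X)$ and uniqueness of duals. Because $S$ meets every equivalence class of objects of $\scr A$, the diagonal bimodule is the geometric realisation of the standard simplicial bimodule $B_\bullet$ with
\[
B_n=\bigvee_{a_0,\dots,a_n\in S} h_{(a_0,a_n)}\tens_\cE\scr A^{\scr E}(a_0,a_1)\tens_\cE\dotsb\tens_\cE\scr A^{\scr E}(a_{n-1},a_n),
\]
where $h_{(p,q)}$ denotes the free bimodule corepresented by $(p,q)$ and the faces are composition in $\scr A$; the identification $|B_\bullet|\simeq\underline{\scr A}$ is a monadicity argument, the restriction functor to $S$-indexed families being conservative (since $S$ meets every equivalence class) and compatible with geometric realisations.

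Next, $\scr M\tens\id$, the symmetry, and $\ev_X$ are $\scr E$-linear and colimit-preserving as operations on the $1$-morphisms $\mathbf 1\to-$, so they may be applied to $B_\bullet$ one simplex at a time. A short computation with the co-Yoneda lemma shows that the composite $\ev_X\circ(\text{symmetry})\circ(\scr M\tens\id)$ carries the free bimodule $h_{(p,q)}$ to $\scr M(q,p)$, hence carries $B_n$ to $\bigvee_{a_0,\dots,a_n\in S}\scr M(a_n,a_0)\tens_\cE\scr A^{\scr E}(a_0,a_1)\tens_\cE\dotsb\tens_\cE\scr A^{\scr E}(a_{n-1},a_n)$; relabelling the summation index by $a_i\mapsto a_{n-i}$ and using the symmetry of $\tens_\cE$ identifies this with $C_n(S,\scr M)$. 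Therefore $\Tr(\scr A,\scr M)\simeq\colim_{\Delta^\op}C_\bullet(S,\scr M)$; conceptually, this is the statement that the trace of the bimodule $\scr M$ is its Hochschild homology relative to $\Ind(\scr E)$, i.e.\ the coend $\int^{a\in\scr A}\scr M(a,a)$. Naturality in the triple $(\scr A,S,\scr M)$ then follows by assembling the naturality of the ingredients: the trace of Definition~\ref{def:trace} is natural in $\scr C$; the duality data is natural in $\scr A$ by uniqueness of duals; a morphism $(\scr A,\scr M)\to(\scr B,\scr N)$ in $\End(\CAT^\Mor(\scr E))$ induces compatible maps on free bimodules, on $B_\bullet$ and hence on $C_\bullet$; and enlarging $S$ merely refines the resolution compatibly with its colimit.

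I expect the main obstacle to be the first reduction: bringing the trace of \S\ref{sec:traces}, built from oplax transfors and $\Fr^\rig(B\N)$, down to the concrete composite displayed above, and within that keeping careful track of the associativity and symmetry constraints of $\tens^\L_\cE$ so as to be certain that $\coev_X(\mathbf 1)$ really is the diagonal bimodule and that the final contraction is $\ev_X$ applied to $\scr M$ itself rather than to a twist of it. A secondary point of care is making the bar construction precise in the $\Ind(\scr E)$-enriched $\infty$-categorical setting; here it is the hypothesis that $S$ meets every equivalence class --- rather than merely generating $\scr A$ up to retracts --- that lets one avoid discussing idempotent completions inside the resolution. The remaining steps are routine.
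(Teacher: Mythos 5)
Your argument is correct, and it rests on the same two ingredients as the paper's proof --- the identification of $\coev(\mathbf 1)$ with the diagonal bimodule $(x,y)\mapsto\scr A^{\scr E}(x,y)$, and an extra-degeneracy argument that uses exactly the hypothesis that $S$ meets every equivalence class --- but it arranges them along a genuinely different route. The paper first observes that $\Tr(\scr A,\scr M)$ is simply the image of $\scr M$ under the evaluation $\Ind(\scr A)^\vee\tens^\L_{\scr E}\Ind(\scr A)\to\Ind(\scr E)$, and then identifies the functor $\scr M\mapsto\colim C_\bullet(S,\scr M)$ with this evaluation by duality: it suffices to check that its mate, the endofunctor of $\Ind(\scr A)$ sending a right module $\scr N$ to $x\mapsto\colim C_\bullet(S,\scr A^{\scr E}(x,\ph)\tens\scr N(\ph))$, is the identity, and this follows because the augmented simplicial object mapping to $\scr N(x)$ acquires an extra degeneracy whenever $x\in S$. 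You instead resolve $\coev(\mathbf 1)$ by the two-sided bar construction $B_\bullet$ and push the resolution through the colimit-preserving composite $\ev\circ(\text{symmetry})\circ(\scr M\tens\id)$, computing the terms by co-Yoneda; the splitting needed for $\lvert B_\bullet\rvert\simeq\scr A^{\scr E}(\ph,\ph)$ is the same extra degeneracy after restriction to $S$. The paper's duality trick buys the identification of $\colim C_\bullet(S,\ph)$ with the evaluation for all bimodules at once, so naturality in $\scr M$ is automatic and one never has to establish the resolution of the diagonal bimodule as such, nor match face maps and symmetry constraints against the explicit formula for $C_\bullet(S,\scr M)$; your route buys a more direct, classical Hochschild-as-coend computation, at the cost of the convention bookkeeping you flag and of making the monadicity/splitting argument for $B_\bullet$ precise. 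Finally, the reduction you list as the main obstacle is in fact immediate: by the construction in Definition~\ref{def:trace}, the trace of an object of $\End(\scr C)$ is by definition the trace of the given endomorphism in the homotopy category, i.e.\ the composite $\ev\circ(\scr M\tens\id)\circ\coev$ you display.
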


\begin{proof}
	The trace $\Tr(\scr A,\scr M)$ is the image of the $\scr A$-bimodule $\scr M$ by the evaluation map $\Ind(\scr A)^\vee\tens_{\scr E}\Ind(\scr A)\to\Ind(\scr E)$.
	We must therefore identify $\scr M\mapsto\colim C_\bullet(S,\scr M)$ with the evaluation. By duality, it suffices to show that the composition
	\[
	\Ind(\scr A) \xrightarrow{\coev\tens\id} \Ind(\scr A)\tens^\L_{\scr E}\Ind(\scr A)^\vee\tens^\L_{\scr E}\Ind(\scr A) \xrightarrow{\id\tens\colim C_\bullet(S,\ph)} \Ind(\scr A)
	\]
	is naturally equivalent to the identity.
	The coevaluation map $\Ind(\scr E)\to \Ind(\scr A)\tens_{\scr E}\Ind(\scr A)^\vee$ sends $\mathbf{1}$ to the $\scr A$-bimodule $(x,y)\mapsto \scr A^\cE(x,y)\in\Ind(\scr E)$.
	Hence, the above composition sends a right $\scr A$-module $\scr N$ to the right $\scr A$-module
	\[
	x\mapsto \colim C_\bullet(S,\scr A^\cE(x,\ph)\tens\scr N(\ph)).
	\]
	There is an augmented simplicial object
	\[
	C_\bullet(S,\scr A^\cE(x,\ph)\tens\scr N(\ph)) \to \scr N(x),
	\]
	natural in $\scr N$ and $x$. If $x\in S$, this augmented simplicial object has an extra degeneracy sending $C_{n-1}$ to the summand of $C_n$ where $a_n=x$. Since $S$ meets every equivalence class in $\scr A$, this completes the proof.
\end{proof}

\begin{corollary}\label{cor:filteredcolimits}
	The functor
	\[
	\Fun(S^1,\Cat^\perf(\scr E)) \simeq \Aut(\CAT^\Mor(\scr E)) \xrightarrow{\Tr} \Ind(\scr E)
	\]
	preserves filtered colimits.
\end{corollary}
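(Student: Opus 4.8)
The plan is to reduce the statement to the bar-complex description of the trace provided by Proposition~\ref{prop:hochschild}, and then to observe that each ingredient of that complex commutes with filtered colimits. Throughout, I identify $\Aut(\CAT^\Mor(\scr E))$ with $\Fun(S^1,\Cat^\perf(\scr E))$ via the equivalence in the statement, so that a pair $(\scr A,f)$ is a small idempotent complete stable $\scr E$-linear category together with an $\scr E$-linear autoequivalence $f$, and $\Tr(\scr A,f)=\Tr(\scr A,\scr M_f)$ for the bimodule $\scr M_f(x,y)\simeq\scr A^\cE(x,f(y))$ attached to $f$ under Corollary~\ref{cor:bimod}.

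The first step records how filtered colimits behave. Since $S^1$ is a finite space (a pushout $*\cup_{S^0}*$), the functor $\Fun(S^1,\ph)$ preserves filtered colimits, so a filtered colimit of objects $(\scr A_i,f_i)$ is the pair $(\scr A,f)$ with $\scr A=\colim_i\scr A_i$ in $\Cat^\perf(\scr E)$ and $f=\colim_i f_i$. By the references used in the proof of Proposition~\ref{prop:sat=>compact}---the forgetful functor $\Cat^\perf(\scr E)\to\Cat^\perf$ preserves colimits by \cite[Corollary~4.2.3.7]{HA}, and $\Cat^\perf\to\Cat_{(\infty,1)}$ preserves filtered colimits by \cite[Proposition~1.1.4.6, Lemma~7.3.5.10]{HA}---the underlying $\infty$-category of $\scr A$ is the filtered colimit $\colim_i\scr A_i$ formed in $\Cat_{(\infty,1)}$. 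Consequently $\iota_0\scr A\simeq\colim_i\iota_0\scr A_i$, so every object of $\scr A$ is the image of an object of some $\scr A_i$; and for $a,b\in\scr A$ lifted to a common stage $i$ one has $\scr A^\cE(a,b)\simeq\colim_j\scr A_j^\cE(a_j,b_j)$, the colimit running over the indices $j$ under $i$. For this last point one uses that the $\scr E$-actions are compatible with the colimit and that $\scr E\subset\Ind(\scr E)$ consists of compact objects, whence $\Ind(\scr E)(e,\scr A^\cE(a,b))\simeq\scr A(e\tens a,b)\simeq\colim_j\scr A_j(e\tens a_j,b_j)\simeq\colim_j\Ind(\scr E)(e,\scr A_j^\cE(a_j,b_j))$ for all $e\in\scr E$.

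The second step runs the bar complex through the colimit. Working with strict models, take $S_i$ to be the set of all objects of $\scr A_i$; the transition functors carry $S_i$ into $S_j$, each $S_i$ meets every equivalence class of $\scr A_i$, and $S:=\colim_i S_i$ is a set of objects of $\scr A$ meeting every equivalence class. Proposition~\ref{prop:hochschild} then identifies $\Tr(\scr A,f)\simeq|C_\bullet(S,\scr M_f)|$ and $\Tr(\scr A_i,f_i)\simeq|C_\bullet(S_i,\scr M_{f_i})|$. Since $S^{n+1}=\colim_i S_i^{n+1}$, since $f=\colim_i f_i$ and the $\scr E$-enriched mapping objects commute with filtered colimits in $\scr A$ by the first step, and since the tensor product and coproducts of $\Ind(\scr E)$ preserve filtered colimits, the wedge of tensor products of mapping objects defining $C_n$ satisfies $C_n(S,\scr M_f)\simeq\colim_i C_n(S_i,\scr M_{f_i})$, compatibly with the simplicial structure. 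Passing to geometric realizations---colimits over $\Delta^\op$, which commute with filtered colimits---yields the required equivalence $\colim_i\Tr(\scr A_i,f_i)\simeq\Tr(\scr A,f)$; that it is the canonical comparison map follows from the naturality asserted in Proposition~\ref{prop:hochschild}.

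The delicate point is the first step: one must know that the colimit $\scr A=\colim_i\scr A_i$ is computed ``naively'', so that its objects and its $\scr E$-enriched mapping objects are the expected filtered colimits. This rests on the fact---encoded in the cited statements about $\Cat^\perf\to\Cat_{(\infty,1)}$---that idempotent completeness of stable $\infty$-categories is preserved under filtered colimits. Granting that, the remainder is routine bookkeeping with commuting colimits, precisely because every building block of $C_\bullet$, namely wedges, tensor products over $\Ind(\scr E)$, $\scr E$-enriched mapping objects, and geometric realization, preserves filtered colimits.
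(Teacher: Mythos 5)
Your proof is correct and follows essentially the same route as the paper's: the paper likewise deduces the statement from the bar-complex formula of Proposition~\ref{prop:hochschild}, after reducing to a filtered poset and choosing a compatible system of object sets $S_\alpha$ (your variant of taking all objects in a strict model plays the same role). The extra bookkeeping you supply — that the underlying $\infty$-category and the $\scr E$-enriched mapping objects of the colimit are computed naively, and that wedges, tensor products, and realizations commute with filtered colimits — is exactly what the paper's terse ``we deduce'' leaves implicit.
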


\begin{proof}
	Let $(\scr A_\alpha,f_\alpha)_{\alpha\in \scr I}$ be a filtered diagram in $\Fun(S^1,\Cat^\perf(\scr E))$. Without loss of generality, we can assume that $\scr I$ is a filtered poset. Then we can find a compatible diagram of sets of objects $S_\alpha\subset\scr A_\alpha$ meeting all equivalence classes. Using the formula of Proposition~\ref{prop:hochschild}, we deduce that $\Tr$ preserves the colimit of $(\scr A_\alpha,f_\alpha)_{\alpha\in\scr I}$.
\end{proof}

\section{Categories of $\cE$-motives}
\label{E-motives}

In this section, we extend the main results from \cite[Sections 6--9]{BGT} to $\cE$-linear $\infty$-categories. 
We assume throughout that $\scr E\in\CAlg(\Cat^\perf)$ is rigid.
We define the $\infty$-categories of additive and localizing motives for $\Cat^\perf(\cE)$, which are the recipients of the universal additive and localizing invariants on $\Cat^\perf(\scr E)$, respectively. Their construction is reminiscent of that of the Grothendieck group of an exact category. We then show that connective and nonconnective $K$-theory are corepresentable in these $\infty$-categories.
We follow \cite{BGT} closely but we include for the reader's convenience complete arguments or precise references. The constructions in \cite{BGT} are recovered when $\cE = \Sp^{\omega}$ is the symmetric monoidal $\infty$-category of compact spectra. 

\subsection{Exact sequences}

\begin{definition}[{\cite[Definition 5.12 and Proposition 5.13]{BGT}}] 
 A sequence 
$
\cA \stackrel{f}{\rightarrow} \cB \stackrel{g}{\rightarrow} \cC
$
in $\Cat^{\perf}$ is \emph{exact} if it is a cofiber sequence and $f$ is fully faithful.
\end{definition}

Note that ``being a cofiber sequence'' is a meaningful property, since the $\infty$-groupoid of equivalences $g\circ f\simeq 0$ is either empty or contractible. Whether a given sequence $\scr A\to\scr B\to\scr C$ is exact can be checked at the level of triangulated homotopy categories: it is exact if and only if $\h\scr A\to\h\scr B$ is fully faithful and $\h\scr C$ is the idempotent completion of the Verdier quotient $\h\scr B/\h\scr A$ \cite[Proposition 5.15]{BGT}.

\begin{definition}[{\cite[Definition 5.18]{BGT}}]  
A sequence 
$\cA \stackrel{f}{\rightarrow} \cB \stackrel{g}{\rightarrow} \cC$
in $\Cat^\perf$ is \emph{split exact} if it is exact and if $f$ and $g$ admit right adjoints.
\end{definition}

\begin{definition}\label{dfn:exact2}
A sequence
\[
\cA \stackrel f\to \cB \stackrel g\to \cC
\]
in $\Cat^\perf(\scr E)$ is called \emph{exact} (resp.\ \emph{split exact}) if its image by the forgetful functor $\Cat^\perf(\scr E)\to\Cat^\perf$ is exact (resp.\ split exact). 
\end{definition}

\begin{proposition}\label{prop:exact}
	A sequence in $\Cat^\perf(\scr E)$ is exact (resp.\ split exact) in the sense of Definition~\ref{dfn:exact2} if and only if it is a localization sequence in the $(\infty,2)$-category $\CAT^\Mor(\scr E)$ (resp.\ in the $(\infty,2)$-category $\CAT^\perf(\scr E)$), in the sense of Definition~\ref{dfn:exact}.
\end{proposition}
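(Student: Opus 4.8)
The plan is to compare the two definitions term by term, translating each condition of Definition~\ref{dfn:exact2} (which lives in $\Cat^\perf(\scr E)$, hence ultimately in $\Cat^\perf$) into the corresponding condition of Definition~\ref{dfn:exact} for the $(\infty,2)$-category $\CAT^\Mor(\scr E)$ (resp. $\CAT^\perf(\scr E)$). First I would handle the \emph{split exact} case, where the comparison is cleanest. A split exact sequence $\cA\stackrel f\to\cB\stackrel g\to\cC$ in $\Cat^\perf(\scr E)$ is one whose image in $\Cat^\perf$ is exact with $f$ and $g$ admitting right adjoints. By Proposition~\ref{prop:rigid}(3), these right adjoints automatically acquire $\scr E$-linear structures, so $f$ and $g$ have right adjoints in $\CAT^\perf(\scr E)$; this gives the first bullet of Definition~\ref{dfn:exact}. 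The composite $g\circ f\simeq 0$ translates directly. The unit $\eta\colon\id_\cA\to f^r f$ being an equivalence is the statement that $f$ is fully faithful, and the counit $\epsilon\colon gg^r\to\id_\cC$ being an equivalence is the statement that $g$ is essentially surjective after idempotent completion (i.e. $\cC$ is the idempotent-complete Verdier quotient) — both of which are encoded in exactness via the homotopy-category criterion of \cite[Proposition 5.15]{BGT} recalled just above. The last bullet, that $ff^r\to\id_\cB\to g^rg$ is a cofiber sequence in $\CAT^\perf(\scr E)(\cB,\cB)=\Fun^\ex_\cE(\cB,\cB)$, is the familiar recollement identity: for a split Verdier localization, every object of $\cB$ sits in the triangle (coreflection into $\cA$) $\to$ (identity) $\to$ (reflection onto $\cC$), and this is checked pointwise in the stable mapping category. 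Conversely, a localization sequence in $\CAT^\perf(\scr E)$ in the sense of Definition~\ref{dfn:exact} has, by the same dictionary and by applying the forgetful functor to $\Cat^\perf$ (which is conservative and exact, preserving cofiber sequences and detecting fully faithfulness and the Verdier-quotient property on homotopy categories), an image that is split exact.

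For the \emph{exact} (non-split) case one works in $\CAT^\Mor(\scr E)$ instead. Here the point is that in $\CAT^\Mor(\scr E)$ \emph{every} object is dualizable (Proposition~\ref{prop:2cat}(1)) and, crucially, the $1$-morphisms are bimodules rather than $\scr E$-linear functors, so that the functors $f$ and $g$ — viewed as bimodules $\Ind(f)$, $\Ind(g)$ via the inclusion $\Cat^\perf(\scr E)\into\Cat^\Mor(\scr E)$ — always admit right adjoints in $\CAT^\Mor(\scr E)$, since $\Ind(f)$ has a colimit-preserving right adjoint by adjoint functor theorem and $\CAT^\Mor(\scr E)$ sees all such adjoints. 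Thus the first bullet of Definition~\ref{dfn:exact} is automatic on the $\CAT^\Mor$ side, matching the fact that Definition~\ref{dfn:exact2} (exact, not split) imposes no adjointability hypothesis. The remaining three bullets are then read off exactly as in the split case: $g\circ f$ is zero, $\eta$ and $\epsilon$ are equivalences (fully faithfulness of $f$ and the quotient property of $g$, via \cite[Proposition 5.15]{BGT}), and $ff^r\to\id\to g^rg$ is a cofiber sequence in $\CAT^\Mor(\scr E)(\Ind\cB,\Ind\cB)$ — which by Corollary~\ref{cor:bimod} is the category of $\cB$-bimodules, and the statement there is precisely the bimodule recollement for the localization $\h\cA\into\h\cB\onto\h\cC$, a standard consequence of the theory of \cite{BGT}.

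The main obstacle — and the step I would spend the most care on — is verifying the last bullet, namely that the recollement triangle $ff^r\to\id\to g^rg$ is genuinely a cofiber sequence \emph{in the $(\infty,2)$-categorical mapping category}, with the \emph{correct} nullhomotopies identified, not merely objectwise. In the $\CAT^\Mor(\scr E)$ formulation this amounts to identifying the composite bimodule $\Ind(g)^r\circ\Ind(g)$ with the Verdier-quotient bimodule and recognizing the cofiber sequence of endofunctors of $\Ind(\cB)$; the cleanest route is to reduce to the absolute case $\scr E=\Sp^\omega$ treated in \cite{BGT} by applying the forgetful (and $\scr E$-linearity-forgetting) functors, which are exact and conservative enough to transport the statement, and then to note that all the structures in sight ($\Ind$, the mapping categories, composition) are $\scr E$-linear and hence the $\scr E$-linear refinement follows formally. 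One should also check the compatibility of the two parts: the inclusion $\CAT^\perf(\scr E)\subset\CAT^\Mor(\scr E)$ of Proposition~\ref{prop:2cat}(2) is an inclusion of a wide subcategory, so a split exact sequence, regarded in $\CAT^\Mor(\scr E)$, is in particular a localization sequence there, which is the expected consistency between the two equivalences. Finally, I would remark that since $\Cat^\sat(\scr E)$ consists of the dualizable objects of $\Cat^\perf(\scr E)$ (Proposition~\ref{prop:2cat}(3)) and a localization sequence of dualizable objects, with right-adjointable structure maps, is exactly what Theorem~\ref{thm:localization} consumes, this proposition is precisely the bridge needed to feed the $K$-theoretic exact sequences of Section~\ref{E-motives} into the trace-localization machinery of Section~\ref{sec:localization}.
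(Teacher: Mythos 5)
Your overall route is the same as the paper's: reduce to the absolute case $\scr E=\Sp^\omega$ via Proposition~\ref{prop:rigid}~(3) (so right adjoints inherit $\scr E$-linear structures and the forgetful functors reflect localization sequences), note that $1$-morphisms coming from $\Cat^\perf(\scr E)$ are automatically right dualizable in $\CAT^\Mor(\scr E)$ (this is Proposition~\ref{prop:2cat}~(2)), and translate the unit/counit conditions of Definition~\ref{dfn:exact}. The gap is at the decisive step, which you yourself flag as the main obstacle but then outsource: you never prove, in either direction, the equivalence between ``$\cA\to\cB\to\cC$ is a cofiber sequence in $\Cat^\perf$ with $f$ fully faithful'' and ``the counit $gg^r\to\id$ is an equivalence and $ff^r\to\id_{\cB}\to g^rg$ is a cofiber sequence''. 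Attributing this to ``the familiar recollement identity'' and ``a standard consequence of the theory of [BGT]'' does not close it: \cite[Proposition 5.15]{BGT} only gives the triangulated criterion ($\h\cC$ is the idempotent completion of $\h\cB/\h\cA$), and from that one still has to derive, in the forward direction, that exactness forces $g^r$ to be fully faithful with the recollement triangle (your gloss that the counit condition ``is'' idempotent-completed essential surjectivity of $g$ is exactly the point at issue, not a formal triviality), and, in the converse direction, that the localization-sequence data identifies $\cC$ with the cofiber of $f$ in $\Cat^\perf$. The paper supplies precisely this input: after applying $\Ind$ (which preserves colimits), pushouts in $\Pr^\L_\St$ are computed as pullbacks along right adjoints by \cite[Theorem 5.5.3.18]{HTT}, so the pushout of $0\leftarrow\Ind(\cA)\to\Ind(\cB)$ is the full subcategory $(f^r)^{-1}(0)\subset\Ind(\cB)$ embedded via a right adjoint, and both implications follow by comparing $g^r$ with this embedding. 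Your sketch needs this computation, or an equally precise substitute (a careful Bousfield--Verdier localization argument run in both directions), to be a proof.

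Two smaller points. The paper deduces the split case from the exact case, since a localization sequence in $\CAT^\perf(\scr E)$ is exactly a localization sequence in $\CAT^\Mor(\scr E)$ whose right adjoints preserve compact objects, which matches the extra adjointability in ``split exact''; your parallel treatment is fine but duplicates work. Also, your concern about identifying ``the correct nullhomotopies'' is moot: since $\pi\iota\simeq 0$, the mapping space from $\iota\iota^r$ to $\pi^r\pi$ is contractible, so the nullhomotopy appearing in Definition~\ref{dfn:exact} is unique, as the definition already records.
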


\begin{proof}
	By Proposition~\ref{prop:rigid} (3), the forgetful functors $\CAT^\Mor(\scr E)\to \CAT^\Mor$ and $\CAT^\perf(\scr E)\to\CAT^\perf$ reflect localization sequences. Hence, we are reduced to the case $\scr E=\Sp^\omega$.
	The statement for split exact sequences follows immediately from the statement for exact sequence.
	For the latter, since $\Ind\colon\Cat^\perf\to \Pr^\L_\St$ preserves colimits, it suffices to prove the following: given a commutative square
\begin{tikzmath}
	\diagram{\cA & \cB \\ 0 & \cC \\};
	\arrows (11-) edge node[above]{$f$} (-12) (12) edge node[right]{$g$} (22) (11) edge (21) (21-) edge (-22);
\end{tikzmath}
in $\Pr^\L_\St$ with $f$ fully faithful, it is a pushout square if and only if $g^r$ is fully faithful and the null sequence
\[
ff^r \to \id_{\scr B} \to g^rg
\]
is a cofiber sequence. By \cite[Theorem 5.5.3.18]{HTT}, the above square is a pushout if and only if $g^r$ is fully faithful with essential image $(f^r)^{-1}(0)$. The result is now straightforward.
\end{proof}

\begin{proposition}\label{prop:exactsequences}
	\leavevmode
	\begin{enumerate}
		\item Every exact sequence $\scr A\to \scr B\to\scr C$ in $\Cat^\perf(\scr E)$ is a filtered colimit of exact sequences $\scr A_\alpha\to\scr B_\alpha\to\scr C_\alpha$ where each $\scr B_\alpha$ is compact.
		\item Every split exact sequence $\scr A\to \scr B\to\scr C$ in $\Cat^\perf(\scr E)$ is a filtered colimit of split exact sequences $\scr A_\alpha\to\scr B_\alpha\to\scr C_\alpha$ where $\scr A_\alpha$, $\scr B_\alpha$, and $\scr C_\alpha$ are compact.
	\end{enumerate}
\end{proposition}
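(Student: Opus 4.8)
The plan is to produce, in each case, a filtered diagram of sequences of the required type, with compact middle term (resp.\ compact terms), whose colimit is the given sequence. The key structural inputs are Proposition~\ref{prop:coco} (that $\Cat^\perf(\scr E)$ is compactly generated) and the fact, recorded in the proof of Proposition~\ref{prop:sat=>compact}, that filtered colimits in $\Cat^\perf(\scr E)$ are computed on underlying $\infty$-categories: the composite $\Cat^\perf(\scr E)\to\Cat^\perf\to\Cat_{(\infty,1)}$ is conservative and preserves filtered colimits, and $\iota_0$ preserves filtered colimits.

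\emph{Part (1).} Write $\scr B\simeq\colim_\alpha\scr B_\alpha$ as a filtered colimit of compact objects of $\Cat^\perf(\scr E)$, indexed by the filtered slice category of such objects over $\scr B$. For each $\alpha$ put $\scr A_\alpha:=\scr A\times_{\scr B}\scr B_\alpha$; since $f$ is fully faithful, $\scr A_\alpha$ is the full subcategory of $\scr B_\alpha$ on the objects whose image in $\scr B$ lies in $\scr A$, and it is stable, idempotent complete, and an $\scr E$-submodule of $\scr B_\alpha$ (the relevant finite limits and colimits, retracts, and the $\scr E$-action are computed in $\scr B_\alpha$ and preserve the condition of landing in $\scr A$); hence $\scr A_\alpha\in\Cat^\perf(\scr E)$, functorially in $\alpha$. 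Let $\scr C_\alpha:=\scr B_\alpha/\scr A_\alpha$ be the cofiber in $\Cat^\perf(\scr E)$, so that $\scr A_\alpha\to\scr B_\alpha\to\scr C_\alpha$ is exact by construction. The natural map $\colim_\alpha\scr A_\alpha\to\scr A$ is an equivalence: it is essentially surjective because every object of $\scr A$, viewed in $\scr B\simeq\colim_\alpha\scr B_\alpha$, lifts to some $\scr B_\alpha$ and hence to $\scr A_\alpha$; and it is fully faithful because $\scr A_\alpha$ (resp.\ $\scr A$) is full in $\scr B_\alpha$ (resp.\ in $\scr B$), so the relevant mapping spaces reduce to $\colim_\alpha\scr B_\alpha(x,y)\simeq\scr B(x,y)$. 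Since cofibers commute with filtered colimits, $\colim_\alpha\scr C_\alpha\simeq\scr C$, and therefore $(\scr A\to\scr B\to\scr C)\simeq\colim_\alpha(\scr A_\alpha\to\scr B_\alpha\to\scr C_\alpha)$ with every $\scr B_\alpha$ compact.

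\emph{Part (2).} Let $\scr A\xrightarrow{f}\scr B\xrightarrow{g}\scr C$ be split exact, with right adjoints $f^r$ and $g^r$. By Proposition~\ref{prop:exact} this is a localization sequence in $\CAT^\perf(\scr E)$, so $f^r f\simeq\id_\scr A$ and $g g^r\simeq\id_\scr C$; thus $(f,f^r)$ exhibits $\scr A$ as an $\scr E$-linear retract of $\scr B$, and $(g^r,g)$ exhibits $\scr C$ as one. In particular, whenever the middle term of a split exact sequence is compact, so are the outer terms. Now imitate Part (1), but realize $\scr B$ as a filtered colimit of compact objects $\scr B_\alpha$ each of which is, moreover, a full $\scr E$-linear subcategory of $\scr B$ stable under the two idempotent projections $ff^r$ and $g^rg$ of the recollement on $\scr B$ (the subsystem of such $\scr B_\alpha$ is still cofinal — this is the point where the finitary nature of compact objects of $\Cat^\perf(\scr E)$ is used to close up a given compact subcategory under the two projections). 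For such $\scr B_\alpha$, let $\scr A_\alpha$ be the $ff^r$-fixed full subcategory — a retract of $\scr B_\alpha$, hence compact — and $\scr C_\alpha:=\scr B_\alpha/\scr A_\alpha$, which is a cofiber of compacts, hence compact; the restricted recollement triangle $ff^r\to\id\to g^rg$ shows that $\scr A_\alpha\to\scr B_\alpha\to\scr C_\alpha$ is split exact. The colimit computation of Part (1) then gives $(\scr A\to\scr B\to\scr C)\simeq\colim_\alpha(\scr A_\alpha\to\scr B_\alpha\to\scr C_\alpha)$, now with all three terms compact. Equivalently, one may organize this as the statement that the $\infty$-category of split exact sequences in $\Cat^\perf(\scr E)$ is compactly generated with these as its compact objects.

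I expect Part (1) to be essentially formal once one has compact generation and the description of filtered colimits. The delicate point is the refinement step in Part (2): one must exhibit, cofinally, compact $\scr B_\alpha$ that are stable under both recollement projections, so that the restricted sequences carry all the adjoints demanded of a split exact sequence while the diagram still exhausts $\scr B$; this requires care with how the recollement interacts with filtered colimits and with the finitary structure of $\Cat^\perf(\scr E)^\omega$. All of this runs parallel to \cite[\S8]{BGT}, to which the $\scr E$-linear statements reduce once these structural facts are in place.
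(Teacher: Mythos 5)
Part (1) of your argument is correct and is essentially the paper's own proof: write $\scr B$ as a filtered colimit of compacts $\scr B_\alpha$, set $\scr A_\alpha=\scr A\times_{\scr B}\scr B_\alpha$, and take cofibers; your extra verifications (fullness of $\scr A_\alpha\into\scr B_\alpha$, the colimit computation) are fine.

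Part (2), however, has a genuine gap, and it sits exactly at the step you yourself flag as delicate. You need a cofinal family of \emph{compact} objects $\scr B_\alpha$ over $\scr B$ which are full $\scr E$-linear subcategories of $\scr B$ closed under the two recollement projections $ff^r$ and $g^rg$. Neither half of this is available: the compact objects appearing in the canonical presentation of $\scr B$ are objects of $\Cat^\perf(\scr E)^\omega$ equipped with maps to $\scr B$ that are in no way fully faithful, and compactness in $\Cat^\perf(\scr E)$ is a finite-\emph{presentation} condition, not finite generation — a full subcategory of $\scr B$ generated by finitely many objects (or the ``closure'' of one under $ff^r$ and $g^rg$, which in general requires an infinite iteration) has no reason to be compact. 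So the claimed cofinal subsystem is not known to exist, and without it the reduction to sequences with compact middle term (your retract observation, which is correct) does not get off the ground. Note also that your Part (1) mechanism cannot be recycled here, because the pullback $\scr A\times_\scr B\scr B_\alpha$ of a split exact sequence along an arbitrary compact $\scr B_\alpha\to\scr B$ need not inherit the adjoints.

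The paper proves (2) by a different and more robust route, which is in fact the ``equivalently'' you mention in passing but do not establish: one shows directly that the $\infty$-category $\scr G$ of split exact sequences (with right adjointable squares as morphisms) is compactly generated, and that the three evaluation functors $\scr G\to\Cat^\perf(\scr E)$ preserve compact objects because their right adjoints (e.g.\ $\scr A\mapsto(\scr A\xrightarrow{\id}\scr A\to 0)$ and $\scr A\mapsto(\scr A\to\Fun(\Delta^1,\scr A)\to\scr A)$) preserve filtered colimits. Compact generation of $\scr G$ is obtained by identifying a split exact sequence with its recollement gluing datum: $\scr G\simeq\scr H$, where $\scr H$ consists of triples $(\scr A,\scr C,h)$ with $h\in\Fun^\L_{\scr E}(\Ind(\scr A),\Ind(\scr C))\simeq\Ind(\scr A^\op\tens_{\scr E}\scr C)$, using \cite[\S A.8]{HA}; since $\Cat^\perf(\scr E)$ and $\Ind(\scr A^\op\tens_{\scr E}\scr C)$ are compactly generated, so is $\scr H$. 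Then any split exact sequence is a filtered colimit of compact objects of $\scr G$, whose terms are compact — no hand-built compact subcategories of $\scr B$ are needed. If you want to salvage your outline, this recollement/gluing-data equivalence is the missing ingredient you would have to prove.
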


\begin{proof}
	(1) Recall that every object in $\ccat{perf}{\cE}$ is a filtered colimit of compact objects. Let $\cI$ be a filtered 
$\infty$-category and let $\cB_\alpha$, $\alpha \in \cI$, be a filtered diagram of compact objects having colimit $\cB$. We set $\cA_{\alpha}:= \cA \times_{\cB} \cB_\alpha$. Then the projection
$\cA_{\alpha} \rightarrow \cB_\alpha$ is fully faithful, and we let $\cC_\alpha$ be its cofiber. Then $\cA_{\alpha} \rightarrow \cB_\alpha \rightarrow \cC_\alpha$ is a 
filtered diagram of exact sequences whose colimit is $\cA \rightarrow \cB \rightarrow \cC$, and each $\scr B_\alpha$ is compact, as desired.

(2) Let $\scr G$ be the subcategory of $\Fun(\Delta^2,\Cat^\perf(\scr E))$ whose objects are split exact sequences and whose morphisms are morphisms of sequences forming right adjointable squares \cite[Definition 7.3.1.2]{HTT}. The three evaluation functors
\[
\ev_0,\ev_1,\ev_2\colon \scr G\to \Cat^\perf(\scr E)
\]
have right adjoints sending $\scr A$ to $\scr A\xrightarrow\id\scr A\to 0$,
\begin{tikzmath}
	\diagram{\scr A & \Fun(\Delta^1,\scr A) & \scr A\rlap,\\};
	\arrows (11-) edge[vshift=\dbl] (-12) edge[<-,vshift=-\dbl] node[below=\dbl]{$\ev_1$} (-12)
	(12-) edge[vshift=\dbl] node[above=\dbl]{$\ev_0$} (-13) edge[<-,vshift=-\dbl] (-13);
\end{tikzmath}
and $0\to\scr A\xrightarrow\id\scr A$, respectively. As these right adjoints preserve filtered colimits, the above evaluation functors preserve compact objects. It will therefore suffice to show that $\scr G$ is compactly generated. 

Consider the $\infty$-category $\scr H$ of triples $(\scr A,\scr C,h)$, where $\scr A,\scr C\in\Cat^\perf(\scr E)$ and $h\colon\Ind(\scr A)\to\Ind(\scr C)$ is a colimit-preserving $\scr E$-linear functor. 
In other words, $\scr H$ is the $\infty$-category of cartesian fibrations $\scr X\to\Delta^1$ with fiber-preserving action of $\scr E$, such that $\scr X_0$ and $\scr X_1$ are compactly generated and the pullback functor $e^*\colon\scr X_1\to\scr X_0$ preserves colimits: such a fibration encodes the triple $(\scr X_1^\omega,\scr X_0^\omega,e^*)$.
Since $\Cat^\perf(\scr E)$ and $\Fun^\L_\scr E(\Ind(\scr A),\Ind(\scr C))\simeq\Ind(\scr A^\op\tens_{\scr E}\scr C)$ are compactly generated, $\scr H$ is also compactly generated: an object $(\scr A,\scr B,h)\in\scr H$ is compact if and only if $\scr A$, $\scr C$, and $h$ are compact in their respective $\infty$-categories.
Consider the functor $\phi\colon\scr G\to\scr H$ sending the sequence \[\scr A\stackrel f\to \scr B\stackrel g\to \scr C\] to the triple $(\scr A,\scr C,g^{rr}\circ f)$. We claim that $\phi$ is an equivalence, which will conclude the proof.
When $\scr E=\Sp^\omega$, $\phi$ is a $\Cat^\perf$-module functor, and the general case is obtained from this case by passing to the $\infty$-categories of $\scr E$-modules. Hence, we may assume that $\scr E=\Sp^\omega$.
Given any split exact sequence as above, we observe that $\Ind(\scr B)$ is a \emph{recollement} of the subcategories $g^r(\Ind(\scr C))$ and $f^{rr}(\Ind(\scr A))$ in the sense of \cite[Definition A.8.1]{HA}. In fact, using the notation of \cite[Remark A.8.19]{HA}, a split exact sequence in $\Cat^\perf$ is the same thing as a stable compactly generated $\infty$-category $\scr D$ equipped with stable subcategories $i_*\colon\scr D_0\into\scr D$ and $j_*\colon\scr D_1\into\scr D$ forming a recollement of $\scr D$, with the additional condition that $i_*$ and $j^*$ preserve compact objects. Since the pair of localization functors $(i^*,j^*)$ is conservative, this additional condition is equivalent to $i^!\circ j_!$ being colimit-preserving.
The fact that $\phi$ is an equivalence now follows from \cite[Remark A.8.18]{HA}. Explicitly,
the inverse functor $\scr H\to\scr G$ sends a cartesian fibration $p\colon\scr X\to\Delta^1$ to the split exact sequence
$\scr X_1^\omega\to \Gamma(p)^\omega \to \scr X_0^\omega$.
\end{proof}

\subsection{Additive $\cE$-motives}\label{sub:add}
\begin{definition}
Let $\scr C$ be a small $(\infty,1)$-category.
We denote by
\begin{itemize} 
	\item $\Pre(\scr C) := \Fun(\scr C^\op,\cS)$, the $\infty$-category of presheaves on $\scr C$.
	\item $\Pre_{\Sp}(\scr C) := \Fun(\scr C^\op,\Sp)$, the $\infty$-category of presheaves of spectra on $\scr C$.
	\item $\Sigma^\infty_+\colon \Pre(\scr C) \to \Pre_{\Sp}(\scr C)$ the stabilization functor, given objectwise by $\Sigma^\infty_+\colon \cS \to \Sp$.
\end{itemize}
\end{definition}

\begin{definition}
We denote by 
\[\psi\colon \Cat^\perf(\cE)\to \Pre_{\Sp}(\Cat^\perf(\cE)^{\omega})\]
the filtered-colimit-preserving extension of the composition
\[
 \Cat^\perf(\cE)^\omega \xrightarrow{j} \Pre(\Cat^\perf(\cE)^\omega) \xrightarrow{\Sigma^\infty_+}
\Pre_{\Sp}(\Cat^\perf(\cE)^\omega),
\] 
where $j$ is the Yoneda embedding.
\end{definition}

Recall that an exact sequence in $\Cat^\perf(\scr E)$ is equipped with a canonical nullhomotopy, since $\CAT^\perf(\scr E)$ is a linear $(\infty,2)$-category.
Let $S_{\add}$ be the class of morphisms in $\Pre_{\Sp}(\Cat^\perf(\cE)^{\omega})$ of the form 
\begin{gather*}
0\to \Sigma^n\psi(0), \\
\Sigma^{n}(\psi(\cB)/\psi(\cA)) \rightarrow \Sigma^{n}\psi(\cC),
\end{gather*}
where $\cA \rightarrow \cB \rightarrow \cC$ is a split exact sequence in $\ccat{perf}{\cE}$ and $n\leq 0$.

\begin{definition}
\label{def:mot-add} 
The \emph{$\infty$-category of additive $\cE$-motives} is the full subcategory of 
$
\Pre_{\Sp}(\Cat^\perf(\cE)^{\omega})
$ 
spanned by the $S_\add$-local objects, in the sense of \cite[Definition 5.5.4.1]{HTT}.
We denote it by $\Mot(\cE)$.
\end{definition}

A priori, the definition of  $\Mot(\cE)$ involves localizing with respect to a proper class of morphisms. However, we now show that 
there exists a small set of morphisms $S'_{\add}$ that generates the same strongly saturated class as $S_{\add}$. In particular, $\Mot(\cE)$ is the full subcategory of $\Pre_{\Sp}(\Cat^\perf(\cE)^{\omega})$ consisting of the $S'_{\add}$-local objects.

\begin{proposition}
\label{prop:motstab}
The $\infty$-category
$\Mot(\cE)$ is an exact $\omega$-accessible localization of $\Pre_{\Sp}(\Cat^\perf(\cE)^{\omega})$. In particular, $\Mot(\cE)$ is a stable compactly generated $\infty$-category.
\end{proposition}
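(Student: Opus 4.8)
The plan is to reduce Proposition~\ref{prop:motstab} to a known structural fact about accessible localizations of presentable stable $\infty$-categories, namely that a localization of a stable presentable $\infty$-category at a \emph{small} set of maps is itself stable and presentable, and that it is an \emph{exact} localization precisely when the strongly saturated class generated by that set is closed under desuspension (equivalently, when the localization functor is left exact, since it is automatically a left adjoint preserving colimits and we only need it to preserve finite limits; in the stable setting left exactness of a reflective localization is equivalent to the class of local equivalences being stable under $\Omega$). So the real content is to replace the proper class $S_{\add}$ by a small set $S'_{\add}$ generating the same strongly saturated class, and to observe that the maps in $S_{\add}$ — and hence in $S'_{\add}$ — are closed under loops by construction, since $S_{\add}$ already contains all negative suspensions $\Sigma^n$, $n\le 0$, of the basic generators.

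First I would recall that $\Pre_{\Sp}(\Cat^\perf(\cE)^\omega)$ is stable and compactly generated: it is a presheaf category with values in spectra on a small $\infty$-category (small by Proposition~\ref{prop:coco}, which gives that $\Cat^\perf(\cE)$ is compactly generated, hence $\Cat^\perf(\cE)^\omega$ is essentially small), and $\Fun(\scr D^\op,\Sp)$ is always stable and compactly generated by the representables $\Sigma^\infty_+ j(d)$. Next I would produce the small set $S'_{\add}$. By Proposition~\ref{prop:exactsequences}(2), every split exact sequence in $\Cat^\perf(\cE)$ is a filtered colimit of split exact sequences of compact objects; the generators of $S_{\add}$ attached to a split exact sequence are therefore filtered colimits of generators attached to split exact sequences of compact objects. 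Since the strongly saturated class generated by any set of maps is closed under filtered colimits in the arrow category (this is part of \cite[Proposition 5.5.4.16]{HTT}), it suffices to take $S'_{\add}$ to be the set of maps $0\to\Sigma^n\psi(0)$ and $\Sigma^n(\psi(\cB)/\psi(\cA))\to\Sigma^n\psi(\cC)$ where $\cA\to\cB\to\cC$ ranges over split exact sequences of \emph{compact} objects — a small set, since $\Cat^\perf(\cE)^\omega$ is essentially small — and $n\le 0$. Then $S'_{\add}$ and $S_{\add}$ generate the same strongly saturated class, so $\Mot(\cE)$ is the localization at the small set $S'_{\add}$, hence presentable and $\omega$-accessible (the generators have compact source and target, so the localization is accessible with the expected accessibility bound; cf.\ \cite[\S5.5.4]{HTT} and the argument in \cite[Proposition 8.4]{BGT}).

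Finally I would verify exactness of the localization. The localization functor $L\colon\Pre_{\Sp}(\Cat^\perf(\cE)^\omega)\to\Mot(\cE)$ is a left adjoint, hence preserves colimits; to see it is exact it remains to show it preserves finite limits, equivalently that it commutes with $\Omega$, equivalently that the class of $L$-equivalences is closed under $\Omega=\Sigma^{-1}$. But $\Sigma^{-1}$ of a generator in $S'_{\add}$ indexed by $n$ is the generator indexed by $n-1$, which again lies in $S'_{\add}$ as long as $n-1\le 0$ — and since we only allowed $n\le 0$ this is automatic. Thus $\Sigma^{-1}S'_{\add}\subseteq S'_{\add}$, so the strongly saturated class it generates is closed under $\Sigma^{-1}$, so $L$ is exact. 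Being an exact accessible localization of a stable presentable $\infty$-category, $\Mot(\cE)$ is stable, and it is compactly generated because it is an exact $\omega$-accessible localization of a compactly generated stable $\infty$-category (the image of a set of compact generators generates). I expect the only delicate point to be the bookkeeping in the filtered-colimit reduction — making sure that the generators attached to a filtered colimit of split exact sequences are genuinely the filtered colimit, in the arrow $\infty$-category, of the generators attached to the pieces, which uses that $\psi$ preserves filtered colimits by construction and that cofibers and suspensions commute with filtered colimits in the stable presheaf category; everything else is a direct appeal to the cited statements in \cite{HTT} and the pattern already established in \cite{BGT}.
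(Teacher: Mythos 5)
Your proposal is correct and follows essentially the same route as the paper: pass to the small subset $S'_{\add}$ of generators with compact entries via Proposition~\ref{prop:exactsequences}~(2) and closure of strongly saturated classes under filtered colimits, invoke \cite[Proposition 5.5.4.15]{HTT} for the accessible localization, use the presence of all desuspensions $n\le 0$ for stability/exactness, and use compactness of the sources and targets of $S'_{\add}$ for $\omega$-accessibility and compact generation. The only (immaterial) difference is that you phrase the exactness step via closure of the saturated class under $\Omega$, whereas the paper checks that the local objects are closed under $\Sigma$ and cites \cite[Proposition 1.4.2.11]{HA}; these are dual formulations of the same observation.
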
 
\begin{proof}
Let $S_\add'\subset S_\add$ be the small subset consisting of the maps $0\to\Sigma^n\psi(0)$ and $\Sigma^n(\psi(\cA)/\psi(\cB))\to\Sigma^n\psi(\cC)$, where $\cA\to\cB\to\cC$ is a split exact sequence in $\Cat^\perf(\scr E)^\omega$ and $n\leq 0$. By Proposition~\ref{prop:exactsequences} (2),
every element of $S_\add$ is a filtered colimit of elements of $S'_\add$ in $\Fun(\Delta^1,\Cat^\perf(\scr E))$. In particular $S'_\add$ and $S_\add$ generate the same strongly saturated class of morphisms. Applying \cite[Proposition 5.5.4.15]{HTT}, we deduce that $\Mot(\cE)$ is an accessible localization of $\Pre_{\Sp}(\Cat^\perf(\cE)^{\omega})$. By definition of $S_\add$, it is clear that $\Mot(\scr E)$ is closed under suspension, and hence it is stable by \cite[Proposition 1.4.2.11]{HA}. 
Finally, note that $S_\add'$-local presheaves are stable under filtered colimits. Since $\Pre_{\Sp}(\Cat^\perf(\cE)^{\omega})$ is compactly generated, it follows that $\Mot(\scr E)$ is compactly generated.
\end{proof}

\begin{remark}\label{rmk:Sadd}
	By definition of $S_\add'$, a presheaf of spectra $F\colon \Cat^\perf(\cE)^{\omega,\op}\to \Sp$ belongs to $\Mot(\cE)$ if and only if it preserves zero objects and carries split exact sequences in $\Cat^\perf(\cE)^\omega$ to fiber sequences of spectra.
\end{remark}

Thus, the inclusion $\Mot(\cE)\subset \Pre_{\Sp}(\Cat^\perf(\cE)^{\omega})$ admits an exact left adjoint, and we denote by $\cU_{\add}$ the composition
\[
\cU_{\add}\colon \Cat^\perf(\cE) \stackrel{\psi}{\rightarrow} \Pre_{\Sp}(\Cat^\perf(\cE)^{\omega}) \rightarrow \Mot(\cE). 
\]
Note that $\cU_\add$ preserves compact objects.

\begin{definition}
\label{def:add}
Let $\cD$ be a stable presentable  $\infty$-category and let $F\colon \Cat^\perf(\cE) \rightarrow \cD$ be a functor. We say that $F$ is an \emph{additive invariant} if the following conditions are satisfied:
\begin{enumerate}
\item $F$ preserves filtered colimits.
\item $F$ preserves zero objects.
\item $F$ sends split exact sequences in $\ccat{perf}{\cE}$ to cofiber sequences in 
$\cD$.\footnote{Note that, as a consequence, $F$ sends split exact sequences to split cofiber sequences.}
\end{enumerate}
We denote by
$\mathrm{Fun_{\add}}(\Cat^\perf(\cE), \cD)$ the 
$\infty$-category of additive invariants with values in $\cD$.
\end{definition}

\begin{theorem}\label{thm:add}
 The functor $\cU_{\add}\colon \Cat^\perf(\cE) \rightarrow \Mot(\cE)$ is the \emph{universal additive invariant}. More precisely, for any presentable stable $\infty$-category $\cD$, $\cU_\add$ induces an equivalence of $\infty$-categories
\[
\mathrm{Fun^L}(\Mot(\cE), \cD) \simeq \mathrm{Fun_{\add}}(\Cat^\perf(\cE), \cD).
\] 
\end{theorem}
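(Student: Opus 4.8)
The plan is to realize $\Mot(\cE)$ as an accessible localization of the presheaf category $\Pre_{\Sp}(\Cat^\perf(\cE)^\omega)$ and then to chase through the universal property of presheaf categories combined with the universal property of localizations. First I would record the defining adjunction: by Proposition~\ref{prop:motstab} the inclusion $\Mot(\cE)\subset \Pre_{\Sp}(\Cat^\perf(\cE)^\omega)$ has an exact left adjoint $L$, so for any presentable stable $\cD$ we get
\[
\Fun^\L(\Mot(\cE),\cD)\simeq \Fun^\L_{S_\add}(\Pre_{\Sp}(\Cat^\perf(\cE)^\omega),\cD),
\]
the full subcategory of colimit-preserving functors that invert the class $S_\add$ (equivalently $S_\add'$), by \cite[Proposition 5.5.4.20]{HTT}. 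Next I would use the universal property of $\Pre_{\Sp}$: since $\Cat^\perf(\cE)^\omega$ has finite colimits and idempotents split, $\Pre_{\Sp}(\Cat^\perf(\cE)^\omega)$ is freely generated under colimits by $\Sigma^\infty_+\circ j\colon \Cat^\perf(\cE)^\omega\to \Pre_{\Sp}(\Cat^\perf(\cE)^\omega)$, so restriction along this functor gives an equivalence $\Fun^\L(\Pre_{\Sp}(\Cat^\perf(\cE)^\omega),\cD)\simeq \Fun(\Cat^\perf(\cE)^\omega,\cD)$. Composing, $\Fun^\L(\Mot(\cE),\cD)$ is identified with the full subcategory of $\Fun(\Cat^\perf(\cE)^\omega,\cD)$ consisting of those functors $G$ whose left Kan extension (equivalently: whose induced colimit-preserving functor on $\Pre_{\Sp}$) inverts every map in $S_\add'$.

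The main work is then to unwind what this condition on $G$ says. Inverting $0\to\Sigma^n\psi(0)$ forces $G(0)\simeq 0$. Inverting $\Sigma^n(\psi(\cB)/\psi(\cA))\to\Sigma^n\psi(\cC)$ for each split exact sequence $\cA\to\cB\to\cC$ of \emph{compact} objects forces $G$ to send such a sequence to a cofiber sequence in $\cD$ (using that the left Kan extension sends $\psi(\cB)/\psi(\cA)$ to $\mathrm{cofib}(G(\cA)\to G(\cB))$, since left Kan extension along $\Sigma^\infty_+\circ j$ is exact). So $\Fun^\L(\Mot(\cE),\cD)$ is equivalent to the category of functors $\Cat^\perf(\cE)^\omega\to\cD$ preserving zero objects and sending split exact sequences of compact objects to cofiber sequences. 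It remains to match this with $\Fun_\add(\Cat^\perf(\cE),\cD)$: a colimit-preserving functor $\Mot(\cE)\to\cD$ precomposed with $\cU_\add$ automatically preserves filtered colimits (condition (1)), preserves zero objects, and sends \emph{all} split exact sequences to cofiber sequences — here I would invoke Proposition~\ref{prop:exactsequences}(2), which writes an arbitrary split exact sequence as a filtered colimit of split exact sequences of compact objects, so a filtered-colimit-preserving functor that works on the compact ones works on all of them. Conversely, given an additive invariant $F\colon \Cat^\perf(\cE)\to\cD$, its restriction $F|_{\Cat^\perf(\cE)^\omega}$ lands in the subcategory just described, hence corresponds to a colimit-preserving $\bar F\colon\Mot(\cE)\to\cD$; and $\bar F\circ\cU_\add\simeq F$ because both are filtered-colimit-preserving and agree on compact objects (every object of $\Cat^\perf(\cE)$ being a filtered colimit of compacts by Proposition~\ref{prop:coco}, with $\cU_\add$ and $F$ both preserving those colimits, and $\cU_\add$ preserving compacts so $\bar F\circ\cU_\add$ preserves filtered colimits). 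These two assignments are mutually inverse, giving the desired equivalence.

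I expect the only genuinely delicate point to be the bookkeeping between ``split exact sequences of compact objects'' (what the generating set $S_\add'$ sees) and ``all split exact sequences'' (what the definition of additive invariant demands): one direction needs Proposition~\ref{prop:exactsequences}(2) to promote the compact case to the general case, and one must check that the evaluation-at-the-cofiber procedure really is compatible with filtered colimits, which is where the exactness of $L$ and of left Kan extension along $\Sigma^\infty_+\circ j$ enters. The rest is a formal assembly of the universal property of presheaf categories \cite[Theorem 5.1.5.6]{HTT}, the universal property of Bousfield localization \cite[Proposition 5.5.4.20]{HTT}, and the compact generation established in Proposition~\ref{prop:motstab}. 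This is exactly the enriched analogue of \cite[Theorem 8.7]{BGT}, and the argument there transcribes essentially verbatim once Propositions~\ref{prop:coco}, \ref{prop:exactsequences}, and \ref{prop:motstab} are in hand.
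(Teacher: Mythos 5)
Your proposal is correct and is essentially the paper's argument: both realize $\Fun^\L(\Mot(\cE),\cD)$ via the universal property of the localization of $\Pre_{\Sp}(\Cat^\perf(\cE)^\omega)$ at $S_\add$ together with the universal property of (spectral) presheaf categories. The only difference is bookkeeping: the paper identifies $\Fun^\L(\Pre_{\Sp}(\Cat^\perf(\cE)^\omega),\cD)$ directly with filtered-colimit-preserving functors on all of $\Cat^\perf(\cE)$ and reads off the additivity conditions from $S_\add$ (which already contains all split exact sequences), whereas you work with functors on $\Cat^\perf(\cE)^\omega$ and the generating set $S'_\add$ and then pass from compact to arbitrary split exact sequences via Proposition~\ref{prop:exactsequences}(2) — a step the paper only needs inside Proposition~\ref{prop:motstab}.
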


\begin{proof}
Note first that $\cU_{\add}$ is an additive invariant: condition 
(1) is satisfied because $\psi$ preserves filtered colimits, and conditions (2) and (3) are satisfied by definition of $S_\add$.
 Next, observe that if $\cD$ is a stable presentable $\infty$-category, the functor $\psi$ induces an equivalence
\[
\mathrm{Fun^L}(\Pre_{\Sp}(\Cat^\perf(\cE)^{\omega}), \cD) \simeq \Fun^\mathrm{flt}(\Cat^\perf(\cE), \cD),
\]
where an object in the right-hand side is a functor that preserves filtered colimits.
The claim now follows from the universal property of the localization $\Mot(\cE)$. 
\end{proof}

We now briefly discuss symmetric monoidal structures (see also \cite{CT} for a different treatment in the language of derivators).
It follows immediately from the definition of $\tens_{\scr E}$ that, if $\scr A,\scr B\in \Cat^\perf(\scr E)$ are compact, so is $\scr A\tens_{\scr E}\scr B$. The $\infty$-category of presheaves $\Pre(\Cat^\perf(\scr E)^\omega)$ therefore acquires a presentably symmetric monoidal structure given by Day convolution, such that the Yoneda embedding 
\[j\colon \Cat^\perf(\scr E)^\omega\into \Pre(\Cat^\perf(\scr E)^\omega)\]
becomes universal among symmetric monoidal functors to presentably symmetric monoidal $\infty$-categories \cite[Proposition 4.8.1.10]{HA}.
The stabilization functor
\[
\Sigma^\infty_+\colon \Pre(\Cat^\perf(\scr E)^\omega) \to \Pre_{\Sp}(\Cat^\perf(\scr E)^\omega)
\]
can also be promoted to a symmetric monoidal functor with an obvious universal property \cite[Remark 2.25]{Robalo}. Note moreover that $\ph\tens_{\scr E}\scr A$ preserves split exact sequences, for any $\scr A\in\Cat^\perf(\scr E)$ (see \cite[Lemma 5.5]{BGT2}). This implies, by \cite[Proposition 2.2.1.9]{HA}, that $\Mot(\scr E)$ acquires a symmetric monoidal structure such that the localization functor $\Pre_{\Sp}(\Cat^\perf(\scr E)^\omega)\to \Mot(\scr E)$ is symmetric monoidal and has a universal property as such.
Combining these universal properties, we obtain:

\begin{theorem}\label{thm:monoidaladd}
 The symmetric monoidal functor $\cU_{\add}\colon \Cat^\perf(\cE) \rightarrow \Mot(\cE)$ is the \emph{universal symmetric monoidal additive invariant}. More precisely, for any presentably symmetric monoidal stable $\infty$-category $\cD$, $\cU_\add$ induces an equivalence of $\infty$-categories
\[
\Fun^{\L,\tens}(\Mot(\cE), \cD) \simeq \Fun_{\add}^\tens(\Cat^\perf(\cE), \cD).
\] 
\end{theorem}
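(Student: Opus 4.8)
The plan is to refine the proof of Theorem~\ref{thm:add} by carrying a symmetric monoidal structure through each stage of the construction
\[
\Cat^\perf(\cE) \xrightarrow{\ \psi\ } \Pre_{\Sp}(\Cat^\perf(\cE)^\omega) \xrightarrow{\ \ell\ } \Mot(\cE)
\]
and then matching up the resulting universal properties. First I would record that $\psi$ is symmetric monoidal and has a universal property: since $\tens_\cE$ restricts to compact objects, Day convolution makes $\Pre(\Cat^\perf(\cE)^\omega)$ presentably symmetric monoidal, with the Yoneda embedding universal among symmetric monoidal functors into presentably symmetric monoidal $\infty$-categories \cite[Proposition 4.8.1.10]{HA}; the stabilization $\Sigma^\infty_+$ transports this to $\Pre_{\Sp}$ with the analogous property among \emph{stable} such targets \cite[Remark 2.25]{Robalo}; and the filtered-colimit-preserving extension along $\Cat^\perf(\cE)^\omega\into\Ind(\Cat^\perf(\cE)^\omega)=\Cat^\perf(\cE)$ is again symmetric monoidal with its own universal property (as in \cite[\S4.8.1]{HA}). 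Composing these three, for every presentably symmetric monoidal stable $\cD$ one obtains
\[
\Fun^{\L,\tens}(\Pre_{\Sp}(\Cat^\perf(\cE)^\omega),\cD)\ \simeq\ \Fun^{\mathrm{flt},\tens}(\Cat^\perf(\cE),\cD),
\]
the right-hand side being the symmetric monoidal functors that preserve filtered colimits; this is the monoidal enhancement of the non-monoidal equivalence used in the proof of Theorem~\ref{thm:add}.

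Next I would verify that the localization $\ell\colon\Pre_{\Sp}(\Cat^\perf(\cE)^\omega)\to\Mot(\cE)$ is compatible with Day convolution, so that \cite[Proposition 2.2.1.9]{HA} endows $\Mot(\cE)$ with a symmetric monoidal structure for which $\ell$ is symmetric monoidal. Since the strongly saturated class generated by $S_\add$ is closed under colimits and $\tens$ preserves colimits in each variable, and since $\{\Sigma^m\psi(\scr D):\scr D\in\Cat^\perf(\cE)^\omega,\ m\in\Z\}$ generates $\Pre_{\Sp}(\Cat^\perf(\cE)^\omega)$ under colimits, it suffices to check that tensoring a generator of $S_\add$ with some $\Sigma^m\psi(\scr D)$ yields an $S_\add$-equivalence. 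Because $\psi$ is symmetric monoidal and exact, $\Sigma^n(\psi(\cB)/\psi(\cA))\tens\Sigma^m\psi(\scr D)\simeq\Sigma^{n+m}\bigl(\psi(\cB\tens_\cE\scr D)/\psi(\cA\tens_\cE\scr D)\bigr)$, and $\cA\tens_\cE\scr D\to\cB\tens_\cE\scr D\to\cC\tens_\cE\scr D$ is again split exact by \cite[Lemma 5.5]{BGT2}; so the tensored map is again one of the maps defining $S_\add$ (the maps $0\to\Sigma^n\psi(0)$ are handled identically). Granting this, $\Mot(\cE)$ is presentably symmetric monoidal — it is stable by Proposition~\ref{prop:motstab} — and the localization has the universal property
\[
\Fun^{\L,\tens}(\Mot(\cE),\cD)\ \simeq\ \{\,F\in\Fun^{\L,\tens}(\Pre_{\Sp}(\Cat^\perf(\cE)^\omega),\cD)\mid F\text{ inverts }S_\add\,\}.
\]

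Finally I would concatenate the two displays. Under the first equivalence, a symmetric monoidal left adjoint $F$ corresponds to the symmetric monoidal, filtered-colimit-preserving functor $F\circ\psi\colon\Cat^\perf(\cE)\to\cD$. As $F$ is exact and $F\circ\psi$ preserves filtered colimits, $F$ inverts the generators of $S_\add$ if and only if $F\circ\psi$ carries zero objects to zero objects and carries split exact sequences — which come equipped with a canonical nullhomotopy, since $\CAT^\perf(\cE)$ is linear — to cofiber sequences; that is, if and only if $F\circ\psi$ is a symmetric monoidal additive invariant in the sense of Definition~\ref{def:add}. Since $\cU_\add=\ell\circ\psi$ is a composite of symmetric monoidal functors and is itself such an invariant, precomposition with $\cU_\add$ gives the desired equivalence $\Fun^{\L,\tens}(\Mot(\cE),\cD)\simeq\Fun_{\add}^\tens(\Cat^\perf(\cE),\cD)$. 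The only step that is not a purely formal juxtaposition of known universal properties is the monoidal compatibility of the localization, and that reduces entirely to the stability of split exact sequences under $\ph\tens_\cE\scr D$, which is cited; so the principal ``obstacle'' is really the bookkeeping of tracking which class of functors (monoidal left adjoints, monoidal functors preserving filtered colimits, monoidal additive invariants) each universal property refers to, and confirming that these classes line up along the two equivalences.
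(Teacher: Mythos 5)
Your proposal is correct and follows essentially the same route as the paper: Day convolution with the universal property of \cite[Proposition 4.8.1.10]{HA}, the symmetric monoidal stabilization of \cite[Remark 2.25]{Robalo}, compatibility of the localization via \cite[Lemma 5.5]{BGT2} and \cite[Proposition 2.2.1.9]{HA}, and then combining the universal properties. The extra details you supply (the filtered-colimit extension step and the check on generators of $S_\add$) are exactly the bookkeeping the paper leaves implicit.
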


Finally, we discuss the functoriality in $\scr E$ of $\Mot(\cE)$. Suppose that $\scr E,\scr F\in\CAlg(\Cat^\perf)$ are rigid and that $f\colon\scr E\to \scr F$ is an exact symmetric monoidal functor. Then $f$ induces a symmetric monoidal base change functor $f^*\colon \Cat^\perf(\scr E)\to\Cat^\perf(\scr F)$ between $\infty$-categories of modules \cite[\S4.5.3]{HA}, with a colimit-preserving right adjoint $f_*$. We claim that the composite functor
\[
\Cat^\perf(\scr E)\xrightarrow{f^*}\Cat^\perf(\scr F) \xrightarrow{\cU_\add} \Mot(\scr F)
\]
is an additive invariant. Conditions (1) and (2) are clear, and condition (3) follows from the fact that $f^*$ preserves (split) exact sequences (see \cite[Lemma 5.5]{BGT2}). Thus, it induces a symmetric monoidal colimit-preserving functor
\[
f^*\colon \Mot(\scr E)\to \Mot(\scr F).
\]
In fact, the construction $\cE\mapsto \Mot(\cE)$ can be promoted to a functor
\[
\Mot(\ph)\colon \CAlg^\rig(\Cat^\perf) \to \CAlg(\Pr^\L_\St).
\]
Indeed, by the universal property of the symmetric monoidal functor $\Cat^\perf(\cE)^\omega \to \Mot(\cE)$ from Theorem~\ref{thm:monoidaladd}, this follows from the functoriality of $\cE\mapsto\Cat^\perf(\cE)$ (see \eqref{eqn:ModA1}), and the fact that objectwise solutions to a universal problem automatically determine a functor \cite[Proposition 5.2.4.2]{HTT}.

\subsection{Localizing $\cE$-motives} 
\label{sub:loc}
Let $S_{\loc}$ be the class of morphisms in $\Pre_{\Sp}(\Cat^\perf(\scr E)^{\omega})$ of the form 
\begin{gather*}
0\to\Sigma^n\psi(0),\\
\Sigma^n(\psi(\cB)/\psi(\cA)) \rightarrow \Sigma^n\psi(\cC),
\end{gather*}
where $\cA \rightarrow \cB \rightarrow \cC$ is an exact sequence in $\ccat{perf}{\cE}$ and $n\leq 0$.

\begin{definition}
\label{def:mot-loc} 
The \emph{$\infty$-category of localizing $\cE$-motives} is the full subcategory of
$\Pre_{\Sp}(\Cat^\perf(\scr E)^{\omega})$
spanned by the $S_\loc$-local objects. We denote it by $\mathbb M\mathrm{ot}(\cE)$.
\end{definition}

Note that $\mathbb M\mathrm{ot}(\cE)\subset \Mot(\cE)$.
As for $S_\add$, we show that there exists a (small) set of morphisms 
$S'_{\loc}$ that generates $S_{\loc}$ under filtered colimits.

\begin{proposition}\label{prop:Motstab}
	The $\infty$-category
	$\MOT(\cE)$ is an exact accessible localization of $\Pre_{\Sp}(\Cat^\perf(\cE)^{\omega})$. In particular, $\MOT(\cE)$ is a stable presentable $\infty$-category.
\end{proposition}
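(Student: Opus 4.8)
The plan is to mimic the proof of Proposition~\ref{prop:motstab} almost verbatim, replacing ``split exact'' by ``exact'' throughout. First I would introduce the small subset $S'_\loc\subset S_\loc$ consisting of the maps $0\to\Sigma^n\psi(0)$ and $\Sigma^n(\psi(\cB)/\psi(\cA))\to\Sigma^n\psi(\cC)$ for $\cA\to\cB\to\cC$ an exact sequence in $\Cat^\perf(\scr E)^\omega$ and $n\leq 0$; this is a set because $\Cat^\perf(\scr E)^\omega$ is essentially small (being the full subcategory of compact objects of a compactly generated $\infty$-category, by Proposition~\ref{prop:coco}).

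Next I would invoke Proposition~\ref{prop:exactsequences}~(1): every exact sequence $\scr A\to\scr B\to\scr C$ in $\Cat^\perf(\scr E)$ is a filtered colimit of exact sequences $\scr A_\alpha\to\scr B_\alpha\to\scr C_\alpha$ with each $\scr B_\alpha$ compact. Taking $\scr A_\alpha=\scr A\times_{\scr B}\scr B_\alpha$ as in that proof, the subobjects $\scr A_\alpha$ and quotients $\scr C_\alpha$ need not themselves be compact, but each $\scr B_\alpha$ is; to land in $S'_\loc$ one then further writes each $\scr A_\alpha$ (hence $\scr C_\alpha=\scr B_\alpha/\scr A_\alpha$) as a filtered colimit of compacts and re-indexes, exactly as is implicit in the additive case. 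Since $\psi$ preserves filtered colimits and these are computed in $\Fun(\Delta^1,\Pre_{\Sp}(\Cat^\perf(\scr E)^\omega))$, every element of $S_\loc$ is a filtered colimit of elements of $S'_\loc$, so the two classes generate the same strongly saturated class of morphisms. By \cite[Proposition 5.5.4.15]{HTT}, $\MOT(\scr E)$ is therefore an accessible localization of $\Pre_{\Sp}(\Cat^\perf(\scr E)^\omega)$.

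For stability: by construction $S_\loc$ is closed under suspension (the index $n$ ranges over all $n\leq 0$ and the morphisms are already in suspension form), so the class of $S_\loc$-local objects is closed under loops, hence $\MOT(\scr E)$ is closed under suspension inside the stable $\infty$-category $\Pre_{\Sp}(\Cat^\perf(\scr E)^\omega)$, and \cite[Proposition 1.4.2.11]{HA} gives that $\MOT(\scr E)$ is stable; exactness of the localization functor follows. Presentability is automatic for an accessible localization of a presentable $\infty$-category. (One can moreover note, as in Proposition~\ref{prop:motstab}, that $S'_\loc$-local presheaves are stable under filtered colimits, so $\MOT(\scr E)$ is even compactly generated, though the statement only claims presentability.)

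I expect the only genuinely delicate point to be the reduction from $S_\loc$ to the set $S'_\loc$: unlike the split-exact case, where Proposition~\ref{prop:exactsequences}~(2) directly produces compact $\scr A_\alpha,\scr B_\alpha,\scr C_\alpha$, here Proposition~\ref{prop:exactsequences}~(1) only compactifies $\scr B_\alpha$, so one must take a little care to re-express the pullback subcategories and their quotients as filtered colimits of compact exact sequences before concluding that $S_\loc$ lies in the strongly saturated class generated by $S'_\loc$. Everything else is a formal transcription of the proof of Proposition~\ref{prop:motstab}.
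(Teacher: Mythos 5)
Your overall architecture (cut the proper class $S_\loc$ down to a small set $S'_\loc$ generating the same strongly saturated class, then apply \cite[Proposition 5.5.4.15]{HTT} and \cite[Proposition 1.4.2.11]{HA}) is the same as the paper's, but the one step you yourself flag as delicate is a genuine gap. You define $S'_\loc$ using exact sequences lying entirely in $\Cat^\perf(\scr E)^\omega$, and then must express an exact sequence $\scr A_\alpha\to\scr B_\alpha\to\scr C_\alpha$ with only $\scr B_\alpha$ compact (which is all that Proposition~\ref{prop:exactsequences}(1) provides) as a filtered colimit of exact sequences with \emph{all three} terms compact. The natural move --- writing $\scr A_\alpha$ as a filtered colimit of compact objects $\scr A_{\alpha\beta}$ and re-indexing --- does not go through: the structure maps $\scr A_{\alpha\beta}\to\scr B_\alpha$ need not be fully faithful, so the sequences $\scr A_{\alpha\beta}\to\scr B_\alpha\to\operatorname{cofib}$ are not exact and the corresponding maps are not elements of $S_\loc$ at all; replacing $\scr A_{\alpha\beta}$ by the full subcategory of $\scr B_\alpha$ it generates restores fully faithfulness but loses compactness, which is exactly where you started. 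Nothing analogous to Proposition~\ref{prop:exactsequences}(2) is available in the localizing setting, and this is not ``implicit in the additive case'' --- there the compactness of all three terms is handed to you.

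The paper's proof avoids the issue by choosing $S'_\loc$ differently: it consists of the maps associated to exact sequences $\scr A\to\scr B\to\scr C$ in which only the \emph{middle} term $\scr B$ is required to be compact. With that choice, Proposition~\ref{prop:exactsequences}(1) applies verbatim (its $\scr A_\alpha=\scr A\times_{\scr B}\scr B_\alpha$ is a full subcategory of $\scr B_\alpha$, and $\scr C_\alpha$ is the cofiber), and smallness of $S'_\loc$ is proved by a separate observation you would also need: $\Cat^\perf(\scr E)^\omega$ is small and the collection of full subcategories of a given small $\infty$-category is small, so up to equivalence there is only a set of such sequences. Your stability and presentability arguments are fine and agree with the paper. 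One further caveat: your parenthetical claim that $\MOT(\scr E)$ is compactly generated does not follow as in Proposition~\ref{prop:motstab}, since with the correct $S'_\loc$ the presheaves $\psi(\scr A)$ and $\psi(\scr C)$ need not be compact, so $S'_\loc$-local objects are not obviously closed under filtered colimits; this is why the statement (and the paper's proof) claims only presentability here, in contrast with the additive case.
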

\begin{proof}
	Let $S_\loc'\subset S_\loc$ denote the subclass consisting of the maps $0\to\Sigma^n\psi(0)$ and $\Sigma^n(\psi(\cA)/\psi(\cB))\to\Sigma^n\psi(\cC)$, where $\cA\to\cB\to\cC$ is an exact sequence with $\scr B\in \Cat^\perf(\scr E)^\omega$ and $n\leq 0$. By Proposition~\ref{prop:exactsequences} (1),
every element of $S_\loc$ is a filtered colimit of elements of $S'_\loc$ in $\Fun(\Delta^1,\Cat^\perf(\scr E))$. In particular $S'_\loc$ and $S_\loc$ generate the same strongly saturated class of morphisms.
Note that $S_\loc'$ is essentially small, since $\Cat^\perf(\scr E)^\omega$ is small and the collection of full subcategories of a given small $\infty$-category is small.
 Applying \cite[Proposition 5.5.4.15]{HTT}, we deduce that $\MOT(\cE)$ is an accessible localization of $\Pre_{\Sp}(\Cat^\perf(\cE)^{\omega})$. By definition of $S_\loc$, $\MOT(\scr E)$ is closed under suspension, and hence it is stable by \cite[Proposition 1.4.2.11]{HA}.
\end{proof}

Thus, the inclusion $\MOT(\cE)\subset \Pre_{\Sp}(\Cat^\perf(\cE)^{\omega})$ admits an exact left adjoint, and we denote by $\cU_{\loc}$ the composition
\[
\cU_{\loc}\colon \Cat^\perf(\cE) \stackrel{\psi}{\rightarrow} \Pre_{\Sp}(\Cat^\perf(\cE)^{\omega}) \rightarrow\MOT(\cE). 
\]

\begin{definition}
\label{def:loc}
Let $\cD$ be a stable presentable  $\infty$-category and let $F\colon \Cat^\perf(\cE) \rightarrow \cD$ be a functor. We say that $F$ is a \emph{localizing invariant} if the following conditions are satisfied:
\begin{enumerate}
\item $F$ preserves filtered colimits.
\item $F$ preserves zero objects.
\item $F$ sends exact sequences in $\Cat^\perf(\cE)$ to cofiber sequences in 
$\cD$.
\end{enumerate}
We denote by
$\Fun_{\mathrm{loc}}(\Cat^\perf(\cE), \cD)$ the 
$\infty$-category of localizing invariants with values in $\cD$.
\end{definition}

\begin{theorem}
\label{thm:localizing}
 The functor $\cU_{\loc}\colon \Cat^\perf(\cE) \rightarrow \MOT(\cE)$ is the \emph{universal localizing invariant}. More precisely, for any presentable stable $\infty$-category $\cD$, $\cU_\loc$ induces an equivalence of $\infty$-categories
\[
\mathrm{Fun^L}(\mathbb M\mathrm{ot}(\cE), \cD) \simeq \mathrm{Fun_{\loc}}(\Cat^\perf(\cE), \cD).
\] 
\end{theorem}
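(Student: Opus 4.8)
The plan is to transcribe the proof of Theorem~\ref{thm:add} almost verbatim, replacing \emph{split exact} sequences by \emph{exact} sequences, $S_\add$ by $S_\loc$ (and $S_\add'$ by $S_\loc'$), and Proposition~\ref{prop:exactsequences}~(2) by Proposition~\ref{prop:exactsequences}~(1). The first step is to observe that $\cU_\loc$ is itself a localizing invariant. Condition~(1) of Definition~\ref{def:loc} holds because $\psi$ preserves filtered colimits by construction and the localization functor $\Pre_{\Sp}(\Cat^\perf(\cE)^\omega)\to\MOT(\cE)$, being a left adjoint, preserves all colimits. Conditions~(2) and~(3) hold by the very definition of $S_\loc$: the maps $0\to\Sigma^n\psi(0)$ and the maps $\psi(\cB)/\psi(\cA)\to\psi(\cC)$ determined by the canonical nullhomotopy of an exact sequence $\cA\to\cB\to\cC$ lie in $S_\loc$, hence become equivalences in $\MOT(\cE)$, which (using that $\MOT(\cE)$ is stable) says precisely that $\cU_\loc$ preserves zero objects and carries exact sequences to cofiber sequences.

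Next I would recall the key computation already established in the proof of Theorem~\ref{thm:add}: since $\Pre_{\Sp}(\Cat^\perf(\cE)^\omega)$ is the free stable presentable $\infty$-category on $\Cat^\perf(\cE)^\omega$ and $\Cat^\perf(\cE)\simeq\Ind(\Cat^\perf(\cE)^\omega)$, the functor $\psi$ induces, for every presentable stable $\cD$, an equivalence
\[
\Fun^\L(\Pre_{\Sp}(\Cat^\perf(\cE)^\omega),\cD)\simeq \Fun^{\mathrm{flt}}(\Cat^\perf(\cE),\cD),
\]
where the right-hand side denotes functors preserving filtered colimits. Then I would invoke the universal property of the localization $\MOT(\cE)\subset\Pre_{\Sp}(\Cat^\perf(\cE)^\omega)$ (\cite[Proposition 5.5.4.20]{HTT}), together with Proposition~\ref{prop:Motstab}: composition with the localization functor exhibits $\Fun^\L(\MOT(\cE),\cD)$ as the full subcategory of $\Fun^\L(\Pre_{\Sp}(\Cat^\perf(\cE)^\omega),\cD)$ spanned by the colimit-preserving functors that invert $S_\loc$, equivalently that invert the small set $S_\loc'$.

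It then remains to match, under the displayed equivalence, the functors inverting $S_\loc'$ with the localizing invariants. A colimit-preserving $G\colon\Pre_{\Sp}(\Cat^\perf(\cE)^\omega)\to\cD$ corresponds to $F=G\circ\psi\colon\Cat^\perf(\cE)\to\cD$, which preserves filtered colimits; since $G$ preserves colimits, $G(\psi(\cB)/\psi(\cA)\to\psi(\cC))$ is the map $F(\cB)/F(\cA)\to F(\cC)$ and $G(0\to\Sigma^n\psi(0))$ is detected by whether $F(0)$ is zero (using that $\Sigma$ is invertible on $\cD$). Hence $G$ inverts $S_\loc'$ if and only if $F$ sends zero objects to zero objects and sends exact sequences with compact middle term to cofiber sequences. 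By Proposition~\ref{prop:exactsequences}~(1) every exact sequence in $\Cat^\perf(\cE)$ is a filtered colimit of such sequences, so, $F$ preserving filtered colimits, this is equivalent to $F$ satisfying conditions~(2) and~(3) of Definition~\ref{def:loc}, i.e.\ to $F$ being a localizing invariant. Therefore the equivalence above restricts to $\Fun^\L(\MOT(\cE),\cD)\simeq\Fun_\loc(\Cat^\perf(\cE),\cD)$, and by construction this equivalence is induced by precomposition with $\cU_\loc$. The only point that is not completely formal is this last passage from the small generating set $S_\loc'$ to the full class $S_\loc$ — but it is supplied entirely by Proposition~\ref{prop:exactsequences}~(1), so I do not expect any genuine obstacle beyond bookkeeping.
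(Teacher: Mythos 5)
Your proof is correct, but it takes a (mildly) different route than the paper. The paper's own proof is a two-step localization: since $S_\add\subset S_\loc$, $\MOT(\cE)$ is the full subcategory of $\Mot(\cE)$ spanned by the objects local with respect to $\cU_\add(S_\loc)$, so Theorem~\ref{thm:add} plus the universal property of this second localization immediately identifies $\Fun^\L(\MOT(\cE),\cD)$ with the additive invariants that moreover send exact sequences to cofiber sequences, i.e.\ the localizing invariants; the compactness bookkeeping has already been absorbed into Theorem~\ref{thm:add} and Proposition~\ref{prop:Motstab}. You instead rerun the proof of Theorem~\ref{thm:add} from scratch, localizing $\Pre_{\Sp}(\Cat^\perf(\cE)^\omega)$ directly at the small set $S'_\loc$ via \cite[Proposition 5.5.4.20]{HTT} and using Proposition~\ref{prop:exactsequences}~(1) to pass from exact sequences with compact middle term to arbitrary ones (the same filtered-colimit argument that underlies Proposition~\ref{prop:Motstab}). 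Both arguments rest on the same two ingredients --- the equivalence $\Fun^\L(\Pre_{\Sp}(\Cat^\perf(\cE)^\omega),\cD)\simeq\Fun^{\mathrm{flt}}(\Cat^\perf(\cE),\cD)$ and the universal property of accessible localizations --- so nothing of substance differs: the paper's version is shorter because it factors through $\Mot(\cE)$, while yours is self-contained and makes explicit exactly where Proposition~\ref{prop:exactsequences}~(1) and the reduction from $S_\loc$ to $S'_\loc$ enter, at the cost of repeating the verification that $\cU_\loc$ is itself a localizing invariant and that the equivalence is implemented by precomposition with $\cU_\loc$.
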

\begin{proof}
Since $\MOT(\cE)$ is the full subcategory of $\cU_\add(S_\loc)$-local objects in $\Mot(\cE)$,
the claim follows from Theorem~\ref{thm:add} and the universal property of localization. 
\end{proof}

Noting that $\ph\tens_{\scr E}\scr A$ preserves exact sequences, we deduce from \cite[Proposition 2.2.1.9]{HA} and Theorem~\ref{thm:monoidaladd} that $\MOT(\cE)$ acquires a symmetric monoidal structure with the following universal property:

\begin{theorem}\label{thm:monoidalloc}
 The symmetric monoidal functor $\cU_{\loc}\colon \Cat^\perf(\cE) \rightarrow \MOT(\cE)$ is the \emph{universal symmetric monoidal localizing invariant}. More precisely, for any presentably symmetric monoidal stable $\infty$-category $\cD$, $\cU_\loc$ induces an equivalence of $\infty$-categories
\[
\Fun^{\L,\tens}(\MOT(\cE), \cD) \simeq \Fun_{\loc}^\tens(\Cat^\perf(\cE), \cD).
\] 
\end{theorem}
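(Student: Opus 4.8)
The strategy is to realise the symmetric monoidal structure on $\MOT(\cE)$ as a Bousfield-type localisation of the presentably symmetric monoidal $\infty$-category $\Mot(\cE)$ produced in Theorem~\ref{thm:monoidaladd}, and then to read off the universal property by composing the universal property of that localisation with Theorem~\ref{thm:monoidaladd} itself.

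As noted in the proof of Theorem~\ref{thm:localizing}, $\MOT(\cE)$ is the accessible localisation of $\Mot(\cE)$ (which is stable presentable by Proposition~\ref{prop:Motstab}) at the strongly saturated class $\bar T$ generated by $\cU_\add(S_\loc)$. To invoke \cite[Proposition 2.2.1.9]{HA} I must check that $\bar T$ is compatible with the monoidal structure, \ie\ that $s\tens\id_Y\in\bar T$ for every $s\in\cU_\add(S_\loc)$ and every $Y\in\Mot(\cE)$. Since $\Mot(\cE)$ is generated under colimits by the image of $\cU_\add$ and $\bar T$ is closed under colimits, it suffices to take $Y=\cU_\add(\scr D)$ with $\scr D\in\Cat^\perf(\cE)$. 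Now $\cU_\add$ is symmetric monoidal and $\Mot(\cE)$ is stable, so tensoring a generating arrow $\Sigma^n(\cU_\add(\cB)/\cU_\add(\cA))\to\Sigma^n\cU_\add(\cC)$, coming from an exact sequence $\cA\to\cB\to\cC$, with $\cU_\add(\scr D)$ yields
\[
\Sigma^n\bigl(\cU_\add(\cB\tens_{\cE}\scr D)/\cU_\add(\cA\tens_{\cE}\scr D)\bigr)\to\Sigma^n\cU_\add(\cC\tens_{\cE}\scr D).
\]
As $\ph\tens_{\cE}\scr D$ preserves exact sequences \cite[Lemma 5.5]{BGT2}, the sequence $\cA\tens_{\cE}\scr D\to\cB\tens_{\cE}\scr D\to\cC\tens_{\cE}\scr D$ is again exact, so the tensored map is a generator in $\cU_\add(S_\loc)$ and hence lies in $\bar T$. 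The case $0\to\Sigma^n\psi(0)$ is trivial.

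Granting this, \cite[Proposition 2.2.1.9]{HA} endows $\MOT(\cE)=\bar T^{-1}\Mot(\cE)$ with a presentably symmetric monoidal structure for which the localisation $L\colon\Mot(\cE)\to\MOT(\cE)$ — and therefore $\cU_\loc=L\circ\cU_\add$ — is symmetric monoidal, and it identifies $\Fun^{\L,\tens}(\MOT(\cE),\cD)$ with the full subcategory of $\Fun^{\L,\tens}(\Mot(\cE),\cD)$ spanned by the functors that invert $\bar T$. Precomposing with $\cU_\add$ and applying Theorem~\ref{thm:monoidaladd}, this full subcategory corresponds to the symmetric monoidal additive invariants $F\colon\Cat^\perf(\cE)\to\cD$ that invert the arrows in $S_\loc$, equivalently — by Proposition~\ref{prop:exactsequences}(1) and the fact that $F$ preserves filtered colimits — that send all exact sequences to cofiber sequences; this is precisely $\Fun_{\loc}^\tens(\Cat^\perf(\cE),\cD)$ by definition, and unwinding the chain of equivalences shows it is implemented by precomposition with $\cU_\loc$. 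The only non-formal input is the compatibility check of the second paragraph, which rests entirely on the cited fact that $\ph\tens_{\cE}\scr A$ preserves exact sequences; everything else is the standard interaction between monoidal localisations and the already-packaged Day-convolution universal property of $\cU_\add$.
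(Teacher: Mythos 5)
Your argument is correct and essentially reproduces the paper's own reasoning: the paper obtains Theorem~\ref{thm:monoidalloc} precisely by noting that $\ph\tens_{\cE}\scr A$ preserves exact sequences and then combining \cite[Proposition 2.2.1.9]{HA} with Theorem~\ref{thm:monoidaladd}, which is the compatibility-of-monoidal-localization argument you spell out in detail. (One microscopic point: $\Mot(\cE)$ is generated under colimits by \emph{desuspensions} of objects $\cU_\add(\scr D)$ rather than by the image of $\cU_\add$ alone, but since the generating class $\cU_\add(S_\loc)$ already contains all negative suspensions, your reduction goes through unchanged.)
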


If $\scr E,\scr F\in\CAlg(\Cat^\perf)$ are rigid and $f\colon \scr E\to\scr F$ is an exact symmetric monoidal functor, the composition
\[
\Cat^\perf(\scr E)\xrightarrow{f^*}\Cat^\perf(\scr F) \xrightarrow{\cU_\loc} \MOT(\scr F)
\]
is a symmetric monoidal localizing invariant and hence induces a symmetric monoidal colimit-preserving functor
\[
f^*\colon \MOT(\scr E)\to \MOT(\scr F).
\]
Its right adjoint $f_*$ is simply the restriction of $f_*\colon \Mot(\scr F)\to \Mot(\scr E)$ to the full subcategory $\MOT(\scr F)$.
As in \S\ref{sub:add}, Theorem~\ref{thm:monoidalloc} implies that this construction can be promoted to a functor
\[
\MOT(\ph)\colon \CAlg^\rig(\Cat^\perf) \to \CAlg(\Pr^\L_\St).
\]

\subsection{Corepresentability of $K$-theory}
\label{sec:corepresentability}
In this subsection, we prove that the connective (resp.\ nonconnective) $K$-theory of objects in $\ccat{perf}{\cE}$ is corepresented by the unit in the symmetric monoidal stable $\infty$-category $\Mot(\cE)$ (resp.\ $\MOT(\cE)$).
These are direct generalizations of \cite[Theorems 7.13 and 9.36]{BGT}.
In fact, for nonconnective $K$-theory, we can easily deduce the corepresentability result from the case $\cE=\Sp^\omega$.
On the other hand, to get the full corepresentability result for connective $K$-theory, we have to repeat the arguments from \cite{BGT}, but we introduce a simplification based on \cite{B}. We start by recalling the definition of the $K$-theory of $\infty$-categories. 

The $\infty$-categorical version of the $S_\bullet$-construction was introduced in \cite[Definition 1.2.2.2]{HA}.  
Let $\Cat^\mathrm{rex}_*$ be the $(\infty, 1)$-category of small pointed $\infty$-categories admitting finite colimits and right exact functors between them.

For all $n\in\Delta$, let $\Ar[n]$ be the category such that: 
\begin{itemize}
\item its objects are pairs $(i,j)$ where $0 \le i \le j \le n$,
\item there is exactly one morphism 
$(i,j) \to (k,l)$ if $i \le k \le j\le l$, 
and none otherwise.
\end{itemize}
In other words, $\Ar[n]$ is the arrow category of the poset $[n]$.
For $\cA\in \Cat^\mathrm{rex}_*$, denote by $S_n(\cA)$ the 
full subcategory of $\Fun(\Ar[n],\cA)$ spanned by the functors $F\colon \Ar[n] \rightarrow \cA$ such that:
\begin{itemize} 
\item for all $i$, $F(i,i)$ is a zero object in $\cA$,
\item if $i \le j \le k$ then $F(i,j) \to F(i,k) \to F(j,k)$ is a cofiber sequence in $\cA$.
\end{itemize}
For all $n$, $S_n(\cA)$ is again an object in $\Cat^\mathrm{rex}_*$. 
Moreover, the $\infty$-categories $S_n(\cA)$ assemble into a simplicial $\infty$-category $S_\bullet(\cA) \in \Fun(\Delta^\op,\Cat^\mathrm{rex}_*)$.

Let $\iota_0S_\bullet(\cA)$ be the simplicial pointed space obtained by 
taking the maximal 
subgroupoids of $S_\bullet(\cA)$ levelwise, and letting the initial objects be the base points. Denote by $\lvert\iota_0S_\bullet(\cA)\rvert\in\cS$  its colimit. 

\begin{definition}
	Let $\cA\in\Cat^\mathrm{rex}_*$. The space $\Omega\lvert\iota_0S_\bullet(\cA)\rvert$ is the \emph{$K$-theory space} of $\cA$. 
\end{definition}

As in the case of ordinary Waldhausen categories, the \emph{$K$-theory spectrum}  of $\cA$, denoted by $K(\cA)$, can be defined by iterating the $S_\bullet$-construction: more precisely, the $n$th space of the spectrum is $K(\cA)_n = \lvert\iota_0S_\bullet^n(\cA)\rvert$ for $n\geq 1$, see \cite[Section 7.1]{BGT}.

If $\cA\in\ccat{perf}{\cE}$, then $S_n \cA$ is stable, idempotent complete, and tensored over $\cE$, and $S_\bullet$ lifts to a functor
\[
S_\bullet\colon \Cat^\perf(\cE) \to \Fun(\Delta^\op,\Cat^\perf(\cE)).
\]

\begin{lemma} 
\label{lem:S_1}
Let $\cA,\cB\in\ccat{perf}{\cE}$. There is a natural equivalence 
\[ 
S_\bullet (\Fun_\cE^\ex(\cA,\cB)) \simeq \Fun_\cE^\ex(\cA,S_\bullet(\cB))
\]
in $\Fun(\Delta^\op,\Cat^\perf(\cE))$.
\end{lemma}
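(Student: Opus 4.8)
The plan is to reduce everything to the universal property of the $S_\bullet$-construction together with the defining cosimplicial presentation of $\Fun^\ex_\cE$. First I would observe that both sides of the claimed equivalence are functors $\Delta^\op\to\Cat^\perf(\cE)$, so by the completeness of $\Delta$ it suffices to produce a natural equivalence of the underlying diagrams; and since $S_\bullet$ is built levelwise by taking full subcategories of $\Fun(\Ar[n],-)$ cut out by conditions on objects and cofiber sequences, it is enough to work one simplicial level $n$ at a time. At level $n$, the right-hand side $\Fun^\ex_\cE(\cA,S_n\cB)$ is the full subcategory of $\Fun^\ex_\cE(\cA,\Fun(\Ar[n],\cB))$ spanned by functors $F\colon\cA\to\Fun(\Ar[n],\cB)$ such that, for each $a\in\cA$, $F(a)$ sends diagonal pairs to zero objects and sends composable triples to cofiber sequences in $\cB$.

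The key computational input is the commutation of $\Fun^\ex_\cE(\cA,-)$ with the functor category $\Fun(\Ar[n],-)$, i.e.\ a natural equivalence
\[
\Fun^\ex_\cE(\cA,\Fun(\Ar[n],\cB)) \simeq \Fun(\Ar[n],\Fun^\ex_\cE(\cA,\cB)).
\]
This is where I would spend the real work. It follows by unwinding the cosimplicial description of $\Fun_\cE$ from \S\ref{sec:enrich} (the limit of the cosimplicial diagram with terms $\Fun(\cE^{\times m},\Fun(\cA,-))$, using rigidity of $\cE$ via Proposition~\ref{prop:rigid}), combined with the elementary fact that $\Fun(\cA,-)$ and $\Fun(\cE^{\times m},-)$ commute with $\Fun(\Ar[n],-)$ as right adjoints, and with the observation that the exactness condition (preservation of finite colimits) is detected objectwise on $\Ar[n]$ since colimits in $\Fun(\Ar[n],\cB)$ are computed pointwise. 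So an $\cE$-linear functor $\cA\to\Fun(\Ar[n],\cB)$ is the same as an $\Ar[n]$-shaped diagram of $\cE$-linear functors $\cA\to\cB$.

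Having established that identification, the restriction to the respective full subcategories matches up: a diagram $\Ar[n]\to\Fun^\ex_\cE(\cA,\cB)$, i.e.\ a functor $G$ with $G(i,j)\colon\cA\to\cB$, lands in $S_n(\Fun^\ex_\cE(\cA,\cB))$ precisely when each $G(i,i)$ is a zero object of $\Fun^\ex_\cE(\cA,\cB)$ (equivalently $G(i,i)(a)\simeq 0$ for all $a$, since zero objects of a functor category are the constant-at-zero functors) and each triple $G(i,j)\to G(i,k)\to G(j,k)$ is a cofiber sequence in $\Fun^\ex_\cE(\cA,\cB)$ (equivalently, objectwise on $a\in\cA$, since cofiber sequences in $\Fun^\ex_\cE(\cA,\cB)$ are detected pointwise—this uses that $\Fun^\ex_\cE(\cA,\cB)$ is stable and colimits there are computed objectwise, which is part of the linear structure discussed in \S\ref{sub:2cat}). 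These are exactly the conditions defining $\Fun^\ex_\cE(\cA,S_n\cB)$ under the identification above. Finally I would check that all these equivalences are natural in $[n]\in\Delta^\op$—which is automatic since every step is induced by functoriality in $\Ar[n]$ and by the simplicial structure maps of $S_\bullet$—yielding the desired equivalence in $\Fun(\Delta^\op,\Cat^\perf(\cE))$. The main obstacle is bookkeeping: making the commutation of $\Fun^\ex_\cE(\cA,-)$ with $\Fun(\Ar[n],-)$ precise and natural at the level of the cosimplicial models, rather than just on homotopy categories.
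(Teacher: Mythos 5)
Your proof is correct, but it takes a different (more self-contained) route than the paper's. The paper's proof is two lines: it invokes the non-enriched statement $S_\bullet(\Fun^\ex(\cC,\cB))\simeq \Fun^\ex(\cC,S_\bullet(\cB))$ from \cite[Lemma 7.16]{BGT} for each term $\cC=\cA\tens\cE^{\tens m}$ of the cosimplicial diagram whose limit is $\Fun_\cE^\ex(\cA,\cB)$, and then commutes $S_\bullet$ with that limit over $m\in\Delta$, using that each $S_n$ preserves limits of $\infty$-categories. You instead prove the enriched statement directly at each simplicial level: you curry $\Fun(\Ar[n],\ph)$ past $\Fun_\cE^\ex(\cA,\ph)$ (which is the same kind of limit manipulation, now performed inside the cosimplicial model for $\Fun_\cE$) and then match the full-subcategory conditions cutting out $S_n$ on both sides, using that zero objects and cofiber sequences in $\Fun_\cE^\ex(\cA,\cB)$ and in $\Fun(\Ar[n],\cB)$ are detected objectwise. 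In effect you re-prove the BGT lemma in the enriched setting rather than citing it and pushing it through the limit. Both arguments are sound; the paper's is shorter because the combinatorial matching is outsourced to BGT and the only new input is commuting $S_n$ with the cosimplicial limit, whereas yours is self-contained but must carry the $\cE$-linear coherence through the currying step—compatibility with the cosimplicial structure maps, closure of $S_n(\cB)\subset\Fun(\Ar[n],\cB)$ under the $\cE$-action, and exactness plus conservativity of the forgetful functor so that the $S_n$-conditions are indeed detected objectwise—which is exactly the bookkeeping you flag at the end.
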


\begin{proof}
For all $m \in \Delta$, we have a natural equivalence of simplicial $\infty$-categories
\[S_\bullet(\Fun^\ex(\cA\tens\cE^{\tens m},\cB)) \simeq \mathrm{Fun}^\ex( \cA\tens\cE^{\tens m}, S_\bullet(\cB)),\]
by  \cite[Lemma 7.16]{BGT}.
Taking the limit over $m\in\Delta$ and noting that $S_n$ preserves limits concludes the proof.
 \end{proof}

For $\cA\in\ccat{perf}{\cE}$, denote by $K_\cA$ the presheaf of spectra on 
$\Cat^\perf(\cE)^\omega$ defined by
\[K_\cA(\cB) = K(\Fun_\cE^\ex(\cB,\cA)).\]

\begin{lemma}\label{lem:KAlocal}
	Let $\cA\in\Cat^\perf(\cE)$. Then $K_\cA$ belongs to $\Mot(\cE)$.
\end{lemma}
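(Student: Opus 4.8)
By Remark~\ref{rmk:Sadd}, to show $K_\cA \in \Mot(\cE)$ it suffices to verify that the presheaf $K_\cA\colon \Cat^\perf(\cE)^{\omega,\op}\to \Sp$ preserves zero objects and sends split exact sequences in $\Cat^\perf(\cE)^\omega$ to fiber sequences of spectra. The first condition is immediate: if $\cB = 0$, then $\Fun_\cE^\ex(0,\cA) \simeq 0$, and $K$ of the zero category is contractible. So the content is the second condition.

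First I would reduce the statement about fiber sequences of spectra to a statement about split exact sequences of $\infty$-categories. Given a split exact sequence $\cB'\to\cB\to\cB''$ in $\Cat^\perf(\cE)^\omega$, applying the exact contravariant functor $\Fun_\cE^\ex(\ph,\cA)$ should yield a split exact sequence $\Fun_\cE^\ex(\cB'',\cA)\to\Fun_\cE^\ex(\cB,\cA)\to\Fun_\cE^\ex(\cB',\cA)$ in $\Cat^\perf(\cE)$; this is where one uses that the three terms of a split exact sequence are ``internally dualizable'' — concretely, the right adjoints in the original sequence induce the fully faithful inclusion and the quotient functor after applying $\Fun_\cE^\ex(\ph,\cA)$, and conversely. (This is the $\cE$-linear analogue of the standard fact that $\mathrm{Fun}^\ex(\ph,\cA)$ takes split exact sequences to split exact sequences; one can deduce it from Proposition~\ref{prop:exact} identifying split exact sequences with localization sequences in $\CAT^\perf(\scr E)$, together with the self-duality $\Fun_\cE^\L(\scr A,\scr B)\simeq\Fun_\cE^\R(\scr B,\scr A)^\op$ of Proposition~\ref{prop:rigid}~(4), or argue directly by checking the conditions of Definition~\ref{dfn:exact2} after applying the forgetful functor to $\Cat^\perf$.)

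Then I would invoke Waldhausen additivity for the $K$-theory of stable $\infty$-categories: $K\colon \Cat^\perf \to \Sp$ sends split exact sequences (indeed all exact sequences) to fiber sequences of spectra — this is \cite[Theorem 7.1 (and its localization refinement)]{BGT}, or the additivity theorem in the form used there. Since the forgetful functor $\Cat^\perf(\cE)\to\Cat^\perf$ carries split exact sequences to split exact sequences by definition, and since $K(\Fun_\cE^\ex(\cB,\cA))$ only depends on the underlying stable $\infty$-category, additivity applied to the split exact sequence $\Fun_\cE^\ex(\cB'',\cA)\to\Fun_\cE^\ex(\cB,\cA)\to\Fun_\cE^\ex(\cB',\cA)$ gives the desired fiber sequence $K_\cA(\cB'')\to K_\cA(\cB)\to K_\cA(\cB')$. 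Note that Lemma~\ref{lem:S_1} is not strictly needed here but provides an alternative route: one can commute $S_\bullet$ past $\Fun_\cE^\ex(\ph,\cA)$ and run the additivity argument on the level of the $S_\bullet$-construction directly, which is the approach taken in \cite{BGT}.

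**Main obstacle.** The only real point requiring care is the claim that $\Fun_\cE^\ex(\ph,\cA)$ sends split exact sequences to split exact sequences; verifying all four clauses of Definition~\ref{dfn:exact2}/\ref{dfn:exact} for the image sequence, keeping track of which functor becomes fully faithful and which becomes the quotient, and checking that right adjoints in the source produce the required adjoints in the target, is the technical heart. Everything after that is a formal consequence of additivity for $K$-theory together with Remark~\ref{rmk:Sadd}.
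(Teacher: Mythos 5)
Your proposal takes essentially the same route as the paper: check the two conditions of Remark~\ref{rmk:Sadd}, observe that $\Fun_\cE^\ex(\ph,\cA)$ carries split exact sequences to split exact sequences, and conclude by additivity of connective $K$-theory applied in $\Cat^\perf$ (the paper cites \cite[Proposition 10.12]{B} for this last step rather than \cite{BGT}, but the content is the same). One caveat: your parenthetical claim that $K$ sends \emph{all} exact sequences to fiber sequences is false for connective $K$-theory --- only split exact sequences work, which is all you actually use --- so that aside should be dropped.
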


\begin{proof}
	By Remark~\ref{rmk:Sadd}, we must prove that $K_\cA(0)\simeq 0$ and that $K_\cA$ takes split exact sequences in $\Cat^\perf(\cE)^\omega$ to fiber sequences.
	The former is clear, since $\Fun_{\cE}^\ex(0,\cA)\simeq 0$. Let $\cB_1\to\cB_2\to\cB_3$ be a split exact sequence in $\Cat^\perf(\cE)^\omega$. Then the sequence 
	\[\Fun_\cE^\ex(\cB_3, \cA) \to \Fun_\cE^\ex(\cB_2, \cA) \to \Fun_\cE^\ex(\cB_1, \cA)\] 
	is split exact. By \cite[Proposition 10.12]{B},
	applying $K$ to a split exact sequence in $\Cat^\perf$ yields a fiber sequence of $K$-theory spectra, as desired.
\end{proof}

\begin{lemma}
\label{lem:S_2}
For any $\cA\in\Cat^\perf(\cE)$, there is a natural equivalence $K_\cA\simeq \cU_\add(\cA)$ in $\Mot(\cE)$.
 \end{lemma}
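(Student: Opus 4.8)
Proof plan for Lemma \ref{lem:S_2} ($K_\cA\simeq\cU_\add(\cA)$ in $\Mot(\cE)$).

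The plan is to identify both sides by exhibiting a natural map and showing it is an equivalence after evaluating on compact objects, where the problem reduces to a statement about mapping spectra in $\Mot(\cE)$. First I would observe that by Lemma~\ref{lem:KAlocal}, $K_\cA$ is an object of $\Mot(\cE)$, and by definition $K_\cA(\cB)=K(\Fun_\cE^\ex(\cB,\cA))$. Using Lemma~\ref{lem:S_1}, the $S_\bullet$-construction commutes with the internal Hom $\Fun_\cE^\ex(\cB,-)$, so that $K_\cA$ can be described as the presheaf $\cB\mapsto \Omega^\infty$ of the spectrum built from $|\iota_0 S^n_\bullet\Fun_\cE^\ex(\cB,\cA)|$. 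The first key point is to construct a natural transformation $\cU_\add(\cA)\to K_\cA$ in $\Pre_{\Sp}(\Cat^\perf(\cE)^\omega)$: this comes from the fact that $\psi(\cA)=\Sigma^\infty_+ j(\cA)$ is the ``free'' spectral presheaf on the representable presheaf, so a map out of it is the same as a point of $K_\cA(\cA)=K(\Fun_\cE^\ex(\cA,\cA))$; we take the class of $\id_\cA$, i.e.\ the unit object of the $K$-theory of the endomorphism category. Since $K_\cA\in\Mot(\cE)$ by Lemma~\ref{lem:KAlocal}, this extends uniquely to a map $\cU_\add(\cA)\to K_\cA$ in $\Mot(\cE)$.

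Next I would check this map is an equivalence by testing on compact generators. Since $\cU_\add$ preserves compact objects and the representables $\cU_\add(\cB)$ for $\cB\in\Cat^\perf(\cE)^\omega$ generate $\Mot(\cE)$ under colimits, it suffices to show that for every $\cB\in\Cat^\perf(\cE)^\omega$ the induced map
\[
\Mot(\cE)(\cU_\add(\cB),\cU_\add(\cA)) \to \Mot(\cE)(\cU_\add(\cB),K_\cA)
\]
is an equivalence of spectra. The right-hand side: since $K_\cA$ is $S_\add$-local, mapping out of $\cU_\add(\cB)$ computes $\Map(\psi(\cB),K_\cA)$ in $\Pre_{\Sp}(\Cat^\perf(\cE)^\omega)$, and $\psi(\cB)=\Sigma^\infty_+ j(\cB)$, so by the Yoneda lemma for presheaves of spectra this is just $K_\cA(\cB)=K(\Fun_\cE^\ex(\cB,\cA))$. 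So the content is an equivalence
\[
\Mot(\cE)(\cU_\add(\cB),\cU_\add(\cA)) \simeq K(\Fun_\cE^\ex(\cB,\cA)).
\]
At this point the statement is precisely a special case of Theorem~\ref{thm:representability} (or \ref{thrm:corep}) applied to the saturated $\cE$-linear category $\cB$ after using that $\Fun_\cE^\ex(\cB,\cA)$ appears naturally — but since Lemma~\ref{lem:S_2} is presumably being used \emph{en route} to that theorem, I would instead argue directly: $\cU_\add$ is the universal additive invariant, and I would show the functor $\cB\mapsto K(\Fun_\cE^\ex(\cB,\cA))=K_\cA(\cB)$ restricted to $\Cat^\perf(\cE)^\omega$ is an additive invariant corepresented in the appropriate sense, following the absolute argument of \cite[\S7]{BGT}. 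Concretely: $K$ of a stable category is the first $\iota_0 S_\bullet$ delooping, and by the additivity theorem (the split-exact case, which is \cite[Proposition 10.12]{B} cited in Lemma~\ref{lem:KAlocal}) $K_\cA$ sends split exact sequences to fiber sequences; moreover $K(\Fun_\cE^\ex(\cE,\cA))\simeq K(\cA)$ by Corollary~\ref{cor:bimod} (or rather by the enriched Yoneda embedding), and the natural map exhibits $K_\cA$ as the value of $\cU_\add$ by comparing the explicit $S_\bullet$-model of $\cU_\add$ built from $\psi$ with the $S_\bullet$-model of $K_\cA$ through Lemma~\ref{lem:S_1}.

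I expect the main obstacle to be the last comparison: showing the explicit map $\cU_\add(\cA)\to K_\cA$ is an equivalence rather than merely a natural transformation. In the absolute setting this is \cite[Theorem 7.13]{BGT}, whose proof uses the ``group completion'' identification of $\Omega|\iota_0 S_\bullet|$ together with the fact that $\Mot$ inverts exactly the split exact sequences; the subtlety is that $\cU_\add(\cA)$ is defined by a localization, not by an explicit model, so one must match the universal property of $\Mot(\cE)$ (Theorem~\ref{thm:add}) with the $S_\bullet$-construction. The clean way is: both $\cA\mapsto\cU_\add(\cA)$ and $\cA\mapsto K_\cA$, viewed as functors $\Cat^\perf(\cE)\to\Mot(\cE)$, preserve filtered colimits and send split exact sequences to (split) cofiber sequences — the former by construction, the latter by Lemma~\ref{lem:KAlocal} — hence both are additive invariants; by Theorem~\ref{thm:add} each is induced by a colimit-preserving functor $\Mot(\cE)\to\Mot(\cE)$, and the natural transformation between them is induced by a natural transformation of such. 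It therefore suffices to check the map is an equivalence after applying $\Mot(\cE)(\cU_\add(\cE),-)$, i.e.\ on the unit: there $\cU_\add(\cE)$ corepresents $K$-theory by the argument already sketched (reducing to $K_\cA(\cE)\simeq K(\cA)$ via Corollary~\ref{cor:bimod}), which is \cite[Theorem 7.13]{BGT} in the absolute case and extends to the enriched case essentially verbatim since all the categorical inputs ($S_\bullet$ preserving enriched functor categories, split exactness of $\Fun_\cE^\ex$, rigidity of $\cE$) have been established above.
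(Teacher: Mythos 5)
Your proposal is not the paper's argument, and the step it ultimately leans on does not work. You correctly reduce the statement to showing that $\Mot(\cE)(\cU_\add(\cB),\cU_\add(\cA))\simeq K(\Fun_\cE^\ex(\cB,\cA))$ for \emph{all} compact $\cB$ (via Yoneda and Lemma~\ref{lem:KAlocal}), and you correctly observe that assuming this outright would be circular, since it is Theorem~\ref{thm:representability}, which the paper deduces \emph{from} Lemma~\ref{lem:S_2}. But your escape route is invalid: even granting that $\cA\mapsto\cU_\add(\cA)$ and $\cA\mapsto K_\cA$ are both additive invariants inducing colimit-preserving endofunctors of $\Mot(\cE)$ with a natural transformation between them, checking that transformation ``on the unit $\cU_\add(\cE)$'' proves nothing: $\Mot(\cE)$ is compactly generated by \emph{all} the objects $\cU_\add(\cB)$, $\cB\in\Cat^\perf(\cE)^\omega$, not by the unit, so agreement of mapping spectra out of $\cU_\add(\cE)$ does not detect that $\cU_\add(\cA)\to K_\cA$ is an equivalence. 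Testing against every compact generator is exactly the corepresentability statement again, so the circularity is not broken. (Two smaller points: your citation of Lemma~\ref{lem:KAlocal} for additivity of $\cA\mapsto K_\cA$ is misplaced --- that lemma concerns split exact sequences in the presheaf variable $\cB$, not in $\cA$ --- and preservation of filtered colimits in $\cA$ by $\Fun_\cE^\ex(\cB,\ph)$ for compact $\cB$ would also need an argument.)

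The missing idea is the paper's direct computation, which never uses a corepresentability statement. One applies $\cU_\add$ to the levelwise split exact sequence $\cA_\bullet\to PS_\bullet\cA\to S_\bullet\cA$ to obtain $\Sigma^n\cU_\add(\cA)\simeq\lvert\cU_\add(S_\bullet^n\cA)\rvert$ in $\Mot(\cE)$; then Lemma~\ref{lem:S_1} identifies $\Omega^{\infty-n}K_\cA$ with $\lvert j(S_\bullet^n\cA)\rvert$, so that $K_\cA\simeq\colim_n\Sigma^{-n}\lvert\psi(S^n_\bullet\cA)/\psi(0)\rvert$ in presheaves of spectra; finally, since $K_\cA$ is already $S_\add$-local (Lemma~\ref{lem:KAlocal}), applying the exact localization to this formula gives $K_\cA\simeq\colim_n\Sigma^{-n}\Sigma^n\cU_\add(\cA)\simeq\cU_\add(\cA)$, the colimit being constant by comparison with the structure maps of the $K$-theory spectrum. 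Your sketch gestures at ``comparing the $S_\bullet$-models'' but never performs this delooping step, and without it the comparison map you build from $[\id_\cA]$ cannot be shown to be an equivalence by the means you propose.
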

\begin{proof} 
As in the proof of \cite[Proposition 7.17]{BGT}, there is a canonical levelwise split exact sequence in $\Fun(\Delta^\op, \Cat^\perf(\cE))$, 
\[
\cA_{\bullet} \to P S_\bullet \cA \to S_\bullet \cA,
\]
where $\cA_\bullet$ is the constant simplicial object with value $\cA$ and $P(\ph)$ denotes the simplicial path object. By applying $\cU_{\add}$ and taking realizations, we obtain a cofiber sequence in $\Mot(\cE)$, 
\[ 
\lvert\cU_\add(\cA_{\bullet})\rvert\simeq\cU_{\add}(\cA) \rightarrow \lvert\cU_{\add}(P S_\bullet \cA)\rvert \simeq \cU_\add(0) \rightarrow \lvert\cU_{\add}(S_\bullet \cA)\rvert.
\]
Thus, since $\cU_\add(0)\simeq 0$, $\Sigma \cU_{\add}(\cA) \simeq \lvert\cU_{\add}(S_\bullet \cA)\rvert$ in $\Mot(\cE)$.
Iterating, we find
\begin{equation}\label{eqn:SigmaSdot}
	\Sigma^n\cU_{\add}(\cA) \simeq \lvert\cU_{\add}(S_\bullet^n \cA)\rvert,
\end{equation}
for any $n\geq 1$.
Next, consider the following equivalences of pointed presheaves on $\Cat^\perf(\cE)^\omega$, where $n\geq 1$ and $j(S^n_\bullet \cA)$ is pointed by $j(0)$:
\begin{align*}
\Omega^{\infty-n} K_\cA & \simeq  
\lvert \iota_0S^n_\bullet(\Fun_\cE^\ex (\ph,\cA)) \rvert
\\ & \simeq   \lvert\iota_0\Fun_\cE^\ex(\ph, S^n_\bullet \cA)\rvert 
 \\ & =  \lvert j(S^n_\bullet \cA)\rvert.
\end{align*}
The first equivalence follows from the definition of $K$-theory, and the second one from Lemma \ref{lem:S_1}.
Writing $K_\cA$ as a colimit of desuspensions of its constituent spaces, we get
\[
K_{\cA} \simeq  \colim_n\Sigma^{-n}\Sigma^\infty\lvert j(S^n_\bullet \cA)\rvert\simeq \colim_n\Sigma^{-n}\lvert \psi(S^n_\bullet \cA)/\psi(0)\rvert.
\]
Hence, by Lemma \ref{lem:KAlocal} and~\eqref{eqn:SigmaSdot}, we have the following equivalences in $\Mot(\cE)$:
\begin{align*}
	K_{\cA} & \simeq\colim_n\Sigma^{-n}\lvert \cU_\add(S^n_\bullet \cA)/\cU_\add(0)\rvert\\
	&\simeq \colim_n\Sigma^{-n}\lvert\cU_\add(S^n_\bullet \cA)\rvert\\
	&\simeq \colim_n\Sigma^{-n}\Sigma^n\cU_\add(\cA)\\
	&\simeq \colim_n\cU_\add(\cA).
\end{align*}
Comparing the construction of~\eqref{eqn:SigmaSdot} with that of the structure maps of the $K$-theory spectrum, we see that this last colimit is constant and hence that $K_\cA\simeq \cU_\add(\cA)$, as desired.
 \end{proof}

\begin{theorem}[Corepresentability of connective $K$-theory]
\label{thm:representability}
Let $\cA,\cB\in\Cat^\perf(\scr E)$ and assume that $\cB$ is compact. Then there is a natural equivalence of spectra
\[ 
\Mot(\cE)( \cU_{\add}(\cB), \cU_{\add}(\cA)) \simeq K(\Fun_{\scr E}^\ex(\cB,\cA)).
\]
\end{theorem}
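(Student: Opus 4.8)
The plan is to deduce the theorem formally from Lemma~\ref{lem:S_2}, which identifies the motive $\cU_\add(\cA)$ with the presheaf $K_\cA\colon \cB'\mapsto K(\Fun_\cE^\ex(\cB',\cA))$, together with the observation that $\cU_\add(\cB)$ corepresents evaluation at $\cB$ whenever $\cB$ is compact. So the first step is to record the corepresentability statement: for every $F\in\Mot(\cE)$ and every compact $\cB\in\Cat^\perf(\cE)$ there is an equivalence of spectra $\Mot(\cE)(\cU_\add(\cB),F)\simeq F(\cB)$, natural in $F$ and in $\cB$.

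To prove this, I would first note that since $\cB$ is compact, the presheaf $\psi(\cB)$ coincides with $\Sigma^\infty_+ j(\cB)$, because $\psi$ is by definition the filtered-colimit-preserving extension of $\Sigma^\infty_+\circ j$ from $\Cat^\perf(\cE)^\omega$, on which it therefore restricts to $\Sigma^\infty_+\circ j$. As $\cU_\add$ is the composite of $\psi$ with the reflector onto the accessible localization $\Mot(\cE)\subset\Pre_{\Sp}(\Cat^\perf(\cE)^\omega)$ (Proposition~\ref{prop:motstab}), for $F$ already in $\Mot(\cE)$ we get
\[
\Mot(\cE)(\cU_\add(\cB),F)\simeq \Pre_{\Sp}(\Cat^\perf(\cE)^\omega)(\Sigma^\infty_+ j(\cB),F)\simeq F(\cB),
\]
where the last equivalence is the stable Yoneda lemma: $\Sigma^\infty_+$ is objectwise left adjoint to $\Omega^\infty$, so the mapping space is $\Omega^\infty F(\cB)$ by the ordinary Yoneda lemma, and since $\Mot(\cE)$ and $\Pre_{\Sp}(\Cat^\perf(\cE)^\omega)$ are stable this identifies the corresponding mapping spectra with $F(\cB)$. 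Then I would conclude by applying this with $F=\cU_\add(\cA)$ and invoking Lemma~\ref{lem:S_2}:
\[
\Mot(\cE)(\cU_\add(\cB),\cU_\add(\cA))\simeq \cU_\add(\cA)(\cB)\simeq K_\cA(\cB)=K(\Fun_\cE^\ex(\cB,\cA)),
\]
naturally in $\cA\in\Cat^\perf(\cE)$ and in compact $\cB$.

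All the genuine content has been pushed into Lemmas~\ref{lem:S_1}–\ref{lem:S_2}, which rest on the additivity of $K$-theory via the $S_\bullet$-construction and on \cite[Proposition 10.12]{B}; given those, the argument above is purely formal, and I do not expect a serious obstacle. The one point that deserves care is keeping track of spectra rather than spaces throughout — in particular that the canonical infinite delooping of $\Map_{\Mot(\cE)}(\cU_\add(\cB),\cU_\add(\cA))$ agrees with the $K$-theory spectrum $K(\Fun_\cE^\ex(\cB,\cA))$ — which follows from stability of the ambient categories and exactness of the evaluation functor $F\mapsto F(\cB)$ on $\Pre_{\Sp}(\Cat^\perf(\cE)^\omega)$, together with the fact that the latter functor and $F\mapsto\Omega^\infty F(\cB)$ have the same underlying space-valued functor.
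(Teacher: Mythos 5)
Your proposal is correct and follows essentially the same route as the paper: the adjunction for the localization $\Mot(\cE)\subset\Pre_{\Sp}(\Cat^\perf(\cE)^\omega)$, the representability of $\psi(\cB)$ for compact $\cB$, the spectral Yoneda lemma, and Lemma~\ref{lem:S_2} identifying $\cU_\add(\cA)$ with $K_\cA$. The only difference is the (immaterial) order in which Yoneda and Lemma~\ref{lem:S_2} are invoked.
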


\begin{proof}
Since $\cB$ is compact, the presheaf $\psi(\cB)$ is representable.
We then have a sequence of natural equivalences of spectra:
\begin{align*}
	\Mot(\cE)(\cU_{\add}(\cB), \cU_{\add}(\cA)) &\simeq \Pre_{\Sp}(\Cat^\perf(\cE)^\omega)(\psi(\cB), \cU_\add(\cA))\\
	&\simeq \Pre_{\Sp}(\Cat^\perf(\cE)^\omega)(\psi(\cB), K_\cA)\\
	&\simeq K_\cA(\cB)=K(\Fun_\cE^\ex(\cB,\cA)).
\end{align*}
The first holds by adjunction, the second by Lemma~\ref{lem:S_2}, and the third by the spectral Yoneda lemma.
\end{proof}

\begin{theorem}[Corepresentability of nonconnective $K$-theory]
	\label{thm:nc-representability}
Let $\cA,\cB\in\Cat^\perf(\cE)$ and assume that $\cB$ is saturated. Then there is a natural equivalence of spectra
\[ \mathbb M\mathrm{ot}(\cE)(\cU_{\loc}(\cB), \cU_{\loc}( \cA) ) \simeq \mathbb{K}(\cB^\op\otimes_\cE \cA).\]
\end{theorem}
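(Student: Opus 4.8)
The plan is to deduce the theorem from its absolute case, \cite[Theorem~9.36]{BGT}, by combining the symmetric monoidality of $\cU_\loc$ (Theorem~\ref{thm:monoidalloc}) with the functoriality of $\MOT(\ph)$ in $\cE$ constructed at the end of \S\ref{sub:loc}. The first step removes $\cB$ from the picture: by Proposition~\ref{prop:sat}, $\cB$ is dualizable in $\Cat^\perf(\cE)$ with dual $\cB^\op$, so $\cU_\loc(\cB)$ is dualizable in $\MOT(\cE)$ with dual $\cU_\loc(\cB^\op)$ and $\cU_\loc(\cB^\op)\otimes\cU_\loc(\cA)\simeq\cU_\loc(\cB^\op\otimes_\cE\cA)$. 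Since the unit of $\MOT(\cE)$ is $\cU_\loc(\cE)$, setting $\cC=\cB^\op\otimes_\cE\cA$ gives
\[
\MOT(\cE)(\cU_\loc(\cB),\cU_\loc(\cA))\simeq\MOT(\cE)(\cU_\loc(\cE),\cU_\loc(\cC)),
\]
so it suffices to prove, naturally in $\cC\in\Cat^\perf(\cE)$, that $\MOT(\cE)(\cU_\loc(\cE),\cU_\loc(\cC))\simeq\mathbb K(\cC)$, where $\mathbb K(\cC)$ denotes the nonconnective $K$-theory of the underlying stable $\infty$-category of $\cC$.

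For the second step I would use base change along the unit map $f\colon\Sp^\omega\to\cE$. The induced adjunction $f^*\dashv f_*$ on motives satisfies $\cU_\loc(\cE)\simeq f^*\cU_\loc(\Sp^\omega)$, whence
\[
\MOT(\cE)(\cU_\loc(\cE),\cU_\loc(\cC))\simeq\MOT(\Sp^\omega)(\cU_\loc(\Sp^\omega),f_*\cU_\loc(\cC)).
\]
Granting the compatibility $f_*\cU_\loc(\cC)\simeq\cU_\loc(f_*\cC)$, where on the right $f_*\colon\Cat^\perf(\cE)\to\Cat^\perf$ is the forgetful functor, the absolute \cite[Theorem~9.36]{BGT} identifies the right-hand side with $\mathbb K(f_*\cC)=\mathbb K(\cC)$, which finishes the proof.

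It remains to establish $f_*\cU_\loc\simeq\cU_\loc f_*$, and this is the heart of the matter. I would first check the additive analogue $f_*\cU_\add\simeq\cU_\add f_*$ on $\Mot$: the counit $f^*f_*\to\id$ on $\Cat^\perf(\cE)$ together with the equivalence $\cU_\add f^*\simeq f^*\cU_\add$ produces, by adjunction on $\Mot$, a natural transformation $\cU_\add f_*\to f_*\cU_\add$; since $f^*$ preserves compact objects, $f_*$ on $\Mot$ is colimit-preserving, so it is enough to see that this transformation becomes an equivalence after mapping in the compact generators $\cU_\add(\cD)$, $\cD\in(\Cat^\perf)^\omega$. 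There the target reads $\Mot(\cE)(\cU_\add(\cD\otimes_{\Sp^\omega}\cE),\cU_\add(\cC))\simeq K(\Fun_\cE^\ex(\cD\otimes_{\Sp^\omega}\cE,\cC))$ by the enriched connective corepresentability (Theorem~\ref{thm:representability}), the source reads $K(\Fun^\ex(\cD,f_*\cC))$ by its absolute counterpart, and the two are identified through the closed module adjunction $\Fun_\cE^\ex(\cD\otimes_{\Sp^\omega}\cE,\cC)\simeq\Fun^\ex(\cD,f_*\cC)$. The localizing statement then follows formally: $f_*\colon\Mot(\cE)\to\Mot(\Sp^\omega)$ is colimit-preserving and, using the additive case together with the fact that $f_*$ carries exact sequences to exact sequences (Definition~\ref{dfn:exact2}), it sends the generating $S_\loc$-local equivalences of $\Mot(\cE)$ to those of $\Mot(\Sp^\omega)$; hence it commutes with the reflective localizations $\Mot(\ph)\to\MOT(\ph)$, and since $\cU_\loc$ is this localization applied to $\cU_\add$ we obtain $f_*\cU_\loc\simeq\cU_\loc f_*$. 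The genuine obstacle is thus this compatibility, and more precisely the fact that it rests on the enriched connective corepresentability Theorem~\ref{thm:representability} — which, as noted above, cannot be reduced to \cite{BGT} by purely formal arguments; everything else amounts to bookkeeping how the two free–forgetful adjunctions interact with the universal invariants.
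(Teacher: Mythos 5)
Your proposal follows the paper's proof essentially verbatim: you reduce to maps out of the unit using that $\cU_{\loc}(\cB)$ is dualizable with dual $\cU_{\loc}(\cB^\op)$ (Proposition~\ref{prop:sat} and monoidality of $\cU_{\loc}$), then base-change along $f\colon \Sp^\omega\to\cE$ and invoke the absolute corepresentability theorem of \cite{BGT}, exactly as in the paper. The only difference is that you make explicit the compatibility $f_*\cU_{\loc}\simeq\cU_{\loc}f_*$ (via the additive analogue checked on compact generators through Theorem~\ref{thm:representability} and commutation with the localizations $\Mot\to\MOT$), a step the paper uses without comment in its chain of equivalences; your justification of it is sound.
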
 

\begin{proof}
By Proposition~\ref{prop:sat}, $\scr B$ is dualizable in $\Cat^\perf(\scr E)$ with dual $\cB^\op$.
Let $f\colon\scr S_\infty^\omega\to\scr E$ be the unique exact symmetric monoidal functor.
Since $\cU_\loc$ is symmetric monoidal, we have
\begin{align*}
	\mathbb M\mathrm{ot}(\cE) (\cU_{\loc} (\cB)  , \cU_{\loc}(\cA)) & \simeq \MOT(\cE)(\cU_\loc(\cE), \cU_\loc(\cB^\op)\tens\cU_\loc(\cA))\\
	&\simeq \MOT(\cE)(\cU_\loc(\cE), \cU_\loc(\cB^\op\tens_{\scr E}\cA))\\
	&\simeq \MOT(\cE)(f^*\cU_\loc(\Sp^\omega), \cU_\loc(\cB^\op\tens_{\scr E}\cA))\\
	&\simeq \MOT(\Sp^\omega)(\cU_\loc(\Sp^\omega), f_*\cU_\loc(\cB^\op\tens_{\scr E}\cA))\\
	&\simeq \MOT(\Sp^\omega)(\cU_\loc(\Sp^\omega), \cU_\loc(\cB^\op\tens_{\scr E}\cA))\\
	&\simeq \K(\cB^\op\otimes_\cE \cA),
\end{align*}
where the last equivalence is \cite[Theorem 9.8]{BGT}.
\end{proof}

\section{The categorified Chern character}
\label{sec:chern}

\subsection{Tannakian geometry}
\label{sub:Tannakian}

Our construction of the categorified Chern character naturally takes place in a slightly more general context than that of spectral geometry, which we call \emph{Tannakian geometry}. It is an $\infty$-categorical version of Balmer's tensor triangular geometry \cite{Balmer}. This generality is merely a convenient way to streamline some of the proofs, and the reader should feel free to replace all occurrences of ``Tannakian'' by ``spectral'' or ``derived'', or to ignore the word altogether.

 We define the $\infty$-category $\Aff^\Tan$ of \emph{Tannakian affine schemes} to be the opposite of the $\infty$-category $\CAlg^\rig(\Cat^\perf)$ of symmetric monoidal $\infty$-categories that are small, stable, idempotent complete, and rigid. Given such a symmetric monoidal $\infty$-category $\scr E$, we denote by $\Spec\scr E\in \Aff^\Tan$ the corresponding Tannakian affine scheme, and we denote by $\Aff^\Tan_{\scr E}$ the overcategory $(\Aff^\Tan)_{/\Spec\scr E}$.
This homotopical algebro-geometric context relates to the usual ones via forgetful functors
\[
\Aff^\mathrm{der} \to \Aff^\mathrm{sp} \into \Aff^\mathrm{nc} \into \Aff^\Tan,
\]
where $(\Aff^\mathrm{der})^\op$, $(\Aff^\mathrm{sp})^\op$, and $(\Aff^\mathrm{nc})^\op$ are the $\infty$-categories of simplicial commutative rings, connective $E_\infty$-ring spectra, and arbitrary $E_\infty$-ring spectra, respectively. The last functor sends an $E_\infty$-ring spectrum $R$ to the symmetric monoidal $\infty$-category $\Perf(R)$ of perfect $R$-modules, and it is fully faithful by \cite[Proposition 3.2.9]{DAG8}.

Given $X=\Spec\scr E\in\Aff^\Tan$, we write $\scr O(X)$ for the $E_\infty$-ring spectrum of endomorphisms of the unit in $\scr E$, $\Perf(X)$ for the symmetric monoidal $\infty$-category $\scr E$ itself, and $\QCoh(X)$ for $\Ind(\Perf(X))$. We also write $\Cat^{\sat,\perf,\Mor}(X)$ for $\Cat^{\sat,\perf,\Mor}(\scr E)$ and similarly for their $(\infty,2)$-categorical enhancements.

A \emph{Tannakian prestack} is a presheaf of $\infty$-groupoids on $\Aff^\Tan$ which is a small colimit of representables. We denote by $\PrSt^\Tan$ the $\infty$-category of Tannakian prestacks and by $\PrSt^\Tan_{\scr E}$ the overcategory $(\PrSt^\Tan)_{/\Spec\scr E}$. The above functors on $\Aff^\Tan$ extend uniquely to limit-preserving functors
 \begin{gather*}
	\scr O\colon \PrSt^{\Tan,\op} \to \CAlg(\Sp),\\
	\Perf\subset\QCoh\colon \PrSt^{\Tan,\op} \to \Cat_{(\infty,1)}^\tens,\\
	\CAT^\mathrm{sat}\subset \CAT^{\perf} \subset \CAT^\Mor\colon \PrSt^{\Tan,\op} \to \Cat_{(\infty,2)}^\tens.
 \end{gather*}
For any $X\in\PrSt^\Tan$, $\Perf(X)$ is the full subcategory of dualizable objects in $\QCoh(X)$, since an object in a limit of symmetric monoidal $\infty$-categories is dualizable if and only if each of its components is dualizable. Similarly, $\CAT^\sat(X)$ is the full subcategory of dualizable objects in $\CAT^\perf(X)$, which is in turn the wide subcategory of right dualizable morphisms in $\CAT^\Mor(X)$, which is itself rigid. In particular, as in Proposition~\ref{prop:2cat} (4), we have $\CAT^\sat(X)\simeq\CAT^\Mor(X)^\mathrm{fd}$.
Note also that $\Perf(X)$ and $\CAT^\sat(X)$ are small, being small limits of small $(\infty,2)$-categories.
Furthermore, we have the following relations between these functors, where $\Omega_\Sp$ denotes the \emph{spectrum} of endomorphisms of the unit in a stable symmetric monoidal $\infty$-category:
 \begin{gather*}
 	\Omega_\Sp\circ\Perf=\Omega_\Sp\circ\QCoh\simeq\scr O,\\
	\Omega\circ\CAT^\sat=\Omega\circ\CAT^\perf\simeq\Perf,\quad \Omega\circ\CAT^\Mor\simeq \QCoh.
 \end{gather*}
Indeed, since $\Omega$ and $\Omega_\Sp$ preserve limits, all of these functors are right Kan extensions of their restrictions to Tannakian affine schemes, where the given equivalences are clear.

The inclusion $\Aff^\mathrm{nc}\into \Aff^\Tan$ induces, by left Kan extension, a fully faithful embedding of (nonconnective) spectral prestacks over an $E_\infty$-ring spectrum $R$ into Tannakian prestacks over $\Perf(R)$. If $X$ is a spectral prestack viewed as a Tannakian prestack in this way, then $\scr O(X)$, $\Perf(X)$, and $\QCoh(X)$ have their usual meanings, and $\CAT^\Mor(X)$ is the full subcategory of the $(\infty,2)$-category of quasi-coherent sheaves of $\infty$-categories on $X$, in the sense of \cite{Ga1}, consisting of those sheaves whose $\infty$-categories of sections over any spectral affine scheme are compactly generated.

\begin{definition}
	Let $X$ be a Tannakian prestack. A sequence $\cA\to\cB\to\cC$ in $\Cat^\perf(X)$ is called \emph{exact} (resp.\ \emph{split exact}) if it is a localization sequence in $\CAT^\Mor(X)$ (resp.\ in $\CAT^\perf(X)$) in the sense of Definition~\ref{dfn:exact}.
\end{definition}

When $X$ is affine, this definition agrees with Definition~\ref{dfn:exact2}, by Proposition~\ref{prop:exact}. In general, a sequence in $\Cat^\perf(X)$ is exact (resp.\ split exact) if and only if its pullback to any affine is exact (resp.\ split exact).

\begin{definition}
	Let $X$ be a Tannakian prestack. The nonconnective $K$-theory of $X$, denoted by $\K(X)$, is the nonconnective $K$-theory of the symmetric monoidal stable $\infty$-category $\Perf(X)$.
\end{definition}

Thus, $\K(X)$ is an $E_\infty$-ring spectrum. Note that the affinization map $X\to \Spec\Perf(X)$ induces an equivalence on $\Perf$ and hence on $\K$.

\subsection{Chern characters}
\label{sub:chern}

From now on we fix a Tannakian affine base scheme $\Spec\scr E\in\Aff^\Tan$.

\begin{definition}
	Let $X$ be a Tannakian prestack over $\scr E$.
	The \emph{free loop space} $\scr LX$ of $X$ over $\scr E$ is the Tannakian prestack over $\scr E$ defined by
	\[
	\scr LX = X^{S^1}\simeq\lim_{S^1}X\simeq X\times_{X\times_{\scr E}X}X.
	\]
\end{definition}

Note that $\scr LX$ is a relative construction and depends on the base $\scr E$, although we choose not to indicate it in the notation. If $X=\Spec\scr C$ is affine, then $\scr LX=\Spec(S^1\tens_{\scr E}\scr C)$, where $\tens_{\scr E}$ denotes the canonical action of $\scr S$ on $\Aff^{\Tan,\op}_{\scr E}$:
\[
S^1\tens_{\scr E}\scr C = \colim_{S^1}\scr C\simeq \scr C\tens_{\scr C\tens_{\scr E}\scr C}\scr C.
\]
When $\scr E=\Perf(R)$ for some $E_\infty$-ring spectrum $R$, the free loop space construction does not commute with the inclusion of spectral prestacks over $R$ into Tannakian prestacks over $\scr E$. However, there is always a canonical map from the free loop space of a spectral prestack $X$ to its free loop space as a Tannakian prestack, and it is an equivalence when $X$ is affine.

Let $X$ be a Tannakian prestack over $\scr E$.
We briefly recall the construction of the Chern pre-character $\ch^\pre\colon \iota_0\Perf(X)\to \scr O(\scr LX)^{hS^1}$ in this context, following Toën and Vezzosi \cite[\S4.2]{TV2}. Given a perfect complex over $X$, its pullback to the free loop space $\scr LX$ is equipped with a canonical monodromy automorphism $m$, whose trace $\Tr(m)\in \Omega^\infty\scr O(\scr LX)$ has a canonical $S^1$-invariant refinement in $\Omega^\infty\scr O(\scr LX)^{hS^1}$. This construction defines a map of $E_\infty$-semirings
\begin{equation}\label{eqn:TV1}
\ch^{\pre}\colon \iota_0\Perf(X) \to \Omega^\infty\scr O(\scr LX)^{hS^1},
\end{equation}
natural in $X$, called the \emph{Chern pre-character} of the Tannakian prestack $X$.

\begin{theorem}\label{thm:additivity}
	Let $X$ be a Tannakian prestack over $\scr E$. The Toën–Vezzosi Chern pre-character $\ch^\pre$ descends to $K$-theory and deloops to a morphism of $E_\infty$-ring spectra
	\[
	\ch\colon\K(X) \to \scr O(\scr LX)^{hS^1},
	\]
	natural in $X$.
\end{theorem}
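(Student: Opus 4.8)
The plan is to recognize $\ch^\pre$ as the object-level restriction of a natural transformation between $\scr E$-linear localizing invariants, and then to deloop it using the universal property of nonconnective $K$-theory proved in Section~\ref{E-motives}. Throughout, write $\cA=\Perf(X)\in\Cat^\perf(\scr E)$, so that $\K(X)=\K(\cA)$.

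First I would identify the target algebraically. Using the free-loop-space formula $\scr LX\simeq X\times_{X\times_\scr E X}X$ together with the fact that $\QCoh$ carries this colimit of affines to a limit, and Proposition~\ref{prop:hochschild}, I would identify $\scr O(\scr LX)$ with the spectrum of endomorphisms of the unit in $\Omega\CAT^\Mor(\scr E)\simeq\Ind(\scr E)$ attached to the trace $\Tr_\otimes$ of $\Ind(\cA)$ with its diagonal bimodule; call this $\HH_\scr E(\cA)$, the $\scr E$-relative Hochschild homology. Hence $\scr O(\scr LX)^{hS^1}\simeq \HH_\scr E(\cA)^{hS^1}$, the relative negative cyclic homology. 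Under this identification the monodromy automorphism on the pullback of a perfect complex $E$ to $\scr LX$ is precisely the datum computing the trace in $\CAT^\Mor(\scr E)$, so that $\ch^\pre$ (before its $S^1$-refinement) is the object-level restriction of the trace map $\iota_0\cA\to\Omega^\infty\HH_\scr E(\cA)$ induced by $\Tr_\otimes\colon\End(\CAT^\Mor(\scr E))\to\Ind(\scr E)$.

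Next I would promote this to a map of localizing invariants. By Corollary~\ref{cor:filteredcolimits}, Theorem~\ref{thm:localization} applied to the linearly symmetric monoidal $(\infty,2)$-category $\CAT^\Mor(\scr E)$, and Proposition~\ref{prop:exact}, the functor $\HH_\scr E\colon\Cat^\perf(\scr E)\to\Ind(\scr E)$ is a localizing invariant; by Definition~\ref{def:monoidaltrace} it is symmetric monoidal, and by Theorem~\ref{thm:S1trace}, applied to the $S^1$-fixed object $(\cA,\id_\cA)\in\Aut(\CAT^\Mor(\scr E))$, it lifts canonically to a symmetric monoidal localizing invariant valued in $\Fun(BS^1,\Ind(\scr E))$. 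Theorems~\ref{thm:localizing} and~\ref{thm:monoidalloc} then factor it through a colimit-preserving symmetric monoidal functor $\widetilde{\HH_\scr E}\colon\MOT(\scr E)\to\Fun(BS^1,\Ind(\scr E))$, while Theorem~\ref{thrm:corep} identifies $\K$ on $\Cat^\perf(\scr E)$ with $\MOT(\scr E)(\cU_\loc(\scr E),\ph)$, carrying the trivial $S^1$-action. The Yoneda lemma in the $S^1$-equivariant setting then converts the unit $1\in\pi_0\scr O(\Spec\scr E)=\pi_0\Omega_\Sp\HH_\scr E(\scr E)$ — which is $S^1$-fixed, since the action on the Hochschild homology of the unit is trivial — into an $S^1$-equivariant natural transformation $\K(\ph)\to\Omega_\Sp\HH_\scr E(\ph)$, hence, adjoining over $BS^1$, into a natural transformation $\K(\ph)\to\HH_\scr E(\ph)^{hS^1}$ of $E_\infty$-ring spectra. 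Specializing to $\cA=\Perf(X)$ produces $\ch\colon\K(X)\to\scr O(\scr LX)^{hS^1}$, natural in $X$ by naturality of all the constructions involved.

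It then remains to match $\ch|_{\iota_0\Perf(X)}$ with $\ch^\pre$. Tracing an object $E$ through the corepresentability equivalence of Theorem~\ref{thm:representability}, it corresponds to the $\scr E$-linear functor $\scr E\to\Perf(X)$, $e\mapsto e\otimes E$; applying $\widetilde{\HH_\scr E}$ and evaluating on the canonical Hochschild class computes $\Tr_\otimes$ of the identity of $E$ in $\CAT^\Mor(\scr E)$, which under Step~1 is exactly $\Tr(m_E)$, with the $S^1$-refinements agreeing by Theorem~\ref{thm:S1trace}. I expect this last compatibility — reconciling Toën–Vezzosi's \emph{geometric} monodromy-trace description of $\ch^\pre$ on $\scr LX$ with the \emph{algebraic} trace map on $\HH_\scr E$, through the Ben-Zvi–Francis–Nadler identification $\QCoh(\scr LX)\simeq\HH_\scr E(\QCoh(X))$ — to be the main obstacle; the delooping itself is formal once one notes that $\HH_\scr E(\ph)^{hS^1}$ fails to preserve filtered colimits and so must be accessed via the genuinely $S^1$-equivariant localizing invariant $\HH_\scr E$ rather than as a localizing invariant in its own right.
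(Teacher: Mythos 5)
Your overall skeleton (show the relevant invariant is a symmetric monoidal localizing invariant, factor it through localizing motives, and use corepresentability of nonconnective $K$-theory to extract a map of $E_\infty$-rings) is the same as the paper's, but the place where you anchor it creates a genuine gap. You work over the base $\scr E$ and assert at the outset that $\scr O(\scr LX)\simeq\HH_{\scr E}(\Perf(X))$ for an arbitrary Tannakian prestack $X$. This is not justified and is false in general: $\scr LX=X\times_{X\times_{\scr E}X}X$ is a \emph{limit} of prestacks, and the fact that $\QCoh$ turns colimits of affines into limits gives no Ben-Zvi--Francis--Nadler-type formula for $\QCoh(\scr LX)$ or $\scr O(\scr LX)$ without affineness (or $1$-affineness/perfectness) hypotheses -- the paper is explicit that such identifications can fail beyond the affine case. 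The paper's proof avoids this entirely: it first reduces to $X=\Spec\scr C$ via the affinization map $X\to\Spec\Perf(X)$ (which is an equivalence on $\K$, and naturality of $\ch^{\pre}$ handles the passage back), and then applies $\Omega_\Sp$ to the already-constructed functor $\ch\colon\MOT(\scr C)\to\Ind(S^1\tens_{\scr E}\scr C)^{hS^1}$ of Corollary~\ref{cor:affine}, whose target literally \emph{is} $\QCoh^{S^1}(\scr LX)$, together with Theorem~\ref{thm:nc-representability}. Because the motives are taken over $\scr C=\Perf(X)$ rather than over $\scr E$, no comparison between loop-space functions and relative Hochschild homology is ever needed. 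If you want to salvage your route you must at minimum insert the affinization reduction, and even then supply the affine comparison $\scr O(\scr L\Spec\scr C)\simeq\Ind(\scr E)(\mathbf 1,\HH(\scr C/\scr E))$, $S^1$-equivariantly.

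The second gap is the one you flag yourself: matching the map you produce with $\ch^{\pre}$. In your setup this requires reconciling Toën--Vezzosi's geometric description (trace of the monodromy automorphism of the pullback to $\scr LX$, taken over $\scr LX$) with the algebraic Dennis-trace-style map into $\HH_{\scr E}(\Perf(X))^{hS^1}$ built from the trace in $\CAT^\Mor(\scr E)$; this is genuinely additional content, not a routine check, and without it you have constructed \emph{a} morphism $\K(X)\to\HH_{\scr E}(\Perf(X))^{hS^1}$ rather than a refinement of $\ch^{\pre}$ into $\scr O(\scr LX)^{hS^1}$. In the paper this issue does not arise because the categorified Chern character of Theorem~\ref{thm:main} is constructed by the very same $S^1$-invariant trace over $\scr LX$ that defines $\ch^{\pre}$, so compatibility is essentially by construction; your base-$\scr E$ construction instead recovers the Dennis-trace picture of Remark~\ref{rmk:Dennistrace}, which is a consequence of the paper's theorem, not a substitute for its proof.
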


We will deduce this theorem from its categorified version, Theorem~\ref{thm:main}, below.

\begin{remark}
	Theorem~\ref{thm:additivity} is already interesting when $X=\Spec\scr E$: in that case, it states that the Euler characteristic $\chi\colon \iota_0\scr E\to\Omega\scr E$ induces a morphism of $E_\infty$-ring spectra from the nonconnective $K$-theory of $\scr E$ to the spectrum $\Omega_\Sp\scr E$ of endomorphisms of $\mathbf{1}\in\scr E$, and even to the mapping spectrum $\Hom(\Sigma^\infty_+ BS^1,\Omega_\Sp\scr E)$. This is a refinement of May's additivity theorem for Euler characteristics \cite[Theorem 0.1]{May}.
\end{remark}

\begin{remark}
	We emphasize that Theorem~\ref{thm:additivity} is not a formal consequence of the universal property of $K$-theory \cite{BGT,B}. Indeed, the Chern character is only defined on \emph{rigid symmetric monoidal} stable $\infty$-categories and not on arbitrary stable $\infty$-categories, as would be required to invoke the universal property of $K$-theory. We will see in Remark~\ref{rmk:Dennistrace} that the Chern character of Theorem~\ref{thm:additivity} is nevertheless an instance of the Dennis trace map, but this is a corollary of our main result.
\end{remark}

The above construction of $\ch^{\pre}$ is clearly very general: in \cite{TV2}, a Chern pre-character is constructed for any sheaf of rigid symmetric monoidal $\infty$-categories on an $\infty$-topos, the ``classical'' case recalled above corresponding to the tautological presheaf $\Perf$ on $\Aff^\Tan$. Toën and Vezzosi also consider in \cite[\S4.3]{TV2} a categorified version of the classical case, where $\Perf$ is replaced by $\Cat^\Mor$. Their construction then yields a morphism of $E_\infty$-semirings
\begin{equation}\label{eqn:TV2}
\ch^\pre\colon\iota_0\Cat^\perf(X) \to \iota_0\QCoh^{S^1}(\scr LX),
\end{equation}
natural in $X$, where $\QCoh^{S^1}(\scr LX)=\QCoh(\scr LX)^{hS^1}$ is the $\infty$-category of $S^1$-equivariant quasi-coherent sheaves on the free loop space of $X$.
Using our higher-categorical generalization of the $S^1$-invariant trace from \S\ref{sub:S1trace}, we can easily upgrade this morphism of $E_\infty$-semirings to a symmetric monoidal $(\infty,1)$-functor:

\begin{theorem}\label{thm:main}
	Let $X$ be a Tannakian prestack over $\scr E$. The Toën–Vezzosi categorified Chern pre-character lifts to a symmetric monoidal $(\infty,1)$-functor
	\[
	\ch^\pre\colon \Cat^\perf(X) \to \QCoh^{S^1}(\scr LX),
	\]
	natural in $X$, which preserves zero objects and sends exact sequences to cofiber sequences.
\end{theorem}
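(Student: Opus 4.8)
The plan is to reduce everything to the affine case and then combine the higher-categorical $S^1$-invariant trace of \S\ref{sub:S1trace} with the localization theorem of \S\ref{sec:localization}. First I would observe that, by the very definitions in \S\ref{sub:Tannakian}, a Tannakian prestack $X$ is a colimit of affines $X \simeq \colim_i \Spec\scr E_i$, and the functors $\CAT^\Mor$, $\QCoh$, and $\Omega$ all send such colimits to limits. Since $\scr L X = \colim_i \scr L(\Spec\scr E_i)$ as well (the free loop space is $\lim_{S^1}$, which commutes with the colimit presentation only in the affine-indexing sense we need — more precisely $\QCoh^{S^1}(\scr LX) = \lim_i \QCoh^{S^1}(\scr L\Spec\scr E_i)$), it suffices to construct $\ch^\pre$ naturally for $X = \Spec\scr E$ affine and then right Kan extend. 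So the real content is the affine statement.

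For $X = \Spec\scr E$ affine, I would apply the symmetric monoidal $S^1$-invariant trace of Theorem~\ref{thm:S1trace} to the linearly symmetric monoidal $(\infty,2)$-category $\scr C = \CAT^\Mor(\scr E)$, using $n=2$. By Proposition~\ref{prop:2cat}, every object of $\CAT^\Mor(\scr E)$ is dualizable, so $\Aut(\CAT^\Mor(\scr E))$ is nonempty and contains all pairs $(\Ind(\cA), f)$ with $f$ an autoequivalence. Restricting the inclusion $\Cat^\perf(\scr E) \hookrightarrow \Cat^\Mor(\scr E)$ to the wide subcategory of right dualizable $1$-morphisms (Proposition~\ref{prop:2cat}(2)), there is a natural functor $\Cat^\perf(X) \to \End(\CAT^\Mor(\scr E))$ sending $\cA \mapsto (\Ind(\cA), \Ind(\cA))$ — the identity bimodule — which in fact lands in the $S^1$-fixed points of $\Aut(\CAT^\Mor(\scr E))$ since $(\Ind(\cA),\id)$ is an $S^1$-fixed point (the ``tautological'' $S^1$-action fixes identities). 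The key point is that this incorporates exactly the monodromy automorphism: pulling back to $\scr LX$ is implemented by the functor $\Cat^\perf(X) \to \Cat^\perf(\scr LX)$, and the canonical bimodule becomes the monodromy. Composing with $\Tr_\otimes \colon \End(\CAT^\Mor(\scr LX)) \to \Omega\CAT^\Mor(\scr LX) \simeq \QCoh(\scr LX)$ from Definition~\ref{def:monoidaltrace}, whose $S^1$-invariant refinement is Theorem~\ref{thm:S1trace}, produces the desired symmetric monoidal $(\infty,1)$-functor
\[
\ch^\pre \colon \Cat^\perf(X) \to \QCoh(\scr LX)^{hS^1} = \QCoh^{S^1}(\scr LX).
\]
Symmetric monoidality and preservation of zero objects are immediate from Theorems~\ref{thm:S1trace} and the fact that $\Tr(0)$ is a zero object. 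That this lifts the Toën--Vezzosi morphism~\eqref{eqn:TV2} on $\pi_0$ follows because both are computed by the trace of the monodromy bimodule, and the $S^1$-invariance of ours matches theirs by \cite[Théorème 2.18]{TV2}, already invoked in the proof of Theorem~\ref{thm:S1trace}.

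It remains to verify that $\ch^\pre$ sends exact sequences to cofiber sequences. By the reduction above and the definition of exact sequences in $\Cat^\perf(X)$ (pullback to every affine is a localization sequence in $\CAT^\Mor$), I may assume $X$ affine. Then an exact sequence $\cA \to \cB \to \cC$ in $\Cat^\perf(\scr E)$ is, by Proposition~\ref{prop:exact}, a localization sequence in $\CAT^\Mor(\scr E)$; pulling back to $\scr LX$ and equipping each term with its identity (monodromy) bimodule gives a localization sequence in $\End(\CAT^\Mor(\scr LX))$ in the sense of Definition~\ref{dfn:exact} — the right adjointability conditions hold automatically since the relevant $2$-morphisms are identities (morphisms in $\Aut$ are always right adjointable, as noted before Definition~\ref{dfn:exact}). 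Theorem~\ref{thm:localization} then gives that $\Tr(f) \to \Tr(g) \to \Tr(h)$ is a cofiber sequence in $\Omega\CAT^\Mor(\scr LX) = \QCoh(\scr LX)$, and since taking $S^1$-homotopy fixed points is exact on the stable $\infty$-category $\QCoh(\scr LX)$, the same holds in $\QCoh^{S^1}(\scr LX)$.

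The main obstacle I anticipate is not any single step but the bookkeeping of the monodromy: one must check carefully that the composite $\Cat^\perf(X) \to \End(\CAT^\Mor(\scr LX)) \xrightarrow{\Tr_\otimes} \QCoh(\scr LX)$ genuinely recovers ``pullback to $\scr LX$ followed by trace of the canonical monodromy'' rather than some a priori different trace, i.e.\ that the tautological $S^1$-action built into $\Aut(\scr C)$ is identified with the geometric $S^1$-action on $\scr LX = X^{S^1}$. This is exactly the identification Toën and Vezzosi carry out in \cite[\S4.2--4.3]{TV2} in the $(\infty,1)$-categorical setting, and the argument there — that the monodromy on the pullback of a perfect complex is the image of the walking automorphism under the structure map $B\Z \to \scr L(-)$ — goes through verbatim at the $(\infty,2)$-categorical level because it only uses the functoriality of $\Fr^\rig$ and the naturality statements in Theorems~\ref{thm:intro-trace} and~\ref{thm:S1trace}. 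The localization and $S^1$-invariance are then, as indicated, direct applications of Theorems~\ref{thm:localization} and~\ref{thm:S1trace}, with no further difficulty.
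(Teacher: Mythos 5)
Two steps in your argument do not work as written. First, the reduction to the affine case rests on the claim that $\QCoh^{S^1}(\scr LX)\simeq\lim_i\QCoh^{S^1}(\scr L\Spec\scr E_i)$ when $X\simeq\colim_i\Spec\scr E_i$. This is false in general: the cotensor $\scr LX=X^{S^1}$ preserves limits of prestacks, not colimits, so the canonical map $\colim_i\scr L\Spec\scr E_i\to\scr LX$ is usually not an equivalence. For example, for a classifying prestack $X=BG$ the left-hand side only sees loops coming from affines, i.e.\ essentially constant loops, whereas $\scr LBG$ is the full inertia, and the two categories of $S^1$-equivariant sheaves differ. Worse, the natural comparison map runs $\QCoh^{S^1}(\scr LX)\to\lim_i\QCoh^{S^1}(\scr L\Spec\scr E_i)$, i.e.\ the wrong way for your purposes: a right Kan extension of the affine construction lands in the limit, and there is no natural functor from that limit back into $\QCoh^{S^1}(\scr LX)$. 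The reduction is also unnecessary, since the construction can be carried out for an arbitrary Tannakian prestack in one stroke.

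Second, and more substantively, the mechanism you give for producing the monodromy is not correct. Sending $\cA$ to the pair $(\Ind(\cA),\id)$ over $X$ and then pulling back along $\scr LX\to X$ yields the \emph{identity} automorphism of $\cA|_{\scr LX}$, whose trace is just the pulled-back Hochschild homology; the identity bimodule does not ``become the monodromy'' under that pullback (in the decategorified analogue this is exactly the difference between the rank and the Chern character). The monodromy is the $S^1$-family structure on the pullback, and the way to produce it — this is the actual content of the paper's proof — is by adjunction: the cone maps $\scr LX=\lim_{S^1}X\to X$ give a canonical functor $\colim_{S^1}\Cat^\perf(X)\to\Cat^\perf(\scr LX)$, whose adjoint $\Cat^\perf(X)\to\Fun(S^1,\Cat^\perf(\scr LX))\simeq\Aut(\CAT^\Mor(\scr LX))$ sends $\cA$ to its pullback equipped with the monodromy automorphism and is $S^1$-equivariant for the trivial action on the source and the diagonal action on the target; composing with $\Tr_\otimes$ and invoking Theorem~\ref{thm:S1trace} then gives the factorization through $\QCoh^{S^1}(\scr LX)$ for arbitrary $X$, with no further appeal to \cite{TV2}. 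Your treatment of exact sequences essentially matches the paper's (localization sequences in $\Aut(\CAT^\Mor(\scr LX))$ plus Theorem~\ref{thm:localization}), except that the final step should be justified by the fact that the forgetful functor $\QCoh^{S^1}(\scr LX)\to\QCoh(\scr LX)$ reflects colimits, not by exactness of homotopy fixed points.
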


\begin{proof}
	To construct $\ch^\pre$, we repeat the construction of Toën and Vezzosi, using the symmetric monoidal $S^1$-invariant trace functor constructed in \S\ref{sub:S1trace}.
	The obvious functor
	\[\colim_{S^1}\Cat^\perf(X)\to \Cat^\perf(\scr LX)\]
	induces by adjunction a functor
	\[
	\Cat^\perf(X) \to \Fun(S^1,\Cat^\perf(\scr LX)),
	\]
	sending a quasi-coherent sheaf of $\infty$-categories on $X$ to its pullback to $\cL X$ equipped with a canonical monodromy automorphism.
	This functor is manifestly $S^1$-equivariant for the trivial action on the source and the diagonal action on the target.
	We then consider the symmetric monoidal composition
	\begin{equation}\label{eqn:ch}
	\Cat^\perf(X) \to \Fun(S^1,\Cat^\perf(\scr LX)) \simeq \Aut(\CAT^\Mor(\scr LX)) \xrightarrow{\Tr_\otimes} \Omega\CAT^\Mor(\scr LX)\simeq \QCoh(\scr LX).
	\end{equation}
	Since the trace $\Tr_\otimes$ is a natural transformation $\Aut \to \Omega$ which is $S^1$-invariant for the action of $S^1$ on $\Aut$ (Theorem~\ref{thm:S1trace}), we deduce that the whole composition is $S^1$-equivariant, and hence yields the desired symmetric monoidal functor $\ch^\pre$.
	The first part of~\eqref{eqn:ch} sends exact sequences in $\Cat^\perf(X)$ to localization sequences in $\Aut(\CAT^\Mor(\scr LX))$, since any morphism in $\Aut(\CAT^\Mor(\scr LX))$ is right adjointable. The fact that $\ch^\pre$ sends exact sequences to cofiber sequences then follows from Theorem~\ref{thm:localization} and the fact that the forgetful functor $\QCoh^{S^1}(\scr LX)\to\QCoh(\scr LX)$ reflects colimits.
\end{proof}

\begin{corollary}\label{cor:affine}
	Let $\Spec\scr C$ be a Tannakian affine scheme over $\scr E$. Then the symmetric monoidal functor $\ch^\pre\colon \Cat^\perf(\scr C) \to \Ind(S^1\tens_{\scr E}\scr C)^{hS^1}$ is a localizing invariant, and hence it factors uniquely through the symmetric monoidal $\infty$-category of localizing $\scr C$-motives:
	\begin{tikzmath}
		\diagram{
		\Cat^\perf(\scr C) & \Ind(S^1\tens_{\scr E}\scr C)^{hS^1}\rlap. \\
		\MOT(\scr C) & \\
		};
		\arrows (11-) edge node[above]{$\ch^\pre$} (-12) (11) edge (21) (21) edge[dashed] node[below right]{$\ch$} (12);
	\end{tikzmath}
\end{corollary}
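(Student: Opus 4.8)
The plan is to reduce the corollary to the universal property of $\cU_\loc$. By Theorem~\ref{thm:monoidalloc}, once we know that $\ch^\pre\colon \Cat^\perf(\scr C)\to \QCoh^{S^1}(\scr LX)$ (where $\QCoh^{S^1}(\scr LX)=\Ind(S^1\tens_{\scr E}\scr C)^{hS^1}$) is a \emph{symmetric monoidal localizing invariant} in the sense of Definition~\ref{def:loc}, the dashed factorization $\ch$ through the presentably symmetric monoidal stable $\infty$-category $\MOT(\scr C)$, together with its uniqueness, is automatic. Theorem~\ref{thm:main} already supplies the symmetric monoidal structure on $\ch^\pre$, the fact that it preserves zero objects, and the fact that it sends exact sequences to cofiber sequences. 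So the only thing left to prove is condition~(1) of Definition~\ref{def:loc}: that $\ch^\pre$ preserves filtered colimits.

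To establish this I would unwind $\ch^\pre$ exactly as in the proof of Theorem~\ref{thm:main}. Since $\scr LX=\Spec(S^1\tens_{\scr E}\scr C)$ is affine, $\ch^\pre$ is the canonical $S^1$-fixed-point refinement of the $S^1$-equivariant composite
\[
\ch^\pre_0\colon \Cat^\perf(\scr C)\to \Fun(S^1,\Cat^\perf(\scr LX))\simeq \Aut(\CAT^\Mor(\scr LX))\xrightarrow{\Tr_\otimes}\Omega\CAT^\Mor(\scr LX)\simeq \Ind(S^1\tens_{\scr E}\scr C).
\]
First I would check that $\ch^\pre_0$ preserves filtered colimits. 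The first arrow is the symmetric monoidal base change functor along $\scr C\to S^1\tens_{\scr E}\scr C$ followed by the adjunction unit into $\Fun(S^1,\ph)$; base change in $\Cat^\perf(\ph)$ preserves all colimits, and evaluation at the base point of $S^1$ reflects filtered colimits in $\Fun(S^1,\ph)$, so this composite preserves filtered colimits. The middle arrow is an equivalence, and the trace $\Tr_\otimes\colon \Aut(\CAT^\Mor(\scr LX))\to\Ind(S^1\tens_{\scr E}\scr C)$ preserves filtered colimits by Corollary~\ref{cor:filteredcolimits}, applied with the rigid symmetric monoidal $\infty$-category $S^1\tens_{\scr E}\scr C$ (an object of $\CAlg^\rig(\Cat^\perf)$ by the very definition of $\scr LX$) in place of $\scr E$. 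Hence $\ch^\pre_0$ preserves filtered colimits.

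The remaining step is to transfer this from $\ch^\pre_0$ to $\ch^\pre$ itself. Here I would argue that the forgetful functor $U\colon \QCoh^{S^1}(\scr LX)\to\QCoh(\scr LX)$ reflects filtered colimits: it is the evaluation at the base point of the limit $\lim_{BS^1}\QCoh(\scr LX)$ along the loop-rotation action, which is by colimit-preserving autoequivalences, so $U$ is conservative (as $BS^1$ is connected) and preserves all colimits (colimits in a limit of presentable $\infty$-categories indexed by a space are computed vertexwise). A conservative, colimit-preserving functor out of a presentable $\infty$-category reflects colimits, so $U$ reflects filtered colimits; since $U\circ \ch^\pre\simeq \ch^\pre_0$ preserves filtered colimits, so does $\ch^\pre$. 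Combined with Theorem~\ref{thm:main}, this shows $\ch^\pre$ is a symmetric monoidal localizing invariant, and Theorem~\ref{thm:monoidalloc} then produces $\ch$.

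I expect the main obstacle to be precisely this last point — not a deep one, but the place where genuine new input enters. Theorem~\ref{thm:main} is silent about filtered colimits, so one must both invoke Corollary~\ref{cor:filteredcolimits} at the level of the non-equivariant trace and verify that passing to $S^1$-homotopy fixed points does not destroy filtered-colimit-preservation. The only care needed along the way is to confirm that the loop-rotation $S^1$-action on $\QCoh(\scr LX)$ is by colimit-preserving autoequivalences, so that $\QCoh^{S^1}(\scr LX)$ is presentable and the vertexwise-colimit description of $U$ is legitimate.
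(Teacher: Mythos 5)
Your proposal is correct and follows essentially the same route as the paper: Theorem~\ref{thm:main} supplies all conditions of a symmetric monoidal localizing invariant except preservation of filtered colimits, which is checked by unwinding the definition~\eqref{eqn:ch} and applying Corollary~\ref{cor:filteredcolimits} (with $S^1\tens_{\scr E}\scr C$ as the base), after which Theorem~\ref{thm:monoidalloc} gives the factorization. The only difference is that you spell out the step the paper leaves implicit — that the forgetful functor from $S^1$-homotopy fixed points is conservative and colimit-preserving, hence reflects filtered colimits — which is a correct and welcome elaboration rather than a different argument.
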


\begin{proof}
By Theorem~\ref{thm:main}, the only thing to check is that $\ch^\pre$ preserves filtered colimits. This follows at once from its definition~\eqref{eqn:ch} and Corollary~\ref{cor:filteredcolimits}.
\end{proof}

\begin{proof}[Proof of Theorem~\ref{thm:additivity}]
	Since the affinization map $X\to \Spec\Perf(X)$ is an equivalence on $\K$, we can assume that $X=\Spec\scr C$.
	Applying $\Omega_\Sp$ to the symmetric monoidal functor $\ch\colon \MOT(\scr C) \to \Ind(S^1\tens_{\scr E}\scr C)^{hS^1}$ of Corollary~\ref{cor:affine} and using Theorem~\ref{thm:nc-representability}, we get a morphism of $E_\infty$-ring spectra $\ch\colon \K(\scr C) \to \scr O(\scr LX)^{hS^1}$. It is easy to check that this morphism refines the Chern pre-character in the desired way.
\end{proof}

\begin{remark}\label{rmk:Dennistrace}
	When $\scr C=\scr E$, the categorified Chern character of Corollary~\ref{cor:affine} is a colimit-preserving symmetric monoidal functor $\ch\colon \MOT(\scr E)\to \Fun(BS^1,\Ind(\scr E))$. By construction, it sends $\scr A\in\Cat^\perf(\scr E)$ to the Hochschild homology of $\scr A$ over $\scr E$ with the $S^1$-action of Remark~\ref{rmk:S1action},
	hence gives rise to morphisms of spectra
	\[\K(\scr B^\op\tens_{\scr E}\scr A) \to {\Ind(\scr E)}(\HH(\scr B/\scr E),\HH(\scr A/\scr E))^{hS^1}\]
	for all $\scr A\in\Cat^\perf(\scr E)$ and $\scr B\in\Cat^\sat(\scr E)$.
	According to \cite[Example 4.2.2]{Lurie}, this $S^1$-action on $\HH(\scr A/\scr E)$ coincides with the usual $S^1$-action coming from the extension of the simplicial bar construction $C_\bullet(S,\scr A)$ of \S\ref{sub:hochschild} to a cyclic object.
	When $\scr B=\scr E=\Sp^\omega$, it follows from \cite[Theorem 10.6]{BGT} that the map $\K(\scr A)\to \HH(\scr A)$ is the classical Dennis trace map. Hence, Corollary~\ref{cor:affine} recovers the classical factorization of the Dennis trace map through the homotopy $S^1$-fixed points of topological Hochschild homology.
\end{remark}

\subsection{Secondary $K$-theory and the secondary Chern character}
\label{sub:K2}

In the remainder of this section, we will use the categorified Chern character to construct a Chern character for Toën's secondary $K$-theory.

Let $\scr C$ be a \emph{small} linear $(\infty,2)$-category (see Definition~\ref{dfn:linear}).
We denote by $\Pre_{\Sp}^\mathrm{loc}(\scr C)$ the $\infty$-category of presheaves of spectra on $\iota_1\scr C$ that send initial objects\footnote{In a linear $(\infty,2)$-category $\scr C$, an object $X$ is initial iff it is final iff $\scr C(X,X)=0$.} to $0$ and localization sequences to fiber sequences. As the inclusion $\Pre_{\Sp}^\mathrm{loc}(\scr C)\subset \Pre_{\Sp}(\scr C)$ preserves filtered colimits, $\Pre_{\Sp}^\mathrm{loc}(\scr C)$ is compactly generated. If $\scr C$ is moreover linearly symmetric monoidal, tensoring with a fixed object preserves localization sequences. It follows from \cite[Proposition 2.2.1.9]{HA} that the Day convolution symmetric monoidal structure on $\Pre_{\Sp}(\scr C)$ (see \cite{Saul}) descends to a symmetric monoidal structure on $\Pre_{\Sp}^\mathrm{loc}(\scr C)$ and that the localization functor is symmetric monoidal. In particular, we get a symmetric monoidal functor
\[
\iota_1\scr C \to \Pre_{\Sp}^\mathrm{loc}(\scr C)^\omega.
\]
For example, the definition of the symmetric monoidal $\infty$-category $\Mot(\cE)$ can be summarized as
\[
\Mot(\cE)=\Pre_{\Sp}^\mathrm{loc}(\CAT^\perf(\cE)^\omega).
\]

For $X$ a Tannakian prestack, we define
\[
\Mot^\sat(X) = \Pre_{\Sp}^\mathrm{loc}(\CAT^\sat(X))^\omega.
\]
Thus, $\Mot^\sat(X)$ is a small, stable, and idempotent complete symmetric monoidal $\infty$-category. It is moreover rigid since it is generated under finite colimits and retracts by the image of $\Cat^\sat(X)$.

\begin{remark}\label{rmk:split}
By definition, a localization sequence in $\CAT^\sat(X)$ is a split exact sequence. However, since $\CAT^\sat(X)=\CAT^\Mor(X)^\mathrm{fd}$, any exact sequence in $\CAT^\sat(X)$ is automatically split exact.
\end{remark}

\begin{remark}\label{rmk:kontsevich}
	The proof of Lemma~\ref{lem:S_2} can be repeated to show that the image in $\Mot^\sat(\scr E)$ of a saturated $\cE$-category $\cA$ is the presheaf of spectra $\cB\mapsto K(\Fun_\cE^\ex(\cB,\cA))$: the only thing to note is that all the terms in the split exact sequence $\cA\to PS_n\cA\to S_n\cA$ are saturated.
	Hence, for any $\scr A,\scr B\in\Cat^\sat(\scr E)$, the spectrum of maps from $\scr B$ to $\scr A$ in $\Mot^\sat(\scr E)$ is the $K$-theory spectrum $K(\scr B^\op\tens_{\scr E}\scr A)$. It follows that the symmetric monoidal functor $\Mot^\sat(\scr E)\to\Mot(\scr E)^\omega$ induced by the inclusion $\Cat^\sat(\scr E)\subset\Cat^\perf(\scr E)^\omega$ is fully faithful. When $\scr E=\Perf(k)$, this shows that $\Mot^\sat(\scr E)$ is an $\infty$-categorical enhancement of Kontsevich's triangulated category of noncommutative mixed motives over $k$ (see \cite{Kontsevich} or \cite[\S8.2]{CT}). 
	In general, we may therefore think of $\Mot^\sat(X)$ as an $\infty$-category of noncommutative mixed motives parametrized by the Tannakian prestack $X$.
\end{remark}

\begin{remark}
	Unlike for $K$-theory, the affinization morphism $X\to\Spec\Perf(X)$ may not induce an equivalence on $\Mot^\sat$. It does so however whenever $X$ is $1$-affine in the sense of Gaitsgory \cite{Ga1}, which includes many nonaffine cases, such as quasi-compact quasi-separated schemes or classifying stacks $BG$, where $G$ is a linearly reductive linear algebraic group.
\end{remark}

\begin{corollary}\label{cor:main}
	Let $X$ be a Tannakian prestack over $\scr E$. The restriction of $\ch^\pre$ to $\Cat^\sat(X)$ induces an exact symmetric monoidal functor $\ch\colon \Mot^\sat(X)\to \Perf^{S^1}(\scr LX)$:
	\begin{tikzmath}
		\diagram{
		\Cat^\sat(X) & \Perf^{S^1}(\scr LX)\rlap. \\
		\Mot^\sat(X) & \\
		};
		\arrows (11-) edge node[above]{$\ch^\pre$} (-12) (11) edge (21) (21) edge[dashed] node[below right]{$\ch$} (12);
	\end{tikzmath}
\end{corollary}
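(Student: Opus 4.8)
The plan is to obtain Corollary~\ref{cor:main} from Theorem~\ref{thm:main} and the construction of $\Mot^\sat(X)$, with essentially no new input. First I would check that $\ch^\pre$ has \emph{perfect} values on $\Cat^\sat(X)$. By Theorem~\ref{thm:main} the functor $\ch^\pre\colon\Cat^\perf(X)\to\QCoh^{S^1}(\scr LX)$ is symmetric monoidal, and its restriction along the symmetric monoidal inclusion $\Cat^\sat(X)\subset\Cat^\perf(X)$ is again symmetric monoidal. Now every object of $\Cat^\sat(X)=\iota_1\CAT^\sat(X)$ is dualizable: we have $\CAT^\sat(X)\simeq\CAT^\Mor(X)^{\mathrm{fd}}$, every object of $\CAT^\Mor(X)$ is dualizable with dual $\scr A\mapsto\scr A^\op$, and $\scr A^\op$ is again saturated. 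A symmetric monoidal functor preserves dualizable objects, so $\ch^\pre$ carries each object of $\Cat^\sat(X)$ to a dualizable object of $\QCoh^{S^1}(\scr LX)=\lim_{BS^1}\QCoh(\scr LX)$; since this is a limit of copies of $\QCoh(\scr LX)$, such an object is dualizable precisely when its underlying object of $\QCoh(\scr LX)$ is. Hence the restriction of $\ch^\pre$ factors through the full symmetric monoidal subcategory $\Perf^{S^1}(\scr LX)\subset\QCoh^{S^1}(\scr LX)$ spanned by the dualizable objects, i.e.\ those whose underlying object of $\QCoh(\scr LX)$ lies in $\Perf(\scr LX)$.

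Next I would verify that $\ch^\pre\colon\Cat^\sat(X)\to\Perf^{S^1}(\scr LX)$ preserves zero objects and carries localization sequences to cofiber sequences. A localization sequence in $\CAT^\sat(X)$ is in particular a localization sequence in $\CAT^\perf(X)$, i.e.\ a split exact sequence, hence an exact sequence in $\Cat^\perf(X)$ (by Remark~\ref{rmk:split} the two notions agree here). By Theorem~\ref{thm:main}, $\ch^\pre\colon\Cat^\perf(X)\to\QCoh^{S^1}(\scr LX)$ preserves zero objects and sends exact sequences to cofiber sequences; and since $\Perf^{S^1}(\scr LX)\subset\QCoh^{S^1}(\scr LX)$ is a fully faithful exact subcategory closed under finite colimits, these cofiber sequences already live in $\Perf^{S^1}(\scr LX)$. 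So $\ch^\pre|_{\Cat^\sat(X)}$ is a symmetric monoidal functor into the small, stable, idempotent complete symmetric monoidal $\infty$-category $\Perf^{S^1}(\scr LX)$ that preserves zero objects and localization sequences.

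Finally I would invoke the universal property of $\Mot^\sat(X)=\Pre_{\Sp}^\mathrm{loc}(\CAT^\sat(X))^\omega$. The presentable category $\Pre_{\Sp}^\mathrm{loc}(\CAT^\sat(X))$ has the universal property established in Theorems~\ref{thm:localizing} and~\ref{thm:monoidalloc}, and passing to compact objects — which is legitimate because $\Perf^{S^1}(\scr LX)$ is small (Corollary~\ref{cor:small}) and idempotent complete, and compact objects of $\Pre_{\Sp}^\mathrm{loc}(\CAT^\sat(X))$ are retracts of finite colimits of representables — shows that precomposition with the canonical symmetric monoidal functor $\Cat^\sat(X)\to\Mot^\sat(X)$ identifies exact symmetric monoidal functors $\Mot^\sat(X)\to\scr D$ with symmetric monoidal functors $\Cat^\sat(X)\to\scr D$ that preserve zero objects and localization sequences, for any small, stable, idempotent complete symmetric monoidal $\scr D$. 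Applying this to $\scr D=\Perf^{S^1}(\scr LX)$ and the functor of the previous step produces the dashed arrow $\ch$ together with the commutative triangle.

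The genuinely routine parts here are the checks of dualizability, smallness, and idempotent completeness; the \textbf{main point to get right} is bookkeeping — keeping straight the three ambient $(\infty,2)$-categories $\CAT^\sat(X)\subset\CAT^\perf(X)\subset\CAT^\Mor(X)$ and the matching notions of split exact versus exact sequence, so that the localization-sequence hypothesis of Theorem~\ref{thm:main} is applied in the right place, and pinning down the universal property of $\Mot^\sat(X)$ for not-necessarily-presentable targets.
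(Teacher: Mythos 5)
Your proposal is correct and follows essentially the same route as the paper's proof: symmetric monoidality of $\ch^\pre$ forces dualizable (hence perfect) values on $\Cat^\sat(X)$, saturated localization sequences are exact sequences in $\Cat^\perf(X)$ so Theorem~\ref{thm:main} gives cofiber sequences that already lie in the stable subcategory $\Perf^{S^1}(\scr LX)$, and the factorization through $\Mot^\sat(X)=\Pre_{\Sp}^{\mathrm{loc}}(\CAT^\sat(X))^\omega$ is exactly the Day-convolution/localization universal property. Only two citations are slightly off, with no effect on the argument: the universal property you need is the one set up at the start of \S\ref{sub:K2} for $\Pre_{\Sp}^{\mathrm{loc}}$ of a small linearly symmetric monoidal $(\infty,2)$-category (Theorems~\ref{thm:localizing} and~\ref{thm:monoidalloc} concern $\MOT(\cE)$), and the smallness of $\Perf^{S^1}(\scr LX)$ follows from its description as a small limit of small categories in \S\ref{sub:Tannakian}, not from Corollary~\ref{cor:small}.
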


\begin{proof}
	Since $\ch^\pre\colon \Cat^\perf(X) \to \QCoh^{S^1}(\scr LX)$ is symmetric monoidal, it preserves dualizable objects and hence restricts to a functor $\Cat^\sat(X) \to \Perf^{S^1}(\scr LX)$. Since this functor has a stable target and sends exact sequences to cofiber sequences, it factors uniquely through $\Mot^\sat(X)$, by the definition of the latter and the universal property of the Day convolution.
\end{proof}

\begin{definition}\label{dfn:K2}
	Let $X$ be a Tannakian prestack. The \emph{nonconnective secondary $K$-theory} of $X$, denoted by $\K^{(2)}(X)$, is the nonconnective $K$-theory of the symmetric monoidal stable $\infty$-category $\Mot^\sat(X)$.
\end{definition}

\begin{remark}
	When $X$ is the spectrum of a commutative ring, Definition \ref{dfn:K2} is closely related to Toën's definition of secondary $K$-theory in \cite[\S5.4]{ToenLectures}. Toën considers the Waldhausen $\infty$-category structure on $\Cat^\sat(X)$ where the cofibrations are the fully faithful $1$-morphisms; let us denote its $K$-theory spectrum by $K^{(2)}_\text{To\"en}(X)$ (it is what we called $K^{(2)}(X)$ in \S\ref{sub:K2intro}). The stable $\infty$-category $\Mot^\sat(X)$ defined above is in a precise sense the closest approximation of this Waldhausen $\infty$-category by a stable idempotent complete $\infty$-category. In particular, we have a symmetric monoidal Waldhausen functor $\Cat^\sat(X)\to\Mot^\sat(X)$,  and hence a morphism of $E_\infty$-ring spectra \[K^{(2)}_\text{To\"en}(X)\to \K^{(2)}(X).\]
	It seems plausible that $K^{(2)}_\text{To\"en}(X)$ is in fact the connective cover of $\K^{(2)}(X)$.
	There are several reasons for using $\Mot^\sat(X)$ instead of $\Cat^\sat(X)$ in our definition of nonconnective secondary $K$-theory: first, it gives a more natural source for the secondary Chern character, and it makes Theorem~\ref{thrm:TV2} below stronger; second, it allows us to relate secondary $K$-theory to iterated $K$-theory, see Remark~\ref{rmk:KK}; and finally, there is at this time no construction of the nonconnective $K$-theory of a Waldhausen $\infty$-category in the literature.
\end{remark}

Combining Theorem~\ref{thm:additivity} and Corollary~\ref{cor:main}, we obtain the following commutative diagram of $E_\infty$-semirings and $E_\infty$-ring spectra, natural in $X\in\PrSt_{\scr E}^\Tan$, where a dotted arrow means a map to the infinite loop space of the target:
\begin{tikzequation}\label{eqn:ch2}
	\diagram{
	\iota_0\Cat^\sat(X) & \iota_0\Perf^{S^1}(\scr LX) & \scr O(\scr L^2X)^{h(S^1\times S^1)}\rlap. \\
	\iota_0\Mot^\sat(X) & \K^{S^1}(\scr LX) & \\
	\K^{(2)}(X) & & \\
	};
	\arrows (11-) edge node[above]{$\iota_0\ch^\pre$} (-12) (12-) edge[dotted] node[above]{$\ch^\pre$} (-13)
	(11) edge (21) (21) edge node[below right]{$\iota_0\ch$} (12)
	(21) edge[dotted] (31) (22) edge node[below right]{$\ch$} (13)
	(12) edge[dotted] (22)
	(31) edge node[below right]{$\K(\ch)$} (22)
	;
\end{tikzequation}
Here, $ \K^{S^1}(\scr LX) =\K(\Perf^{S^1}(\scr LX))$ is the $S^1$-equivariant $K$-theory of the free loop stack $\scr LX$.
The diagonal composition is the \emph{secondary Chern character}
\[
\ch^{(2)}\colon \K^{(2)}(X) \to \scr O(\scr L^2X)^{h(S^1\times S^1)}.
\]
It is thus a morphism of $E_\infty$-rings, natural in $X\in\PrSt_{\scr E}^\Tan$.
The top row of~\eqref{eqn:ch2} is exactly the secondary Chern pre-character constructed by Toën and Vezzosi in \cite[\S4.4]{TV2}. In particular, we have proved the following:

\begin{theorem}
\label{thrm:TV2}
The Toën–Vezzosi secondary Chern pre-character $\iota_0\Cat^\sat(X)\to \cO(\cL^{2} X)^{h(S^1 \times S^1)}$ is refined by a morphism of $E_\infty$-ring spectra
\[
\ch^{(2)}\colon\K^{(2)}(X) \to \cO(\cL^{2} X)^{h(S^1 \times S^1)},
\]
natural in $X\in\PrSt_{\scr E}^\Tan$.
\end{theorem}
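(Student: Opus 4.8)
The plan is to obtain $\ch^{(2)}$ by splicing together the categorified Chern character of Corollary~\ref{cor:main} with a homotopy‑fixed‑point version of the additive Chern character of Theorem~\ref{thm:additivity}, thereby assembling the diagram~\eqref{eqn:ch2}, and then to identify the diagonal composite with a refinement of the Toën–Vezzosi secondary Chern pre‑character. The first leg is immediate: Corollary~\ref{cor:main} gives an exact symmetric monoidal functor $\ch\colon\Mot^\sat(X)\to\Perf^{S^1}(\scr LX)$ of small rigid symmetric monoidal stable $\infty$-categories, natural in $X$; since nonconnective $K$-theory is lax symmetric monoidal on $\Cat^\perf$ \cite{BGT}, applying it yields a morphism of $E_\infty$-ring spectra
\[
\K(\ch)\colon \K^{(2)}(X)=\K(\Mot^\sat(X)) \longrightarrow \K(\Perf^{S^1}(\scr LX))=:\K^{S^1}(\scr LX),
\]
natural in $X$.

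For the second leg I would produce a natural $E_\infty$-morphism $\K^{S^1}(\scr LX)\to\scr O(\scr L^2X)^{h(S^1\times S^1)}$. Regard $\scr LX$, with its canonical loop‑rotation $S^1$-action, as a functor $BS^1\to\PrSt_{\scr E}^\Tan$. By the naturality clause of Theorem~\ref{thm:additivity}, the morphism $\ch\colon\K(\scr LX)\to\scr O(\scr L(\scr LX))^{hS^1}=\scr O(\scr L^2X)^{hS^1}$ is then $S^1$-equivariant, where this $S^1$ rotates the inner circle of $\scr L^2X=\scr L(\scr LX)$ while the $hS^1$ already present records rotation of the outer circle created by the last application of $\scr L$; the two circle actions commute. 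Passing to homotopy $S^1$-fixed points and using $\big((-)^{hS^1}\big)^{hS^1}\simeq(-)^{h(S^1\times S^1)}$ for commuting actions, I get an $E_\infty$-morphism $\K(\scr LX)^{hS^1}\to\scr O(\scr L^2X)^{h(S^1\times S^1)}$. Finally I precompose with the canonical comparison $\K^{S^1}(\scr LX)=\K(\Perf(\scr LX)^{hS^1})\to\K(\Perf(\scr LX))^{hS^1}=\K(\scr LX)^{hS^1}$, obtained by applying $\K$ to the cone map from the constant $BS^1$-diagram at $\Perf(\scr LX)^{hS^1}$ to the diagram $\Perf(\scr LX)$ and using the adjunction $\mathrm{const}\dashv\lim$. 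This gives the map $\ch\colon\K^{S^1}(\scr LX)\to\scr O(\scr L^2X)^{h(S^1\times S^1)}$, natural in $X$.

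Defining $\ch^{(2)}$ to be the composite $\K^{(2)}(X)\xrightarrow{\K(\ch)}\K^{S^1}(\scr LX)\xrightarrow{\ch}\scr O(\scr L^2X)^{h(S^1\times S^1)}$, it is a morphism of $E_\infty$-ring spectra, natural in $X\in\PrSt_{\scr E}^\Tan$ since each constituent is. It then remains to check that $\ch^{(2)}$ refines the Toën–Vezzosi secondary Chern pre‑character, equivalently that the outer boundary of diagram~\eqref{eqn:ch2} commutes — its top row being, by \cite[\S4.4]{TV2}, exactly that pre‑character. The left‑hand squares commute by Corollary~\ref{cor:main} together with the definition $\K^{(2)}(X)=\K(\Mot^\sat(X))$ and the corepresentability argument of Remark~\ref{rmk:kontsevich} (a relative version of Theorem~\ref{thm:nc-representability}); the right‑hand squares commute because the constructions above are compatible with $\iota_0$ and with homotopy fixed points by design. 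I expect the main obstacle to be precisely the second‑leg bookkeeping: keeping the newly created loop‑rotation circle genuinely separate from, and commuting with, the transported original $S^1$-equivariance, so that the output lives in the \emph{iterated} fixed points $\scr O(\scr L^2X)^{h(S^1\times S^1)}$ and not merely in a single $hS^1$, and then matching this second circle's worth of invariance with the one already carried by the pre‑character of \cite{TV2}. Since the necessary inputs — the naturality of Theorem~\ref{thm:additivity} and the $(S^1)^k$-invariance of the iterated trace (Remark~\ref{rmk:kfoldTrace} and the remark following Theorem~\ref{thm:S1trace}) — are already available, this should be a matter of careful unwinding rather than of new ideas.
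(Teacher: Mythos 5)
Your proposal is correct and follows essentially the same route as the paper: the paper also defines $\ch^{(2)}$ as the diagonal composite $\K^{(2)}(X)\xrightarrow{\K(\ch)}\K^{S^1}(\scr LX)\xrightarrow{\ch}\scr O(\scr L^2X)^{h(S^1\times S^1)}$ in diagram~\eqref{eqn:ch2}, with the first leg coming from Corollary~\ref{cor:main} and the second from the naturality of Theorem~\ref{thm:additivity} applied to the $S^1$-diagram $\scr LX$ together with the comparison $\K(\Perf(\scr LX)^{hS^1})\to\K(\Perf(\scr LX))^{hS^1}$. Your write-up merely makes the second leg and the circle-action bookkeeping more explicit than the paper does, which is consistent with its intended construction.
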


\begin{remark}
	The top row of~\eqref{eqn:ch2} sends $\scr A\in\iota_0\Cat^\sat(X)$ to the secondary trace $\Tr^{(2)}$ of the canonical pair of commuting automorphisms of the pullback of $\scr A$ to the double free loop space $\scr L^2X$. It follows from the $2$-dimensional cobordism hypothesis that this map is in fact invariant for the action of the framed diffeomorphism group of the torus on $\scr O(\scr L^2X)$.
	It is natural to ask whether the secondary Chern character is also invariant for this action. Unfortunately, this seems difficult to answer from our construction, which crucially depends on decomposing the torus as a product of circles. On the other hand, $\ch^{(2)}$ admits an asymmetrical refinement, namely the composition
	\[
	\K^{(2)}(X) \xrightarrow{\K(\ch)} \K^{S^1}(\scr LX)=\K(\Perf^{S^1}(\scr LX)) \xrightarrow{\ch} \Omega_{\Sp}(S^1\tens_{\scr E}\Perf^{S^1}(\scr LX))^{hS^1}.
	\]
\end{remark}

\begin{remark}
	If $\scr F(X)$ is a symmetric monoidal $(\infty,n)$-category varying functorially with $X$, the $n$-fold trace $\Tr^{(n)}$ similarly defines an $n$-ary Chern pre-character
	\[
	\iota_0\scr F(X)^\mathrm{fd} \to \Omega^n\scr F(\scr L^nX).
	\]
	By the $n$-dimensional cobordism hypothesis, this map is invariant under the action of the framed diffeomorphism group of the $n$-dimensional torus on $\Omega^n\scr F(\scr L^nX)$.
	For $n\geq 3$, however, we do not know what $n$-ary $K$-theory is.
\end{remark}

\begin{remark}\label{rmk:KK}
	Let $R$ be an $E_\infty$-ring spectrum. The computation of the mapping spaces in $\Mot^\sat(R)$ from Remark~\ref{rmk:kontsevich} shows that the $\infty$-category $\Mod^\mathrm{free}(K(R))$ of finite free $K(R)$-modules is a full symmetric monoidal subcategory of $\Mot^\sat(R)$. Since $\Mot^\sat(R)$ is idempotent complete, we get a symmetric monoidal exact functor $\Mod^\mathrm{proj}(K(R))\to \Mot^\sat(R)$, where $\Mod^\mathrm{proj}(K(R))$ is the idempotent completion of $\Mod^\mathrm{free}(K(R))$.
	 Since the $K$-theory of a connective ring spectrum $A$ is equivalent to the group completion of the $E_\infty$-space $\iota_0\Mod^\mathrm{proj}(A)$, we obtain a canonical map of $E_\infty$-ring spectra $K(K(R))\to K(\Mot^\sat(R))$. 
	 It is not difficult to show that the composition
	 \[
	 K(K(R)) \to K(\Mot^\sat(R)) \xrightarrow{K(\ch)} K^{S^1}(\mathrm{HH}(R))
	 \]
	 is $K$ of the Dennis trace map $K(R)\to \mathrm{HH}(R)$. This can be used to detect nonzero elements in the homotopy groups $\pi_n\K^{(2)}(R)$ for $n\geq 0$. For example, if $R$ is a field of characteristic not $2$, $\{\pm 1\}=K_1(K(R))\to \pi_1\K^{(2)}(R)$ is injective.
\end{remark}

\providecommand{\bysame}{\leavevmode\hbox to3em{\hrulefill}\thinspace}

\end{document}